\numberwithin{equation}{section}
\begin{document}
\newcommand{\dd}{\mathrm{d}}
\newcommand{\ii}{\mathrm{i}}
\newcommand{\ee}{\mathrm{e}}
\newcommand{\LZeroBlue}{{L^0_\squarellblack}}
\newcommand{\LZeroBlueOne}{{L^{0,1}_\squarellblack}}
\newcommand{\LZeroBlueTwo}{{L^{0,2}_\squarellblack}}
\newcommand{\LZeroBlueThree}{{L^{0,3}_\squarellblack}}
\newcommand{\LInftyBlue}{{L^\infty_\squarellblack}}
\newcommand{\LInftyBlueOne}{{L^{\infty,1}_\squarellblack}}
\newcommand{\LInftyBlueTwo}{{L^{\infty,2}_\squarellblack}}
\newcommand{\LZeroRed}{{L^0_\squareurblack}}
\newcommand{\LZeroRedOne}{{L^{0,1}_\squareurblack}}
\newcommand{\LZeroRedTwo}{{L^{0,2}_\squareurblack}}
\newcommand{\LInftyRed}{{L^\infty_\squareurblack}}
\newcommand{\LInftyRedOne}{{L^{\infty,1}_\squareurblack}}
\newcommand{\LInftyRedTwo}{{L^{\infty,2}_\squareurblack}}
\newcommand{\LInftyRedThree}{{L^{\infty,3}_\squareurblack}}
\newcommand{\VZeroBlue}{{\mathbf{V}^0_\squarellblack}}
\newcommand{\VInftyBlue}{{\mathbf{V}^\infty_\squarellblack}}
\newcommand{\VZeroRed}{{\mathbf{V}^0_\squareurblack}}
\newcommand{\VInftyRed}{{\mathbf{V}^\infty_\squareurblack}}
\newcommand{\OurPower}[2]{{#1^{#2}_\squarellblack}}
\newcommand{\OurArg}[1]{{\arg_\squarellblack(#1)}}
\newcommand{\OurLog}[1]{{\log_\squarellblack(#1)}}
\newcommand{\OurLogAvg}[1]{{\langle\log_\squarellblack(#1)\rangle}}
\newcommand{\pEInftyRed}{{\partial E^\infty_\squareurblack}}
\newcommand{\pEZeroBlue}{{\partial E^0_\squarellblack}}
\newcommand{\lNaught}{{p}}
\newcommand{\lNaughtZeroBlue}{{p^0_\squarellblack}}
\newcommand{\lNaughtInftyRed}{{p^\infty_\squareurblack}}
\newcommand{\LensRedPM}{{\Lambda^\pm_\squareurblack}}
\newcommand{\LensBluePM}{{\Lambda^\pm_\squarellblack}}
\newtheorem{lem}{Lemma}
\newtheorem{prop}{Proposition}
\newtheorem{rhp}{Riemann-Hilbert Problem}
\newtheorem{rem}{Remark}
\newtheorem{theorem}{Theorem}
\newtheorem{conjecture}{Conjecture}

\title{Rational Solutions of the Painlev\'e-III Equation}
\author{Thomas Bothner}
\address{Department of Mathematics, University of Michigan, 2074 East Hall, 530 Church Street, Ann Arbor, MI 48109-1043, United States}
\email{bothner@umich.edu}
\author{Peter D. Miller}
\address{Department of Mathematics, University of Michigan, 2074 East Hall, 530 Church Street, Ann Arbor, MI 48109-1043, United States}
\email{millerpd@umich.edu}
\author{Yue Sheng}
\address{Department of Mathematics, University of Michigan, 2074 East Hall, 530 Church Street, Ann Arbor, MI 48109-1043, United States.  Current address:  Department of Mathematics, University of Pennsylvania, 209 South 33rd Street, Philadelphia, PA 19104-6395, United States}
\email{yuesheng@sas.upenn.edu}
\date{\today}
\keywords{Painlev\'e-III equation, rational solutions, isomonodromy method, Riemann-Hilbert problem, large degree asymptotics.}

\subjclass[2010]{Primary 34M55; Secondary 34M35, 34E05}

\thanks{TB acknowledges support by the AMS and the Simons Foundation through a travel grant and the work of PDM is supported by the National Science Foundation under grant DMS-1513054.  The authors are grateful to P. Clarkson, A. Its, C.-K. Law and W. Van Assche for useful conversations.}
\begin{abstract}
All of the six Painlev\'e equations except the first have families of rational solutions, which are frequently important in applications.  The third Painlev\'e equation in generic form depends on two parameters $m$ and $n$, and it has rational solutions if and only if at least one of the parameters is an integer.  We use known algebraic representations of the solutions to study numerically how the distributions of poles and zeros behave as $n\in\mathbb{Z}$ increases and how the patterns vary with $m\in\mathbb{C}$.  This study suggests that it is reasonable to consider the rational solutions in the limit of large $n\in\mathbb{Z}$ with $m\in\mathbb{C}$ being an auxiliary parameter.  To analyze the rational solutions in this limit, algebraic techniques need to be supplemented by analytical ones, and 
the main new contribution of this paper is to develop a Riemann-Hilbert representation of the rational solutions of Painlev\'e-III that is amenable to asymptotic analysis.  Assuming further that $m$ is a half-integer, we derive from the Riemann-Hilbert representation a finite dimensional Hankel system for the rational solution in which $n\in\mathbb{Z}$ appears as an explicit parameter.
\end{abstract}
\maketitle

\section{Introduction}
This paper is the first in a series concerned with the large degree asymptotic analysis of rational solutions $u_n(x;m)$ to the generic Painlev\'e-III equation parametrized by $n\in\mathbb{Z}$ and $m\in\mathbb{C}$.
The six Painlev\'e equations are best known for their transcendental solutions, and indeed their general solutions are frequently referred to as \emph{Painlev\'e transcendents}.  These transcendental solutions are modern special functions that have appeared in numerous applications, most famously in similarity solutions of nonlinear partial differential equations and in integrable probability.  However, all of the Painlev\'e equations except the first are actually families of ordinary differential equations indexed by complex parameters, and it is well-known that if the parameters take on certain special values, then the Painlev\'e equation admits particular solutions that are either finitely constructed from elementary special functions or rational functions.\smallskip

For example, the Painlev\'e-II equation $u''=2u^3+xu+m$ has a complex parameter $m$, and it is elementary that if $m=0$ then the equation admits the trivial rational solution $u(x)\equiv 0$.  With this solution in hand for $m=0$, one can apply the \emph{B\"acklund transformation} 
\begin{equation*}
	u(x)\mapsto\widehat{u}(x):=-u(x)-\frac{2m+1}{2u(x)^2+2u'(x)+x}
\end{equation*} 
taking a solution of the equation with parameter $m$ into another solution of the same equation but with parameter $m\mapsto\widehat{m}:=m+1$.  The B\"acklund transformation obviously preserves rationality and with its help one quickly obtains a rational solution of the Painlev\'e-II equation for each integer value of $m$. It turns out that the integral values of $m$ are the only ones for which the equation admits a rational solution, and for each $m\in\mathbb{Z}$ there is exactly one rational solution, denoted $u_m(x)$, $m\in\mathbb{Z}$.  Motivated by applications, the family of functions $\{u_m(\cdot)\}_{m\in\mathbb{Z}}$ has recently been studied from the analytic perspective, i.e., from the point of view of asymptotic analysis in the limit of large integer $m$ \cite{BertolaB15,BuckinghamM14,BuckinghamM15,MillerS17}.

\subsection{The Painlev\'e-III equation, its symmetries and its rational solutions}\label{master}
The generic Painlev\'e-III equation
\begin{equation}
\frac{\dd^2u}{\dd x^2}=\frac{1}{u}\left(\frac{\dd u}{\dd x}\right)^2-\frac{1}{x}\frac{\dd u}{\dd x} + 
\frac{4\Theta_0 u^2 + 4(1-\Theta_\infty)}{x} + 4u^3-\frac{4}{u},
\label{eq:PIII}
\end{equation}
is the simplest of the Painlev\'e equations having a fixed singular point ($x=0$), and it involves two\footnote{In the most general form of the Painlev\'e-III equation one replaces the terms $4u^3-4u^{-1}$ on the right-hand side by $\gamma u^3 +\delta u^{-1}$ for arbitrary parameters $(\gamma,\delta)\in\mathbb{C}^2$.  Under the generic assumption that $\gamma\delta\neq 0$, a suitable rescaling of the dependent and independent variables results in the form \eqref{eq:PIII}.  There are two singular reductions:  one in which either $\gamma=0$ or $\delta=0$ but not both, which can be reduced by scaling to a one-parameter family of equations (or in the more special case that either $\Theta_0$ or $1-\Theta_\infty$ vanishes to an equation whose general solution is known in closed form), and one in which $\gamma=\delta=0$, which can be reduced by scaling to a unique form if $\Theta_0(1-\Theta_\infty)\neq 0$.  See \cite[\S32.2.2]{DLMF} and \cite[Section 2.2]{GamayunIL13}.} distinct complex parameters $\Theta_0$ and $\Theta_\infty$.  As we shall see, both of these features introduce new phenomena into the behavior of even the most elementary, rational solutions.\smallskip 

In order to study the rational solutions of \eqref{eq:PIII}, it will be convenient to represent the constant parameters $\Theta_0$ and $\Theta_\infty$ in the form
\begin{equation}
\Theta_0=n+m\quad\text{and}\quad\Theta_\infty=m-n+1.
\label{eq:Thetas-m-n}
\end{equation}
Equation \eqref{eq:PIII} has many symmetries, including the following elementary ones:
\begin{itemize}
\item
\emph{Inversion}:  if $u(x)$ satisfies \eqref{eq:PIII}--\eqref{eq:Thetas-m-n}, then $u(x)\mapsto I[u](x):=1/u(x)$ satisfies \eqref{eq:PIII} with modified parameters $I:\Theta_0\mapsto \Theta_\infty-1=m-n$ and $I:\Theta_\infty\mapsto \Theta_0+1=m+n+1$ (corresponding to changing the sign of $n$ while holding $m$ fixed).  The mapping $I:(u(x),\Theta_0,\Theta_\infty)\mapsto (1/u(x),\Theta_\infty-1,\Theta_0+1)$ is an involution.
\item
\emph{Rotation}:  if $u(x)$ satisfies \eqref{eq:PIII}--\eqref{eq:Thetas-m-n}, then
$u(\cdot)\mapsto R[u](x):=-\ii u(-\ii x)$ satisfies \eqref{eq:PIII} with modified parameters $R:\Theta_0\mapsto \Theta_0=n+m$ and $R:\Theta_\infty\mapsto 2-\Theta_\infty=n-m+1$ (corresponding to swapping $m$ and $n$).  The mapping $R:(u(x),\Theta_0,\Theta_\infty)\mapsto (-\ii u(-\ii x),\Theta_0,2-\Theta_\infty)$ is the generator of a cyclic symmetry group of order $4$.  Note that $R^2$ fixes the parameters $(\Theta_0,\Theta_\infty)$ in \eqref{eq:PIII} but maps the solution $u(x)$ to its odd reflection $-u(-x)$.
\end{itemize}
A nontrivial symmetry is the following B\"acklund transformation $u(x)\mapsto\widehat{u}(x)$, which was discovered by Gromak \cite{Gromak73}:
\begin{equation}
\widehat{u}(x):=\frac{xu'(x)+2xu(x)^2+2x-2(1-\Theta_\infty)u(x)-u(x)}{u(x)\cdot(xu'(x)+2xu(x)^2+2x+2\Theta_0u(x)+u(x))}
\label{eq:Backlund-n}
\end{equation}
solves \eqref{eq:PIII} for modified parameters $\Theta_0\mapsto\widehat{\Theta}_0:=\Theta_0+1=(n+1)+m$ and $\Theta_\infty\mapsto\widehat{\Theta}_\infty:=\Theta_\infty-1=m-(n+1)+1$, which amounts to incrementing $n$ for fixed $m$.

\begin{prop} Suppose now that \eqref{eq:PIII} has a solution $u(x)$ that is rational.  Then either $m\in\mathbb{Z}$ or $n\in\mathbb{Z}$ or both.
\end{prop}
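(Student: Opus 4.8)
The plan is to read the constraint on $(m,n)$ off the residues of a rational solution $u$ at its poles, together with its behaviour as $x\to\infty$. First I would use the \emph{Inversion} symmetry to reduce to the case that $u$ has no pole at $x=0$: if $u$ does have a pole there, then $I[u]=1/u$ is a rational solution of \eqref{eq:PIII}--\eqref{eq:Thetas-m-n} with $n$ replaced by $-n$ and $m$ unchanged, and $1/u$ vanishes — hence has no pole — at $x=0$; since the assertion ``$m\in\mathbb{Z}$ or $n\in\mathbb{Z}$'' is insensitive to the sign of $n$, it suffices to argue for $1/u$ in that case. So assume from now on that $u$ is either finite and nonzero, or vanishes, at $x=0$.

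Next I would analyze $u$ near $x=\infty$. Writing $u=P/Q$ with $P,Q$ coprime polynomials, a dominant-balance argument in \eqref{eq:PIII} rules out $\deg P\neq\deg Q$: if $\deg P>\deg Q$ the term $4u^3$ grows strictly faster than every other term as $x\to\infty$, and if $\deg P<\deg Q$ the term $-4/u$ does. Hence $u$ tends to a finite nonzero limit $c_\infty$, and passing to the limit in \eqref{eq:PIII} (all terms carrying an explicit factor $x^{-1}$ or a derivative of $u$ vanish) gives $4c_\infty^3-4c_\infty^{-1}=0$, i.e.\ $c_\infty^4=1$. Substituting $u(x)=c_\infty+v_1x^{-1}+O(x^{-2})$ into \eqref{eq:PIII} and collecting the coefficient of $x^{-1}$ then yields, after simplifying with $c_\infty^4=1$,
\[
v_1=-\tfrac14\bigl(\Theta_0+(1-\Theta_\infty)c_\infty^2\bigr).
\]

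Then I would use the local structure at the poles. The Painlev\'e property of \eqref{eq:PIII} (or a direct Laurent computation, in which the leading coefficient $a$ of a pole satisfies $a=4a^3$) shows that every pole of $u$ at a point $x_0\neq 0$ is simple with residue $\pm\tfrac12$. Since $u$ is rational with finite value $c_\infty$ at infinity, the residue theorem on $\mathbb{P}^1$ gives $\sum_{x_0\in\mathbb{C}}\operatorname{Res}_{x=x_0}u=-\operatorname{Res}_{x=\infty}u=v_1$, and by the reduction above $x=0$ contributes nothing to the left-hand side, so $v_1=\tfrac12(N_+-N_-)$ with $N_\pm\in\mathbb{Z}_{\ge 0}$ the numbers of poles with residue $\pm\tfrac12$. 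Comparing with the formula for $v_1$ forces $\Theta_0+(1-\Theta_\infty)c_\infty^2\in 2\mathbb{Z}$. It remains only to substitute \eqref{eq:Thetas-m-n}: when $c_\infty^2=1$ this reads $\Theta_0-\Theta_\infty+1=2n\in 2\mathbb{Z}$, so $n\in\mathbb{Z}$, and when $c_\infty^2=-1$ it reads $\Theta_0+\Theta_\infty-1=2m\in 2\mathbb{Z}$, so $m\in\mathbb{Z}$.

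The steps that need care are the dominant-balance claims at $x=\infty$ — in particular that a rational solution can neither blow up nor vanish there, so that $c_\infty$ is a genuine fourth root of unity — and the local analyses showing that a movable pole is simple with residue $\pm\tfrac12$ and that a pole at $x=0$, when present, is simple, which is what makes the passage to $1/u$ legitimate. These are routine. The one substantive ingredient is the explicit formula tying the $x^{-1}$-coefficient $v_1$ to $\Theta_0$ and $\Theta_\infty$, which, matched against the half-integrality of $\sum\operatorname{Res}u$, produces the integrality of $m$ or of $n$ at once.
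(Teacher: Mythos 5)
Your proposal is correct and follows essentially the same route as the paper: dominant balance at $x=\infty$ forcing $u\to c_\infty$ with $c_\infty^4=1$, computation of the $x^{-1}$ coefficient (your $v_1$ agrees with the paper's $b=a^2(\Theta_\infty-1)/4-\Theta_0/4$), the residue count with movable simple poles of residue $\pm\tfrac12$, and the inversion symmetry to dispose of a pole at the origin. The only difference is cosmetic — you invoke the inversion reduction at the outset, whereas the paper applies it at the end.
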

\begin{proof}  
Indeed, assuming $u(x)=ax^p+O(x^{p-1})$ as $x\to\infty$ for $p\in\mathbb{Z}$ and $a\neq 0$, from \eqref{eq:PIII} we obtain a dominant balance only for $p=0$, yielding (from the last two terms on the right-hand side) $a^4=1$.  Continuing the Laurent expansion to the next order by writing $u(x)=a+bx^{-1}+O(x^{-2})$ as $x\to\infty$ with $a^4=1$, the calculation of $b$ only brings in the remaining terms in \eqref{eq:PIII} that are not proportional to derivatives of $u$, and we find $b=a^2(\Theta_\infty-1)/4-\Theta_0/4$.  Therefore, the sum of all finite residues of the assumed rational solution $u(x)$ must equal $b$ as well.  If $x=0$ is a pole of $u(x)$, then a similar dominant balance argument involving the terms $u''(x)$, $u'(x)^2/u(x)$, $u'(x)/x$, $u(x)^2/x$, and $4u(x)^3$ shows that it must be a simple pole of residue $-\Theta_0$.  Finally, if $x_0\neq 0$ is a pole of $u(x)$, then it must be a simple pole and a dominant balance involving $u''(x)$, $u'(x)^2/u(x)$, and $4u(x)^3$ shows that the residue is either $\tfrac{1}{2}$ or $-\tfrac{1}{2}$.  Letting $k\in\mathbb{Z}$ denote the difference between the number of nonzero poles of $u(x)$ with residues $\tfrac{1}{2}$ and $-\tfrac{1}{2}$, we therefore arrive at the identities
\begin{equation}
\frac{1}{2}k\mp \frac{1}{4}(\Theta_\infty-1)+\frac{1}{4}\Theta_0=
\begin{cases}\Theta_0 ,&\quad \text{if $x=0$ is a pole of $u$}\\
0,& \text{if $x=0$ is not a pole of $u$},
\end{cases}
\label{eq:Thetas-integral}
\end{equation}
where $a^2=\pm 1$.  Using \eqref{eq:Thetas-m-n} then shows that, if $x=0$ is not a pole of $u$, then $a^2=1$ implies $n=k\in\mathbb{Z}$, while $a^2=-1$ implies $m=-k\in\mathbb{Z}$.  On the other hand, if $x=0$ is a pole of $u$, then by inversion symmetry $I[u](x)=1/u(x)$ is a rational solution of \eqref{eq:PIII} analytic at the origin and corresponding to the modified parameters $I:\Theta_0\mapsto m-n$ and $I:\Theta_\infty\mapsto m+n+1$.  Applying \eqref{eq:Thetas-integral} to $I[u]$ with parameters replaced by their modified values then yields the same conclusion as in the case that $u$ is analytic at the origin, namely that $n=k\in\mathbb{Z}$ if $a^2=1$
and $m=-k\in\mathbb{Z}$ if $a^2=-1$.
\end{proof}

This argument shows that each rational solution of \eqref{eq:PIII} tends to one of four nonzero limits, $\pm 1$ or $\pm\ii$, as $x\to\infty$ and hence cannot be an odd function of $x$. Furthermore, it follows from odd reflection symmetry $R^2:u(x)\mapsto -u(-x)$ that for given parameters \eqref{eq:Thetas-m-n} with $m\in\mathbb{Z}$ or $n\in\mathbb{Z}$, the rational solutions come in distinct pairs permuted by odd reflection.\bigskip  

It turns out that if $m\in\mathbb{Z}$ or $n\in\mathbb{Z}$ there indeed exists a rational solution of \eqref{eq:PIII}--\eqref{eq:Thetas-m-n}.  If only one of $m$ and $n$ is integral, then there are exactly two rational solutions, while if both are integral there are exactly four rational solutions.  The existence and precise number of the rational solutions can be established by iterated B\"acklund transformations once the cases of $m=0$ or $n=0$ are analyzed.

Suppose\footnote{Taking $n=0$ in \eqref{eq:PIII}--\eqref{eq:Thetas-m-n} yields the so-called \emph{sine-Gordon reduction}: writing $u(x)=\ee^{-\ii\varphi(x)}$ and setting $n=0$ in \eqref{eq:PIII}--\eqref{eq:Thetas-m-n} gives
\[
\frac{\dd^2\varphi}{\dd x^2}+\frac{1}{x}\frac{\dd\varphi}{\dd x}=\frac{8m}{x}\sin(\varphi) + 8\sin(2\varphi).
\]
}
%
$n=0$ and $m\not\in\mathbb{Z}$.  Then it is obvious that \eqref{eq:PIII}--\eqref{eq:Thetas-m-n} has at least the two distinct rational (equilibrium) solutions $u(x)=\pm 1$.  It is easy to see that there are no other rational solutions in this case. 
Indeed, if we consider the rational solutions that tend to $\pm 1$ as $x\to\infty$ and take $n=0$ in \eqref{eq:PIII}--\eqref{eq:Thetas-m-n}, a simple dominant balance argument shows that these solutions satisfy $u(x)=\pm 1+O(x^{-p})$ as $x\to\infty$ for every positive integer $p$ and hence as $u(x)$ is rational the error terms vanish identically so the exact solutions $u(x)=\pm 1$ are the only ones recovered.  On the other hand, if we consider the rational solutions that tend to $\pm \ii$ as $x\to\infty$ and take $n=0$ in \eqref{eq:Thetas-integral} we find that for some $k\in\mathbb{Z}$ we have
$m=k$ if $x=0$ is a pole of $u$ and $m=-k$ otherwise, both of which contradict the assumption that $m\not\in\mathbb{Z}$.
Similarly if $m=0$ and $n\not\in\mathbb{Z}$, then \eqref{eq:PIII}--\eqref{eq:Thetas-m-n} has  the pair $u(x)=\pm\ii$ as its only rational solutions (this also follows directly using the rotation symmetry generator $R$).  Finally if $m=n=0$ there are precisely four rational solutions:  $u(x)=\pm 1$ and $u(x)=\pm\ii$.  In Section~\ref{sec:Backlund} we use these facts to determine the precise number of rational solutions of \eqref{eq:PIII} for non-integral $m$.\bigskip  

The rational solutions of \eqref{eq:PIII} have been known at least since the paper of Gromak \cite{Gromak73}.  The paper \cite{MilneCB97} is an exhaustive survey of special solutions of the Painlev\'e-III equation that describes the effect of iterating transformations such as \eqref{eq:Backlund-n}, including cataloguing the exact numbers of poles and zeros of the iterates.  This paper also includes complete references on applications of the Painlev\'e-III equation accurate to the date of publication.  Since rational functions are naturally presented as ratios of polynomials, it is compelling to ask whether the polynomials themselves have a simple recurrence formula like \eqref{eq:Backlund-n}.  Such a result was first found for the Painlev\'e-II equation by Yablonskii \cite{Yablonskii59} and Vorob'ev \cite{Vorobev65}, and since then many algebraic representations of these polynomials have been discovered.  For the Painlev\'e-III equation, a representation of rational solutions in terms of special polynomials was first obtained by Umemura \cite[Section 9]{Umemura99}. Clarkson further developed Umemura's scheme; in \cite{Clarkson03} a sequence of functions is defined by setting 
\begin{equation}
s_{-1}(x;m)\equiv s_0(x;m)\equiv 1
\label{eq:s-IC}
\end{equation}
and then using the recurrence relation
\begin{equation}
s_{n+1}(x;m):=\frac{\left(4 x+2m+1\right) s_{n}(x;m)^2- s_{n}(x;m)
   s_{n}'(x;m)- x \left( s_{n}(x;m) s_{n}''(x;m)-
   s_{n}'(x;m)^2\right)}{2s_{n-1}(x;m)},\quad n\in\mathbb{Z}_{\geq 0}.
   \label{eq:s-recurrence}
\end{equation}
It turns out that the denominator is always a factor of the numerator, so the functions $\{s_n(x;m)\}_{n=0}^\infty$ are all \emph{polynomials} in $x$.  Note that comparing with the notation of \cite{Clarkson03,ClarksonLL16}, we have $\mu=m+\tfrac{1}{2}$, $z=2x$, $\beta=2(1-\Theta_\infty)$, and $\alpha=2\Theta_0$.  The result of the scheme is the following.
\begin{prop}[Umemura \cite{Umemura99}, Clarkson \cite{Clarkson03}, Clarkson, Law, and Lin \cite{ClarksonLL16}]
The result of applying the B\"acklund transformation \eqref{eq:Backlund-n} $n$ times to the seed solution $u(x)\equiv 1$ is the function 
\begin{equation}
u(x)=u_n(x;m):=\frac{s_n(x;m-1)s_{n-1}(x;m)}{s_n(x;m)s_{n-1}(x;m-1)},\quad n\in\mathbb{Z}_{\geq 0},
\label{eq:un-fraction}
\end{equation}
defined in terms of polynomials $\{s_n(x;m)\}_{n=0}^\infty$ determined by \eqref{eq:s-IC}--\eqref{eq:s-recurrence}.
Furthermore, $u_n(x;m)$ is the unique rational solution of \eqref{eq:PIII} for parameters \eqref{eq:Thetas-m-n} for which $u_n(x;m)\to 1$ as $x\to\infty$.
\label{prop:polynomial-recurrence}
\end{prop}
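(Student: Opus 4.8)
The plan is to establish both assertions by induction on $n\in\mathbb{Z}_{\geq 0}$, using two elementary facts. First, with $u\equiv 1$ the right-hand side of \eqref{eq:PIII} collapses to $4(\Theta_0+1-\Theta_\infty)/x=8n/x$ by \eqref{eq:Thetas-m-n}, so $u\equiv 1$ solves \eqref{eq:PIII} precisely when $n=0$; second, the B\"acklund transformation \eqref{eq:Backlund-n} maps the parameter pair $(n,m)$ to $(n+1,m)$. The base case $n=0$ of the representation \eqref{eq:un-fraction} is immediate from the normalization \eqref{eq:s-IC}: $u_0(x;m)=s_0(x;m-1)s_{-1}(x;m)/(s_0(x;m)s_{-1}(x;m-1))=1$, which is the seed solution.

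For the inductive step, assume \eqref{eq:un-fraction} holds for a given $n\geq 0$. Substitute this expression into \eqref{eq:Backlund-n} with $\Theta_0=n+m$ and $1-\Theta_\infty=n-m$, expanding the logarithmic derivative as
\[
\frac{u_n'(x;m)}{u_n(x;m)}=\frac{s_n'(x;m-1)}{s_n(x;m-1)}+\frac{s_{n-1}'(x;m)}{s_{n-1}(x;m)}-\frac{s_n'(x;m)}{s_n(x;m)}-\frac{s_{n-1}'(x;m-1)}{s_{n-1}(x;m-1)}.
\]
After clearing the common denominator $s_n(x;m)s_{n-1}(x;m-1)$, both the numerator and the denominator of $\widehat{u_n}$ become expressions bilinear in the four polynomials $s_n(x;m)$, $s_n(x;m-1)$, $s_{n-1}(x;m)$, $s_{n-1}(x;m-1)$ and their first $x$-derivatives. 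The claim is that the resulting ratio simplifies to $s_{n+1}(x;m-1)s_n(x;m)/(s_{n+1}(x;m)s_n(x;m-1))=u_{n+1}(x;m)$. Two algebraic inputs effect this simplification: (i) the recurrence \eqref{eq:s-recurrence}, applied at parameter values $m$ and $m-1$, which is what introduces $s_{n+1}(x;m)$ and $s_{n+1}(x;m-1)$; and (ii) an auxiliary bilinear (Hirota-type) identity relating $s_n(x;m)$, $s_n(x;m-1)$, $s_{n-1}(x;m)$, $s_{n-1}(x;m-1)$ and their first derivatives, which couples the $n$-shift with the $m$-shift and reflects the underlying discrete Toda lattice structure of the Umemura polynomials. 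Identity (ii) may itself be proved by an induction on $n$ that uses only \eqref{eq:s-recurrence}, or it may be extracted from the bilinear relations recorded by Clarkson \cite{Clarkson03} and by Clarkson, Law, and Lin \cite{ClarksonLL16}. Granting (i) and (ii), the substitution is a finite rational computation; in carrying it out one also verifies that the newly produced denominators $s_{n+1}(x;m)$ and $s_{n+1}(x;m-1)$ are not identically zero, so that every B\"acklund iterate of $u\equiv 1$ is a genuine rational solution of \eqref{eq:PIII}. This completes the induction and proves the first assertion.

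For uniqueness, let $v(x)$ be any rational solution of \eqref{eq:PIII}--\eqref{eq:Thetas-m-n} with $v(x)\to 1$ as $x\to\infty$. If $n=0$, the dominant-balance argument already given in the text forces $v(x)=1+O(x^{-p})$ for every $p\in\mathbb{Z}_{\geq 1}$, hence $v\equiv 1=u_0(x;m)$; this is the base case. For $n\geq 1$, apply the transformation inverse to \eqref{eq:Backlund-n} (the one decrementing $n$, which can be realized as $I\circ\widehat{(\cdot)}\circ I$ with $I$ the inversion symmetry). Standard properties of this transformation --- that it is well defined on rational solutions, preserves rationality, and (by tracking the leading Laurent coefficient through the formula) maps solutions tending to $1$ at infinity to solutions tending to $1$ at infinity --- show that it carries $v$ to a rational solution of \eqref{eq:PIII} with parameters $(n-1,m)$ tending to $1$. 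By the inductive hypothesis this image equals $u_{n-1}(x;m)$, and applying \eqref{eq:Backlund-n} once more returns $v=u_n(x;m)$.

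The main obstacle is the inductive step for the representation \eqref{eq:un-fraction}: isolating precisely the mixed bilinear identity (ii) and verifying that, together with \eqref{eq:s-recurrence}, it collapses the post-B\"acklund ratio into the claimed product form; this is where essentially all of the bookkeeping lives, and it is also the step most economically discharged by quoting the explicit polynomial identities in \cite{Clarkson03,ClarksonLL16}. A secondary point requiring care throughout is the non-degeneracy of the B\"acklund orbit --- that the numerators and denominators appearing in \eqref{eq:Backlund-n} and in its inverse never vanish identically --- which is what guarantees both that the iterates of $u\equiv 1$ are bona fide rational solutions and that the downward induction in the uniqueness argument does not stall.
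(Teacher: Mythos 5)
Your proposal is broadly consistent with how the paper treats this statement, but you should be aware that the paper does not actually prove the representation \eqref{eq:un-fraction}: it is quoted as a known result of Umemura, Clarkson, and Clarkson--Law--Lin, which is exactly where your inductive skeleton also ends up. As a self-contained argument, your first half therefore has a genuine gap: the ``mixed bilinear identity (ii)'' is the entire content of the inductive step, and you neither state it precisely nor verify it, so nothing beyond the base case $u_0\equiv 1$ is actually established. If you intend to cite \cite{Clarkson03,ClarksonLL16} for that identity, you are matching the paper's level of rigor; if you intend a proof, the identity must be written down and checked (and one must also verify that the denominator appearing in \eqref{eq:Backlund-n} is not identically zero at each stage, which for solutions tending to $1$ follows from the observation that it behaves like $4x$ as $x\to\infty$). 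Your uniqueness argument, by contrast, is essentially the paper's own: Section~\ref{sec:Backlund} inverts the B\"acklund chain down to $n=0$ (realizing the decrement via the inversion symmetry, just as you do) and appeals to the dominant-balance classification at $n=0$, where any rational solution tending to $1$ satisfies $u=1+O(x^{-p})$ for all $p$ and hence equals $1$ identically. One small improvement in your version is that this downward induction works uniformly in $m$, whereas the paper's Section~\ref{sec:Backlund} discussion is phrased for $m\notin\mathbb{Z}$ and relies on the count of rational solutions from the introduction to cover integral $m$; your route avoids that case split, provided you supply the non-degeneracy check for the inverse transformation noted above.
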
  
The family of rational solutions $u_n(x;m)$ can be extended to negative integral values of $n$ through the inversion symmetry $I$:
\begin{equation}\label{negn}
u_{-n}(x;m):=Iu_n(x;m)=\frac{1}{u_n(x;m)},\quad n\in\mathbb{Z}_{\geq 0}.
\end{equation}
It obviously holds that $u_{-n}(x;m)\to 1$ as $x\to\infty$, so the family captures every rational solution of the Painlev\'e-III equation \eqref{eq:PIII} that tends to $1$ as $x\to\infty$.  It is clearly sufficient to study the family for integers $n\ge 0$.  Without loss of generality we may also restrict attention to values of $m$ in the closed right half-plane:  $\mathrm{Re}(m)\ge 0$; indeed, composing inversion $I$ with two rotations,
\begin{equation}\label{negm}
u_n(x;-m)=R\circ I\circ R u_n(x;m)=\frac{1}{u_n(-x;m)}.
\end{equation}
Moreover, unless $m\in\mathbb{Z}$, studying
the family $\{u_n(x;m)\}$ of rational solutions tending to $1$ as $x\to\infty$ captures \emph{all} rational solutions of \eqref{eq:PIII} because $R^2u_n(x;m)=-u_n(-x;m)$ is the rational solution of exactly the same Painlev\'e-III equation \eqref{eq:PIII} tending to $-1$ as $x\to\infty$.  If both $n$ and $m$ are integers, we may access the rotation symmetry generator $R$ to finally exhaust all rational solutions of \eqref{eq:PIII}.
\begin{rem}
It has been proven by Clarkson, Law, and Lin \cite[Theorem 4.6]{ClarksonLL16} that if $m+\tfrac{1}{2}\in\mathbb{Z}$, then
for $n>|m+\tfrac{1}{2}|$, $s_n$ has $\tfrac{1}{2}n(n+1)$ roots, $s_n$ vanishes to order $\tfrac{1}{2}(n-|m+\tfrac{1}{2}|)(n-|m+\tfrac{1}{2}|+1)$ at the origin, and all remaining roots are simple and nonzero.  This shows that when $m$ is a half-integer and $n$ is large, $s_n$ has a root of order $O(n^2)$ at the origin and merely $O(n)$ simple nonzero roots.  This result implies that when $m=\tfrac{1}{2},\tfrac{3}{2}, \tfrac{5}{2},\dots$, $u_n(x;m)$ has a simple zero at the origin, while when $m=-\tfrac{1}{2},-\tfrac{3}{2},-\tfrac{5}{2},\dots$, $u_n(x;m)$ has a simple pole at the origin.
\end{rem}

\subsection{Riemann-Hilbert problem formulation and main result}
\label{sec:main}
The purpose of this paper is to take the first steps toward understanding the family $\{u_n(x;m)\}_{n=0}^\infty$ of rational solutions of the Painlev\'e-III equation \eqref{eq:PIII} from the perspective of mathematical analysis, a goal which essentially begs the question of how $u_n(x;m)$ behaves when $n$ is large and how the result depends on $(x,m)\in\mathbb{C}^2$.  In Section~\ref{sec:numerics-and-scalings} we present the results of several plots of poles and zeros of $u_n(x;m)$ set in the context of a formal scaling analysis of the Painlev\'e-III equation in the limit of large (integral) $n$.  These results suggest numerous remarkable phenomena that can occur in this limit, but whose proofs would require other methods.  The issue at hand is that the methods described above for constructing the rational function $u_n(x;m)$ all involve some sort of iteration, producing formul\ae\ that generally become more complicated as $n$ increases.  The recurrence \eqref{eq:s-recurrence} is preferable to iteration of the B\"acklund transformation \eqref{eq:Backlund-n} in the sense that it takes advantage of explicit factorization of the numerator and denominator polynomials in the rational function $u_n(x;m)$, but it is a recurrence nonetheless.  Kajiwara and Masuda \cite{KajiwaraM99} found a way to express (essentially) the polynomial $s_n(x;m)$ in closed form via Wronskian determinants of polynomials obtained from an elementary generating function.  However, unlike certain determinantal representations of Hankel type appearing in the theory of the rational solutions of the Painlev\'e-II \cite{BertolaB15} and (for the ``generalized Hermite'' rational solutions) Painlev\'e-IV \cite{Buckingham17} equations, the determinants of Kajiwara and Masuda do not appear to be amenable to asymptotic analysis in the limit of large $n$ (in which the size of the determinant grows without bound).   The lack of an analytically tractable formula for $u_n(x;m)$ is the main problem that we address and solve in this paper.  After a review of the isomonodromy theory of the Painlev\'e-III equation in Section~\ref{sec:isomonodromy-review}, in Sections~\ref{sec:direct-monodromy} and \ref{sec:Schlesinger} we construct a Riemann-Hilbert representation of the function $u_n(x;m)$ that can be used \cite{BothnerM18} to successfully analyze the rational solution for large $n$.  To formulate this problem here in the introduction, given a nonzero $x\in\mathbb{C}$ with $-\pi<\mathrm{Arg}(x)<\pi$, let $L=\LInftyRed\cup\LZeroRed\cup\LInftyBlue\cup\LZeroBlue$ be a contour in the complex $\lambda$-plane consisting of four arcs with the following properties.  There is an intersection point $\lNaught$ such that:
\begin{itemize}
\item $\LInftyRed$ originates from $\lambda=\infty$ in such a direction that $\ii x\lambda$ is negative real and terminates at $\lambda=\lNaught$, 
$\LZeroRed$ begins at $\lambda=\lNaught$ and terminates at $\lambda=0$ in a direction such that $-\ii x\lambda^{-1}$ is negative real, and the net increment of the argument of $\lambda$ along $\LInftyRed\cup\LZeroRed$ is
\begin{equation}
\Delta\arg(\squareurblack)=2\mathrm{Arg}(x)-2\pi\mathrm{sgn}(\mathrm{Im}(x)).
\label{eq:increment-argument-red}
\end{equation}
\item $\LInftyBlue$ originates from $\lambda=\infty$ in such a direction that $-\ii x\lambda$ is negative real and terminates at $\lambda=\lNaught$,
$\LZeroBlue$ begins at $\lambda=\lNaught$ and terminates at $\lambda=0$ in a direction such that $\ii x\lambda^{-1}$ is negative real, and the net increment of the argument of $\lambda$ along $\LInftyBlue\cup\LZeroBlue$ is
\begin{equation}
\Delta\arg(\squarellblack)=2\mathrm{Arg}(x).
\label{eq:increment-argument-blue}
\end{equation}
\item The arcs $\LInftyRed$, $\LZeroRed$, $\LInftyBlue$, and $\LZeroBlue$ do not otherwise intersect.
\end{itemize}
See Figure~\ref{fig:jump-contour} below for an illustration.  Consider now the following problem.
\begin{rhp}  Given parameters $m\in\mathbb{C}$ and $n\in\mathbb{Z}$ as well as $x\in\mathbb{C}\setminus\{0\}$ with $-\pi<\mathrm{Arg}(x)<\pi$, let $L$ denote an $x$-dependent contour as above, and seek a $2\times 2$ matrix function $\mathbf{Y}(\lambda)=\mathbf{Y}^{(n)}(\lambda;x,m)$ with the following properties:  
\begin{itemize}
\item[(1)]\textbf{Analyticity:}  $\mathbf{Y}(\lambda)$ is analytic in $\lambda$ in the domain $\lambda\in\mathbb{C}\setminus L$.  It takes continuous boundary values on $L\setminus\{0\}$ from each maximal domain of analyticity.  
\item[(2)]\textbf{Jump conditions:}  The boundary values $\mathbf{Y}_\pm(\lambda)$ are related on each arc of $L$ by the following formul\ae:
\begin{equation}
\mathbf{Y}_+(\lambda)
=\mathbf{Y}_-(\lambda)\begin{bmatrix}
1 & \displaystyle -\frac{\sqrt{2\pi}\OurPower{\lambda}{-(m+1)}}{\Gamma(\tfrac{1}{2}-m)}\lambda^n\ee^{\ii x(\lambda-\lambda^{-1})}\\
0 & 1
\end{bmatrix},\quad \lambda\in \LZeroRed
\label{eq:Yjump-1}
\end{equation}
\begin{equation}
\mathbf{Y}_+(\lambda)
=\mathbf{Y}_-(\lambda)\begin{bmatrix}
1 & \displaystyle \frac{\sqrt{2\pi}\OurPower{\lambda}{-(m+1)}}{\Gamma(\tfrac{1}{2}-m)}\lambda^n\ee^{\ii x(\lambda-\lambda^{-1})}\\
0 & 1
\end{bmatrix},\quad \lambda\in \LInftyRed
\label{eq:Yjump-2}
\end{equation}
\begin{equation}
\mathbf{Y}_+(\lambda)
=\mathbf{Y}_-(\lambda)\begin{bmatrix}
1 & 0\\
\displaystyle \frac{\sqrt{2\pi}(\OurPower{\lambda}{(m+1)/2})_+(\OurPower{\lambda}{(m+1)/2})_-}{\Gamma(\tfrac{1}{2}+m)}\lambda^{-n}\ee^{-\ii x(\lambda-\lambda^{-1})} & 1
\end{bmatrix},\quad \lambda\in \LInftyBlue
\label{eq:Yjump-3}
\end{equation}
\begin{equation}
\mathbf{Y}_+(\lambda)
=\mathbf{Y}_-(\lambda)\begin{bmatrix}
-\ee^{2\pi\ii m} & 0\\
\displaystyle \frac{\sqrt{2\pi}(\OurPower{\lambda}{(m+1)/2})_+(\OurPower{\lambda}{(m+1)/2})_-}{\Gamma(\tfrac{1}{2}+m)}\lambda^{-n}\ee^{-\ii x(\lambda-\lambda^{-1})} & -\ee^{-2\pi \ii m}\end{bmatrix}, \quad \lambda\in \LZeroBlue.
\label{eq:Yjump-4}
\end{equation}
\item[(3)]\textbf{Asymptotics:}  $\mathbf{Y}(\lambda)\to\mathbb{I}$ as $\lambda\to\infty$.  Also,
the matrix function $\mathbf{Y}(\lambda)\OurPower{\lambda}{-(\Theta_0+\Theta_\infty)\sigma_3/2}=\mathbf{Y}(\lambda)\OurPower{\lambda}{-(m+\tfrac{1}{2})\sigma_3}$ has a well-defined limit as $\lambda\to 0$ (the same limit from each side of $L$).
\end{itemize}
\label{rhp:renormalized}
\end{rhp}
Here, $\OurPower{\lambda}{p}$ is notation for a certain well-defined (see Section \ref{sec:normalized-solutions} below) branch of the power function with its branch cut on the contour $\LZeroBlue\cup\LInftyBlue$, $\sigma_3:=\mathrm{diag}[1,-1]$ denotes a standard Pauli spin matrix, and subscripts $+$/$-$ refer to boundary values taken on the indicated contour from the left/right.
We introduce the expansions 
\begin{equation}
\mathbf{Y}(\lambda)=\mathbb{I}+\mathbf{Y}_1^\infty(x)\lambda^{-1}+O(\lambda^{-2}),\quad\lambda\to\infty;\ \ \ \ \ \mathbf{Y}_1^{\infty}(x)=\big[Y_{1,jk}^{\infty}(x)\big]_{j,k=1}^2
\label{eq:Y-expand-infty}
\end{equation}
and 
\begin{equation}
\mathbf{Y}(\lambda)\OurPower{\lambda}{-(m+\tfrac{1}{2})\sigma_3}=\mathbf{Y}^0_0(x)+O(\lambda),\quad\lambda\to 0;\ \ \ \ \ \mathbf{Y}_0^{0}(x)=\big[Y_{0,jk}^{0}(x)\big]_{j,k=1}^2.
\label{eq:Y-expand-zero}
\end{equation}
Note that the matrix coefficients $\mathbf{Y}_1^\infty(x)$ and $\mathbf{Y}_0^0(x)$ depend parametrically on both $n$ and $m$, as well as $x$.  Then we have the following result.
\begin{theorem}
\label{thm:RH-representation}
The rational solution $u_n(x;m)$ of the Painlev\'e-III equation \eqref{eq:PIII} with parameters $m$ and $n\in\mathbb{Z}$ defined in Proposition~\ref{prop:polynomial-recurrence} and extended to negative integral $n$ by inversion $I$ is given equivalently in terms of the solution $\mathbf{Y}^{(n)}(\lambda;x,m)$ of Riemann-Hilbert Problem~\ref{rhp:renormalized} by
\begin{equation}
u_n(x;m)=\frac{-\ii Y^\infty_{1,12}(x)}{Y^0_{0,11}(x)Y^0_{0,12}(x)}
\label{eq:u-n-from-Y-formula}
\end{equation}
where we have suppressed the parametric dependence on $n\in\mathbb{Z}$ and $m\in\mathbb{C}$ on the right-hand side.
\end{theorem}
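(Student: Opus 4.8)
The plan is to obtain Riemann--Hilbert Problem~\ref{rhp:renormalized} and the formula \eqref{eq:u-n-from-Y-formula} from the isomonodromy theory of \eqref{eq:PIII}, that is, to pass from a solution of the associated linear (Lax) system to the renormalized matrix $\mathbf{Y}$ while keeping track, through that passage, of how the Painlev\'e-III solution is stored in the local expansion coefficients. First I would recall the isomonodromic Lax pair for \eqref{eq:PIII}--\eqref{eq:Thetas-m-n}: a $2\times 2$ matrix function $\mathbf{\Psi}(\lambda)=\mathbf{\Psi}(\lambda;x)$ solving a linear ODE in $\lambda$ with rank-one irregular singular points at $\lambda=\infty$ and $\lambda=0$ (whence the essential factors $\ee^{\pm\ii x\lambda}$ and $\ee^{\mp\ii x\lambda^{-1}}$) and with formal monodromy exponents at $0$ and $\infty$ fixed by $\Theta_0$ and $\Theta_\infty$. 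The compatibility of its $\lambda$- and $x$-equations is exactly \eqref{eq:PIII}, and $u(x)$ is recovered from the subleading coefficients of $\mathbf{\Psi}$ at $\lambda=\infty$ and $\lambda=0$ by an explicit rational expression; this is the content of the review in Section~\ref{sec:isomonodromy-review}.

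Next I would pin down the monodromy data of the specific rational solution $u_n(x;m)$. For the seed $u\equiv 1$ (the case $n=0$) the Stokes matrices and connection matrix are elementary and known in closed form (Section~\ref{sec:direct-monodromy}), and the discrete B\"acklund transformation \eqref{eq:Backlund-n} lifts to a Schlesinger (rational gauge) transformation of $\mathbf{\Psi}$; iterating it $n$ times produces $\mathbf{\Psi}^{(n)}(\lambda;x,m)$ whose jumps across the Stokes rays are the seed data conjugated by the accumulated diagonal weights $\lambda^{\pm n}$ (Section~\ref{sec:Schlesinger}). The constants $\Gamma(\tfrac12\pm m)$ enter here through the explicit local parametrices (confluent hypergeometric / Bessel type) used to normalize $\mathbf{\Psi}^{(n)}$ at the irregular points.

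I would then set $\mathbf{Y}(\lambda):=\mathbf{C}\,\mathbf{\Psi}^{(n)}(\lambda)\,\mathbf{D}(\lambda)^{-1}$, where $\mathbf{D}(\lambda)$ is the explicit diagonal matrix carrying the exponential $\ee^{\pm\frac12\ii x(\lambda-\lambda^{-1})}$, the power-function factors $\OurPower{\lambda}{p}$ with cut on $\LZeroBlue\cup\LInftyBlue$, and the $\lambda^{\pm n/2}$ Schlesinger weights, and $\mathbf{C}$ is a fixed matrix chosen so that $\mathbf{Y}(\lambda)\to\mathbb{I}$ as $\lambda\to\infty$. A direct computation converts the jump relations for $\mathbf{\Psi}^{(n)}$ into \eqref{eq:Yjump-1}--\eqref{eq:Yjump-4}: the off-diagonal entries acquire the advertised $\lambda^{\pm n}\ee^{\pm\ii x(\lambda-\lambda^{-1})}$ together with the $\sqrt{2\pi}/\Gamma(\tfrac12\pm m)$ normalizations and the weights $\OurPower{\lambda}{-(m+1)}$, resp.\ $(\OurPower{\lambda}{(m+1)/2})_+(\OurPower{\lambda}{(m+1)/2})_-$; the constant diagonal jump $\mathrm{diag}(-\ee^{2\pi\ii m},-\ee^{-2\pi\ii m})=\ee^{2\pi\ii(m+\tfrac12)\sigma_3}$ on $\LZeroBlue$ is forced by the formal monodromy exponent $-(m+\tfrac12)$ at $\lambda=0$ interacting with the branch cut of $\mathbf{D}$; and the asymptotic normalizations in item~(3) follow from the leading behavior of $\mathbf{\Psi}^{(n)}$ at $0$ and $\infty$. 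For well-definedness of ``the solution'' I would note that all four jump matrices have unit determinant, so $\det\mathbf{Y}$ is entire and tends to $1$, hence $\equiv 1$; and if $\widetilde{\mathbf{Y}}$ is any other solution then $\mathbf{Y}\widetilde{\mathbf{Y}}^{-1}$ continues to an entire function (the jumps cancel, and the possible singularity at $\lambda=0$ is removable by item~(3)) tending to $\mathbb{I}$, whence $\mathbf{Y}\widetilde{\mathbf{Y}}^{-1}\equiv\mathbb{I}$.

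Finally, substituting $\mathbf{\Psi}^{(n)}=\mathbf{C}^{-1}\mathbf{Y}\,\mathbf{D}$ into the isomonodromy formula for $u(x)$ and expanding $\mathbf{D}(\lambda)$ at $\lambda=\infty$ and $\lambda=0$ expresses $u_n(x;m)$ through the entries of $\mathbf{Y}_1^\infty(x)$ and $\mathbf{Y}_0^0(x)$ of \eqref{eq:Y-expand-infty}--\eqref{eq:Y-expand-zero}; after collecting constants this is precisely \eqref{eq:u-n-from-Y-formula}. Rationality of the $u_n$ so obtained, and the normalization $u_n\to 1$ as $x\to\infty$, are inherited from the construction, so by the uniqueness statement in Proposition~\ref{prop:polynomial-recurrence} it equals the $u_n(x;m)$ defined there; alternatively one can verify directly from the RHP that $\partial_\lambda\mathbf{Y}\cdot\mathbf{Y}^{-1}$ and $\partial_x\mathbf{Y}\cdot\mathbf{Y}^{-1}$ are rational in $\lambda$ with the correct pole structure and that their compatibility reproduces \eqref{eq:PIII}. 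The main obstacle is the third step: performing the conjugation $\mathbf{\Psi}^{(n)}\mapsto\mathbf{Y}$ while controlling exactly the branches of $\OurPower{\lambda}{p}$ on each of the four arcs, the accumulated $\lambda^{\pm n}$ Schlesinger weights, and the precise constants $\sqrt{2\pi}/\Gamma(\tfrac12\pm m)$, so that all four jump matrices emerge exactly as written, and — relatedly — retaining enough of this bookkeeping to pass cleanly from the $\mathbf{\Psi}$-formula for $u$ to the compact ratio \eqref{eq:u-n-from-Y-formula}.
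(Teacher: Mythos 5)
Your proposal follows essentially the same route as the paper: construct the Jimbo--Miwa Lax pair, solve the direct monodromy problem explicitly for the seed $u_0(x;m)\equiv 1$ via confluent hypergeometric (Whittaker) functions, iterate Schlesinger transformations to reach $\mathbf{\Psi}^{(n)}$ with the $\lambda^{\pm n}$-dressed jump data, renormalize by right-multiplication with $\OurPower{\lambda}{\Theta_\infty\sigma_3/2}\ee^{-\ii x(\lambda-\lambda^{-1})\sigma_3/2}$ to obtain $\mathbf{Y}^{(n)}$, and read off $u_n$ from the relations $\mathbf{\Psi}_1^\infty=\mathbf{Y}_1^\infty-\tfrac{\ii x}{2}\sigma_3$, $\mathbf{\Psi}_0^0=\mathbf{Y}_0^0$ together with $u=-y/s$ and \eqref{e:5}. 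The only notable divergence is in the final identification with Proposition~\ref{prop:polynomial-recurrence}: the paper shows directly (Proposition~\ref{p:u-sub-n}) that the Schlesinger transformation induces Gromak's B\"acklund transformation \eqref{eq:Backlund-n} on the potentials, whereas your primary suggestion appeals to uniqueness of the rational solution tending to $1$ — which would still require you to prove rationality and the limit $u\to 1$ for the RHP-derived function (the paper handles rationality by an induction on the expansion coefficients in Lemma~\ref{l:exist}), so your second alternative is closer to what is actually done.
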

 

The proof of this theorem will be completed at the end of Section~\ref{sec:Schlesinger}.  Finally, in Section~\ref{sec:m-half-integer} we study how the Riemann-Hilbert representation degenerates when $m\in\mathbb{Z}+\tfrac{1}{2}$.


\section{Numerical Observations and Formal Scaling Theory}
\label{sec:numerics-and-scalings}
%
\subsection{Scaling analysis}
Eliminating $\Theta_0$ and $\Theta_\infty$ in favor of $m$ and $n$ by \eqref{eq:Thetas-m-n}, the Painlev\'e-III equation \eqref{eq:PIII} becomes
\begin{equation}
\frac{\dd^2u}{\dd x^2}=\frac{1}{u}\left(\frac{\dd u}{\dd x}\right)^2-\frac{1}{x}\frac{\dd u}{\dd x} + 
\frac{4(n+m) u^2 + 4(n-m)}{x} + 4u^3-\frac{4}{u}.
\label{eq:PIII-again}
\end{equation}
Considering $m$ fixed and $n$ large, we introduce a new independent variable by the scaling $x=ny$, and then to further zoom in on the neighborhood of a particular point $y_0$ we set $y=y_0+w/n$.  A simple calculation then shows that if we set $\lNaught(w):=-\ii u(x)=-\ii u(ny_0+w)$, 
\eqref{eq:PIII-again} becomes
\begin{equation*}
\frac{\dd^2\lNaught}{\dd w^2}=\frac{1}{\lNaught}\left(\frac{\dd \lNaught}{\dd w}\right)^2+\frac{4\ii}{y_0}(\lNaught^2-1)-4\lNaught^3+\frac{4}{\lNaught} + O(n^{-1})
\end{equation*}
where the final term combines several others all of which are proportional to $n^{-1}$.  Neglecting this formally small term and replacing $\lNaught$ with the symbol $\dot{\lNaught}$ indicating a formal approximation yields an autonomous nonlinear equation parametrized by $y_0\in\mathbb{C}\setminus\{0\}$:
\begin{equation}
\frac{\dd^2\dot{\lNaught}}{\dd w^2}=\frac{1}{\dot{\lNaught}}\left(\frac{\dd \dot{\lNaught}}{\dd w}\right)^2+\frac{4\ii}{y_0}(\dot{\lNaught}^2-1)-4\dot{\lNaught}^3+\frac{4}{\dot{\lNaught}}.
\label{eq:Boutroux-model}
\end{equation}
This model equation admits a first integral:  multiply \eqref{eq:Boutroux-model} through by $\dot{\lNaught}'/\dot{\lNaught}^2$ ($\prime = \dd/\dd w$) and rearrange to obtain
\begin{equation*}
\frac{\dot{\lNaught}'\dot{\lNaught}''}{\dot{\lNaught}^2}-\frac{(\dot{\lNaught}')^3}{\dot{\lNaught}^3}=4\left[\frac{\ii}{y_0}(1-\dot{\lNaught}^{-2})-\dot{\lNaught}+\dot{\lNaught}^{-3}\right]\dot{\lNaught}'
\end{equation*}
which is easily integrated to yield 
\begin{equation*}
\frac{(\dot{\lNaught}')^2}{2\dot{\lNaught}^2}=4\left[\frac{\ii}{y_0}(\dot{\lNaught}+\dot{\lNaught}^{-1})-\frac{1}{2}\dot{\lNaught}^2-\frac{1}{2}\dot{\lNaught}^{-2}\right]+\frac{8C}{y_0^2},
\end{equation*}
where $C$ is a constant of integration.  Therefore,
\begin{equation}
\left(\frac{\dd\dot{\lNaught}}{\dd w}\right)^2 = \frac{16}{y_0^2}P(\dot{\lNaught};y_0,C),\quad P(\dot{\lNaught};y_0,C):=-\frac{y_0^2}{4}\dot{\lNaught}^4+\frac{\ii y_0}{2}\dot{\lNaught}^3 + C\dot{\lNaught}^2 + \frac{\ii y_0}{2}\dot{\lNaught}-\frac{y_0^2}{4}.
\label{eq:dotV-ODE}
\end{equation}

Suppose that $y_0$ and $C$ are such that the quartic 
$P(\dot{\lNaught};y_0,C)$ has a double root $\dot{\lNaught}=\lNaught_0$; eliminating $C$ between the equations 
$P(\lNaught_0;y_0,C)=0$ and $P'(\lNaught_0;y_0,C)=0$ shows that $\lNaught_0$ is a solution of the quartic equation
\begin{equation}
y_0\lNaught_0^4-\ii \lNaught_0^3+\ii \lNaught_0-y_0=0.
\label{eq:V0-quartic}
\end{equation}
Obviously, 
$\lNaught_0^2-1$ is a factor of the left-hand side:  
$y_0\lNaught_0^4-\ii \lNaught_0^3+\ii \lNaught_0-y_0=(\lNaught_0^2-1)(y_0(\lNaught_0^2+1)-\ii \lNaught_0)$, so there are four possibilities for double roots of 
$P(\dot{\lNaught};y_0,C)$, namely:
\begin{equation}
\lNaught_0= 1,\quad \lNaught_0=-1,\quad \lNaught_0=\lNaught^+_0(y_0):=\frac{\ii}{2 y_0}-\ii\sqrt{\frac{1}{4y_0^2}+1},\quad \lNaught_0=\lNaught^-_0(y_0):=\frac{\ii}{2y_0}+\ii\sqrt{\frac{1}{4y_0^2}+1}.
\label{eq:V0-four-answers}
\end{equation}
Note that since the quartic equation \eqref{eq:V0-quartic} is the same equation as arises upon setting $\dot{\lNaught}=\lNaught_0$ and neglecting derivatives of $\dot{\lNaught}$ in 
\eqref{eq:Boutroux-model},
the four values \eqref{eq:V0-four-answers} are precisely the equilibrium solutions of the differential equation \eqref{eq:Boutroux-model}.
The corresponding values of $C$ are then obtained explicitly from the equation 
$P'(\lNaught_0;y_0,C)=0$, which is linear in $C$ (and the coefficient of $C$ is nonzero in each case):
\begin{equation}
C=-\frac{\ii y_0}{4\lNaught_0}-\frac{3\ii y_0}{4}\lNaught_0+\frac{y_0^2}{2}\lNaught_0^2.
\label{eq:C-formula}
\end{equation}
Thus, whenever $C$ is given by \eqref{eq:C-formula} and $\lNaught_0$ is a root of the quartic equation \eqref{eq:V0-quartic} (equivalently, an equilibrium solution of \eqref{eq:Boutroux-model}), 
\begin{equation*}
P(\dot{\lNaught};y_0,C)=-\frac{y_0^2}{4}(\dot{\lNaught}-\lNaught_0)^2(\dot{\lNaught}^2+b\dot{\lNaught}+c),\quad\text{where}\quad
b:=2\lNaught_0-\frac{2\ii}{y_0},\quad c:=\frac{1}{\lNaught_0^2}.
\end{equation*}
For each fixed $(y_0,C)$ pair, the root locus of 
$P(\dot{\lNaught};y_0,C)$ is invariant under 
$\dot{\lNaught}\mapsto 1/\dot{\lNaught}$.  Since 
$\pm 1$ are individually fixed by this involution while the other two possible double roots listed in \eqref{eq:V0-four-answers} are permuted by this involution, we see that if there exists a double root distinct from 
$1$ or $-1$, then there are two distinct double roots and hence 
$P(\dot{\lNaught};y_0,C)$ factors as a perfect square of a quadratic with distinct roots.  If one of the points 
$\pm 1$ is a double root, then either all four roots coincide, the two remaining roots coalesce at 
$\mp 1$, or the two remaining roots are distinct simple roots that are permuted by the involution. 

\subsection{Experiments and conjectures}
To begin to assess the validity of predictions following from the above formal large-$n$ scaling arguments, we may try to examine a finite number of the functions $u_n(x;m)$, say for $n=0,1,2,\dots,N$, and plot their poles and zeros in $x$.  Since according to Proposition~\ref{prop:polynomial-recurrence}, $u_n(x;m)\to 1$ as $x\to\infty$ and $u_n(x;m)$ is rational in $x$ with simple poles and zeros only, such plots actually convey complete information.  In practice, it is substantially more efficient for large $n$ to implement the polynomial recurrence scheme of Umemura/Clarkson than to directly iterate the B\"acklund transformation \eqref{eq:Backlund-n}.  Therefore, we symbolically compute
a sufficient number of the polynomials $s_n$, which have coefficients rational in $m$.  Then by using rational values\footnote{We observed that if the real or imaginary part of $m$ is irrational then \texttt{NSolve} performs poorly for moderately large $n$.} for the real and imaginary parts of $m$, we may apply the \textit{Mathematica}\footnote{We used \textit{Mathematica} version 11.} routine \texttt{NSolve} with the option \texttt{WorkingPrecision->30} to obtain accurate approximations of the roots.  We then plot separately the roots of the four polynomial factors in the representation \eqref{eq:un-fraction}.  As long as the roots of the factors are simple and distinct, no information is lost in making such a plot; this is known to be the case \cite{Clarkson03,ClarksonLL16} unless $m\in\mathbb{Z}+\tfrac{1}{2}$, in which case for large enough $n$ there is a common root of high order at the origin in all four factors, leading to a high degree of cancellation.  We restrict our numerical calculations of poles and zeros to nonnegative values of $n$ and to $\mathrm{Re}(m)\ge 0$ without loss of generality, compare \eqref{negn} and \eqref{negm}.\smallskip

Since the scaling formalism is based at first on the scaling $x=ny$, it is useful to initially view the plots of poles/zeros of $u_n(x;m)$ in the $y$-plane.  Figures~\ref{fig:1}--\ref{fig:4} study the convergence properties of the pole/zero patterns in the $y$-plane as $n$ increases for several values of $m\in\mathbb{C}$.
\begin{figure}[h]
\begin{center}
\includegraphics[width=0.3\linewidth]{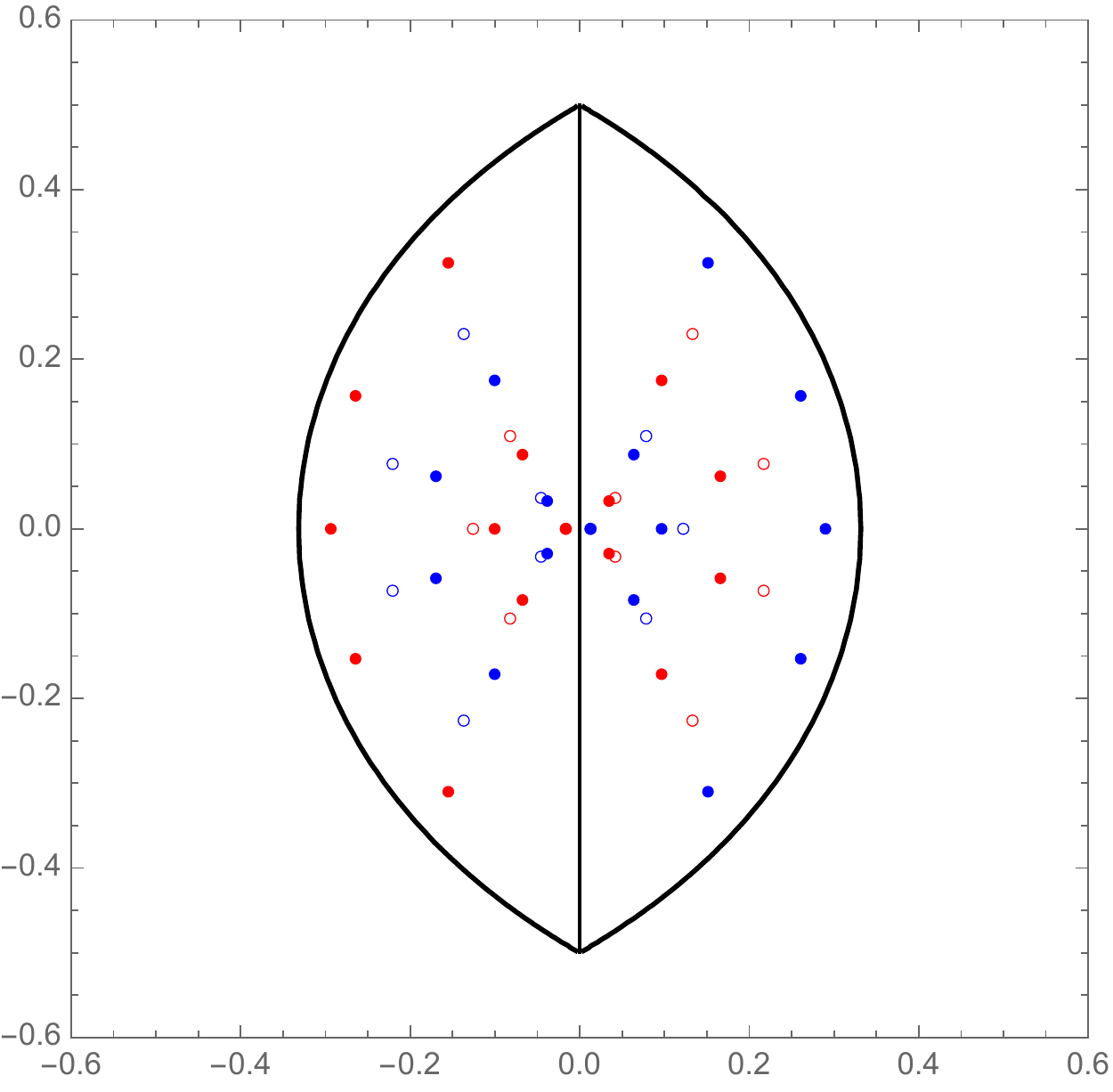}\hspace{0.03\linewidth}%
\includegraphics[width=0.3\linewidth]{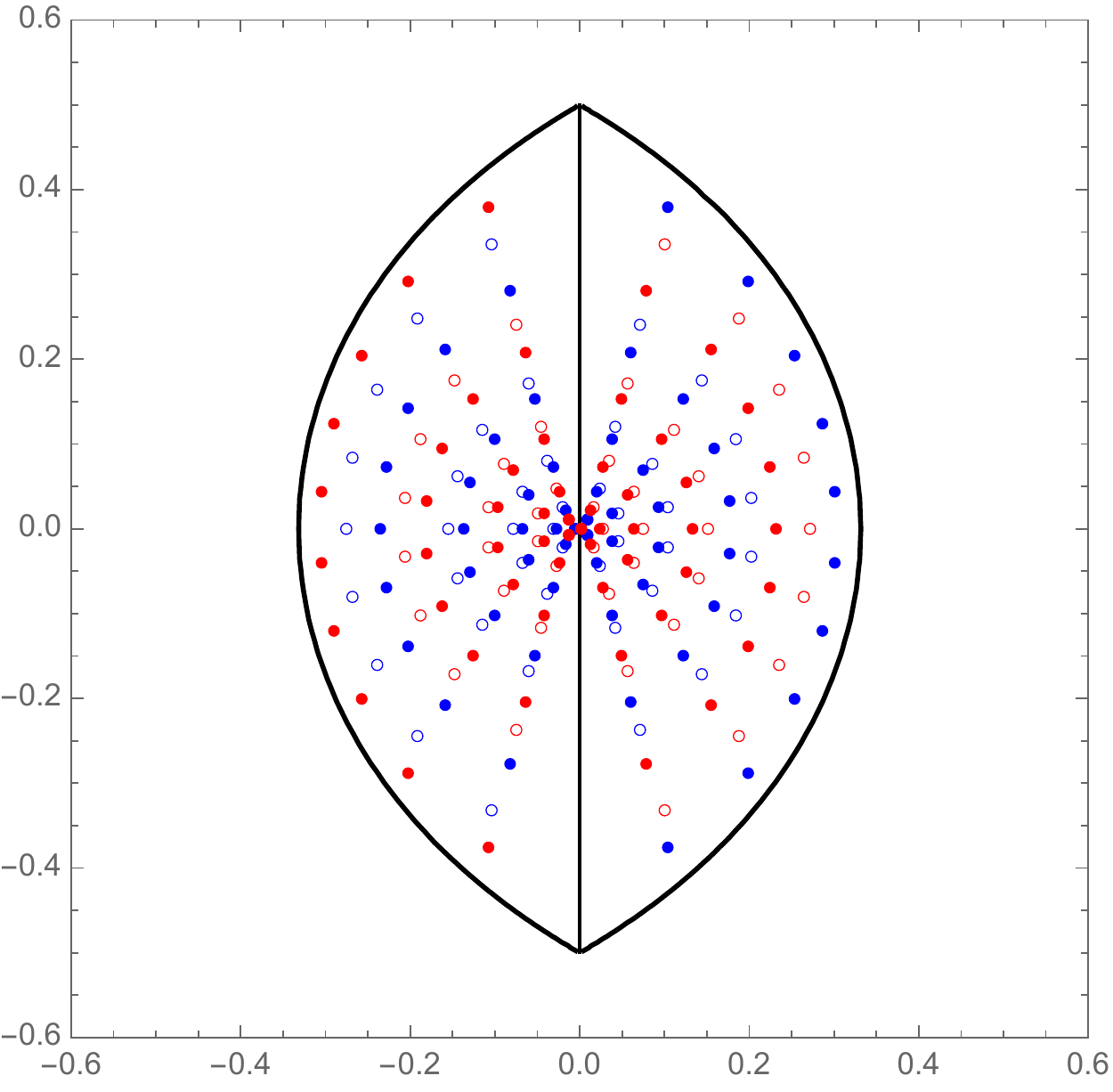}\hspace{0.03\linewidth}%
\includegraphics[width=0.3\linewidth]{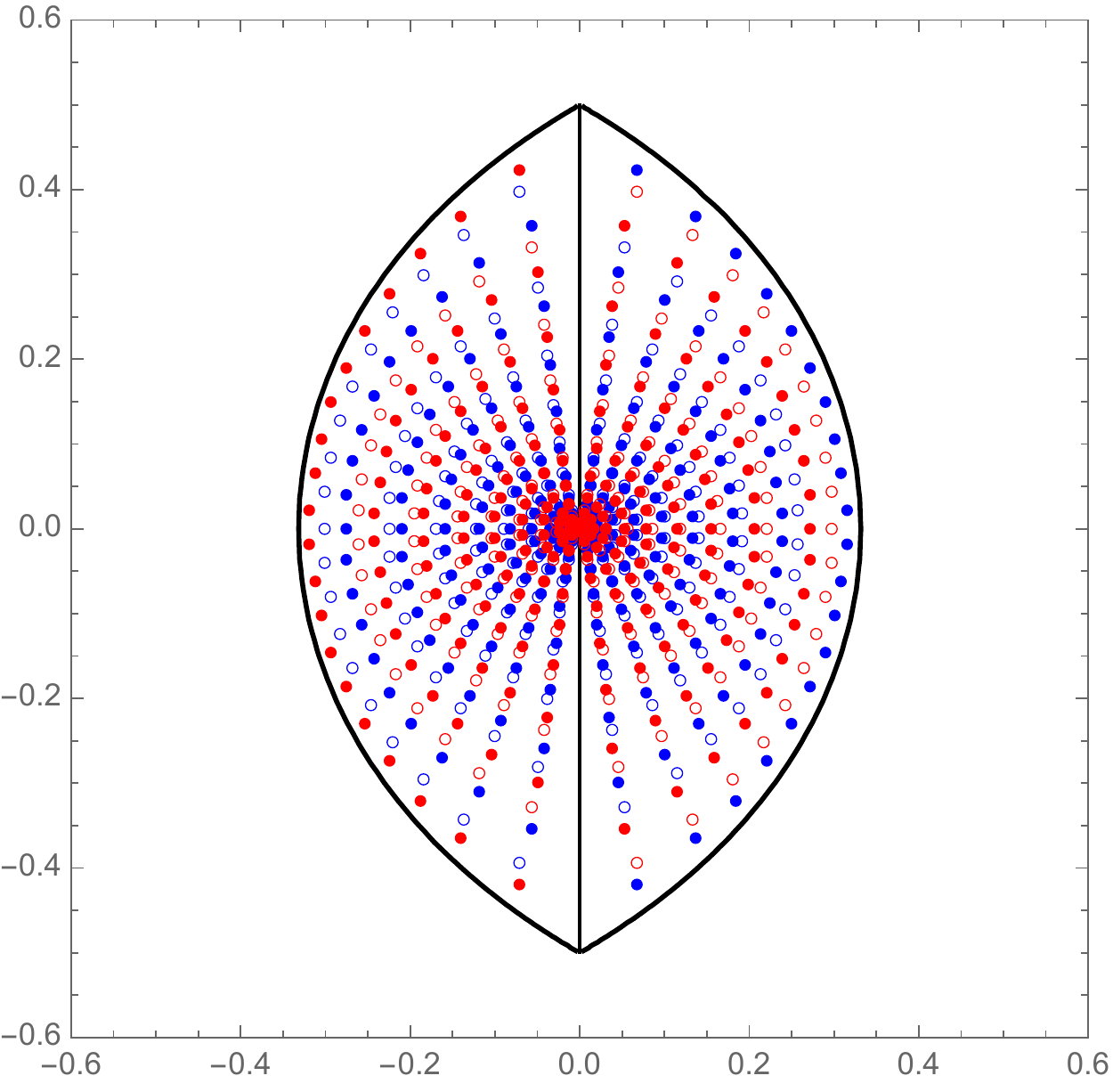}
\end{center}
\caption{Poles of $u_n(x;m)$ (red dots, filled for the roots of $s_n(x;m)$ and unfilled for the roots of $s_{n-1}(x;m-1)$) and zeros of $u_n(x;m)$ (blue dots, filled for the roots of $s_n(x;m-1)$ and unfilled for the roots of $s_{n-1}(x;m)$) rendered in the $y=x/n$-plane for $m=0$.  Left:  $n=5$, center: $n=10$, right: $n=20$.  The black curves are independent of $n$ and $m$ and form the boundaries of two half-eye-shaped regions known to contain the poles and zeros of $u_n(x;m)$ for large $n$ \cite{BothnerM18}.}
\label{fig:1}
\end{figure}
\begin{figure}[h]
\begin{center}
\includegraphics[width=0.3\linewidth]{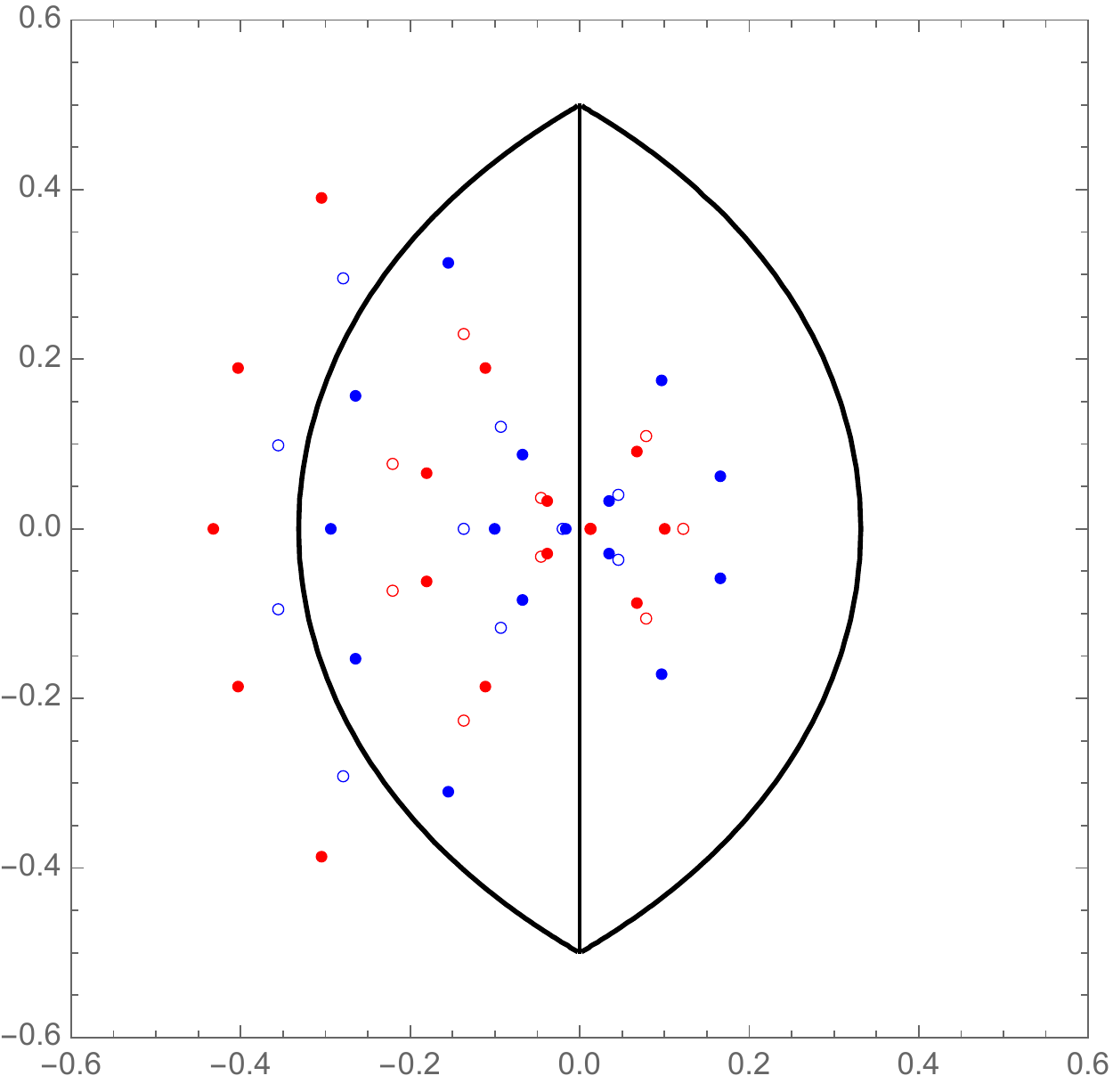}\hspace{0.03\linewidth}%
\includegraphics[width=0.3\linewidth]{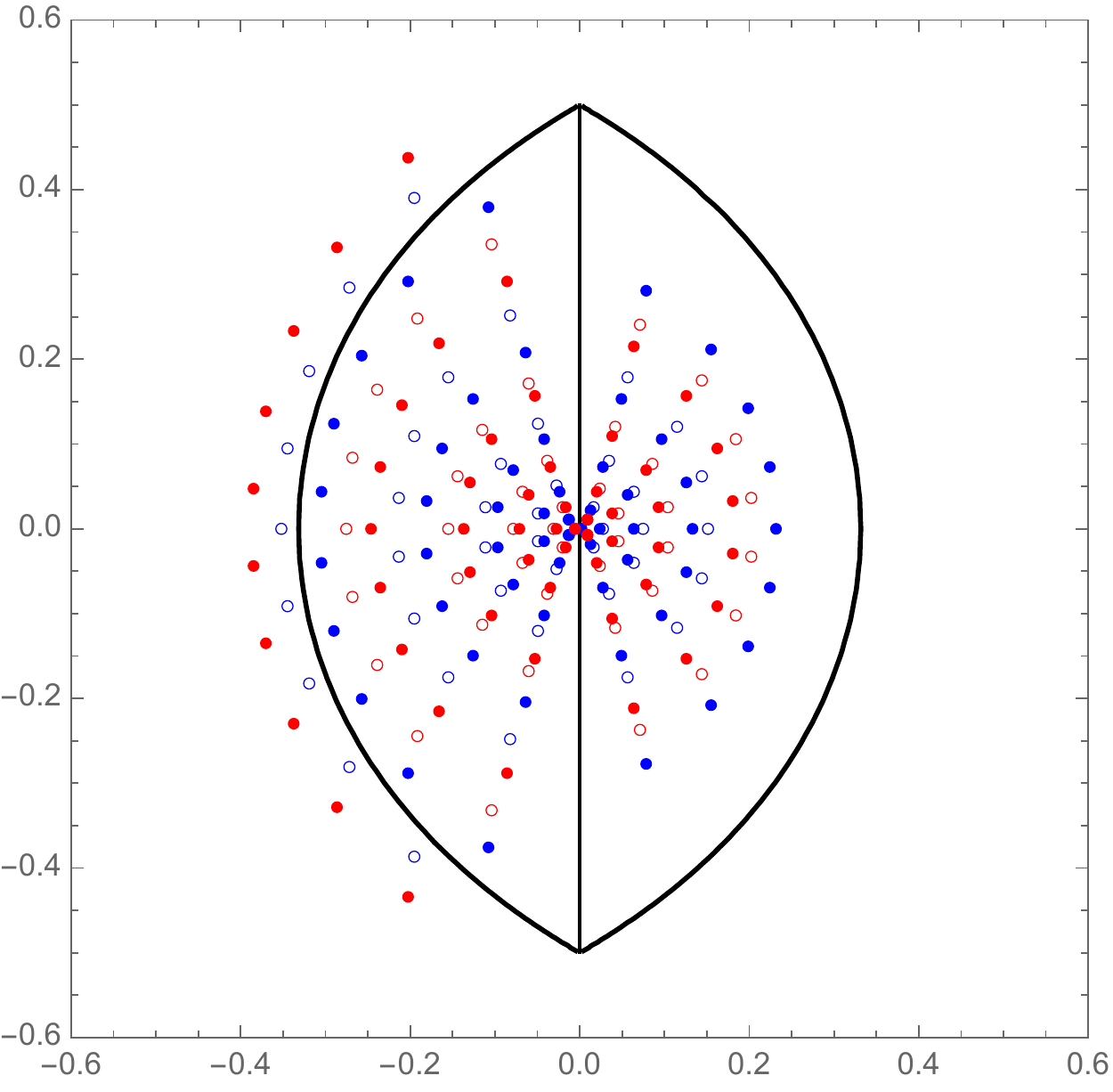}\hspace{0.03\linewidth}%
\includegraphics[width=0.3\linewidth]{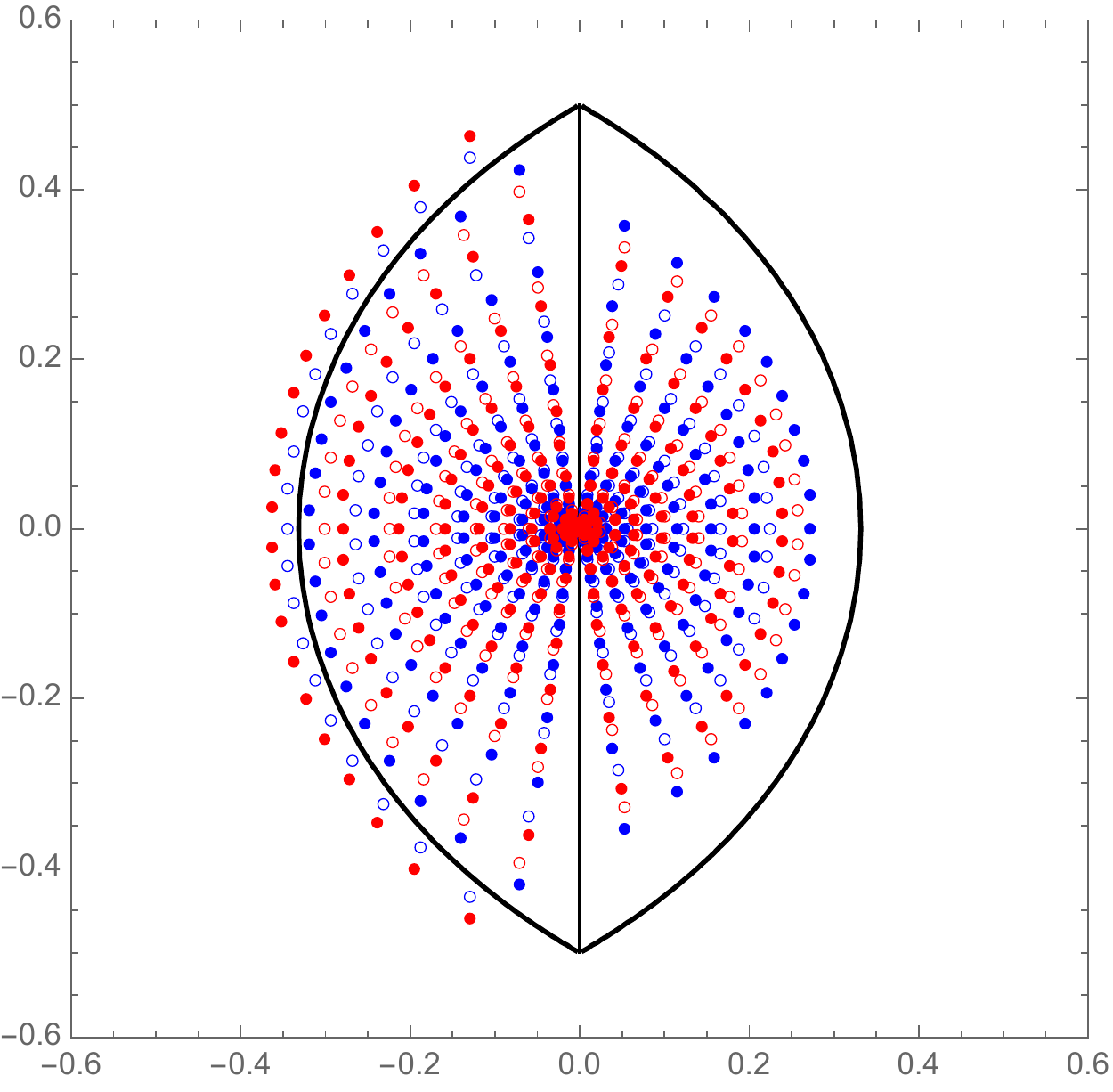}
\end{center}
\caption{As in Figure~\ref{fig:1} but for $m=1$.}
\label{fig:2}
\end{figure}
\begin{figure}[h]
\begin{center}
\includegraphics[width=0.3\linewidth]{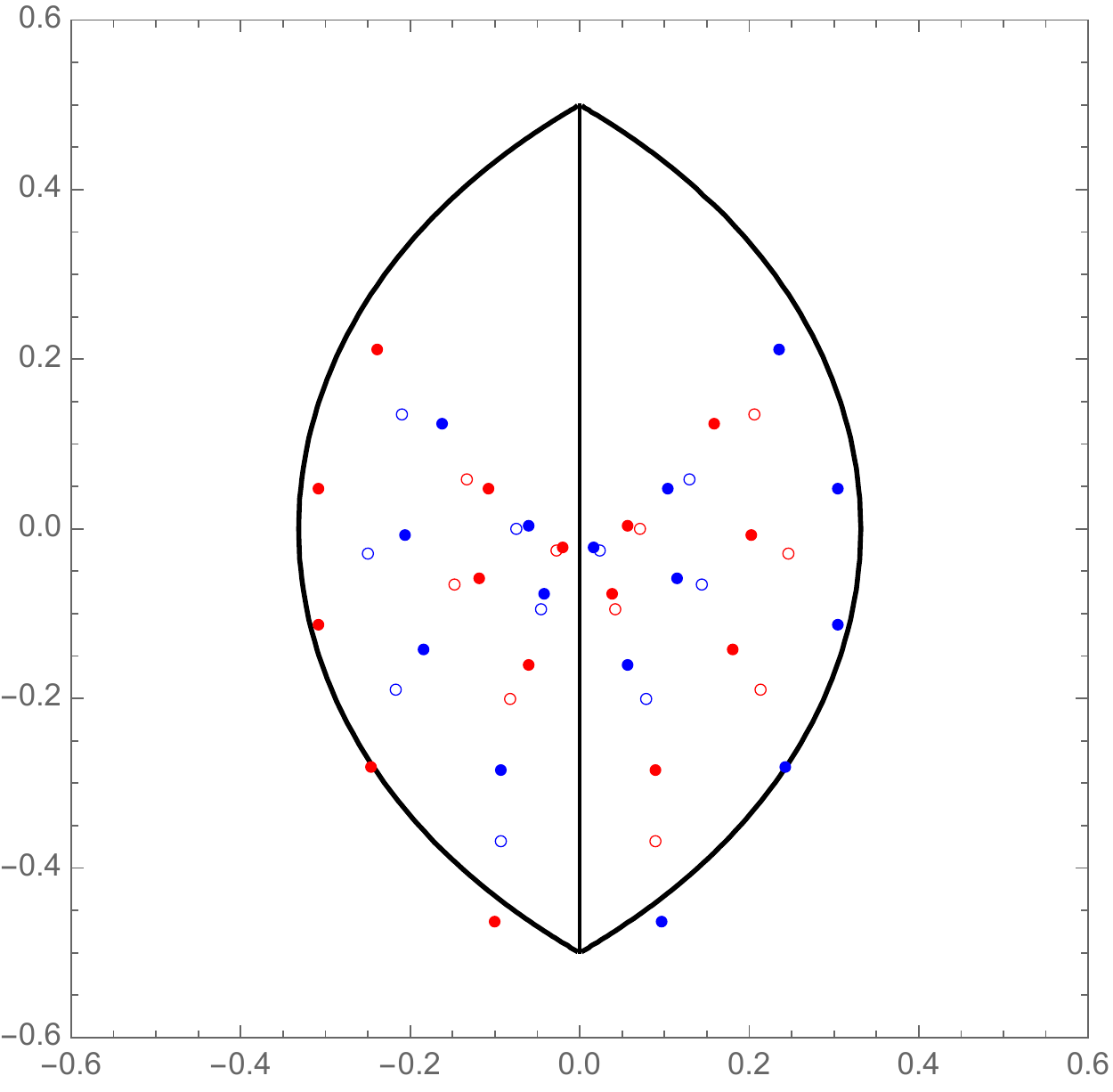}\hspace{0.03\linewidth}%
\includegraphics[width=0.3\linewidth]{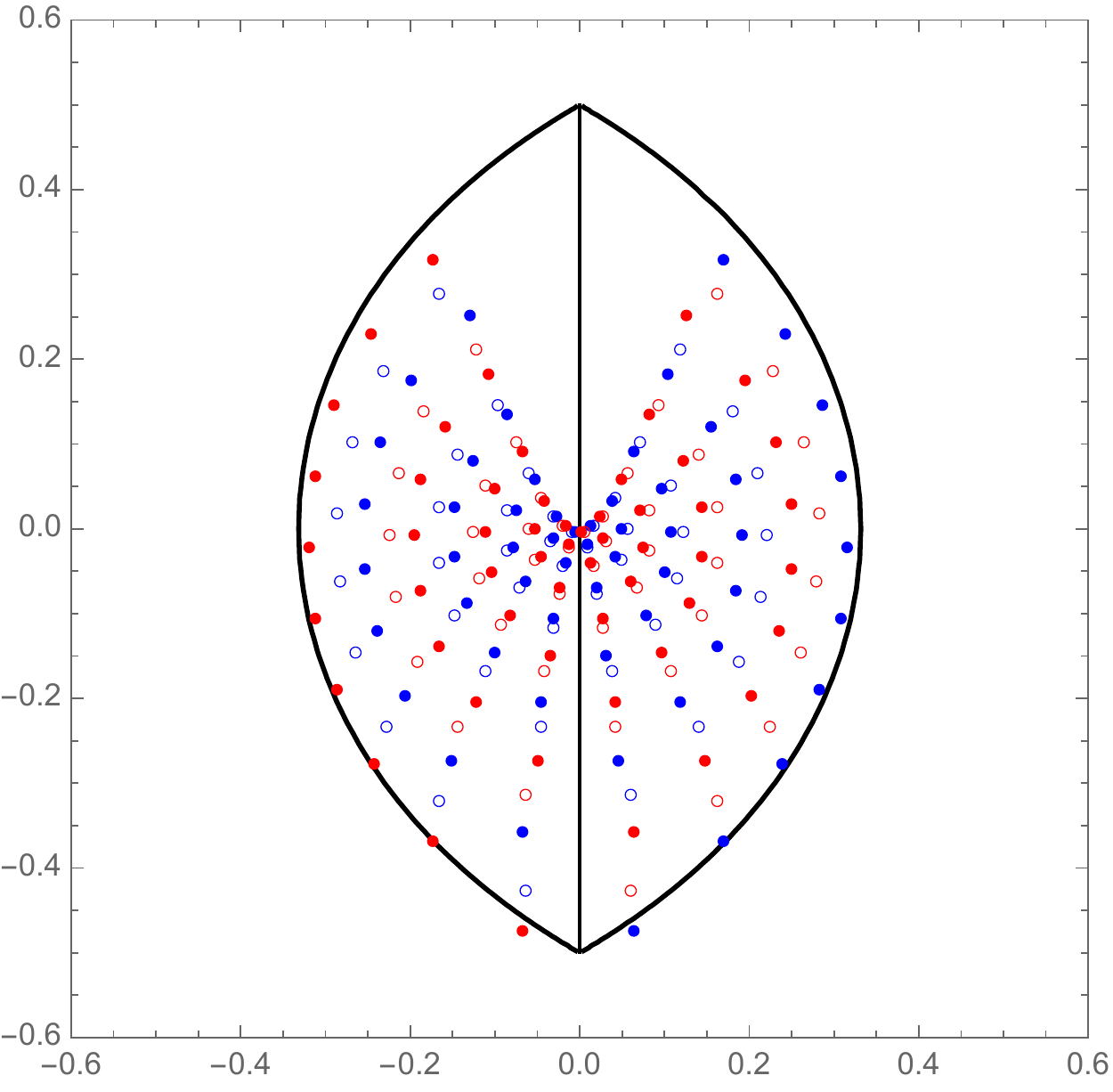}\hspace{0.03\linewidth}%
\includegraphics[width=0.3\linewidth]{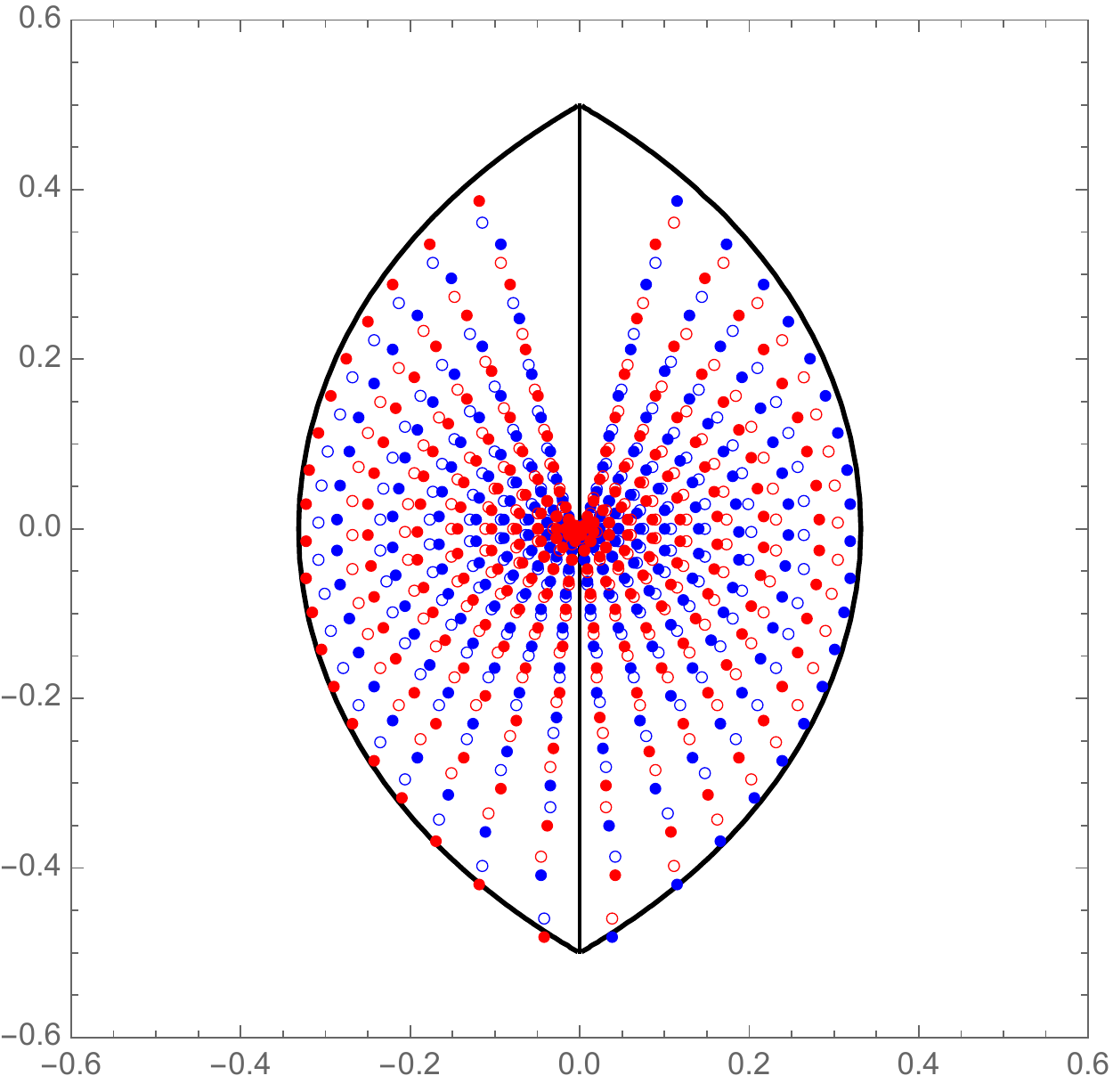}
\end{center}
\caption{As in Figure~\ref{fig:1} but for $m=\tfrac{4}{5}\ii$.}
\label{fig:3}
\end{figure}
The key feature evident in the plots of Figures~\ref{fig:1}, \ref{fig:2}, and \ref{fig:3} is that while there is some variability with the value of $m\in\mathbb{C}$, as $n$ increases the region of the $y$-plane that contains the poles and zeros of $u_n(ny;m)$ appears to stabilize to an eye-shaped domain $E$ that is independent of both $n$ and $m$.  
\begin{figure}[h]
\begin{center}
\includegraphics[width=0.3\linewidth]{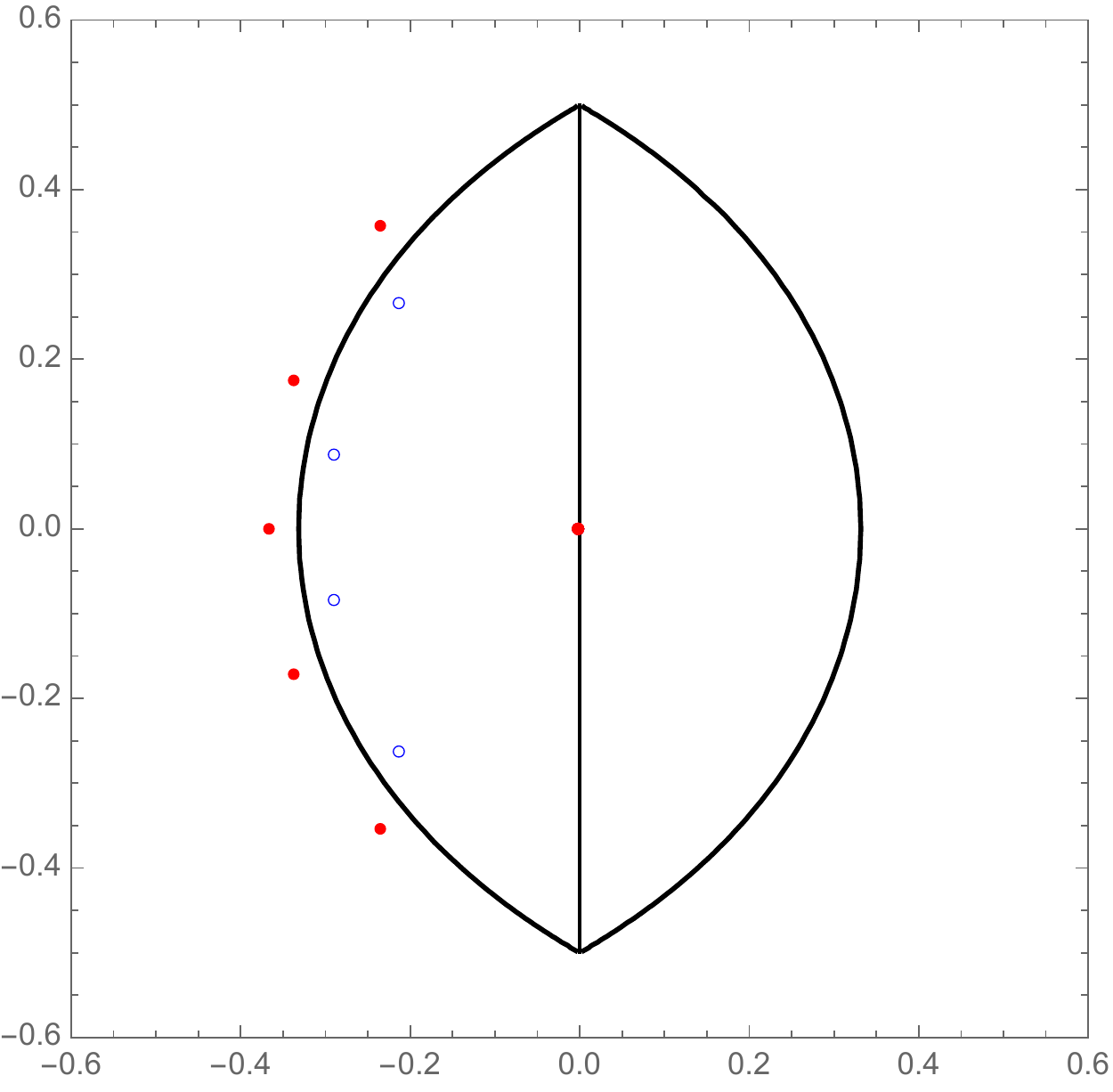}\hspace{0.03\linewidth}%
\includegraphics[width=0.3\linewidth]{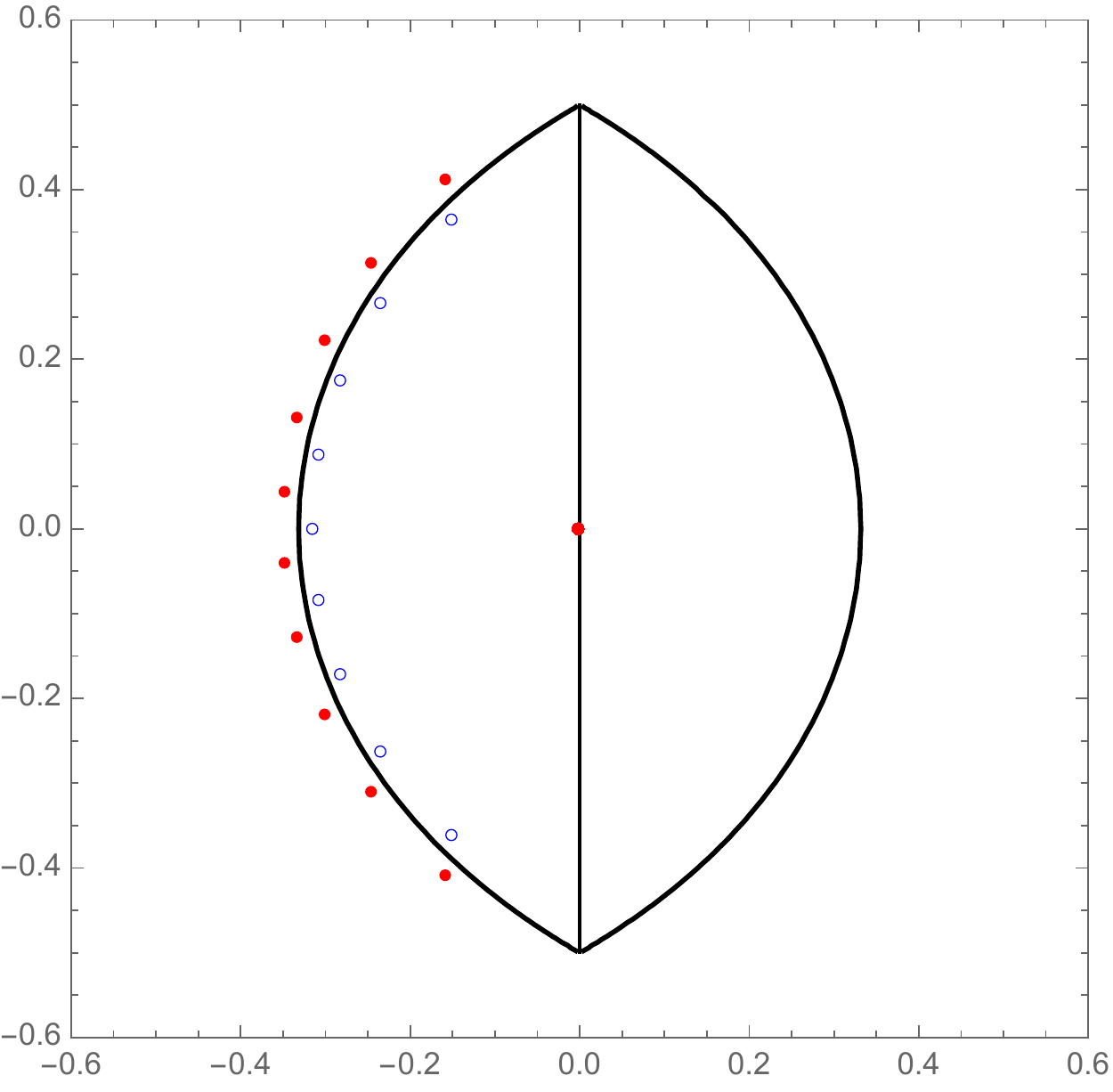}\hspace{0.03\linewidth}%
\includegraphics[width=0.3\linewidth]{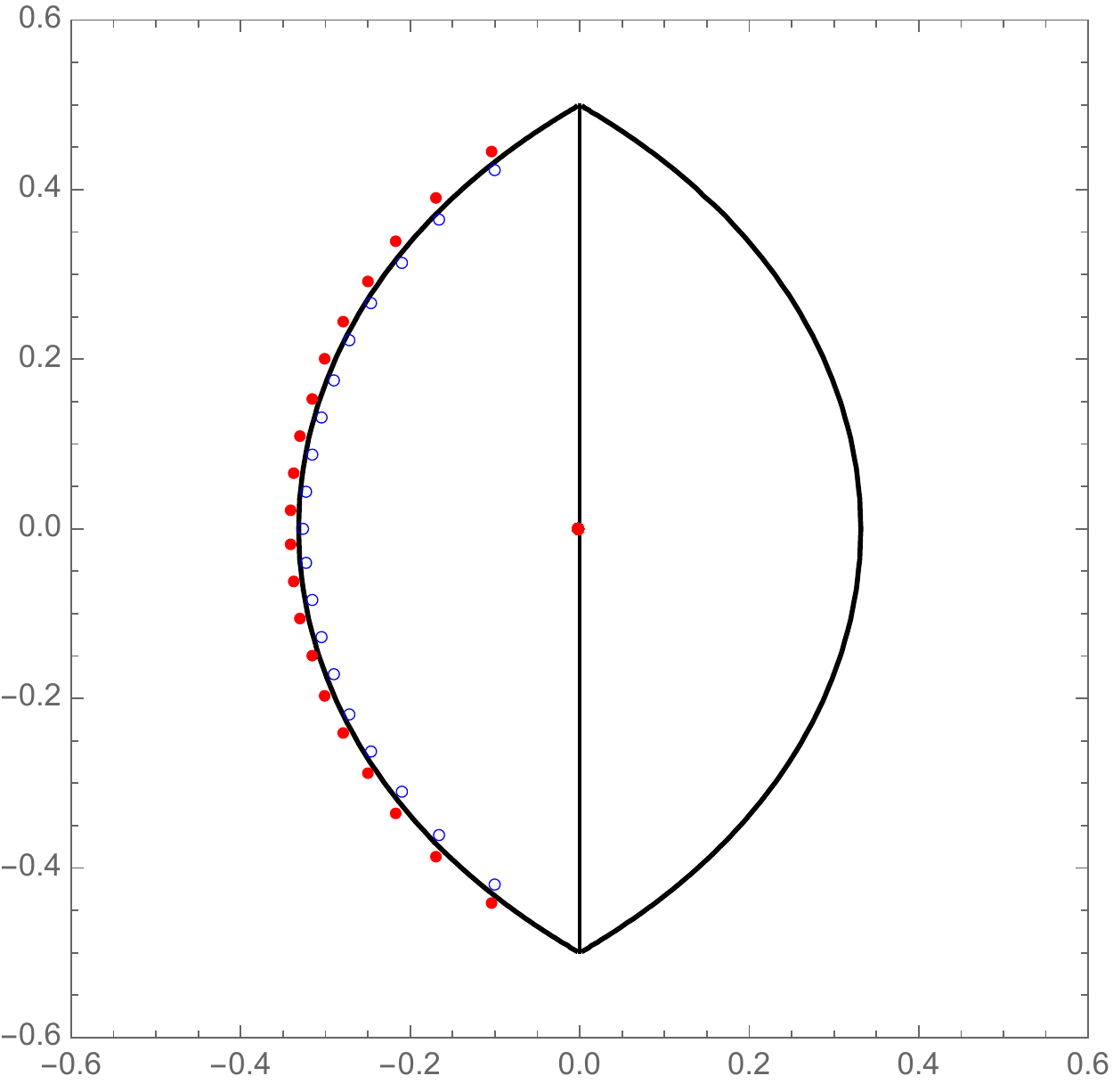}
\end{center}
\caption{As in Figure~\ref{fig:1} but for $m=\tfrac{1}{2}$.  Here we know from \cite{ClarksonLL16} that the apparent pole near the origin in the plots is an artifact of our method of plotting separately the roots of the polynomial factors in \eqref{eq:un-fraction}; in fact $u_n(x;\tfrac{1}{2})$ has a simple zero at $x=0$.}
\label{fig:4}
\end{figure}
Figure~\ref{fig:4} shows a similar convergence study, here for a half-integral value of $m$.  While the poles and zeros seem to move toward the same eye-shaped domain $E$ as $n$ increases, the distribution of poles and zeros within $E$ appears to be completely different than in Figures~\ref{fig:1}--\ref{fig:3}, with poles and zeros concentrating only along one ``eyebrow'' of the eye $E$.\smallskip

Taken together, these figures suggest that $u_n(ny;m)$ may have a well-defined limit as $n\to\infty$ as long as $y$ is restricted to the exterior of $E$.  We are led to formulate the following conjecture.
\begin{conjecture}
Assume that $y$ lies outside of a certain eye-shaped bounded domain $E\subset\mathbb{C}$.  Then 
\begin{equation}
\lim_{n\to\infty}u_n(ny;m)=\ii p_0^+(y),
\end{equation}
where $p_0^+(y)$ is defined by \eqref{eq:V0-four-answers} in which the square root refers to the principal branch.
\label{conjecture:outside}
\end{conjecture}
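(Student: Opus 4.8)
The plan is to subject Riemann--Hilbert Problem~\ref{rhp:renormalized} to the Deift--Zhou nonlinear steepest descent method in the scaling regime $x=ny$ with $y$ held fixed in the exterior of $E$, and then to read off $u_n(ny;m)$ from the formula \eqref{eq:u-n-from-Y-formula} of Theorem~\ref{thm:RH-representation}. Setting $x=ny$, every $n$--dependent factor appearing in the jump matrices \eqref{eq:Yjump-1}--\eqref{eq:Yjump-4} is carried by $\lambda^{\pm n}\ee^{\pm\ii x(\lambda-\lambda^{-1})}=\ee^{\pm n\varphi(\lambda)}$ with the controlling phase $\varphi(\lambda)=\varphi(\lambda;y):=\OurLog{\lambda}+\ii y(\lambda-\lambda^{-1})$, while the remaining data (the $\Gamma$--factors, the powers $\OurPower{\lambda}{\pm(m+1)/2}$, and the diagonal entries $-\ee^{\pm2\pi\ii m}$ along $\LZeroBlue$) do not depend on $n$. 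One computes $\varphi'(\lambda)=\lambda^{-2}(\ii y\lambda^{2}+\lambda+\ii y)$, so the saddle points of $\varphi$ are the roots of $y\lambda^{2}-\ii\lambda+y=0$, i.e.\ precisely $\lambda=p_0^{\pm}(y)$ from \eqref{eq:V0-four-answers}: the critical points of the phase coincide with the non-constant equilibria of the model \eqref{eq:Boutroux-model} and are among the four roots of the quartic \eqref{eq:V0-quartic}. Moreover $\mathrm{Re}\,\varphi(\lambda;y)=\log|\lambda|-\mathrm{Im}(y(\lambda-\lambda^{-1}))$ is single--valued and harmonic on $\mathbb{C}\setminus\{0\}$, tending to $\mp\infty$ in complementary half--plane--shaped regions at each of $\lambda=0$ and $\lambda=\infty$ --- the very directions in which the arcs of the contour $L$ were prescribed to emanate in Riemann--Hilbert Problem~\ref{rhp:renormalized}.

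I would then introduce a $g$--function $g=g(\lambda;y)$, analytic off a sub--arc of $\LZeroBlue\cup\LInftyBlue$ with prescribed endpoint behaviour --- equivalently, a direct deformation of $L$ together with a $\sigma_3$--conjugation $\mathbf{Y}\mapsto\mathbf{Z}=\ee^{-n\ell\sigma_3}\mathbf{Y}\ee^{ng(\lambda)\sigma_3}$ for a suitable constant $\ell$ --- chosen so that after the transformation the triangular jump entries carry the exponential of $n$ times a \emph{modified} phase whose real part is strictly negative along the red contour $\LInftyRed\cup\LZeroRed$ and strictly positive along the blue contour $\LInftyBlue\cup\LZeroBlue$, with the mandatory intersection point placed at the relevant saddle $\lNaught=p_0^+(y)$. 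The structural heart of the argument is that \emph{the exterior of $E$ is precisely the set of $y$ for which such a $g$ and such an admissible deformation exist}: the associated sign chart --- equivalently, the topology of the Stokes graph of the modified phase --- must permit routing the red arcs through the region where the relevant exponential decays and the blue arcs through the region where the complementary exponential decays, with $\LZeroBlue$ lying in the latter so that its off--diagonal entry is negligible while its diagonal entry $\mathrm{diag}(-\ee^{2\pi\ii m},-\ee^{-2\pi\ii m})$ survives. Opening the lenses $\LensRedPM$ and $\LensBluePM$ moves the triangular jumps onto these steepest--descent contours, so that every jump becomes exponentially close to $\mathbb{I}$ as $n\to\infty$ except for the surviving diagonal jump on $\LZeroBlue$ and the jumps inside a fixed disk about $p_0^+(y)$.

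Next come the parametrices. The outer parametrix $\dot{\mathbf{Y}}^{\,\infty}$ solves the model problem carrying only the diagonal jump along $\LZeroBlue$ together with $\dot{\mathbf{Y}}^{\,\infty}\to\mathbb{I}$ at $\infty$ and $\dot{\mathbf{Y}}^{\,\infty}\OurPower{\lambda}{-(m+\tfrac{1}{2})\sigma_3}$ bounded at $0$; since this jump is diagonal, $\dot{\mathbf{Y}}^{\,\infty}$ is itself diagonal, given explicitly by a power (with exponent $m+\tfrac{1}{2}$) of the Möbius function $\lambda/(\lambda-p_0^+(y))$ together with its reciprocal --- in particular $\det\dot{\mathbf{Y}}^{\,\infty}\equiv1$, and $\dot{\mathbf{Y}}^{\,\infty}$ acquires a $(\lambda-p_0^+(y))^{-(m+\tfrac{1}{2})}$ singularity at $p_0^+(y)$. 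In a fixed disk about $p_0^+(y)$, where $\varphi$ has a simple saddle for $y\notin E$ (so $\varphi'(p_0^+(y);y)=0\neq\varphi''(p_0^+(y);y)$) and the two complementary triangular structures meet the branch arc $\LZeroBlue$, I would build a local parametrix $\dot{\mathbf{Y}}^{\,(p)}$ out of special functions of parabolic--cylinder (equivalently confluent hypergeometric) type, carrying the parameter $m$, in a conformal coordinate $\zeta$ with $\zeta^{2}\propto n(\varphi(\lambda)-\varphi(p_0^+(y)))$, designed both to reproduce the singularity of $\dot{\mathbf{Y}}^{\,\infty}$ at $p_0^+(y)$ and to match $\dot{\mathbf{Y}}^{\,\infty}$ on the boundary circle with relative error $O(n^{-1/2})$. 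Writing $\mathbf{R}:=\mathbf{Z}(\dot{\mathbf{Y}}^{\,\infty})^{-1}$ outside the disk and $\mathbf{R}:=\mathbf{Z}(\dot{\mathbf{Y}}^{\,(p)})^{-1}$ inside it, $\mathbf{R}$ solves a small--norm Riemann--Hilbert problem with jump $\mathbb{I}+O(n^{-1/2})$, hence $\mathbf{R}=\mathbb{I}+O(n^{-1/2})$ uniformly, the leading correction being governed by the subleading term of the parabolic--cylinder parametrix on the matching circle.

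Finally I would trace this back through \eqref{eq:u-n-from-Y-formula}. Unravelling $\mathbf{Y}$ from $\mathbf{R}$, the outer parametrix, and the conjugation, and using that $\dot{\mathbf{Y}}^{\,\infty}$ is diagonal with $\det\dot{\mathbf{Y}}^{\,\infty}\equiv1$, the $\sigma_3$--conjugation factors cancel identically in the quotient \eqref{eq:u-n-from-Y-formula}, leaving to leading order $u_n(ny;m)=-\ii(\mathbf{R}_1)_{12}/(\mathbf{R}(0))_{12}$, where $\mathbf{R}(\lambda)=\mathbb{I}+\mathbf{R}_1\lambda^{-1}+O(\lambda^{-2})$ as $\lambda\to\infty$; a computation of $\mathbf{R}_1$ and $\mathbf{R}(0)$ from the Cauchy integral over the matching circle --- where the dominant contribution is supplied by the parabolic--cylinder parametrix centred at $p_0^+(y)$ --- is expected to collapse this quotient to $\ii p_0^+(y)$, so that $u_n(ny;m)=\ii p_0^+(y)+O(n^{-1/2})$ uniformly on compact subsets of the exterior of $E$, with $m$ entering only through the remainder. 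I expect the two genuinely substantial steps to be: (i) the global analysis of the Stokes graph of the modified phase as $y$ ranges over $\mathbb{C}\setminus\{0\}$, namely proving that the admissible deformation described above exists \emph{exactly} on the exterior of $E$ --- this is what actually determines $E$ and $\partial E$ and requires controlling how the zero level set of the modified phase reconnects as $y$ crosses $\partial E$; and (ii) the construction and matching of the parabolic--cylinder parametrix at $p_0^+(y)$ for general $m\in\mathbb{C}$, together with the attendant bookkeeping of the $O(n^{-1/2})$ quantities, so that the quotient in \eqref{eq:u-n-from-Y-formula} does limit to exactly $\ii p_0^+(y)$ with no spurious constant or $m$--dependent factor.
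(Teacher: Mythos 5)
The first thing to say is that the paper does not prove this statement: it is stated as Conjecture~\ref{conjecture:outside}, supported only by the numerics of Figures~\ref{fig:1}--\ref{fig:3} and the formal scaling analysis, with the actual proof explicitly deferred to the companion work \cite{BothnerM18} ``using the Riemann-Hilbert representation of $u_n(x;m)$ presented in Theorem~\ref{thm:RH-representation}.'' So there is no in-paper proof to compare against; your proposal should be judged as a programme for the deferred proof. As such, it is the right programme, and the concrete assertions in it that can be checked against the present paper do check out: with $x=ny$ the exponent in \eqref{eq:Yjump-1}--\eqref{eq:Yjump-4} is $n\varphi(\lambda)$ with $\varphi=\OurLog{\lambda}+\ii y(\lambda-\lambda^{-1})$, whose critical points are the roots of $y\lambda^2-\ii\lambda+y=0$, i.e.\ exactly the two non-unimodular roots $p_0^\pm(y)$ of the quartic \eqref{eq:V0-quartic} -- this is the genuine link between the formal Boutroux analysis of Section~\ref{sec:numerics-and-scalings} and the Riemann--Hilbert problem, and you identified it correctly. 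The observation that only the diagonal part $\mathrm{diag}(-\ee^{2\pi\ii m},-\ee^{-2\pi\ii m})$ of the jump on $\LZeroBlue$ can survive, forcing a diagonal outer parametrix with branch points at $0$ and at the intersection point, is also correct and is consistent with the requirement that $\mathbf{Y}\OurPower{\lambda}{-(m+\frac12)\sigma_3}$ be bounded at the origin.

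That said, what you have written is an outline, not a proof, and you have (honestly) flagged but not executed precisely the two steps that constitute the mathematical content: (i) the global sign-structure/Stokes-graph analysis showing that an admissible deformation through $p_0^+(y)$ exists exactly for $y$ outside $E$ -- without this, $E$ is not even defined, and the conjecture as stated cannot be verified; and (ii) the local analysis at the saddle. On point (ii) I would stress one structural feature more strongly than you do: because the outer parametrix is diagonal, \emph{both} $Y^\infty_{1,12}$ and $Y^0_{0,12}$ in \eqref{eq:u-n-from-Y-formula} vanish to leading order, so the formula is a $0/0$ ratio of subleading quantities. The claim that this ratio ``collapses to $\ii p_0^+(y)$'' is therefore not a consequence of the small-norm estimate $\mathbf{R}=\mathbb{I}+O(n^{-1/2})$ alone; it requires computing the explicit leading term of the off-diagonal entries of $\mathbf{R}$ from the residue of the local parametrix at $p_0^+(y)$, and verifying that the $m$-dependent and $n$-dependent prefactors cancel between numerator and denominator. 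This is exactly where a ``spurious constant'' could survive, and until that computation is done the limit $\ii p_0^+(y)$ is plausible (and consistent with the heuristic that the answer should be the saddle location) but not established.
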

This conjecture asserts that for $y$ outside of $E$, the quartic $P(\dot{p};y,C)$ has a distinct pair of double roots at $\dot{p}=p_0^\pm(y)$, and that the equilibrium $\dot{p}=p_0^+(y)$ (we are identifying $y$ with the constant $y_0$) is the relevant solution of the autonomous model differential equation \eqref{eq:Boutroux-model}.  Note that $\ii p_0^+(y)$ is independent of the second parameter $m$, and $\ii p_0^+(y)\to 1$ as $y\to\infty$, which is consistent with the fact that for each fixed $n$, $u_n(x;m)\to 1$ as $x\to\infty$.  A suitably precise version of Conjecture~\ref{conjecture:outside} is proven in \cite{BothnerM18} using the Riemann-Hilbert representation of $u_n(x;m)$ presented in Theorem~\ref{thm:RH-representation} formulated in Section~\ref{sec:main}; part of the proof is to correctly specify the domain $E$.  The black curves shown in Figures~\ref{fig:1}--\ref{fig:4} are described in \cite{BothnerM18}; in particular the top and bottom corners of the domain $E$ lie at the points $y=\pm\tfrac{1}{2}\ii$.\bigskip

The asymptotic pattern of poles and zeros of $u_n(x;m)$ is qualitatively similar to that shown in Figure~\ref{fig:4} whenever $m\in\mathbb{Z}+\tfrac{1}{2}$, but different details emerge as $m$ is increased through half-integers as illustrated in Figure~\ref{fig:5}.  
\begin{figure}[h]
\begin{center}
\includegraphics[width=0.3\linewidth]{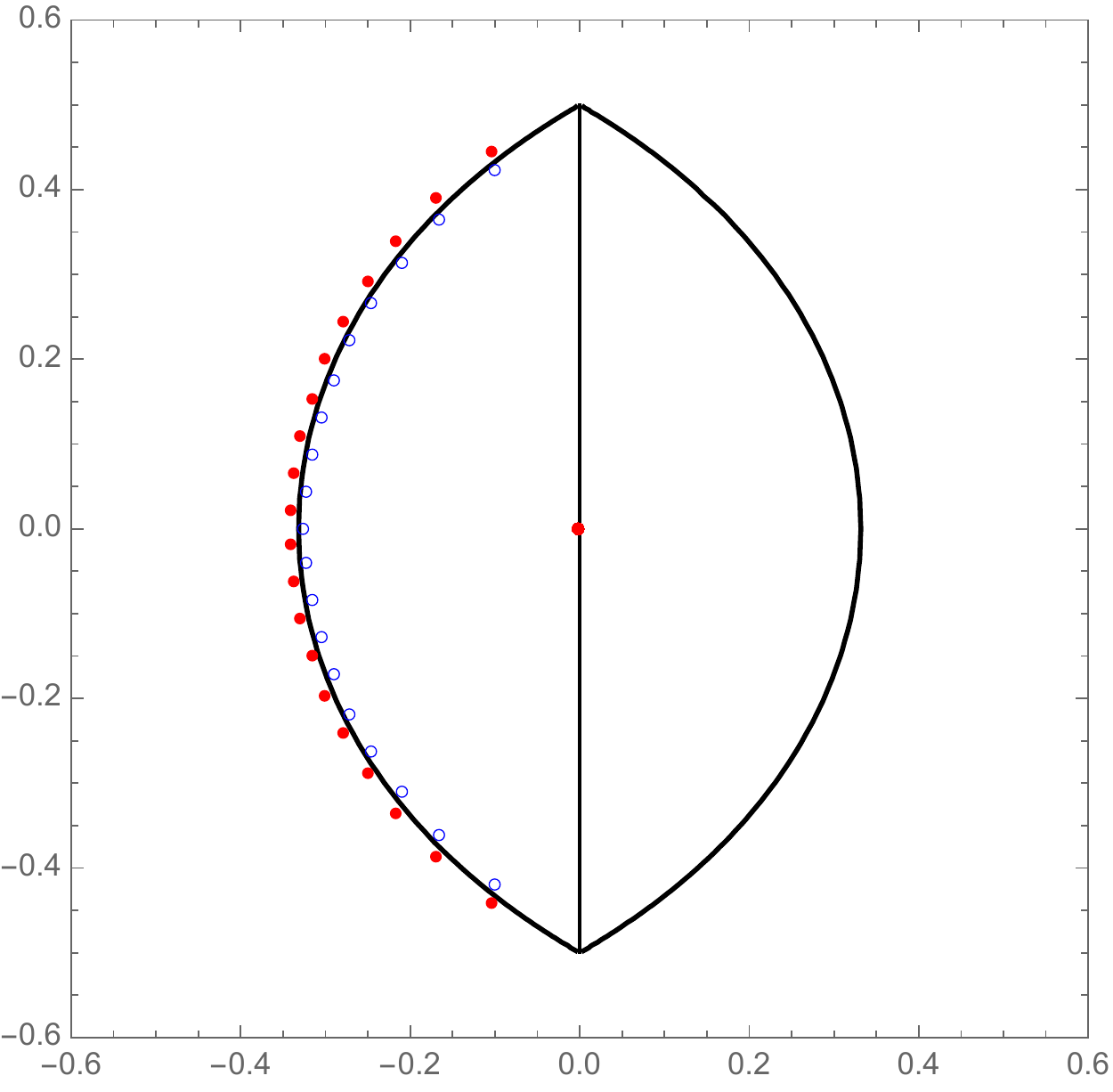}\hspace{0.03\linewidth}%
\includegraphics[width=0.3\linewidth]{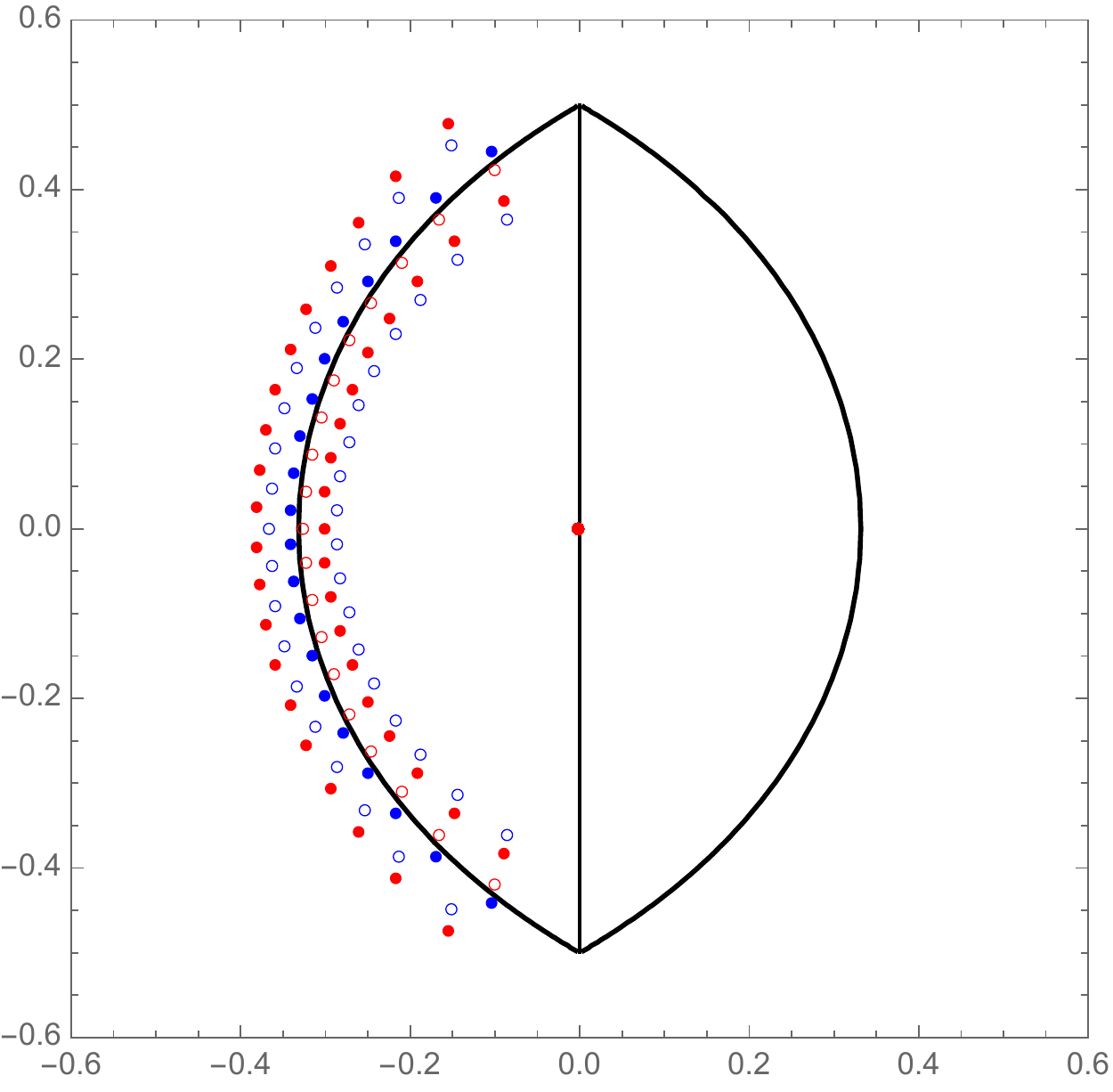}\hspace{0.03\linewidth}%
\includegraphics[width=0.3\linewidth]{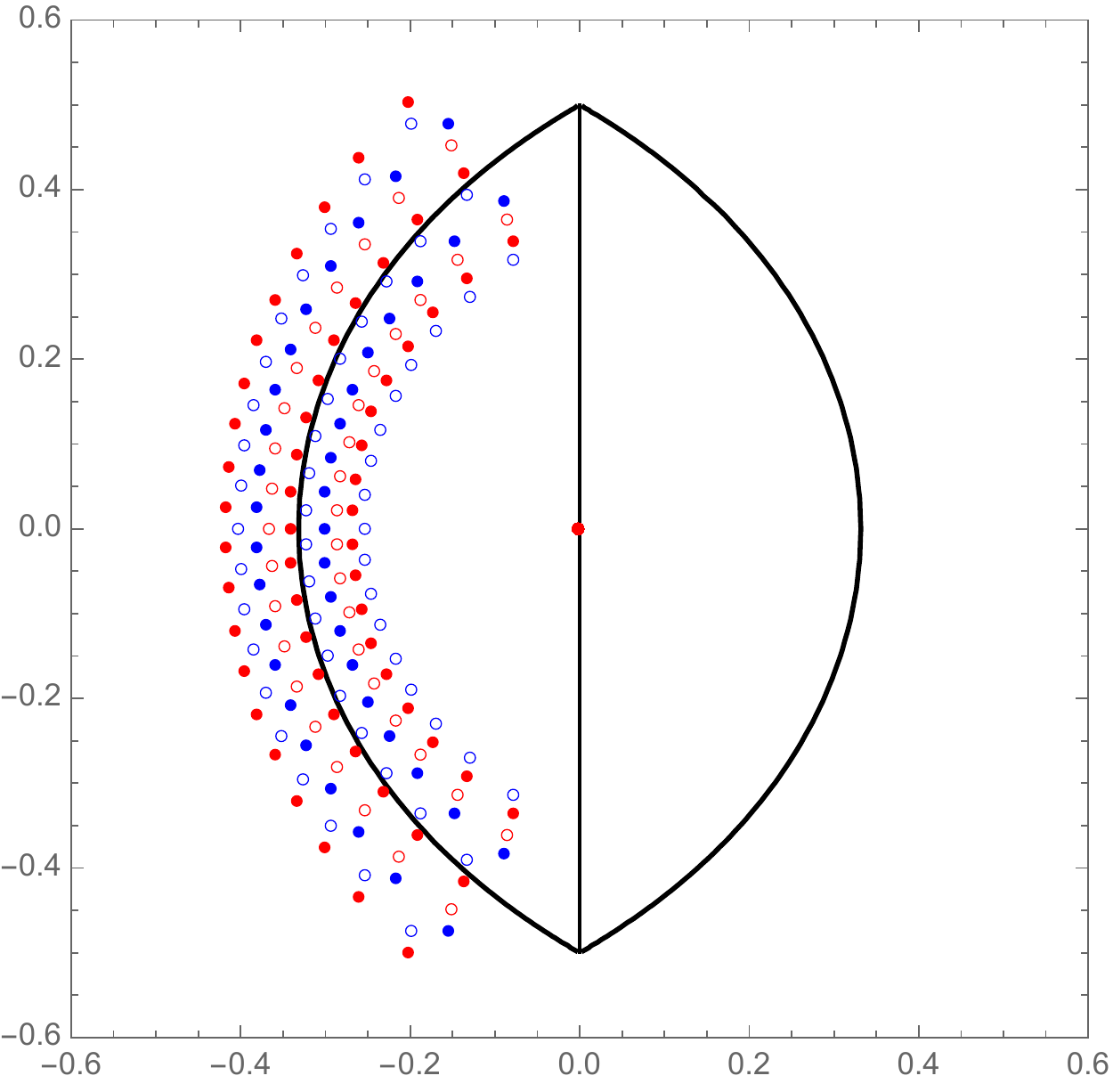}
\end{center}
\caption{As in Figure~\ref{fig:4} but for $n=20$ and $m=\tfrac{1}{2}$ (left), $m=\tfrac{3}{2}$ (center), and $m=\tfrac{5}{2}$ (right).}
\label{fig:5}
\end{figure}
From these plots we may formulate a second conjecture.
\begin{conjecture}
Suppose that $m=\tfrac{1}{2}+k$, $k\in\mathbb{Z}_{\ge 0}$.  Then as $n\to\infty$, the poles and zeros of $u_n(ny,m)$ accumulate near the left boundary arc of the domain $E$ in the $y$-plane.  In more detail, the poles and zeros are arranged along $4k+2$ non-intersecting arcs roughly parallel to and $o(1)$ distance from the left boundary arc of $E$.  The outermost curve contains $n$ poles of $u_n(ny;m)$ coming from roots of $s_n(ny;m)$ and moving inwards the next curve contains $n-1$ zeros of $u_n(ny;m)$ coming from roots of $s_{n-1}(ny;m)$.  If $k>0$ there are then $k$ families of four nested curves each; the $j^\mathrm{th}$ family lies to the outside of the $j+1^\mathrm{st}$ and consists of (in order from outside to inside, $j=1,\dots,k$):
\begin{itemize}
\item A curve containing $n-j+1$ zeros of $u_n(ny;m)$ coming from roots of $s_n(ny;m-1)$.
\item A curve containing $n-j$ poles of $u_n(ny;m)$ coming from roots of $s_{n-1}(ny;m-1)$.
\item A curve containing $n-j$ poles of $u_n(ny;m)$ coming from roots of $s_n(ny;m)$.
\item A curve containing $n-j-1$ zeros of $u_n(ny;m)$ coming from roots of $s_{n-1}(ny;m)$.
\end{itemize}
\label{conjecture:half-integers}
\end{conjecture}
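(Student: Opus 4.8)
The plan is to derive the statement from a Deift--Zhou steepest-descent analysis of Riemann-Hilbert Problem~\ref{rhp:renormalized}, carried out uniformly in a shrinking neighbourhood of the left boundary arc of $E$, after specialising to the half-integer value $m=\tfrac12+k$ at which the Riemann-Hilbert problem degenerates to a finite object. Writing $x=ny$ and
\[
\vartheta(\lambda;y):=\ii y(\lambda-\lambda^{-1})+\log\lambda,
\]
the oscillatory factors $\lambda^{n}\ee^{\ii x(\lambda-\lambda^{-1})}$ and $\lambda^{-n}\ee^{-\ii x(\lambda-\lambda^{-1})}$ in the jump matrices of Riemann-Hilbert Problem~\ref{rhp:renormalized} equal $\ee^{\pm n\vartheta(\lambda;y)}$, and the two saddle points of $\vartheta(\cdot;y)$, which solve $\ii y(\lambda^2+1)+\lambda=0$, are precisely the equilibria $p_0^\pm(y)$ of the model equation~\eqref{eq:Boutroux-model} given in~\eqref{eq:V0-four-answers}. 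The eye-shaped domain $E$, constructed in~\cite{BothnerM18}, is exactly the region in which $p_0^-(y)$ and $p_0^+(y)$ cannot be joined by a single steepest-descent arc of $\vartheta(\cdot;y)$; Conjecture~\ref{conjecture:outside}, proved there, already describes $u_n(ny;m)$ for $y\notin E$, so the remaining task is the sharp count of the poles and zeros that persist near $\partial E$ when $m$ is a half-integer.

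The first step is to exploit the degeneration at $m=\tfrac12+k$. Since $\Gamma(\tfrac12-m)=\Gamma(-k)=\infty$, the off-diagonal entries of the jumps on $\LInftyRed$ and $\LZeroRed$ vanish identically, and since $\ee^{\pm2\pi\ii m}=-1$ the diagonal factors on $\LZeroBlue$ trivialise, so Riemann-Hilbert Problem~\ref{rhp:renormalized} collapses to a single lower-triangular jump on $\LInftyBlue\cup\LZeroBlue$ with off-diagonal entry proportional to $\OurPower{\lambda}{m+1}\ee^{-n\vartheta(\lambda;y)}$. The second column of $\mathbf{Y}$ is then analytic across the contour, hence analytic on $\mathbb{C}\setminus\{0\}$, and by the prescribed behaviour at $0$ and $\infty$ it equals $\lambda^{-(k+1)}$ times a polynomial of degree at most $k$; imposing the required order-$(k+1)$ vanishing of the first column at $\lambda=0$ produces a $(k+1)\times(k+1)$ linear system of Hankel type with entries the moments $\int_{\LInftyBlue\cup\LZeroBlue}\mu^{j}\OurPower{\mu}{m+1}\ee^{-n\vartheta(\mu;y)}\,\dd\mu$ (this is the finite system of Section~\ref{sec:m-half-integer}), and replacing $m$ by $m-1$ gives a companion $k\times k$ system. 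Via~\eqref{eq:u-n-from-Y-formula} and the identity~\eqref{eq:un-fraction}, the poles and zeros of $u_n(ny;m)$ are the values of $y$ at which prescribed bordered Hankel determinants in these moments vanish; matching determinants to~\eqref{eq:un-fraction} identifies them with $s_n(ny;m)$ and $s_{n-1}(ny;m)$ (from the $(k+1)$-dimensional system) and with $s_n(ny;m-1)$ and $s_{n-1}(ny;m-1)$ (from the $k$-dimensional one), and the degree and origin-vanishing-order formulas of the Remark following Proposition~\ref{prop:polynomial-recurrence}, due to~\cite{ClarksonLL16}, record exactly how many nonzero roots of each factor must be located.

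The second step is the steepest-descent analysis of these moments. Both endpoints of $\LInftyBlue\cup\LZeroBlue$ are exponentially suppressed by $\ee^{-n\vartheta}$, so each moment is controlled by the saddles $p_0^\pm(y)$: for $y$ bounded away from $E$ a single saddle suffices, the controlling determinant ratios are smooth and nonzero, and one recovers Conjecture~\ref{conjecture:outside} while confining all nonzero roots of the four factors to a shrinking neighbourhood of $\partial E$. For $y$ near $\partial E$ both saddles must be used; the essential effect of the half-integer value of $m$ is that only the $\LInftyBlue\cup\LZeroBlue$ jump survives, which breaks the left--right symmetry of the generic problem and, as in~\cite{BothnerM18}, selects the \emph{left} arc of $\partial E$ --- in the generic case the problem would be of genus one in the interior of $E$ and the poles and zeros would fill a two-dimensional region, whereas here the genus drops and they collapse onto curves within $o(1)$ of the left eyebrow. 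Along that arc the two-saddle structure is nearly degenerate --- to leading order the relevant Hankel matrices have rank two --- so the precise pattern of roots is governed by the subleading terms of the saddle-point expansions of the moments; organizing this expansion should show that the roots arrange themselves along $4k+2$ nearly parallel curves, the systems of sizes $k+1$ and $k$ accounting for $2(k+1)$ and $2k$ of them respectively, and the precise per-arc counts $n$, $n-1$, $n-j$, $n-j\pm1$ would follow from the argument principle applied to these determinants along the left boundary of $E$, using the total degrees from~\cite{ClarksonLL16} as a rigidity constraint and the leading WKB phase to fix the spacing; the nesting order of the four families comes from the ordering of the four associated phase offsets.

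The main obstacle is this last step: establishing the \emph{precise} structure --- exactly $4k+2$ non-intersecting arcs, each carrying exactly the stated number $n$, $n-1$, $n-j$, or $n-j\pm1$ of poles or zeros --- rather than merely the leading-order accumulation set, which is comparatively soft. This requires a uniform local parametrix valid along the entire left eyebrow of $E$, including the corner points $y=\pm\tfrac{\ii}{2}$ where $p_0^+(y)$ and $p_0^-(y)$ collide and the local model changes type; it requires identifying the degree-$O(k)$ rational correction to the outer parametrix forced by the residual algebraic factor $\OurPower{\mu}{m+1}$ with $m+1=k+\tfrac{3}{2}$, and controlling its zeros and poles once the steepest-descent deformation is undone; and proving that the arcs do not intersect demands in addition that the four phase functions be in general position along the eyebrow. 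This uniform near-boundary analysis, absent from~\cite{BothnerM18}, is where essentially all of the technical content and any genuinely new ideas will be concentrated.
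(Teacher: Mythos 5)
You should first note what the paper actually does with this statement: it is a \emph{conjecture}, not a theorem. The paper offers no proof of it --- its ``evidence'' consists of the numerical pole/zero plots in Figures~\ref{fig:4}--\ref{fig:5}, together with the remark that a suitably precise form is proven in the companion work \cite{BothnerM18} by classical steepest descent applied to the Hankel systems with Bessel-function coefficients derived in Section~\ref{sec:m-half-integer}. So there is no in-paper argument to compare yours against; what can be said is that your outline coincides with the strategy the paper announces (degeneration of Riemann--Hilbert Problem~\ref{rhp:renormalized} at $m=\tfrac12+k$ to a $(k+1)\times(k+1)$ Hankel system in the moments $I^\pm_{n,k,j}$, followed by saddle-point analysis with the saddles $p_0^\pm(y)$ of $\vartheta$), and your computations of the degeneration (vanishing of $1/\Gamma(\tfrac12-m)$, trivialization of the diagonal on $\LZeroBlue$, analyticity of the second column, the Laurent-polynomial ansatz) reproduce Section~\ref{sec:m-half-integer} correctly.

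As a proof, however, the proposal has two genuine gaps. First, the identification of specific bordered Hankel determinants with the four individual factors $s_n(\cdot;m)$, $s_{n-1}(\cdot;m)$, $s_n(\cdot;m-1)$, $s_{n-1}(\cdot;m-1)$ of \eqref{eq:un-fraction} is asserted, not derived: formula \eqref{eq:u-n-from-Y-formula} expresses $u_n$ through only three quantities ($Y^\infty_{1,12}$, $Y^0_{0,11}$, $Y^0_{0,12}$), and the conjecture attributes each arc to a \emph{particular} polynomial factor, so one needs a determinantal (tau-function type) identity tying each factor to a specific minor of $\mathbf{H}^\pm_{n,k}$ and $\mathbf{H}^\pm_{n,k-1}$; your ``companion $k\times k$ system at $m-1$'' is the right instinct but the matching is nontrivial and is where the degree/vanishing-order data of \cite{ClarksonLL16} must actually be used, not merely cited as a consistency check. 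Second --- and you say this yourself --- the entire quantitative content of the conjecture (exactly $4k+2$ non-intersecting arcs, with exactly $n$, $n-1$, $n-j$, $n-j\pm1$ roots on each, in the stated nesting order) is left as an ``obstacle'' requiring a uniform parametrix along the left eyebrow including the corners $y=\pm\tfrac{\ii}{2}$. Since that is precisely the assertion to be proved, the proposal is a credible research plan rather than a proof; it does not close the conjecture any more than the paper itself does.
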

A suitably precise form of Conjecture~\ref{conjecture:half-integers} is proven in \cite{BothnerM18} using classical steepest descent analysis for certain Hankel systems with Bessel function coefficients derived from Riemann-Hilbert Problem~\ref{rhp:renormalized} in Section~\ref{sec:m-half-integer} below.\bigskip

Comparing Figures~\ref{fig:1}--\ref{fig:3} with Figures~\ref{fig:4}--\ref{fig:5} makes clear that the asymptotic behavior of $u_n(x;m)$ cannot possibly be uniform with respect to $m$ in any neighborhood of a half-integral value.  It appears to therefore be compelling to investigate how $u_n(x;m)$ behaves if $n$ is large while simultaneously $m$ is close to a given half-integer.  Such an experiment is reproduced in Figure~\ref{fig:6}.
\begin{figure}[h]
\begin{center}
\includegraphics[width=0.3\linewidth]{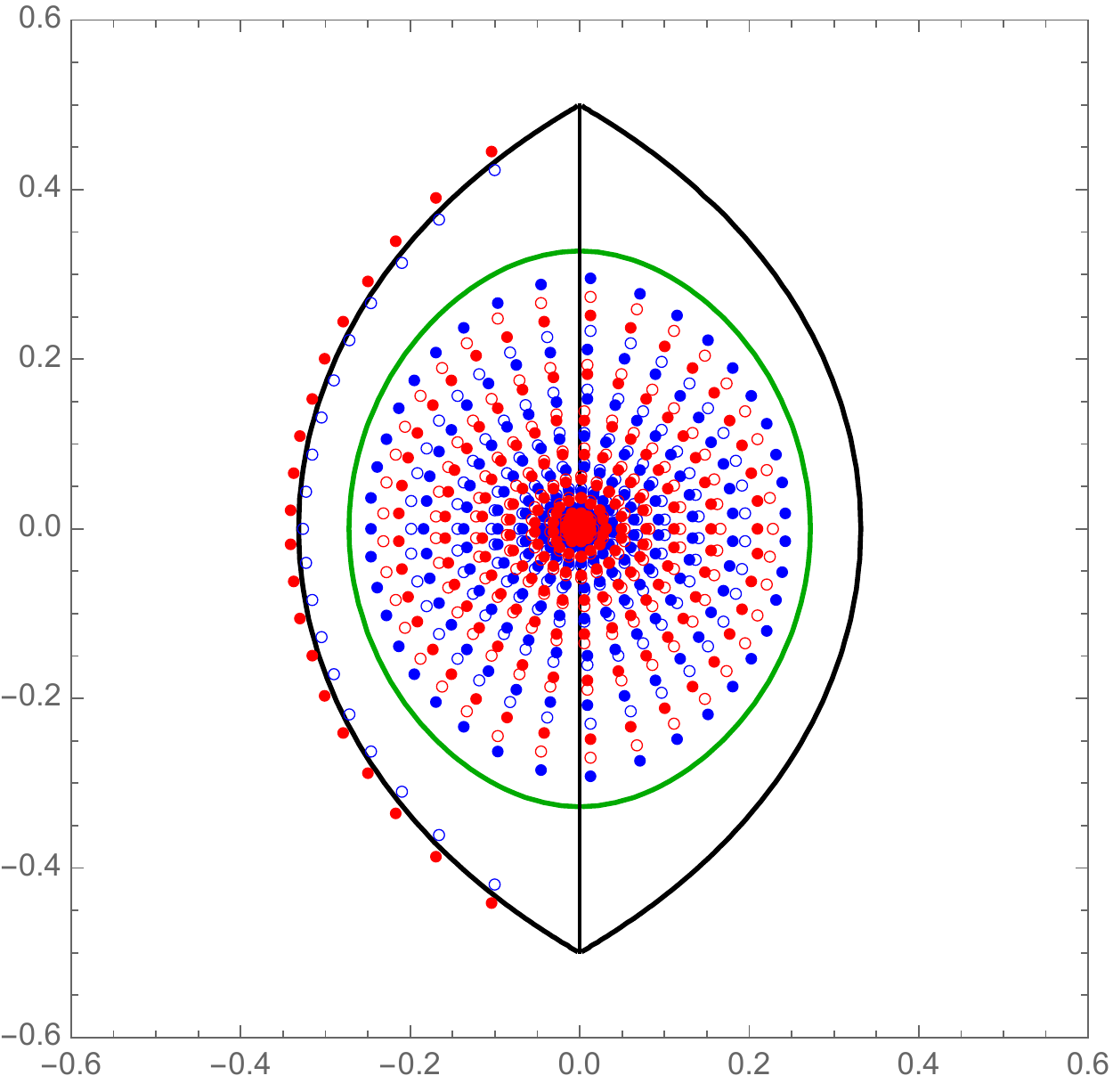}\hspace{0.03\linewidth}%
\includegraphics[width=0.3\linewidth]{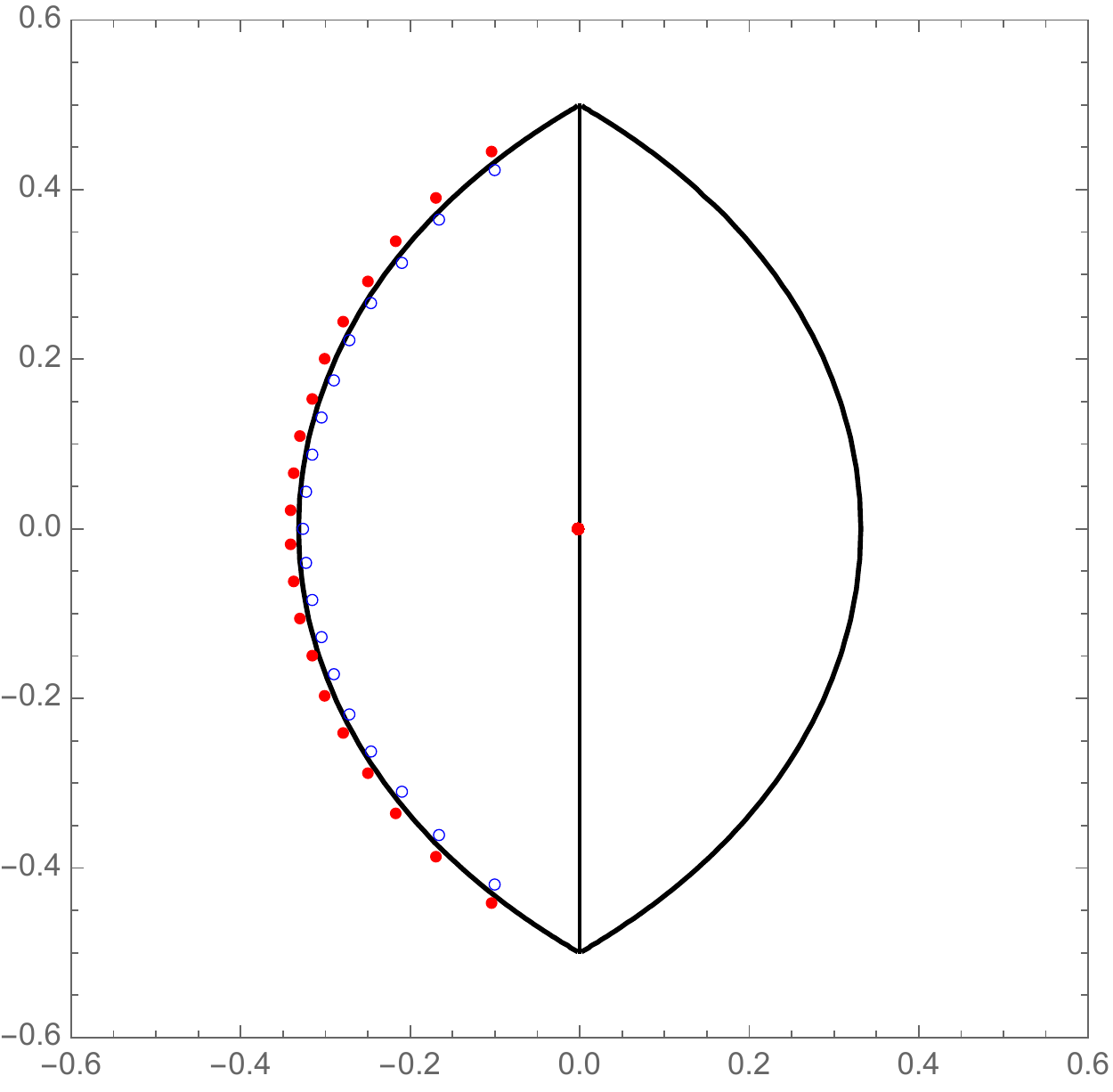}\hspace{0.03\linewidth}%
\includegraphics[width=0.3\linewidth]{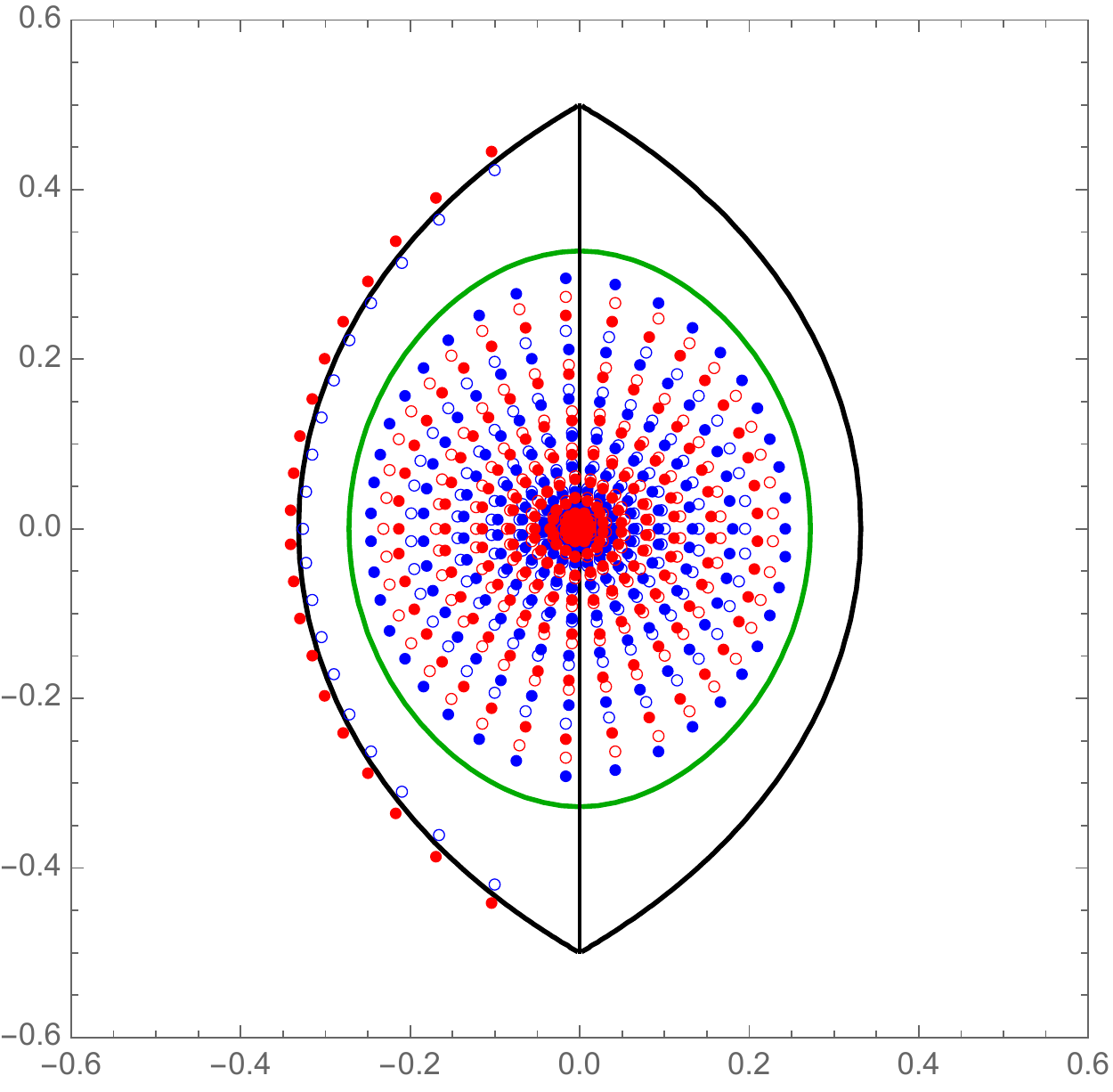}
\end{center}
\caption{As in Figure~\ref{fig:1} but for $n=20$ and $m=\tfrac{1}{2}-10^{-4}$ (left), $m=\tfrac{1}{2}$ (center), and $m=\tfrac{1}{2}+10^{-4}$ (right).  Superimposed in green is another curve that better approximates the central pole/zero region in a double-scaling limit where $n$ grows while $m$ approaches a half-integer \cite{BothnerM18}.}
\label{fig:6}
\end{figure}
This figure suggests that if $m$ is taken to be very close to a half-integer, the majority of the poles and zeros of $u_n(x;m)$ are captured in the midst of a process in which they are collapsing toward the origin, leaving just a small fraction of them near the left (for positive half-integer $m$) ``eyebrow''. 
In this situation, the domain containing the majority of the poles and zeros appears to be smaller than the full domain $E$.  This collapse process can be studied \cite{BothnerM18} with the help of Theorem~\ref{thm:RH-representation} and asymptotic analysis in a double-scaling limit in which $n$ is large and $m$ differs from a half-integer by an exponentially small amount.  The green curve plotted in Figure~\ref{fig:6} is one of the outcomes of this analysis.  The same analysis shows that the convergence claimed in Conjecture~\ref{conjecture:outside} also holds for $y$ in the annular region between the boundary of $E$ and the green curve, as well as near the right ``eyebrow'' (but something more like Conjecture~\ref{conjecture:half-integers} occurs near the left ``eyebrow'').\bigskip  

Taking now $m\not\in\mathbb{Z}+\tfrac{1}{2}$, an interesting question suggested by the scaling analysis above is whether $u_n(ny_0+w;m)$ behaves asymptotically (as a function of $w$ for fixed $y_0\in E$) like an elliptic function solving \eqref{eq:dotV-ODE} for a suitable choice of integration constant $C$ such that the quartic $P$ has four distinct roots.  To investigate this, we select a point $y_0$ in the domain $E$ and display in Figure~\ref{fig:7} the poles and zeros of $u_n(ny_0+w;m)$ in the $w$-plane.
\begin{figure}[h]
\begin{center}
\includegraphics[width=0.3\linewidth]{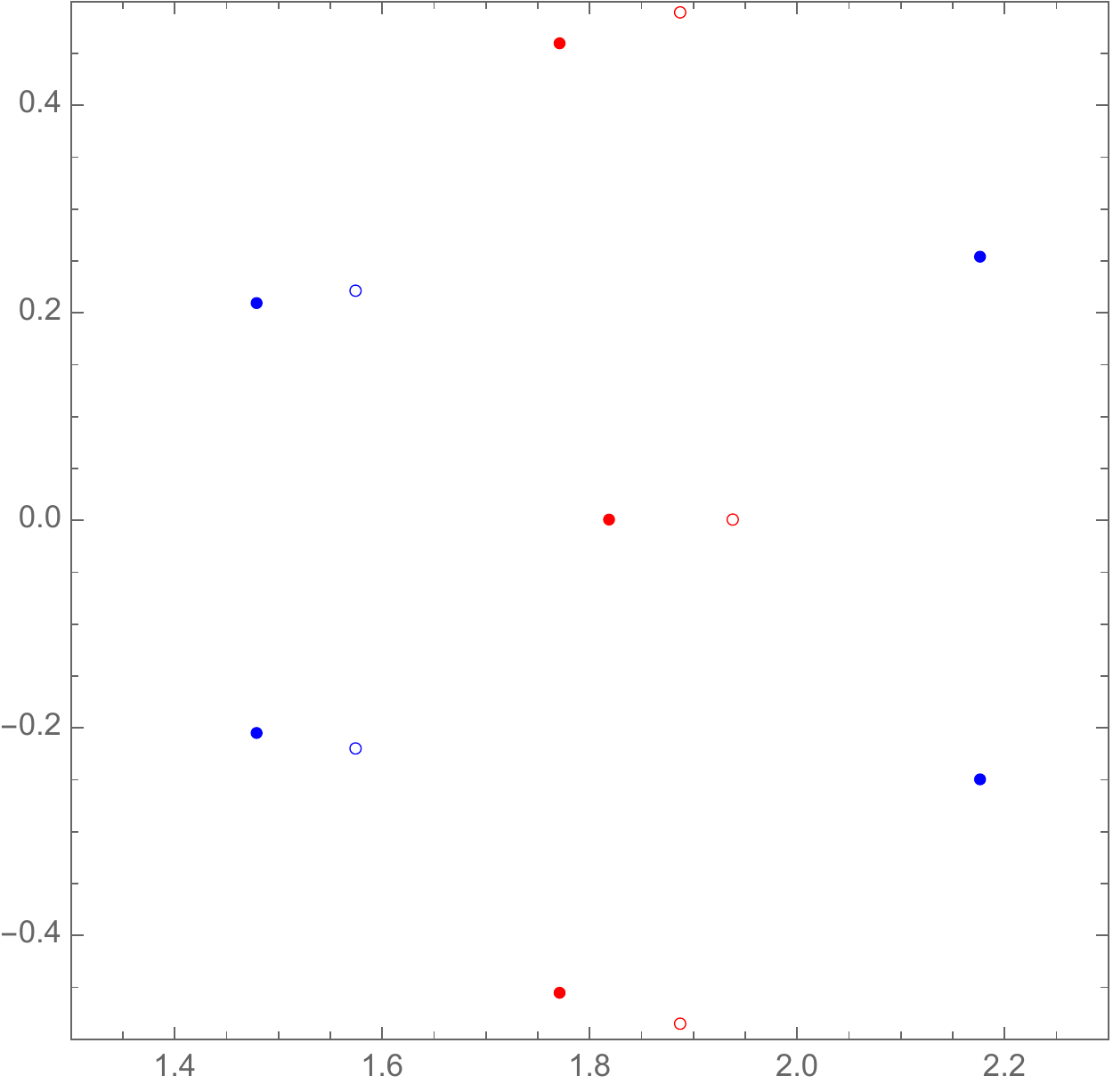}\hspace{0.03\linewidth}%
\includegraphics[width=0.3\linewidth]{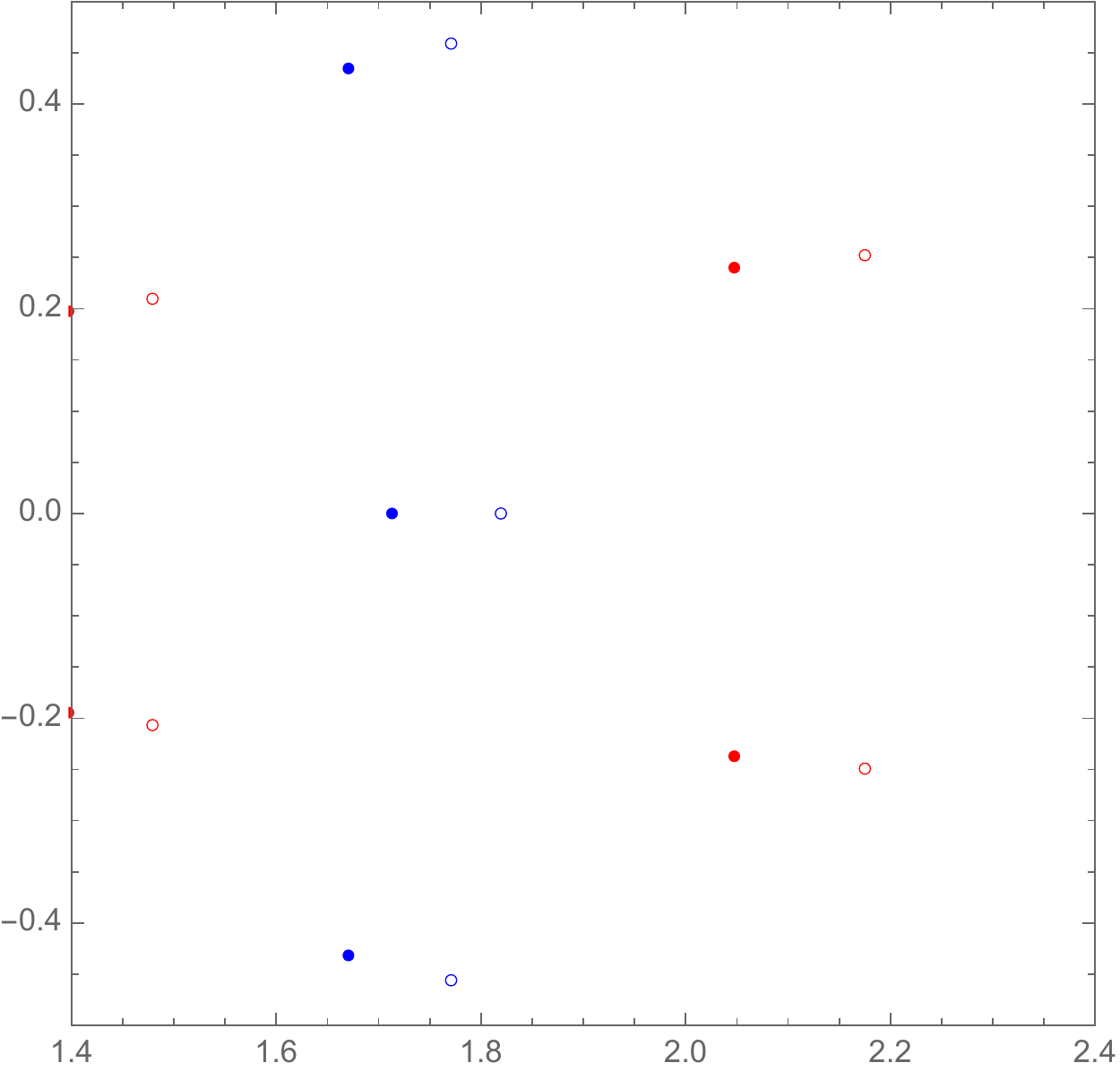}\hspace{0.03\linewidth}%
\includegraphics[width=0.3\linewidth]{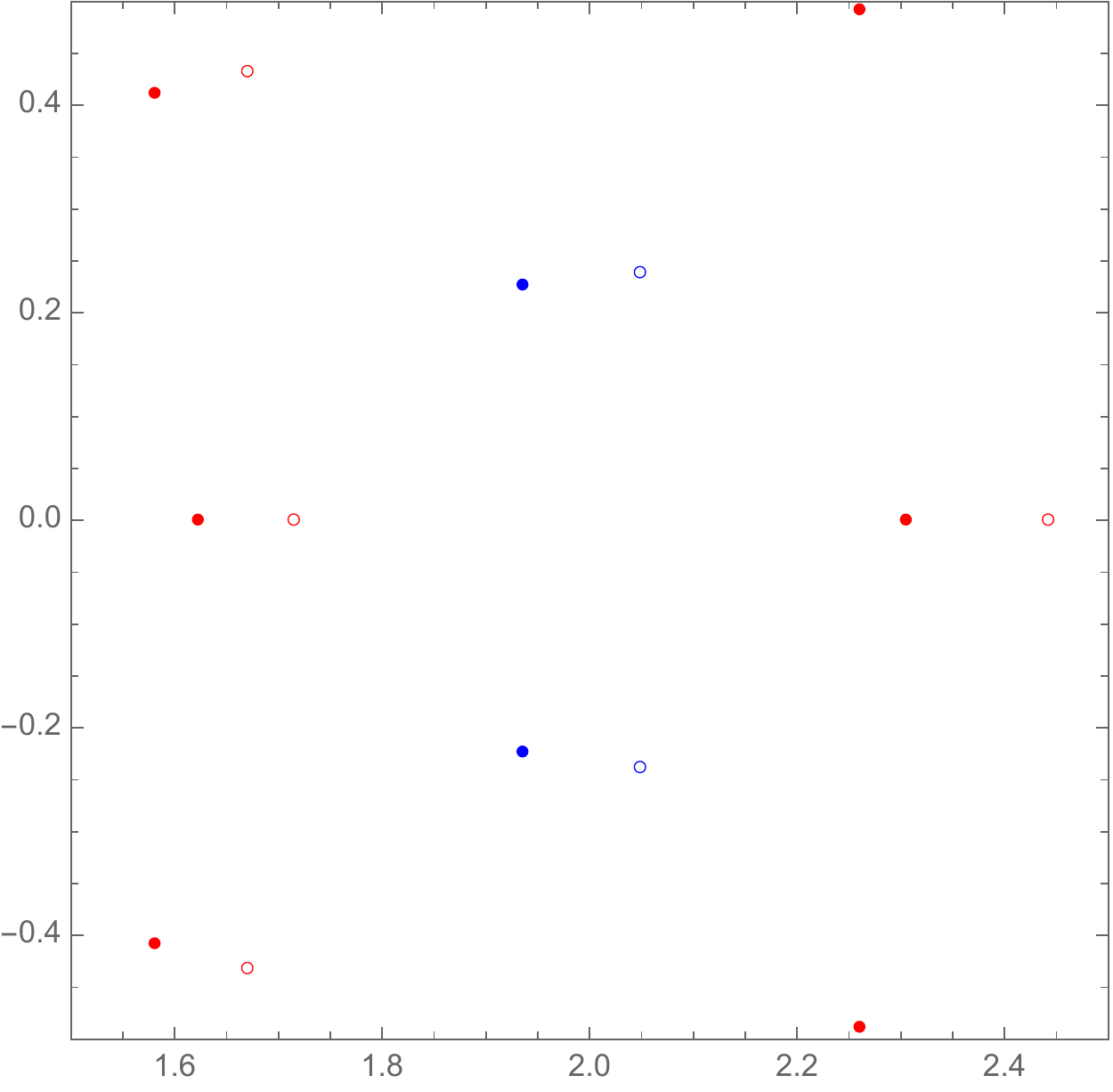}
\end{center}
\caption{As in Figure~\ref{fig:1} but plotted in the $w$-plane for $m=0$ and $y_0=0.1$ with $n=18$ (left), $n=19$ (center), and $n=20$ (right).}
\label{fig:7}
\end{figure}
This figure suggests that indeed for given large $n$, the poles and zeros are arranged roughly in a doubly-periodic lattice, with the lattice becoming more rigid as $n$ increases.  An important observation is that the lattice does not appear to become fixed as $n$ increases, although its lattice vectors do.  To the contrary, there appears to be a strong fluctuation of the offset of the lattice as $n$ is increased in integer increments.  These observations are consistent with the approximation of $u_n(ny_0+w;m)$ by a family of solutions of the autonomous elliptic function differential equation \eqref{eq:dotV-ODE} differing by an $n$-dependent shift in the argument $w$.  We formulate this as a conjecture.
\begin{conjecture}
Assume that $m\not\in\mathbb{Z}+\tfrac{1}{2}$ is fixed, and fix $y_0\in E$.  Then there is a solution $\dot{p}=\dot{p}_n(w;y_0)$ (an elliptic function of $w$) of the differential equation \eqref{eq:dotV-ODE} for suitable $C=C(y_0)$ such that the quartic $P$ has distinct roots, for which
\begin{equation}
\lim_{n\to\infty} \left(u_n(ny_0+w;m)-\ii\dot{p}_n(w;y_0)\right)=0.
\end{equation}
\label{conjecture:elliptic}
\end{conjecture}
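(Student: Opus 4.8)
The plan is to perform a Deift--Zhou steepest-descent analysis of Riemann--Hilbert Problem~\ref{rhp:renormalized} in the limit $n\to\infty$ with $x=ny_0+w$, $y_0\in E$ fixed and $w$ confined to a compact set, and then to extract the asymptotics of $u_n(ny_0+w;m)$ from the formula \eqref{eq:u-n-from-Y-formula}. Writing the off-diagonal exponents of the jumps \eqref{eq:Yjump-1}--\eqref{eq:Yjump-4} in the form $\pm\big(\ii x(\lambda-\lambda^{-1})+n\log\lambda+\cdots\big)$ and substituting $x=ny_0+w$ exposes the large parameter $n$ multiplying the controlling phase $\varphi(\lambda;y_0):=\ii y_0(\lambda-\lambda^{-1})+\log\lambda$, the dependence on $w$ and $m$ entering only through the bounded corrections $\ii w(\lambda-\lambda^{-1})$ and $\mp(m+1)\log\lambda$. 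The first and most substantial step is the construction of a scalar $g$-function: one seeks $g(\lambda)$, analytic off a system of ``bands'', whose derivative has branch points precisely at the four roots of the quartic $P(\lambda;y_0,C)$ of \eqref{eq:dotV-ODE} and the same singular behavior as $\varphi'$ at $\lambda=0$ and $\lambda=\infty$. For such a $g$ to exist with the branch points simple one imposes the Boutroux conditions: the real parts of the periods of $g'(\lambda)\,\dd\lambda$ over a homology basis of the genus-one curve $\mu^2=P(\lambda;y_0,C)$ must vanish. I expect these two real equations to determine $C=C(y_0)$, and the statement $y_0\in E$ to mean exactly that this genus-one configuration (with all four roots simple and an admissible Stokes graph for $\varphi$) is the valid one, in contrast to the genus-zero configuration of \eqref{eq:V0-four-answers} governing Conjecture~\ref{conjecture:outside} outside $E$.

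With $g$ in hand the analysis follows the usual template. After the transformation $\mathbf{Y}\mapsto\mathbf{Y}\ee^{ng(\lambda)\sigma_3}$ (together with the appropriate renormalizations at $\lambda=0,\infty$) and the opening of lenses around the bands, the jumps become exponentially close to $\mathbb{I}$ away from the four branch points and from $\lambda=0,\infty$. A global parametrix is built on the genus-one curve in terms of Riemann theta functions; local parametrices at the four simple branch points are Airy parametrices; and at $\lambda=0$ and $\lambda=\infty$ one needs Bessel- and confluent-hypergeometric-type local parametrices, which is where the parameter $m$ enters, through the power-law factors $\OurPower{\lambda}{\mp(m+1)}$, the Gamma factors $\Gamma(\tfrac{1}{2}\mp m)$, and the endpoint normalization $\OurPower{\lambda}{-(m+\tfrac{1}{2})\sigma_3}$ of Riemann--Hilbert Problem~\ref{rhp:renormalized}. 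A standard small-norm estimate then shows that the ratio of $\mathbf{Y}$ to the assembled parametrix equals $\mathbb{I}+O(n^{-1})$, uniformly for $w$ in compact subsets of the complement of the (common, $n$-dependent) pole locus.

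It then remains to feed the parametrix into \eqref{eq:u-n-from-Y-formula}, i.e.\ to read $Y^\infty_{1,12}$ off the $\lambda\to\infty$ expansion \eqref{eq:Y-expand-infty} and $Y^0_{0,11}Y^0_{0,12}$ off the $\lambda\to0$ expansion \eqref{eq:Y-expand-zero} of the theta-function model. The result is a ratio of theta functions whose arguments are affine in $w$ --- produced by integrating the bounded phase $\ii w(\lambda-\lambda^{-1})$ against the holomorphic differential --- plus an extra term $n\vec{\Omega}(y_0)$, coming from $n$ times the $B$-periods of $\dd g$ (nontrivial because of the $\log\lambda$ in $\varphi$), reduced modulo the period lattice. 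This reproduces the behavior seen in Figure~\ref{fig:7}: an elliptic function of $w$ whose lattice (the modulus of the curve, a function of $y_0$ alone) is fixed but whose offset drifts with $n$. One defines $\ii\dot{p}_n(w;y_0)$ to be this leading-order expression; it is then an exact elliptic function of $w$ for each $n$, and the standard finite-gap integration (or a direct computation with the theta formula) shows that it solves \eqref{eq:dotV-ODE} for $C=C(y_0)$. Since $u_n$ satisfies \eqref{eq:PIII} exactly and $u_n(ny_0+w;m)-\ii\dot{p}_n(w;y_0)=O(n^{-1})$ off the poles by the previous step, the difference tends to $0$ as claimed (which, conversely, also forces $\dot p_n$ to solve the limiting equation \eqref{eq:dotV-ODE}).

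The main obstacle is the first step: proving that the Boutroux conditions are solvable for every $y_0\in E$, that the resulting Stokes graph of $\varphi(\,\cdot\,;y_0)$ has the topology needed to place bands and gaps, and that this structure persists uniformly as $y_0$ ranges over $E$, including control of the degenerations at $\partial E$ and at the corners $y=\pm\tfrac{1}{2}\ii$. A second genuine difficulty is the endpoint analysis at $\lambda=0,\infty$: one must verify that the local parametrices there are well-defined and nondegenerate, and this is precisely where the hypothesis $m\notin\mathbb{Z}+\tfrac{1}{2}$ is used --- at half-integers the Gamma factors and the power $\OurPower{\lambda}{-(m+\tfrac{1}{2})\sigma_3}$ degenerate, the whole construction collapses to the finite Hankel system of Section~\ref{sec:m-half-integer}, and the conclusion changes (compare Conjecture~\ref{conjecture:half-integers}). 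Finally, because $u_n$ has poles the convergence must be phrased with care --- as locally uniform convergence in $w$ off the poles of $\dot p_n$, or equivalently in the sense of meromorphic functions --- with the matching of pole locations following automatically from the local analysis; making Conjecture~\ref{conjecture:elliptic} precise in this way is part of the task, as carried out in \cite{BothnerM18}.
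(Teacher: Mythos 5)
This statement is one of the paper's \emph{conjectures}: the paper itself contains no proof of it, only the remark that it is established in the companion work \cite{BothnerM18} by asymptotic analysis of Riemann--Hilbert Problem~\ref{rhp:renormalized}, that part of that proof consists of isolating the correct constant $C=C(y_0)$, and that $w$ must avoid neighborhoods of certain lattice points. Your outline is exactly the strategy the authors indicate --- a $g$-function with branch points at the roots of $P$, Boutroux (vanishing-real-period) conditions determining $C(y_0)$, a genus-one theta-function parametrix whose argument carries an $n$-dependent shift $n\vec{\Omega}(y_0)$ explaining the drifting lattice of Figure~\ref{fig:7}, local parametrices at $\lambda=0,\infty$ where $m$ and the hypothesis $m\notin\mathbb{Z}+\tfrac12$ enter, and extraction of $u_n$ from \eqref{eq:u-n-from-Y-formula} via \eqref{eq:Y-expand-infty}--\eqref{eq:Y-expand-zero}. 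So the approach is right and consistent with everything this paper says about the forthcoming proof.

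However, as a proof it is a program rather than an argument: the steps you yourself flag as ``the main obstacle'' are precisely the mathematical content that would have to be supplied. Nothing in the proposal establishes that the Boutroux conditions are solvable (let alone uniquely and continuously) for every $y_0\in E$, that the four roots of $P(\cdot\,;y_0,C(y_0))$ remain distinct there, that an admissible lens/band--gap contour geometry compatible with the specific jump structure \eqref{eq:Yjump-1}--\eqref{eq:Yjump-4} and with the argument-increment constraints on $L$ exists, or that the resulting theta-function ratio actually satisfies \eqref{eq:dotV-ODE} with the chosen $C(y_0)$; the last point in particular is asserted by appeal to ``standard finite-gap integration'' but is a nontrivial identity that must be checked against the normalization of $P$ in \eqref{eq:dotV-ODE}. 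The small-norm step also requires uniformity of the error as $w$ varies near the (moving, $n$-dependent) zeros of the theta function in the denominator, which is exactly why the paper warns that $w$ must be kept away from certain lattice points; your closing paragraph acknowledges this but does not resolve it. In short: correct blueprint, matching the authors' intended route in \cite{BothnerM18}, but with the decisive analytic steps left open, so it cannot be regarded as a proof of Conjecture~\ref{conjecture:elliptic}.
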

This conjecture is proved in \cite{BothnerM18} using Theorem~\ref{thm:RH-representation}.  Part of the proof involves isolating the correct value of the integration constant $C$ given $y_0\in E$.  It is also important in the proof that $y_0$ not lie on the imaginary axis, which is excluded from $E$ as shown in Figures~\ref{fig:1}--\ref{fig:6}.  Also, $w$ should be restricted to a bounded domain that excludes arbitrarily small fixed neighborhoods of certain lattice points.\bigskip

We have already pointed out that the two ``corner points'' of the eye-shaped domain $E$ occur at the values $y=y_0=\pm\tfrac{1}{2}\ii$.  These values are the only ones for which the quartic $P$ can have only one four-fold root.  This particularly severe degeneration of the quartic suggests that the rational solution $u_n(x;m)$ may behave in a special way for large $n$ when $x\approx\pm\tfrac{1}{2}\ii n$, a notion that is reinforced by another suitable rescaling of \eqref{eq:PIII-again}.  Indeed, to localize $y=x/n$ near $y_0=\pm \tfrac{1}{2}\ii$, we set $x=\pm\ii (\tfrac{1}{2}n + (\tfrac{1}{32}n)^{1/3}\xi^\pm)$ and consider $\xi^\pm$ to be bounded.  Similarly, since $\lNaught_0^+(\pm \tfrac{1}{2}\ii n)
=\pm 1$, we wish to localize $u$ near $\pm \ii$ so we set $u=\pm\ii(1 - (\tfrac{1}{4}n)^{-1/3}W^\pm)$ and consider $W^\pm$ to be bounded.  (The exponents of $\pm \tfrac{1}{3}$ are chosen to achieve a dominant balance, and the numerical coefficients of $\tfrac{1}{32}$ and $\tfrac{1}{4}$ are chosen for convenience.)  Making these substitutions, we multiply \eqref{eq:PIII-again} through by $\mp\tfrac{1}{8}\ii xu(x)$ and obtain
\begin{equation*}
\frac{\dd^2W}{\dd\xi^2}=2W^3+\xi W +m + O(n^{-1/3}),\quad \xi=\xi^\pm,\quad W=W^\pm,
\end{equation*}
where again the final term combines several others all proportional to $n^{-1/3}$ or more negative powers of $n$.  Neglecting the error terms and relabeling $W$ as $\dot{W}$ yields as a model equation
\begin{equation}
\frac{\dd^2\dot{W}}{\dd\xi^2}=2\dot{W}^3+\xi\dot{W}+m
\label{eq:PII}
\end{equation}
which is the Painlev\'e-II equation with parameter $m$.  Based on this calculation, we may expect that when $n$ is large and $m$ is held fixed, the rational Painlev\'e-III functions behave near the points $x=\pm\tfrac{1}{2}\ii n$ like certain solutions of the Painlev\'e-II equation \eqref{eq:PII}; moreover, the dependence on the fixed parameter $m$ becomes apparent at leading order in this approximation.
To explore this possibility, we plot the poles and zeros of $u_n(x;m)$ in the $\xi^\pm$ planes for two fixed values of $m$ and for increasing $n$ in Figures~\ref{fig:8}--\ref{fig:11}.
\begin{figure}[h]
\begin{center}
\includegraphics[width=0.3\linewidth]{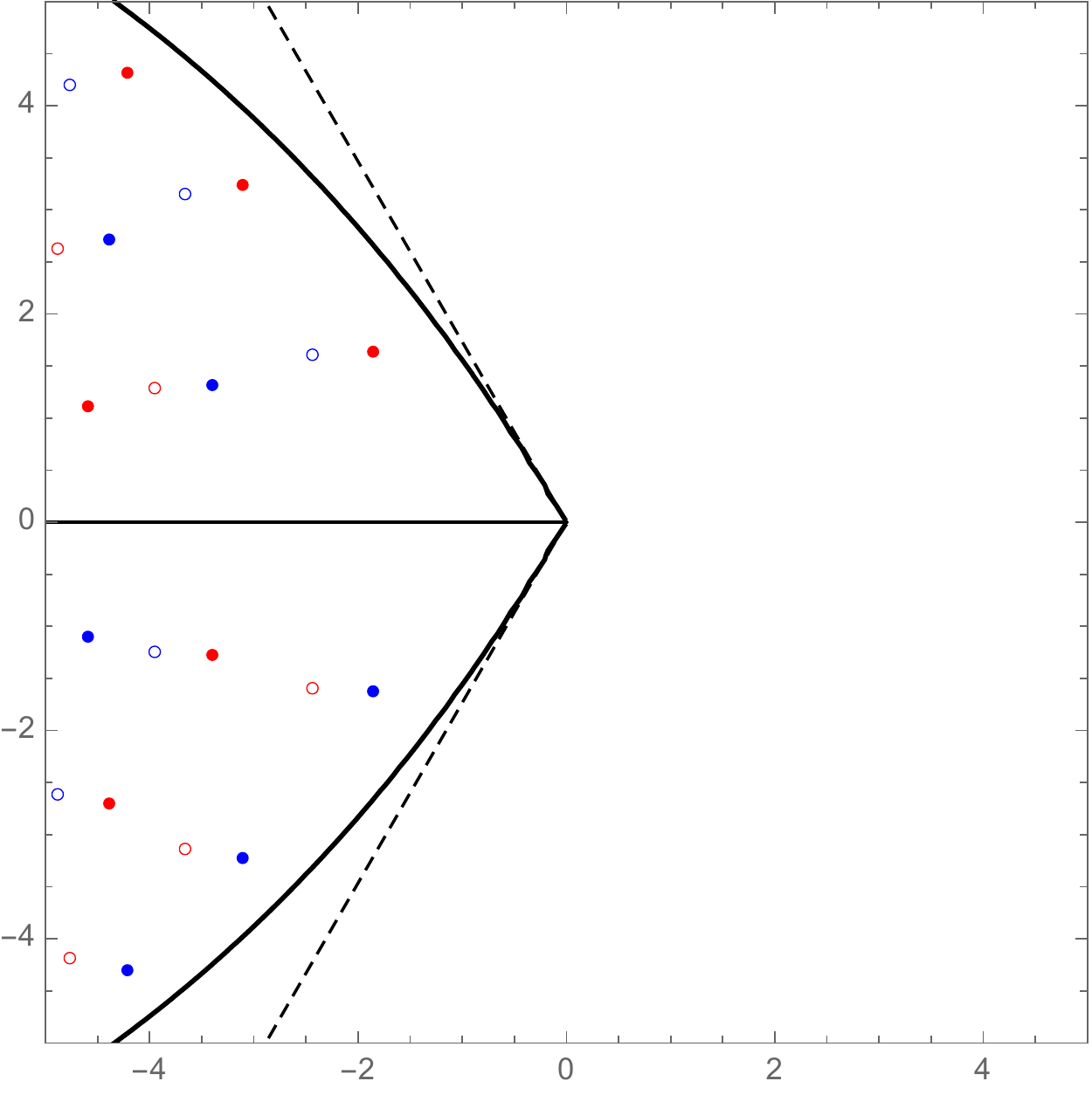}\hspace{0.03\linewidth}%
\includegraphics[width=0.3\linewidth]{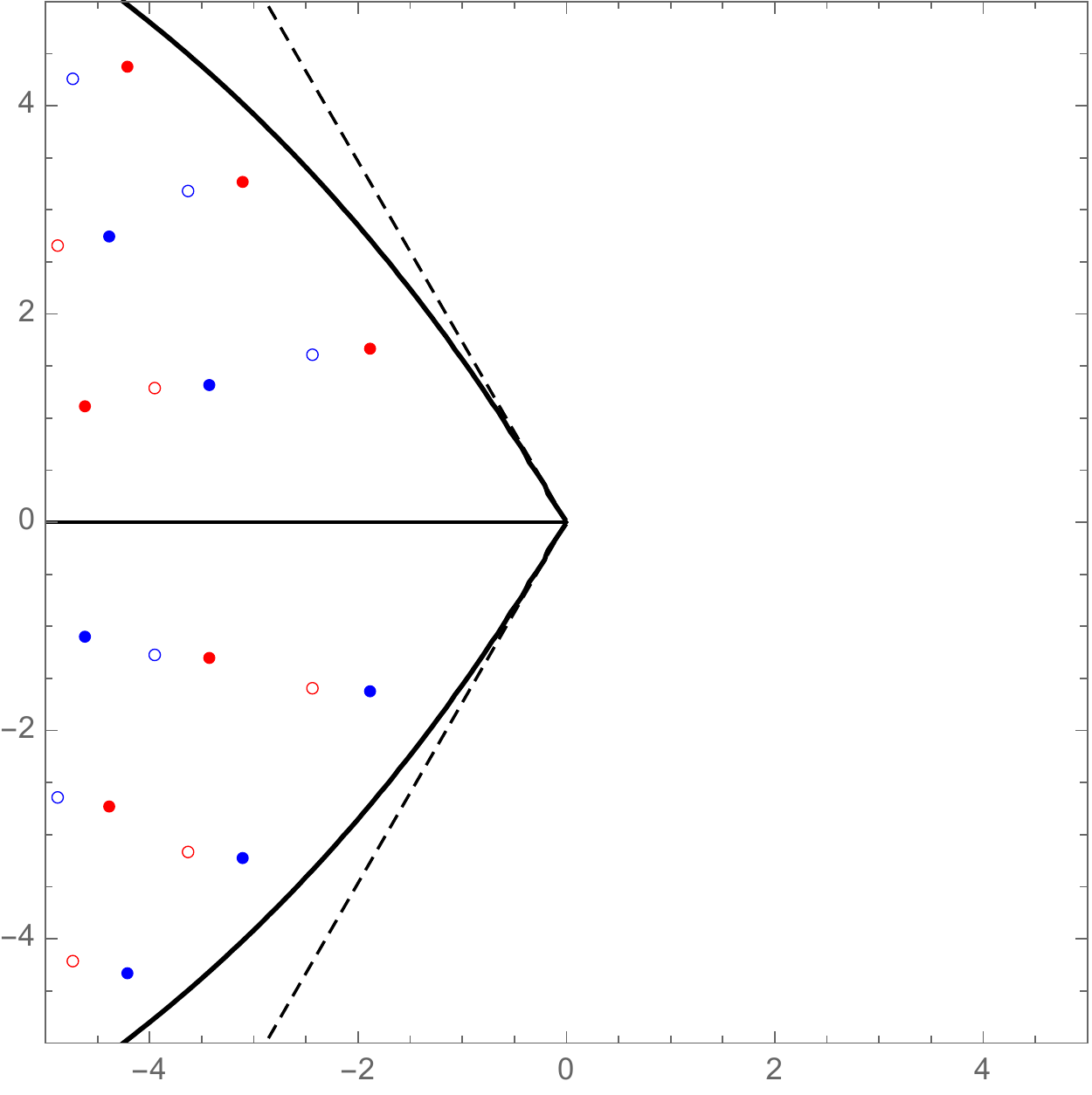}\hspace{0.03\linewidth}%
\includegraphics[width=0.3\linewidth]{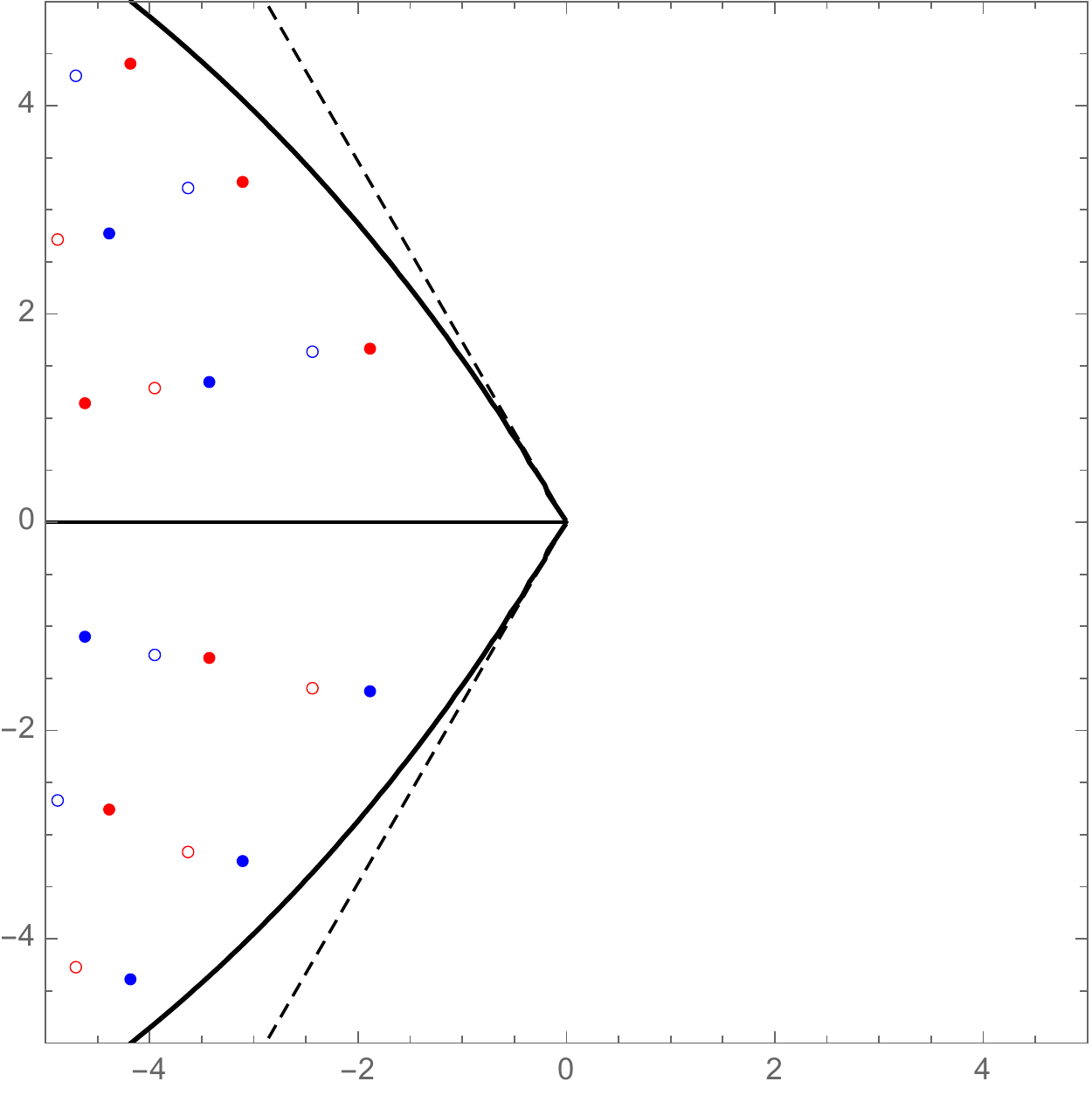}
\end{center}
\caption{As in Figure~\ref{fig:1} but plotted in the $\xi^+$-plane for $m=0$ and $n=18$ (left), $n=19$ (center), and $n=20$ (right).  Also shown with dashed lines are the rays $\mathrm{Arg}(\xi^+)=\pm \tfrac{2}{3}\pi$, which are the tangents to the boundary of $E$ at the upper corner.}
\label{fig:8}
\end{figure}
\begin{figure}[h]
\begin{center}
\includegraphics[width=0.3\linewidth]{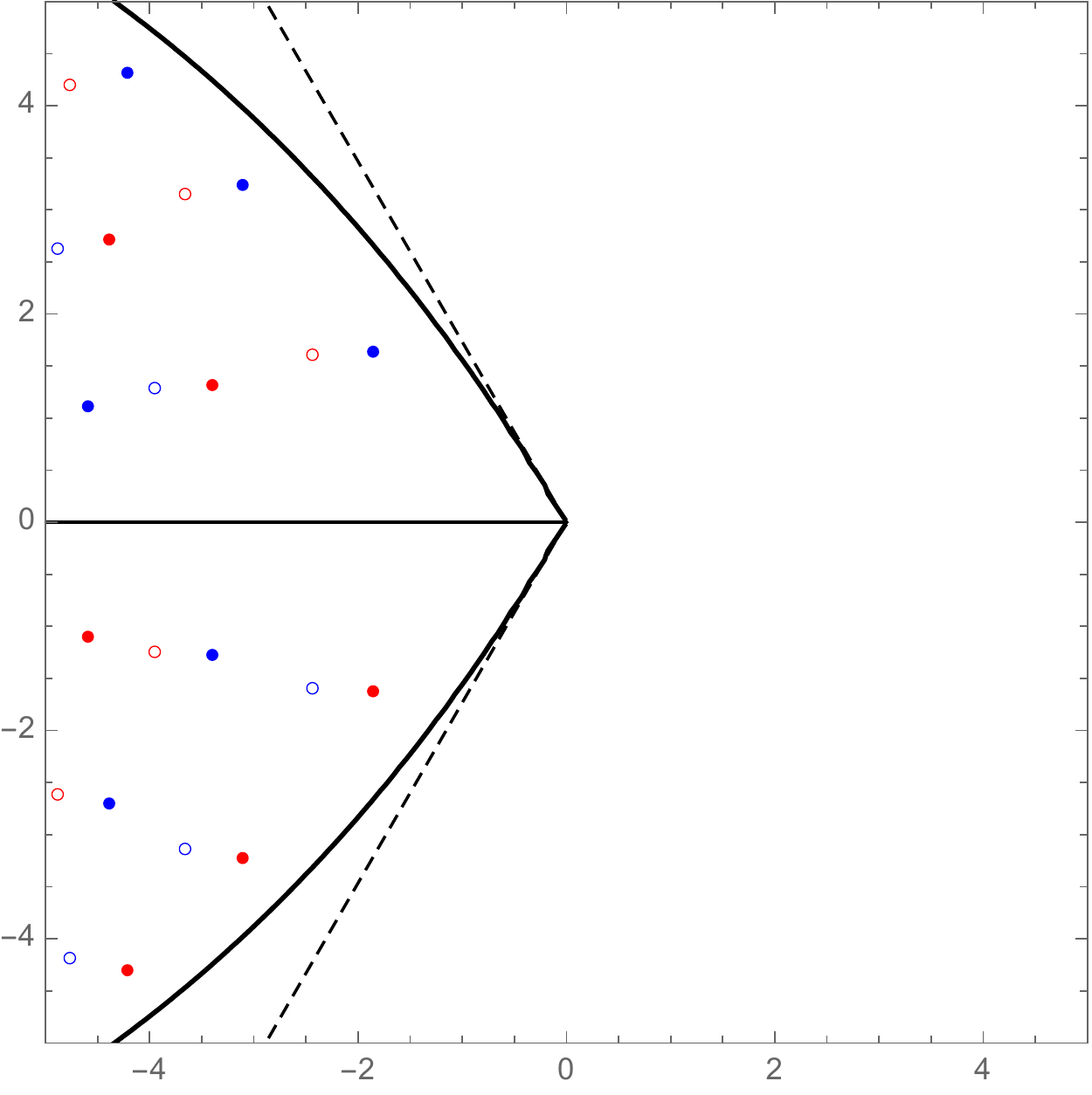}\hspace{0.03\linewidth}%
\includegraphics[width=0.3\linewidth]{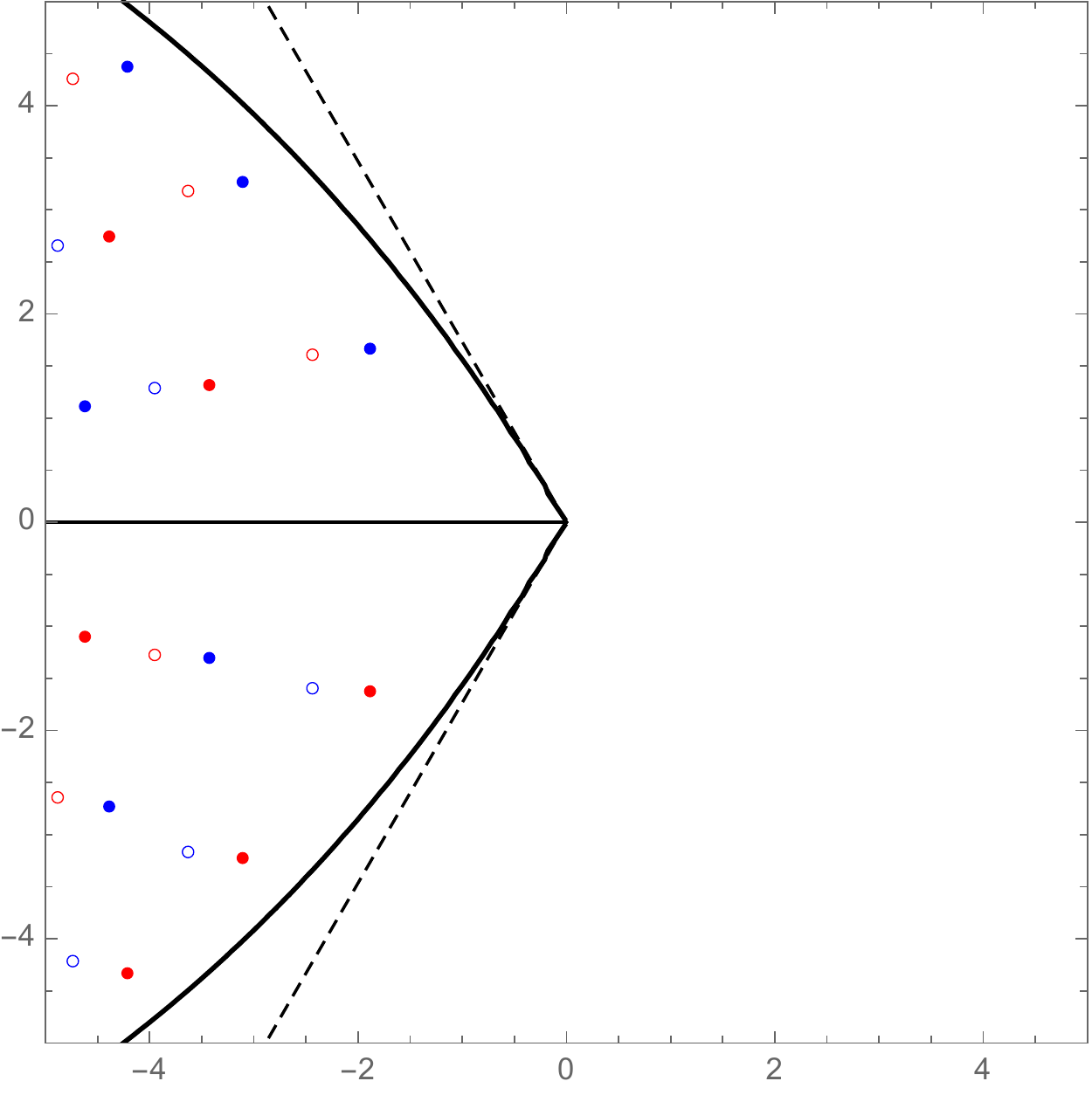}\hspace{0.03\linewidth}%
\includegraphics[width=0.3\linewidth]{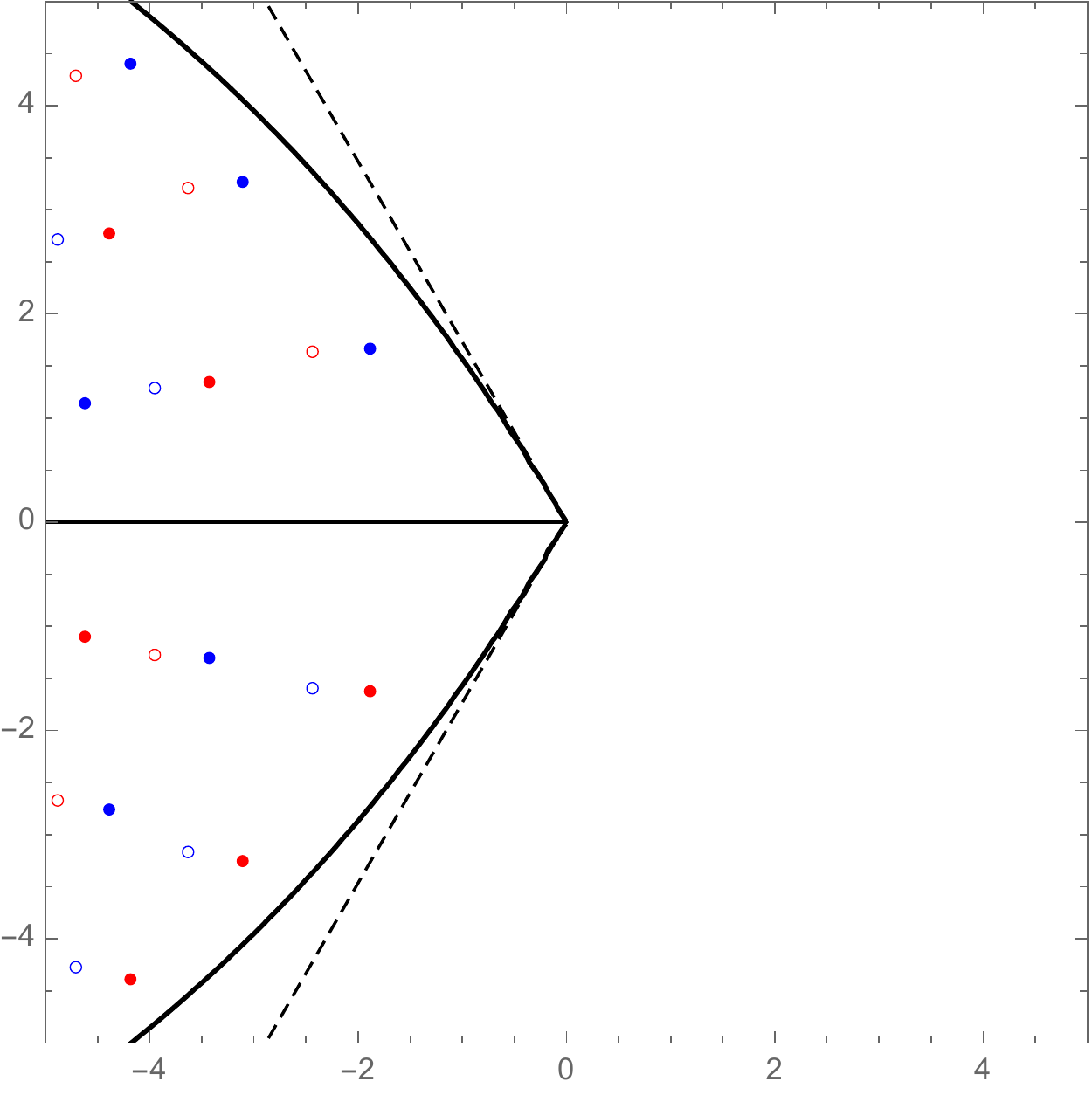}
\end{center}
\caption{As in Figure~\ref{fig:1} but plotted in the $\xi^-$-plane for $m=0$ and $n=18$ (left), $n=19$ (center), and $n=20$ (right).  Also shown with dashed lines are the rays $\mathrm{Arg}(\xi^-)=\pm \tfrac{2}{3}\pi$, which are the tangents to the boundary of $E$ at the lower corner.}
\label{fig:9}
\end{figure}
\begin{figure}[h]
\begin{center}
\includegraphics[width=0.3\linewidth]{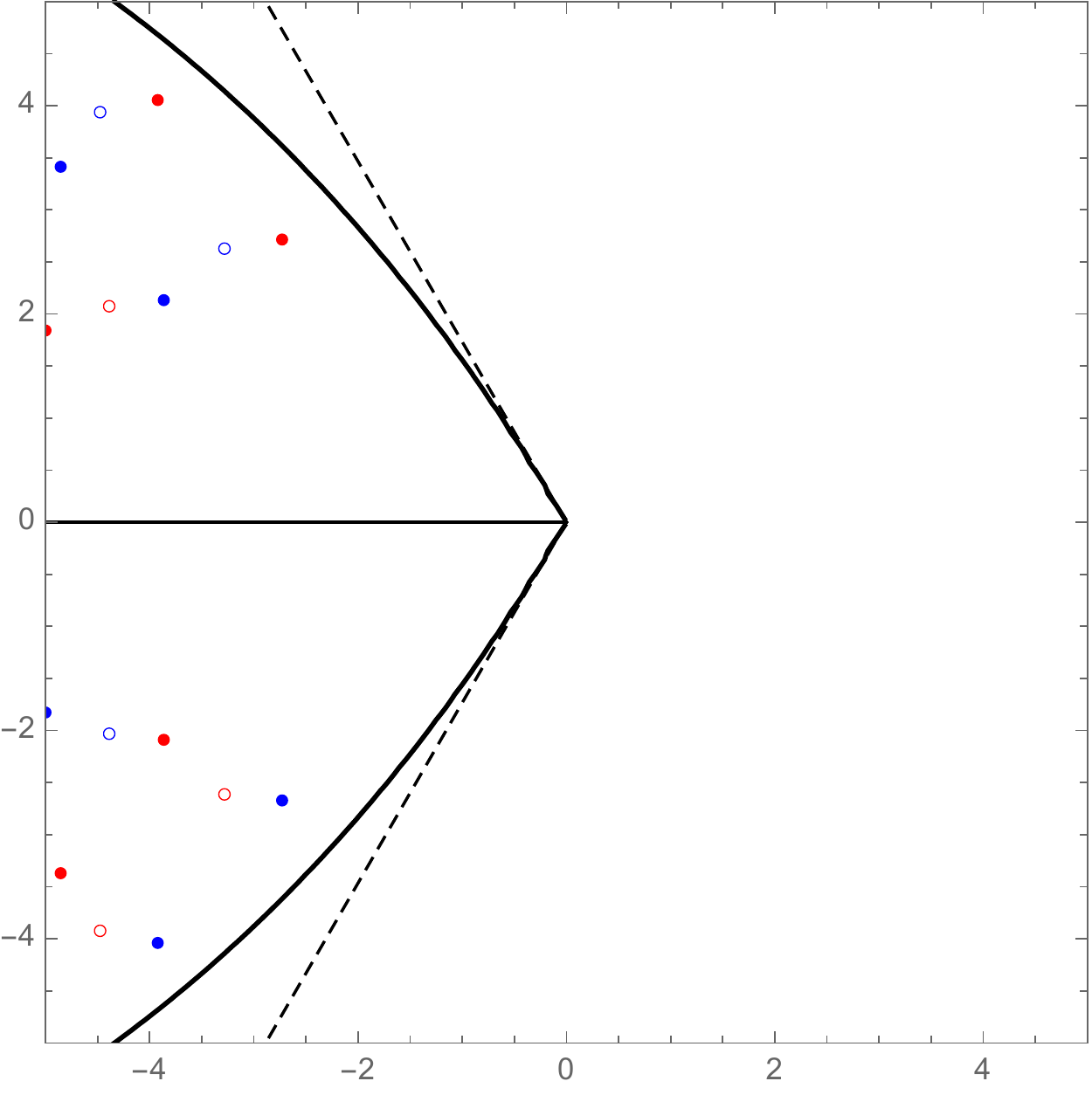}\hspace{0.03\linewidth}%
\includegraphics[width=0.3\linewidth]{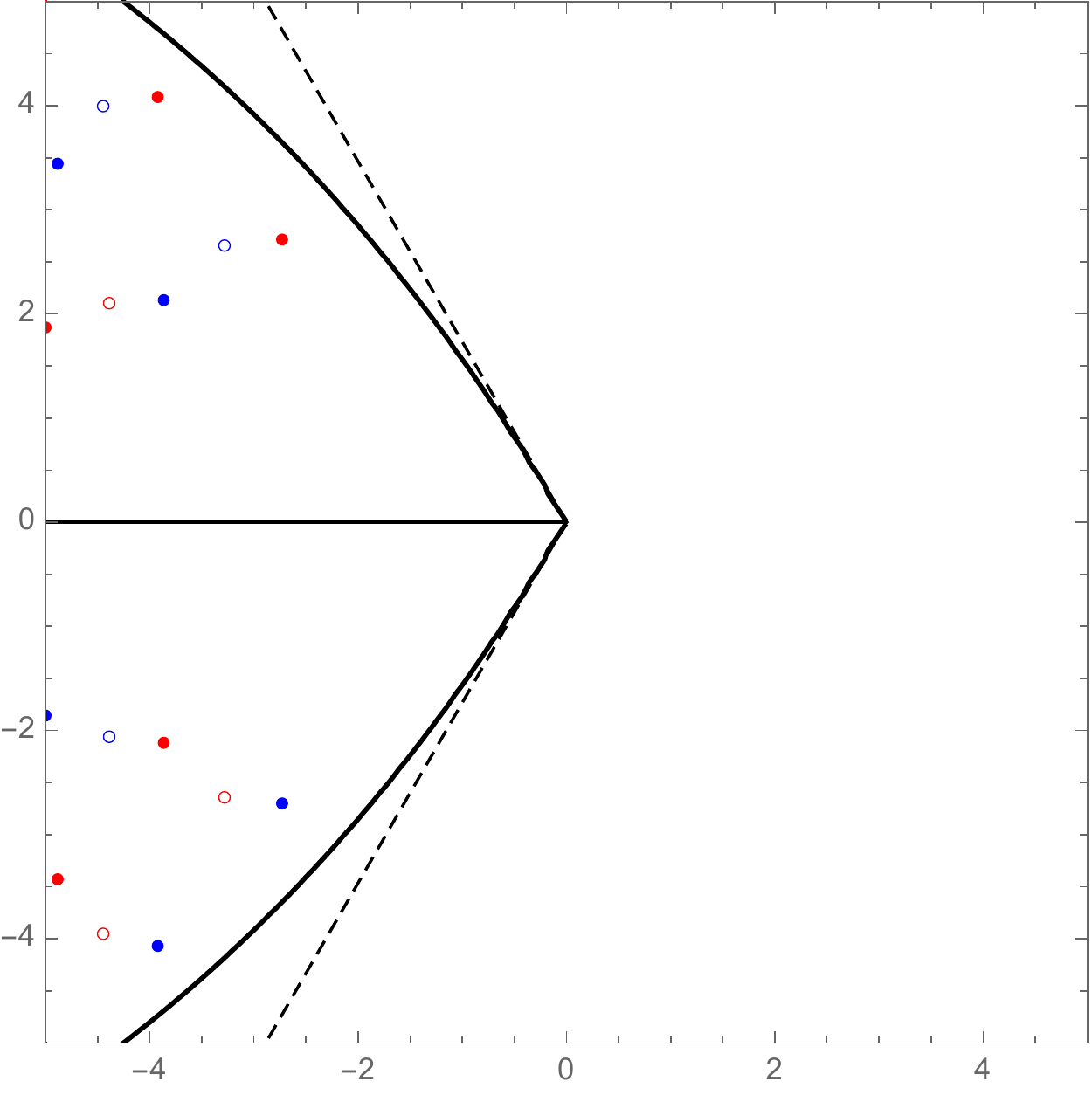}\hspace{0.03\linewidth}%
\includegraphics[width=0.3\linewidth]{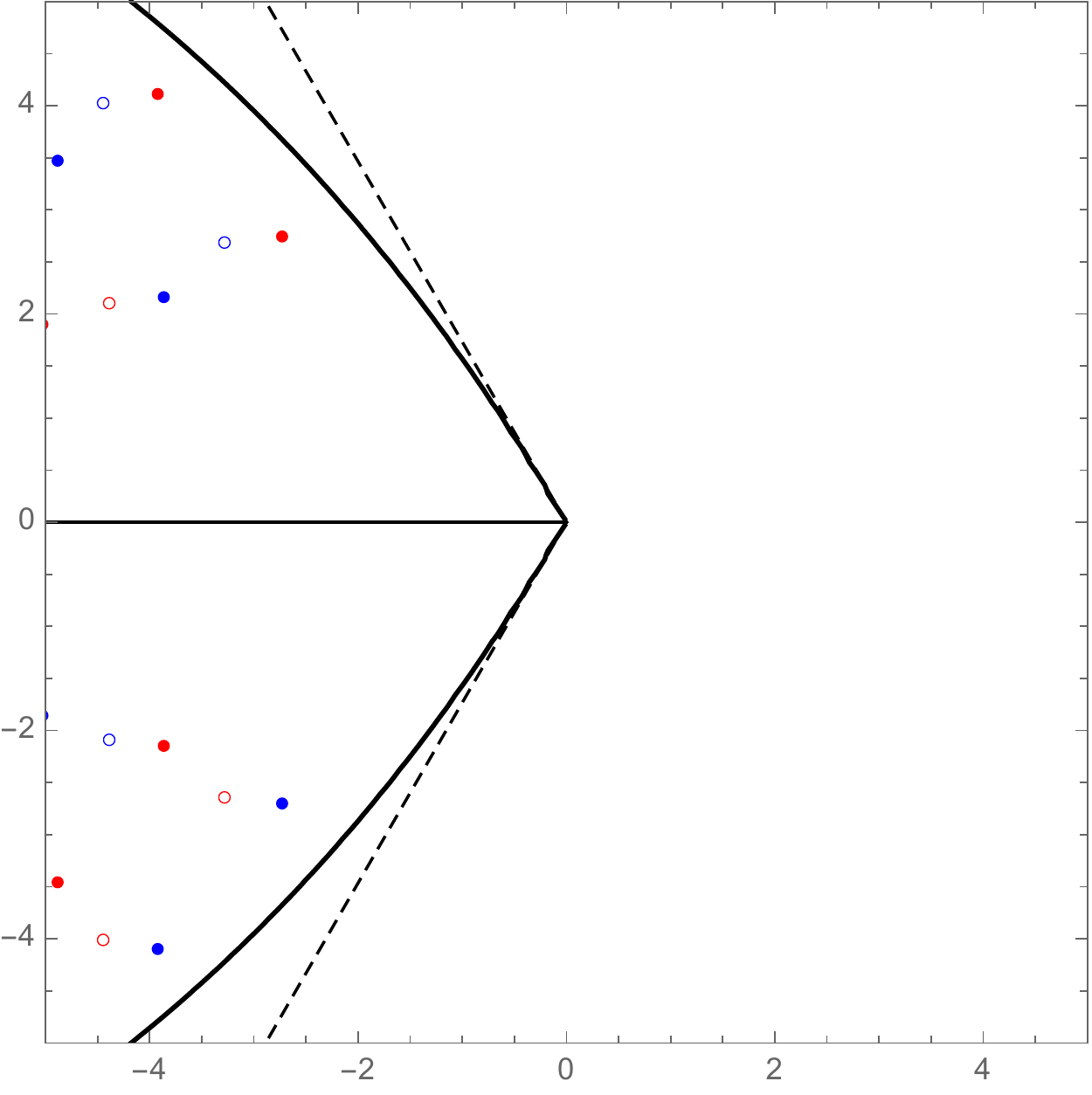}
\end{center}
\caption{As in Figure~\ref{fig:8} (zooming into the upper corner of the domain $E$) but for $m=\tfrac{4}{5}\ii$.}
\label{fig:10}
\end{figure}
\begin{figure}[h]
\begin{center}
\includegraphics[width=0.3\linewidth]{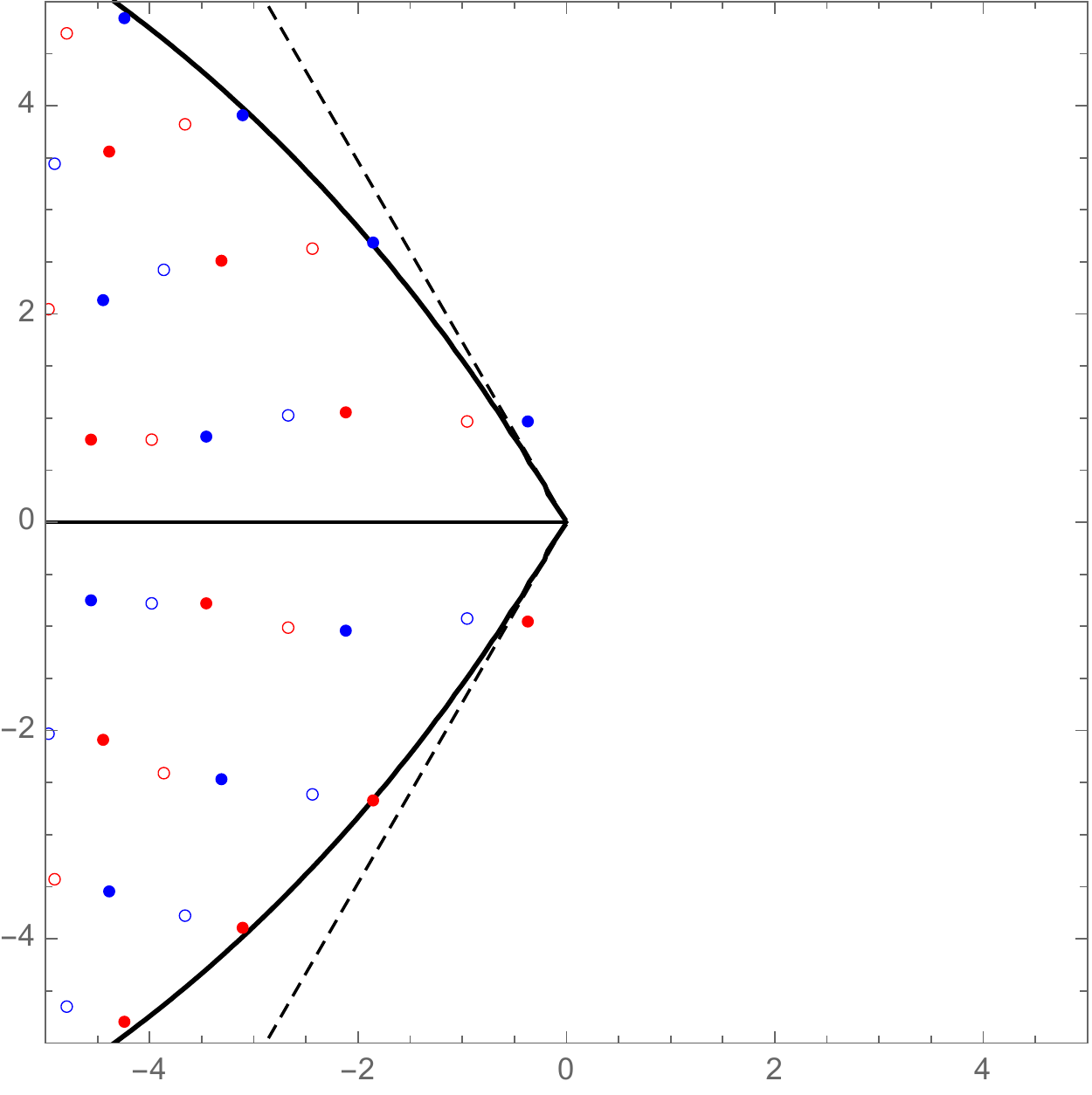}\hspace{0.03\linewidth}%
\includegraphics[width=0.3\linewidth]{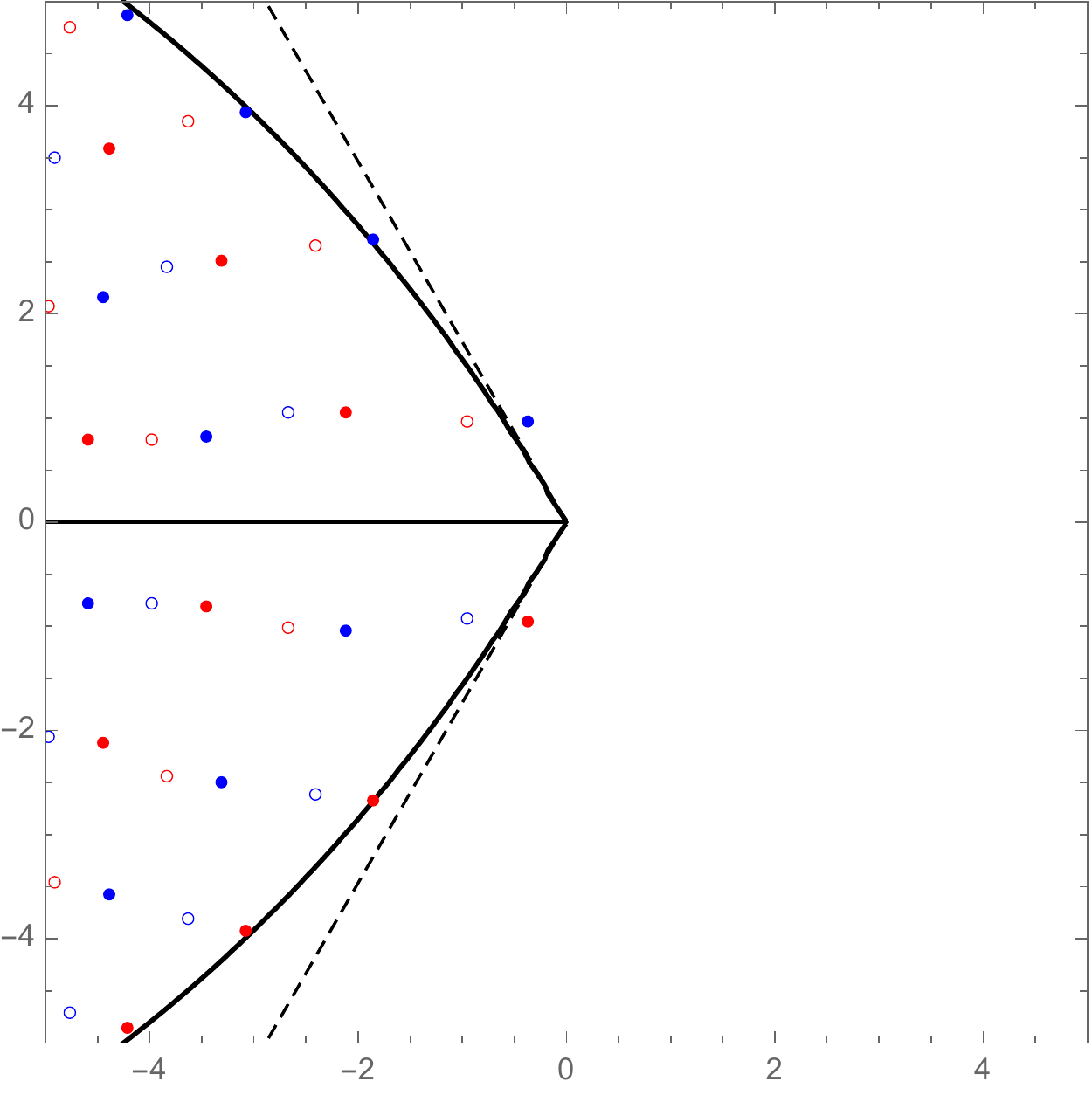}\hspace{0.03\linewidth}%
\includegraphics[width=0.3\linewidth]{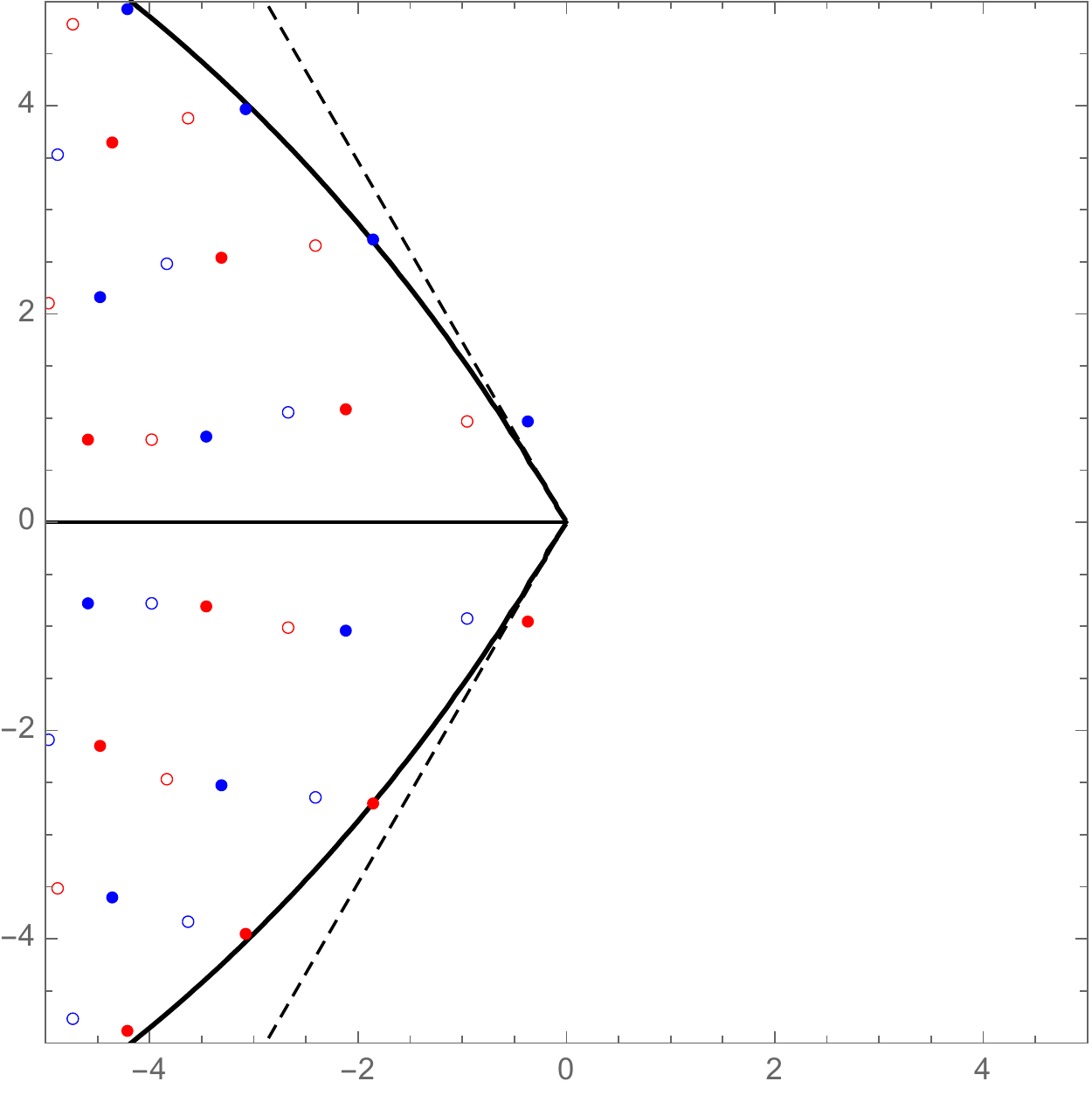}
\end{center}
\caption{As in Figure~\ref{fig:9} (zooming into the lower corner of the domain $E$) but for $m=\tfrac{4}{5}\ii$.}
\label{fig:11}
\end{figure}

In each of these figures, the three plots for consecutive reasonably large values of $n$ are nearly indistinguishable to the eye, suggesting convergence to a particular solution of \eqref{eq:PII} independent of $n$. To try to identify the relevant particular solutions, we may start with the outer approximation given in Conjecture~\ref{conjecture:outside} and re-express it in terms of the recentered and rescaled independent variables $\xi^\pm$, taking careful account of the principal branch interpretation of the square root in \eqref{eq:V0-four-answers}.  Thus, $u_n(x;m)\approx \ii p_0^+(y)=\ii p_0^+(n^{-1}x)=\pm\ii 2^{1/6}n^{-1/3}(\xi^\pm)^{1/2}+O(n^{-2/3}\xi^\pm)$ assuming that Conjecture~\ref{conjecture:outside} holds and that $\xi^\pm$ is small compared to $n^{2/3}$.  If this expression is to agree in some overlap domain with an approximation based on the Painlev\'e-II equation \eqref{eq:PII}, we should express $W=W^\pm$ in terms of $u_n(x;m)\approx \ii p_0^+(y)$.  Thus, 
$W^\pm=(\tfrac{1}{4}n)^{1/3}(1\pm\ii u_n(x;m))\approx (\tfrac{1}{4}n)^{1/3}(1\mp p_0^+(y)) = \pm\ii (\tfrac{1}{2}\xi^\pm)^{1/2}+O(n^{-1/3}\xi^\pm)$ if also $\xi^\pm$ is small compared to $n^{1/3}$.  Assumption of an overlap domain then suggests that the relevant solutions of the Painlev\'e-II equation \eqref{eq:PII} should satisfy $\dot{W}^\pm\sim \pm \ii (\tfrac{1}{2}\xi^\pm)^{1/2}$ as $\xi^\pm\to\infty$ in the exterior domain where the outer approximation is valid. In the limit $n\to\infty$, this region corresponds to the sector $\mathrm{Arg}(\xi^\pm)\in (-\tfrac{2}{3}\pi,\tfrac{2}{3}\pi)$.
It is known that \cite[Chapter 11]{FokasIKN06} for each complex $m$ there are two and only two solutions of the Painlev\'e-II equation \eqref{eq:PII} denoted $\dot{W}=\dot{W}^\pm(\xi;m)$ with the asymptotic behavior $\dot{W}^\pm(\xi;m)\sim \pm\ii(\tfrac{1}{2}\xi)^{1/2}$ as $\xi\to\infty$ with $|\mathrm{Arg}(\xi)|\le \tfrac{2}{3}\pi-\epsilon$ for $\epsilon>0$ sufficiently small, where the one-half power denotes the principal branch.  These are known as \emph{tritronqu\'ee} solutions of \eqref{eq:PII}.  We are led to formulate the following conjecture.
\begin{conjecture}
Let $m\in\mathbb{C}$ be fixed.  Then,
\begin{equation}
\lim_{n\to\infty}\left(\frac{1}{4}n\right)^{1/3}(1\pm\ii u_n(\pm\ii(\tfrac{1}{2} n +(\tfrac{1}{32}n)^{1/3}\xi);m)=\dot{W}^\pm(\xi;m),
\end{equation}
where $\dot{W}=\dot{W}^\pm(\xi;m)$ are the aforementioned
tritronqu\'ee solutions of the Painlev\'e-II equation \eqref{eq:PII}. 
\label{conjecture:PII}
\end{conjecture}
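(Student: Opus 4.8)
The plan is to establish Conjecture~\ref{conjecture:PII} by a Deift--Zhou steepest-descent analysis of Riemann-Hilbert Problem~\ref{rhp:renormalized} carried out in the double-scaling regime $x=\pm\ii(\tfrac12 n+(\tfrac1{32}n)^{1/3}\xi)$ with $\xi$ bounded, the rational solution being recovered at the end from formula~\eqref{eq:u-n-from-Y-formula} of Theorem~\ref{thm:RH-representation}. The mechanism is that the $n$- and $x$-dependence of the jump matrices \eqref{eq:Yjump-1}--\eqref{eq:Yjump-4} is concentrated in the factors $\lambda^{\pm n}\ee^{\pm\ii x(\lambda-\lambda^{-1})}=\ee^{\pm\theta_n(\lambda)}$ with $\theta_n(\lambda):=n\log\lambda+\ii x(\lambda-\lambda^{-1})$, and that when $x=\pm\tfrac12\ii n$ one has $\theta_n'(\lambda)=\mp\tfrac12 n(\lambda\mp1)^2\lambda^{-2}$, so $\lambda=\pm1$ is a degenerate critical point at which $\theta_n(\lambda)-\theta_n(\pm1)\sim\mp\tfrac16 n(\lambda\mp1)^3$. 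A cubic critical point of this type is exactly the Stokes structure underlying the isomonodromy representation of the Painlev\'e-II equation \eqref{eq:PII}: the bounded deviation $\xi$ enters through $\ii(x\mp\tfrac12\ii n)(\lambda-\lambda^{-1})$, which on the natural local scale $\zeta\propto n^{1/3}(\lambda\mp1)$ becomes the linear term $\propto\xi\zeta$ of \eqref{eq:PII}, while the parameter $m$, carried by the constants $\sqrt{2\pi}/\Gamma(\tfrac12\mp m)$ and by the diagonal jump factor on $\LZeroBlue$ in \eqref{eq:Yjump-4}, enters the emergent local Painlev\'e-II problem as the value of its parameter through the inherited Stokes multipliers, with the relevant constraint furnished by the reflection formula for the Gamma function.

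First I would introduce a $g$-function adapted to the corner and perform the usual chain of explicit invertible substitutions $\mathbf{Y}\mapsto\mathbf{T}\mapsto\mathbf{S}$: normalize the behaviour at $\lambda=0$ and $\lambda=\infty$, then open lenses along the steepest-descent arcs issuing from $\lambda=\pm1$, so that on every arc of the deformed contour outside a shrinking disk about $\lambda=\pm1$ the jump matrices differ from $\mathbb{I}$ by an exponentially small amount. The local sign chart of $\mathrm{Re}(\theta_n-\theta_n(\pm1))$ at the cubic critical point has six sectors, and one must verify that the globally prescribed contour $L$ of Riemann-Hilbert Problem~\ref{rhp:renormalized}, whose argument increments along the red and blue arcs are fixed by \eqref{eq:increment-argument-red}--\eqref{eq:increment-argument-blue}, can be routed so as to enter and exit the disk about $\lambda=\pm1$ through the admissible (decaying) sectors.

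Next I would construct the parametrix. Away from $\lambda=\pm1$ the outer parametrix $\dot{\mathbf Y}^{\mathrm{out}}$ solves a Riemann-Hilbert problem whose only residual jump comes from the branch cut of the power functions $\OurPower{\lambda}{-(m+1)}$ and $\OurPower{\lambda}{(m+1)/2}$, and is an explicit diagonal-times-rational matrix whose entries are dictated by Conjecture~\ref{conjecture:outside} evaluated at $y=\pm\tfrac12\ii$; since at those points the equilibria $p_0^+(y)$ and $p_0^-(y)$ of \eqref{eq:V0-four-answers} coalesce at $\pm1$, the outer parametrix degenerates at $\lambda=\pm1$, which is precisely why an inner parametrix on the comparatively large scale $n^{-1/3}$ (rather than an Airy-type parametrix on scale $n^{-2/3}$) is needed there. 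On a disk of radius $n^{-\beta}$, $0<\beta<\tfrac13$, I would pass to the conformal coordinate $\zeta\propto n^{1/3}(\lambda\mp1)$ and match the jumps of $\mathbf{S}$ to those of the isomonodromic Riemann-Hilbert problem for \eqref{eq:PII}; the resulting $n$-independent Stokes data single out a particular solution $\dot W=\dot W^\pm(\xi;m)$ of \eqref{eq:PII}, whose behaviour as $\zeta\to\infty$ — required to agree with the outer solution $\ii p_0^+(y)=\pm\ii\,2^{1/6}n^{-1/3}\xi^{1/2}+\cdots$ in the overlap annulus — is exactly the tritronqu\'ee asymptotics $\dot W^\pm(\xi;m)\sim\pm\ii(\tfrac12\xi)^{1/2}$ in the sector $|\mathrm{Arg}(\xi)|<\tfrac23\pi$ that characterizes $\dot W^\pm$ among all solutions of \eqref{eq:PII}. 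I expect this identification of the emergent Painlev\'e-II function with the correct tritronqu\'ee solution to be the principal obstacle: it amounts to showing that the Stokes multipliers inherited from Riemann-Hilbert Problem~\ref{rhp:renormalized} are those of the tritronqu\'ee solution (one of them vanishing), which requires tracking the connection data carefully through all the transformations — in particular confirming that the $m$-dependence entering at leading order is reproduced correctly — and is made delicate by the Boutroux-type (complex $g$-function, non-oscillatory) character of the problem.

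Finally, writing $\dot{\mathbf Y}$ for the global parametrix patched from $\dot{\mathbf Y}^{\mathrm{out}}$ and the inner Painlev\'e-II parametrix, the error $\mathbf{E}:=\mathbf{S}\dot{\mathbf Y}^{-1}$ is analytic off a contour on which its jump is $\mathbb{I}+O(n^{-\delta})$ for some $\delta>0$, so the standard small-norm theory yields $\mathbf{E}=\mathbb{I}+O(n^{-\delta})$ locally uniformly in $\xi$. Undoing the substitutions and inserting $\dot{\mathbf Y}$ into \eqref{eq:u-n-from-Y-formula} — reading $Y^\infty_{1,12}$ from the $\lambda\to\infty$ expansion and $Y^0_{0,11}$, $Y^0_{0,12}$ from the $\lambda\to0$ behaviour, with the dominant contributions supplied by the inner parametrix — produces $1\pm\ii u_n(x;m)=(\tfrac14 n)^{-1/3}\dot W^\pm(\xi;m)+o(n^{-1/3})$, which upon multiplication by $(\tfrac14 n)^{1/3}$ is precisely the assertion of Conjecture~\ref{conjecture:PII}.
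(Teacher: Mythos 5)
The first thing to note is that the statement you were asked to prove is presented in the paper as a conjecture, not a theorem: the paper supports Conjecture~\ref{conjecture:PII} only by the formal rescaling that produces \eqref{eq:PII} as a model equation near $x=\pm\tfrac{1}{2}\ii n$ and by the numerical pole/zero plots of Figures~\ref{fig:8}--\ref{fig:11}, and it states explicitly that the asymptotic analysis needed to establish it from Theorem~\ref{thm:RH-representation} is work in progress. So there is no proof in the paper to compare against, and your proposal must be judged on whether it would itself constitute one.

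Judged that way, your outline is the right strategy --- almost certainly the intended one: a Deift--Zhou analysis of Riemann-Hilbert Problem~\ref{rhp:renormalized} with a Painlev\'e-II local parametrix at the degenerate critical point $\lambda=\pm 1$ of the phase $n\log\lambda+\ii x(\lambda-\lambda^{-1})$, and your computations of the cubic degeneration, of the local scale $n^{1/3}(\lambda\mp 1)$, and of the amplitude scale $n^{-1/3}$ are correct. But as written it is a plan, not a proof: every technically substantive step is deferred. The $g$-function adapted to the Boutroux-type (non-real phase) geometry is not constructed; the sign chart of the real part of the phase and the admissibility of routing $L$, with its argument increments \eqref{eq:increment-argument-red}--\eqref{eq:increment-argument-blue}, through the decaying sectors is only asserted to be something ``one must verify''; the outer parametrix at a corner point where the two double roots $p_0^\pm(y)$ of \eqref{eq:V0-four-answers} coalesce is not exhibited; and, most importantly, the identification of the inherited Stokes data with those of the tritronqu\'ee solutions $\dot{W}^\pm(\xi;m)$ --- including the precise way $m$ enters through the constants $\sqrt{2\pi}/\Gamma(\tfrac{1}{2}\mp m)$ and the diagonal entries $-\ee^{\pm 2\pi\ii m}$ in \eqref{eq:Yjump-4} --- is flagged by you as ``the principal obstacle'' but not resolved. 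Since that identification is exactly the content of the conjecture (any solution of \eqref{eq:PII} would emerge from some Stokes data; the claim is that the data are tritronqu\'ee, i.e., that the matching with the outer solution forces the sector of asymptotic analyticity of opening $\tfrac{4}{3}\pi$), the proposal leaves the essential point open. It is a sound research program, consistent with the heuristics in the paper, but it does not close the conjecture.
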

The convergence might be expected to be uniform on compact subsets of the $\xi$-plane from which arbitrarily small open disks centered at the poles of the tritronqu\'ee solution in question have been excised. The assertion that the particular solutions of \eqref{eq:PII} should be of tritronqu\'ee type means that they are asymptotically analytic in a sector of the complex $\xi$-plane of opening angle $\tfrac{4}{3}\pi$, consistent with the plots in Figures~\ref{fig:8}--\ref{fig:11}.  Tronqu\'ee and tritronqu\'ee solutions of the Painlev\'e-II equation \eqref{eq:PII} were originally studied long ago by Boutroux; see also Joshi and Mazzocco \cite{JoshiM03}.  
When $m=0$, the Painlev\'e-II equation \eqref{eq:PII} has the obvious symmetry $\dot{W}(\xi)\mapsto -\dot{W}(\xi)$, and by uniqueness of the two tritronqu\'ee solutions this means that $\dot{W}^-(\xi;0)=-\dot{W}^+(\xi;0)$.  Comparing Figures~\ref{fig:8}--\ref{fig:9} we therefore expect a sign change while the figures clearly show instead some sort of reciprocation, with poles and zeros of $u_n(x;m)$ being exchanged.  The explanation for this lies in the relation $u=\pm\ii(1-(\tfrac{1}{4}n)^{-1/3}W^\pm)$, which shows that both poles and zeros of $u$ correspond to $W^\pm$ becoming very large; in other words, both the red and the blue dots in Figures~\ref{fig:8}--\ref{fig:11} should be attracted in the limit $n\to+\infty$ toward the fixed simple poles of the corresponding tritronqu\'ee solution of the Painlev\'e-II equation \eqref{eq:PII}. More to the point, assuming the validity of Conjecture~\ref{conjecture:PII} with the suggested nature of convergence, one may apply the argument principle to the rational function $u_n(\pm\ii(\tfrac{1}{2}n+(\tfrac{1}{32}n)^{1/3}\xi);m)$ about a Jordan curve $C$ in the $\xi$-plane that encloses exactly one pole of the corresponding tritronqu\'ee solution of \eqref{eq:PII}.  The index (increment of the argument) of $u_n$ about $C$ is zero for sufficiently large $n$ because $u_n$ converges uniformly on $C$ to $\pm\ii$ as $\dot{W}^\pm$ is analytic and therefore bounded on $C$.  This means that in fact \emph{each pole of the Painlev\'e-II tritronqu\'ee would be expected to attract (in the $\xi$-plane) an equal number of poles and zeros of $u_n$ in the large-$n$ limit}.  One can see the indicated pairing of poles with zeros in Figures~\ref{fig:8}--\ref{fig:11}, although with larger values of $n$ the phenomenon should become even more obvious to the eye.
\begin{rem}  
While tritronqu\'ee solutions are by definition asymptotically (i.e., for large $|\xi|$) pole-free in a certain sector of the complex plane, the pole-free property is not a priori guaranteed in any bounded region of the complex-plane.  However, recently it was shown \cite{CostinHT14} that all tritronqu\'ee solutions of the Painlev\'e-I equation are actually analytic down to the origin in the asymptotically pole-free sector, proving a conjecture of Dubrovin.  See \cite{Bertola12} for related results on certain solutions of the Painlev\'e-II equation \eqref{eq:PII}.  It is not known whether the tritronqu\'ee solutions $\dot{W}^\pm(\xi;m)$ of the Painlev\'e-II equation are exactly pole-free in the sector $-\tfrac{2}{3}\pi<\mathrm{Arg}(\xi)<\tfrac{2}{3}\pi$.  Because we expect pole/zero pairs of $u_n$ to converge toward fixed poles of $\dot{W}^\pm$ in the $\xi$-plane, in our opinion the plots shown in Figures~\ref{fig:8}--\ref{fig:11} are not sufficiently resolved (i.e., $n$ is not sufficiently large) to provide convincing evidence one way or the other, even though Figure~\ref{fig:11} shows some poles and zeros of $u_n$ lying in the asymptotic pole-free sector for $\dot{W}^-(\xi;\tfrac{4}{5}\ii)$ near the origin.
\end{rem}

The origin $x=0$ is a fixed singular point of the Painlev\'e-III equation \eqref{eq:PIII} and its presence appears to affect the pattern of poles and zeros of $u_n(x;m)$ close to the origin if $m\not\in\mathbb{Z}+\tfrac{1}{2}$, as can be seen in Figures~\ref{fig:1}--\ref{fig:3}.  In particular, the density of the regular distribution of poles and zeros within the domain $E$ seems to blow up as $y_0\to 0$, a phenomenon that is confirmed by the asymptotic analysis in \cite{BothnerM18}.  However, this accumulation phenomenon cannot be uniformly valid in any neighborhood of the origin because $u_n(x;m)$ is rational.  Our numerical computations suggest that the $x$-distance of the smallest poles and zeros of $u_n(x;m)$ to the origin scales as $n^{-1}$ when $n$ is large, which suggests 
introducing into \eqref{eq:PIII-again} the scaling $x=n^{-1}z$ and considering $n$ large for $m$ bounded.  Then \eqref{eq:PIII-again} becomes
\begin{equation}
\frac{\dd^2u}{\dd z^2} = \frac{1}{u}\left(\frac{\dd u}{\dd z}\right)^2 -\frac{1}{z}\frac{\dd u}{\dd z} +\frac{4u^2+4}{z}+O(n^{-1}),
\end{equation}
which is a perturbation of the parameter-free $\mathrm{PIII}_3$ equation 
\begin{equation}
\frac{\dd^2\dot{u}}{\dd z^2}=\frac{1}{\dot{u}}\left(\frac{\dd\dot{u}}{\dd z}\right)^2-\frac{1}{z}\frac{\dd\dot{u}}{\dd z}+\frac{4\dot{u}^2+4}{z}
\label{eq:PIII-special}
\end{equation}
(arising from the general Painlev\'e-III equation in the special case $\gamma=\delta=0$, see \cite[Section 2.2]{GamayunIL13}).  We may therefore expect that $u_n(n^{-1}z;m)$ should behave like a particular solution (or possibly a family of particular solutions parametrized by $m$ and/or $n$) of this limiting equation when $n$ is large and $z$ is bounded.  To explore this possibility, we plotted the poles and zeros of $u_n(n^{-1}z;m)$ in the complex $z$-plane for two different fixed values of $m$ and increasing large $n$ in Figures~\ref{fig:12} and \ref{fig:13}.  
\begin{figure}[h]
\begin{center}
\includegraphics[width=0.3\linewidth]{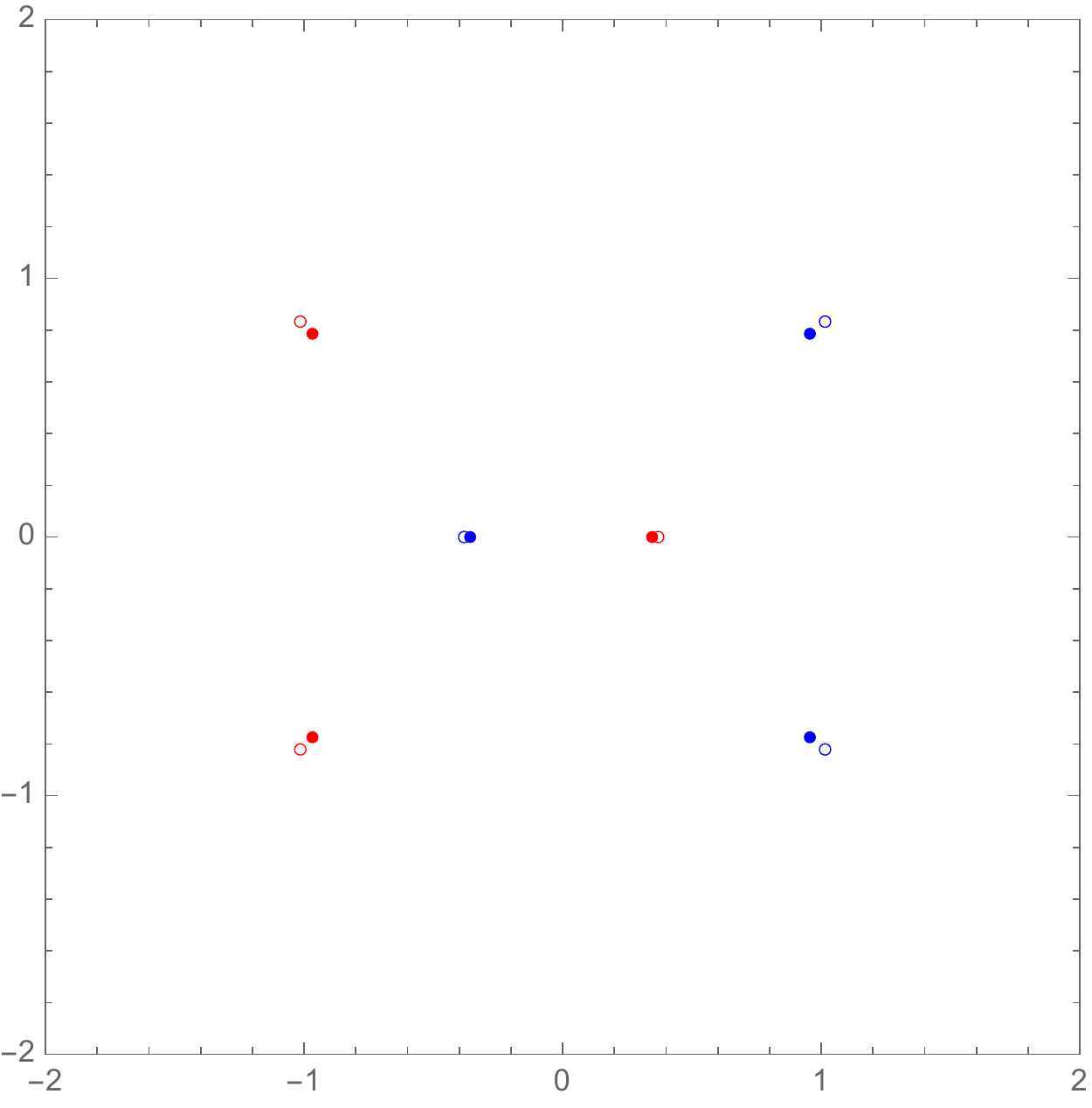}\hspace{0.03\linewidth}%
\includegraphics[width=0.3\linewidth]{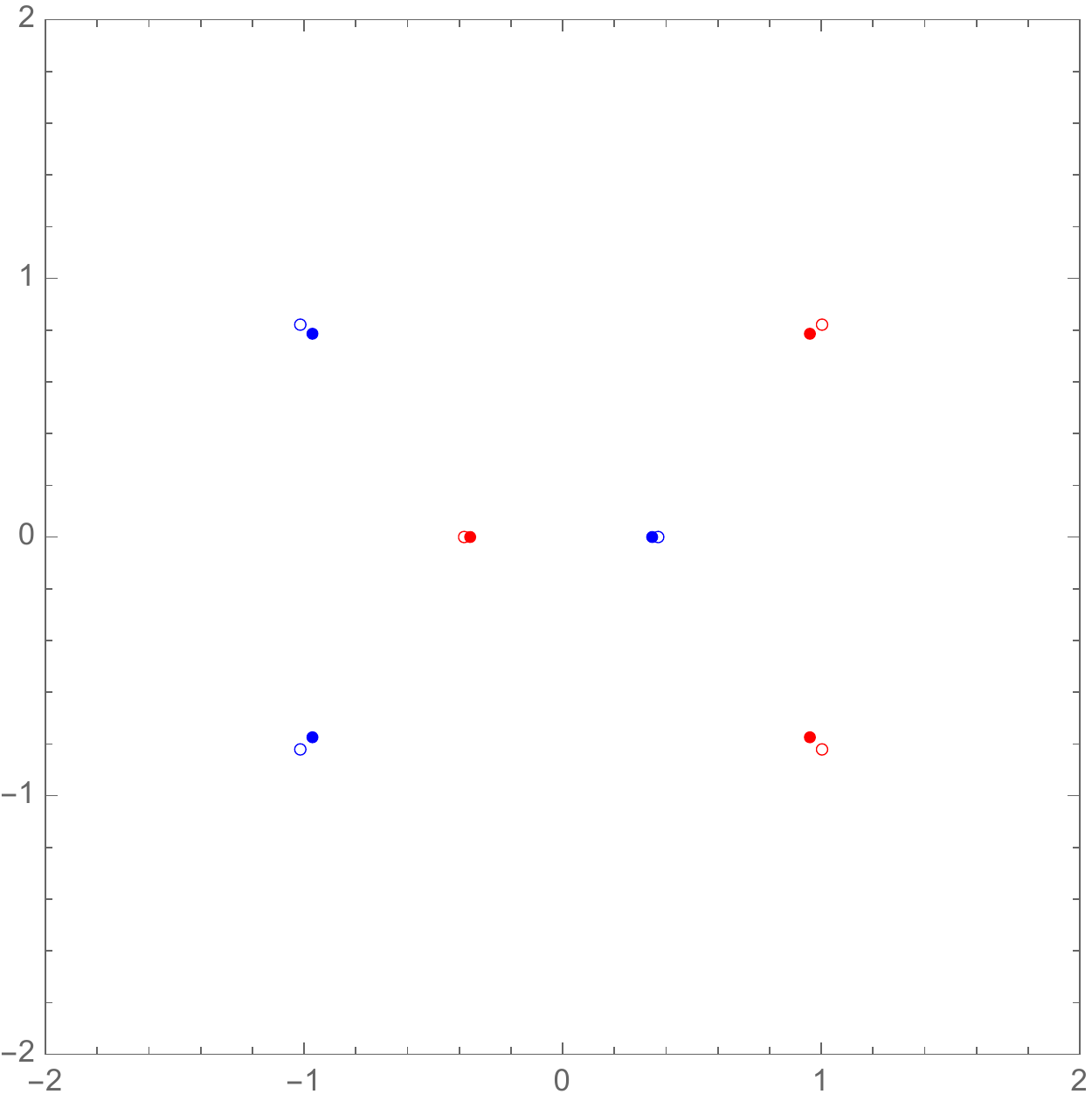}\hspace{0.03\linewidth}%
\includegraphics[width=0.3\linewidth]{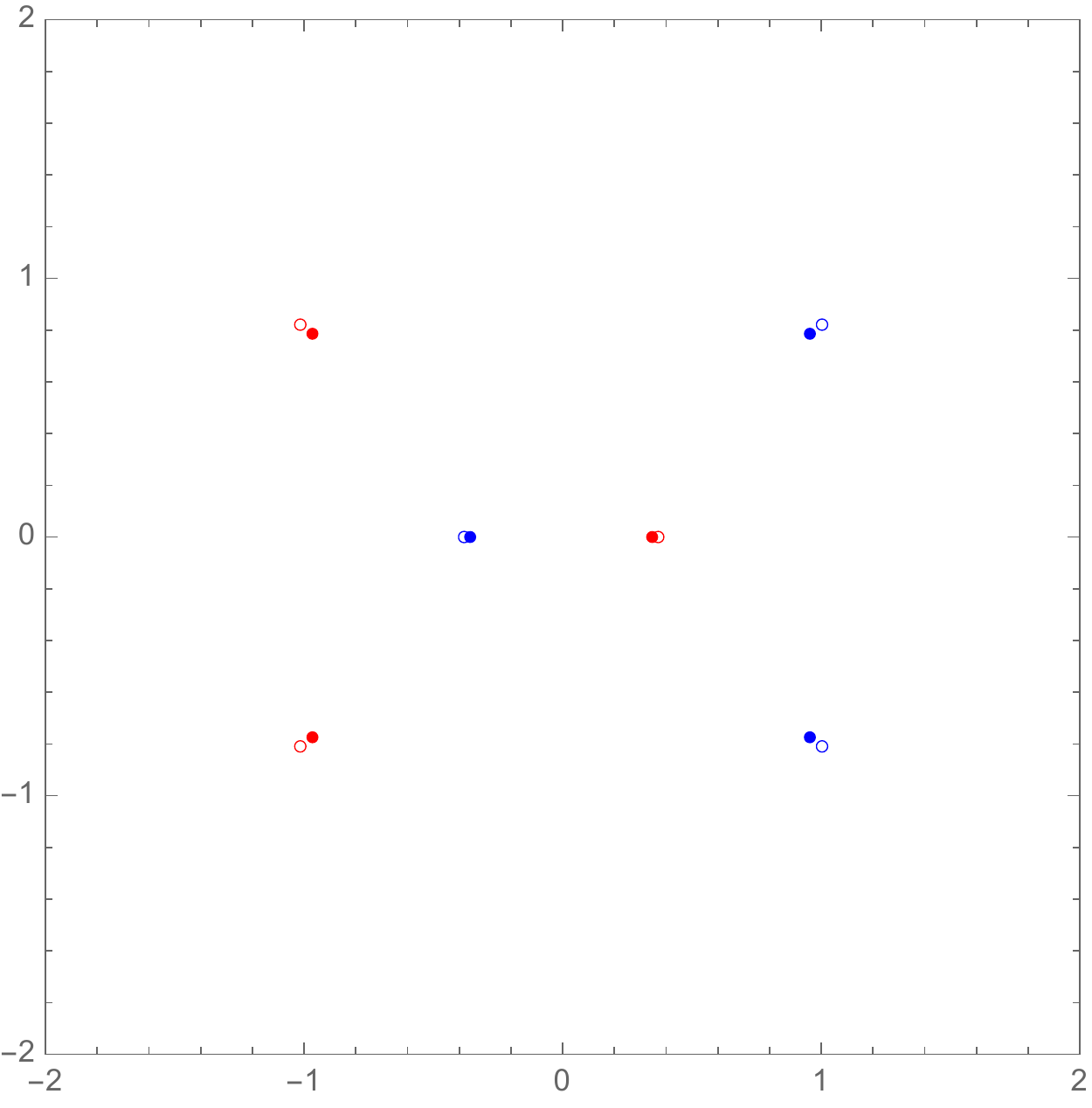}
\end{center}
\caption{As in Figure~\ref{fig:1} but plotted in the $z$-plane for $m=0$ and $n=18$ (left), $n=19$ (center), and $n=20$ (right).}
\label{fig:12}
\end{figure}
\begin{figure}[h]
\begin{center}
\includegraphics[width=0.3\linewidth]{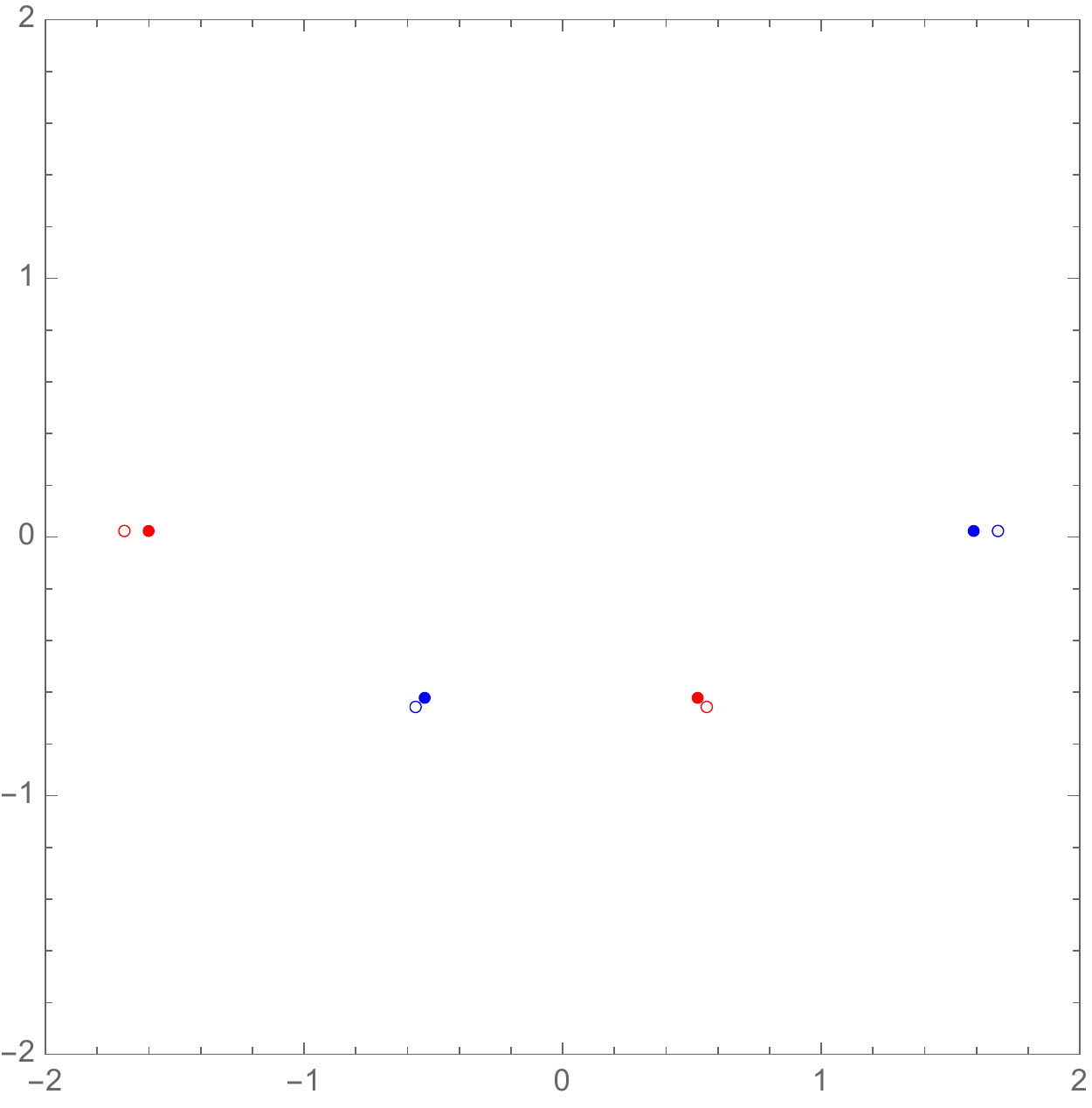}\hspace{0.03\linewidth}%
\includegraphics[width=0.3\linewidth]{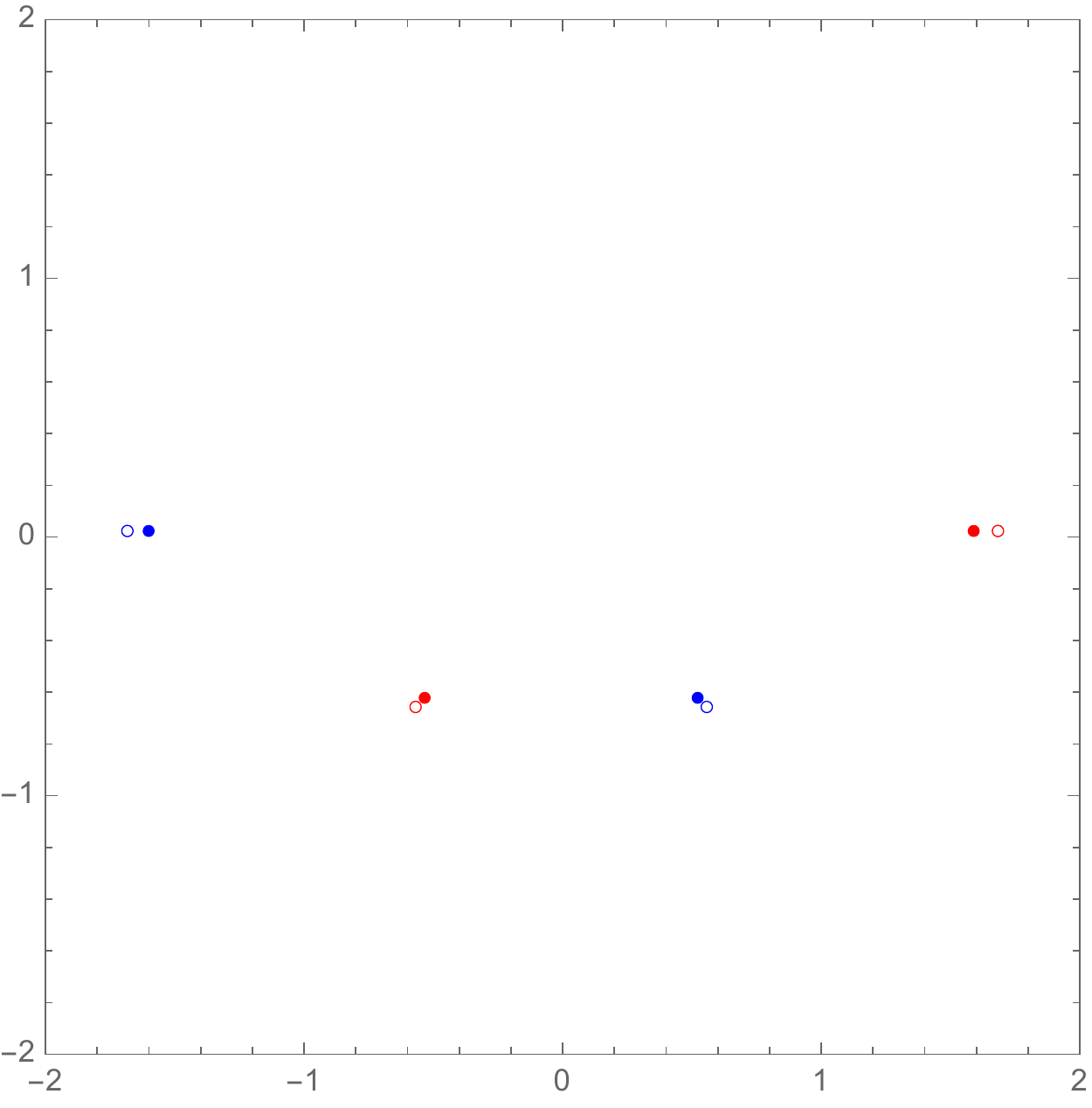}\hspace{0.03\linewidth}%
\includegraphics[width=0.3\linewidth]{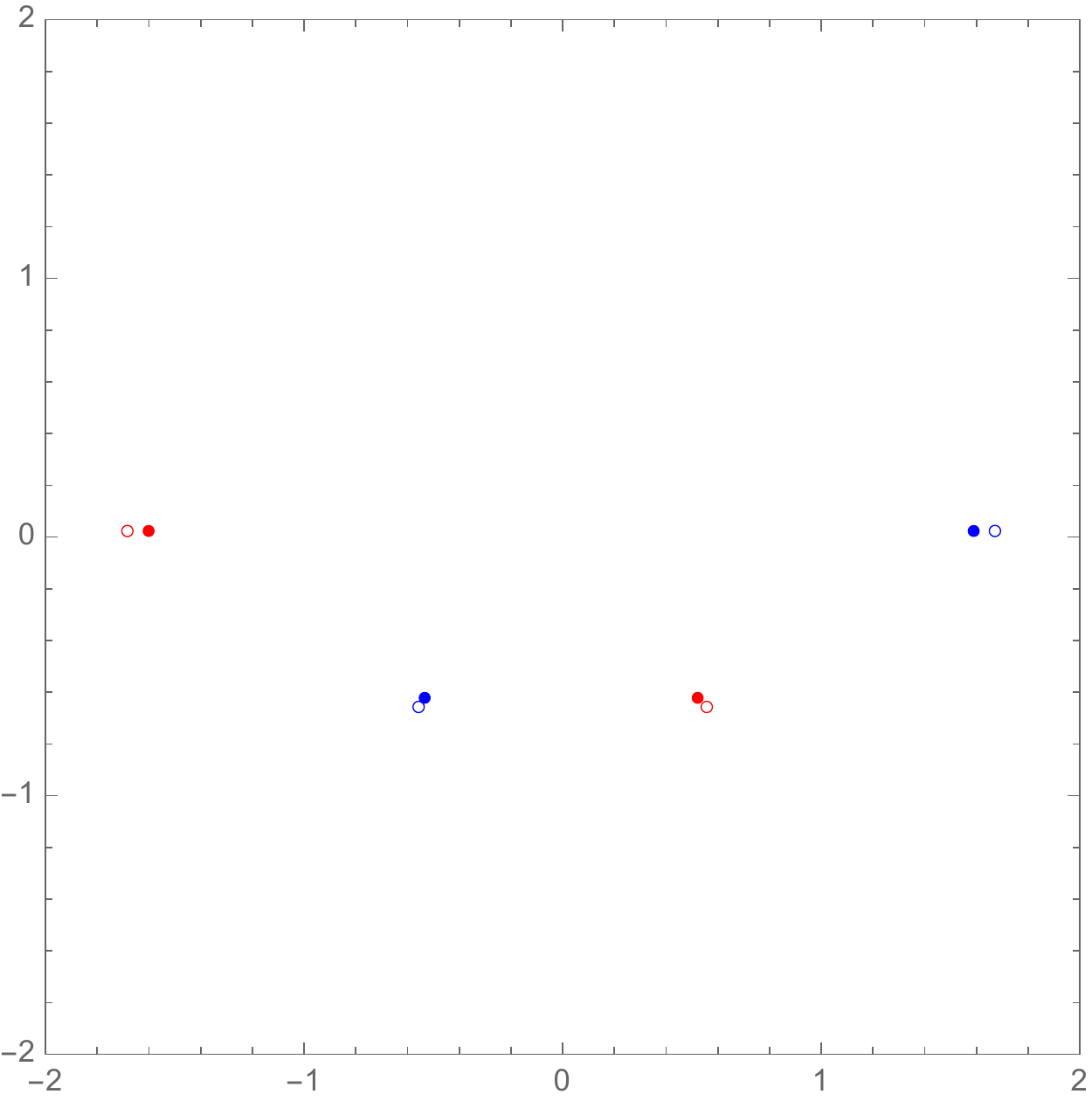}
\end{center}
\caption{As in Figure~\ref{fig:1} but plotted in the $z$-plane for $m=\tfrac{4}{5}\ii$ and $n=18$ (left), $n=19$ (center), and $n=20$ (right).}
\label{fig:13}
\end{figure}
Noting the alternation in the pattern of poles and zeros with increasing $n$ in each case and taking into account the symmetry $\dot{u}\mapsto -\dot{u}^{-1}$ of \eqref{eq:PIII-special} leads to the following conjecture.
\begin{conjecture}
Let $m\in\mathbb{C}\setminus(\mathbb{Z}+\tfrac{1}{2})$ be given.  Then there exists a corresponding particular solution $\dot{u}(z;m)$ of the $m$-independent model equation \eqref{eq:PIII-special} such that 
\begin{equation}
\lim_{j\to\infty} u_{2j}((2j)^{-1}z;m)=\dot{u}(z;m)\quad\text{and}\quad
\lim_{j\to\infty} u_{2j+1}((2j+1)^{-1}z;m)=-\dot{u}(z;m)^{-1}.
\end{equation}
\label{conjecture:origin}
\end{conjecture}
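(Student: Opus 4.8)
\emph{Proof proposal.} The plan is to analyze Riemann-Hilbert Problem~\ref{rhp:renormalized} in the regime $x=n^{-1}z$ with $z$ fixed and $n\to\infty$, and then to extract the limit from the representation formula \eqref{eq:u-n-from-Y-formula}. First I would substitute $x=n^{-1}z$: the parameter $n$ then enters the problem only through the powers $\lambda^{\pm n}$ in the jump matrices \eqref{eq:Yjump-1}--\eqref{eq:Yjump-4} and through the exponentials $\ee^{\pm\ii n^{-1}z(\lambda-\lambda^{-1})}$, which converge to $1$ uniformly on compact subsets of $\mathbb{C}\setminus\{0\}$ while still producing decay near $\lambda=0,\infty$ along the prescribed contour directions; the asymptotic exponent $m+\tfrac12=\tfrac12(\Theta_0+\Theta_\infty)$ in property~(3) is, crucially, $n$-free. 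Next I would remove the factors $\lambda^{\pm n}$ by the explicit substitution $\mathbf{Z}(\lambda):=\mathbf{Y}(\lambda)\OurPower{\lambda}{n\sigma_3/2}$, with the branch of $\OurPower{\lambda}{n/2}$ cut along $\LZeroBlue\cup\LInftyBlue$. When $n$ is even this power is single-valued and the substitution cancels $\lambda^{\pm n}$ in every jump, leaving the $n$-independent off-diagonal entries $\mp c_m\OurPower{\lambda}{-(m+1)}\ee^{\ii n^{-1}z(\lambda-\lambda^{-1})}$ and $d_m(\OurPower{\lambda}{(m+1)/2})_+(\OurPower{\lambda}{(m+1)/2})_-\ee^{-\ii n^{-1}z(\lambda-\lambda^{-1})}$, where $c_m:=\sqrt{2\pi}/\Gamma(\tfrac12-m)$ and $d_m:=\sqrt{2\pi}/\Gamma(\tfrac12+m)$; in return $\mathbf{Z}(\lambda)\OurPower{\lambda}{-n\sigma_3/2}\to\mathbb{I}$ at $\lambda=\infty$ and $\mathbf{Z}(\lambda)\OurPower{\lambda}{-(n/2+m+1/2)\sigma_3}$ has a finite limit at $\lambda=0$, so the $n$-dependence has been pushed into growing power-type behaviour at the two endpoints. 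When $n$ is odd, $\OurPower{\lambda}{n/2}$ is two-valued and its monodromy $-\mathbb{I}$ around $\lambda=0$ modifies the jumps along $\LZeroBlue\cup\LInftyBlue$ by a sign; in particular it replaces the diagonal factor $-\ee^{\pm2\pi\ii m}$ on $\LZeroBlue$ by $+\ee^{\pm2\pi\ii m}$. This sign is the sole source of the parity dichotomy in the conjecture.

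The second and central step is a matched-asymptotics (steepest-descent) analysis of the $\mathbf{Z}$-problem. Because the jump matrices are now $n$-independent up to $\ee^{\pm\ii n^{-1}z(\lambda-\lambda^{-1})}=1+O(n^{-1})$ on compacta of $\mathbb{C}\setminus\{0\}$, their $n\to\infty$ limit defines an $n$-free ``outer'' Riemann-Hilbert problem on the same contour; matching $\mathbf{Z}$ to its solution fails only near $\lambda=0$ and $\lambda=\infty$, where the growing exponents $\lambda^{\pm n/2}$ must be absorbed. This is a confluence: at each of $\lambda=0,\infty$ the rank-one irregular singularity of Riemann-Hilbert Problem~\ref{rhp:renormalized}, of strength $\propto n^{-1}z$, merges with a formal-monodromy exponent $\propto n$, the product being held fixed (a multiple of $z$). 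The corresponding inner parametrices are built from Bessel/Whittaker functions of large parameter $n$ and argument involving $z$, with Airy-type behaviour through the turning points $\lambda\sim n^{\mp2}$; matching them to the outer parametrix forces the latter to carry $z$-dependent expansion coefficients at $0$ and $\infty$. A direct computation then identifies the resulting $z$-dependent outer problem, with this local data and Stokes/connection data prescribed by the $m$-dependent constants $c_m$, $d_m$, $\ee^{2\pi\ii m}$, as the Jimbo-Miwa isomonodromy Riemann-Hilbert problem associated with the $m$-free equation \eqref{eq:PIII-special} (the degenerate Painlev\'e-III equation $\mathrm{PIII}_3$). One proves a vanishing lemma for this model problem, obtaining unique solvability for all $z$ outside a discrete exceptional set $P(m)\subset\mathbb{C}$, defines $\dot{u}(z;m):=-\ii\dot{Y}^\infty_{1,12}(z)/(\dot{Y}^0_{0,11}(z)\dot{Y}^0_{0,12}(z))$ in the notation of \eqref{eq:Y-expand-infty}--\eqref{eq:Y-expand-zero}, and checks via the Lax structure of the limiting problem that $\dot{u}(z;m)$ solves \eqref{eq:PIII-special} and is meromorphic on $\mathbb{C}\setminus\{0\}$ with poles in $P(m)$.

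Third, a standard small-norm estimate finishes the argument. After deforming the upper-triangular arcs $\LInftyRed,\LZeroRed$ into $|\lambda|<1$ and the lower-triangular arcs $\LInftyBlue,\LZeroBlue$ into $|\lambda|>1$ away from the forced tails, and inserting the global parametrix (the two inner Bessel parametrices at $0$ and $\infty$ glued to the outer $\mathrm{PIII}_3$ parametrix in between), the error matrix $\mathbf{Z}^{(n)}(\lambda)\cdot(\text{global parametrix})^{-1}$ solves a Riemann-Hilbert problem with jump $\mathbb{I}+O(n^{-1})$ on its contour, uniformly for $z$ in compact subsets of $\mathbb{C}\setminus(\{0\}\cup P(m))$ (the control breaking down near $P(m)$ precisely because the model problem becomes singular there). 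Hence $\mathbf{Z}^{(n)}(\lambda;n^{-1}z,m)=(\mathbb{I}+O(n^{-1}))\,\dot{\mathbf{Y}}(\lambda;z,m)$ locally uniformly in $\lambda$ off the contour; undoing $\mathbf{Z}=\mathbf{Y}\OurPower{\lambda}{n\sigma_3/2}$ and reading off the coefficients in \eqref{eq:Y-expand-infty}--\eqref{eq:Y-expand-zero} gives $Y^\infty_{1,12}\to\dot{Y}^\infty_{1,12}$, $Y^0_{0,11}\to\dot{Y}^0_{0,11}$, $Y^0_{0,12}\to\dot{Y}^0_{0,12}$ with $O(n^{-1})$ error. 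For even $n=2j$, \eqref{eq:u-n-from-Y-formula} then yields $u_{2j}((2j)^{-1}z;m)\to\dot{u}(z;m)$. For odd $n=2j+1$, the sign-modified jumps from the first step produce a limiting problem obtained from the even-$n$ one by conjugation by a constant Pauli matrix together with the reflection $\lambda\mapsto-\lambda^{-1}$, which at the level of \eqref{eq:PIII-special} implements the symmetry $\dot{u}\mapsto-\dot{u}^{-1}$ recalled just before the conjecture; hence $u_{2j+1}((2j+1)^{-1}z;m)\to-\dot{u}(z;m)^{-1}$. Both limits are in fact locally uniform on $\mathbb{C}\setminus(\{0\}\cup P(m))$.

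The main obstacle is the central step. The substitution removing $\lambda^{\pm n}$ is elementary, but two ingredients are genuinely substantial: (i) constructing the inner parametrices at $\lambda=0,\infty$ — which requires the uniform large-parameter asymptotics of Bessel/Whittaker functions through their turning points and a careful verification that they match the $\mathrm{PIII}_3$ outer parametrix with $O(n^{-1})$ error uniformly in $n$ — and (ii) proving the vanishing lemma for the limiting $\mathrm{PIII}_3$ model problem, i.e., establishing the global meromorphic existence of $\dot{u}(z;m)$ and pinning down the exceptional set $P(m)$ where the stated convergence can fail; the remaining contour surgery and small-norm estimates are routine.
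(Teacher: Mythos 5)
First, a point of orientation: the paper does not prove this statement. It is stated as Conjecture~\ref{conjecture:origin}, supported only by the formal rescaling $x=n^{-1}z$ leading to \eqref{eq:PIII-special} and by the numerical pole/zero plots in Figures~\ref{fig:12}--\ref{fig:13}; the authors explicitly defer the proof to future work and single it out as ``particularly challenging because Riemann-Hilbert Problem~\ref{rhp:renormalized} cannot even be formulated for $x=0$.'' So there is no proof in the paper to compare yours against. Your proposal is a strategy in the spirit the authors themselves advocate (asymptotic analysis of Riemann-Hilbert Problem~\ref{rhp:renormalized} via Theorem~\ref{thm:RH-representation}), and several of its ingredients are sound and verifiable: the conjugation $\mathbf{Z}=\mathbf{Y}\OurPower{\lambda}{n\sigma_3/2}$ does cancel the factors $\lambda^{\pm n}$ in \eqref{eq:Yjump-1}--\eqref{eq:Yjump-4}, the even/odd dichotomy does reduce to the sign of $(\OurPower{\lambda}{n/2})_+/(\OurPower{\lambda}{n/2})_-$ across $\LZeroBlue\cup\LInftyBlue$, and the parity mechanism you describe is exactly the heuristic ($\dot u\mapsto-\dot u^{-1}$ symmetry of \eqref{eq:PIII-special}) that the paper uses to motivate the alternating form of the limits.

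That said, as a proof the proposal has a genuine gap, and it is precisely the part you label ``central.'' After the conjugation, all of the $n$-dependence sits in the exponents $\pm n/2$ and $\pm(n/2+m+\tfrac12)$ at $\lambda=\infty$ and $\lambda=0$, balanced against exponential weights $\ee^{\pm\ii n^{-1}z(\lambda-\lambda^{-1})}$ whose saddle points migrate to $|\lambda|\sim n^{\pm2}$. You assert, without construction, that the resulting confluence produces Bessel/Whittaker inner parametrices that match an outer problem identifiable with the isomonodromy problem for \eqref{eq:PIII-special}. This identification is not routine: the degenerate equation $\mathrm{PIII}_3$ has a linear problem with \emph{ramified} irregular singularities (Poincar\'e rank $\tfrac12$) at both $0$ and $\infty$, so one must show that the rank-$1$ singularities of strength $\propto n^{-1}z$ colliding with formal monodromy exponents $\propto n$ actually degenerate to that ramified structure with the correct Stokes data determined by $c_m$, $d_m$, and $\ee^{\pm2\pi\ii m}$ --- and one must then prove a vanishing lemma for the limiting model problem to get existence of $\dot u(z;m)$ off a discrete set. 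Neither step is carried out, and each is of comparable difficulty to the full result. Until the inner parametrices are built with uniform $O(n^{-1})$ matching and the model problem is shown solvable, the small-norm argument in your third step has nothing to bite on. In short: this is a plausible and well-organized research program, consistent with what the authors announce for \cite{BothnerM18}-type analysis, but it is not a proof of Conjecture~\ref{conjecture:origin}.
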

The reason for excluding half-integral values of $m$ from this statement is that $u_n(x;m)$ has either a simple pole or a simple zero at the origin \cite{ClarksonLL16} for such $m$ and asymptotic analysis \cite{BothnerM18} shows convergence to a function of $y=x/n$ (the analytic continuation of $\ii p^+_0(y)$ to the complement of the ``eyebrow''), which would correspond under rescaling either to $\dot{u}\equiv 0$ or $\dot{u}\equiv\infty$; moreover, this limit is independent of whether $n$ is odd or even.  Naturally, this discrepancy begs again the question of how the solution behaves near the origin in a double-scaling limit of large $n$ and $m$ close to a half-integer.  

The asymptotic analysis to establish Conjectures~\ref{conjecture:PII} and \ref{conjecture:origin} using Theorem~\ref{thm:RH-representation} is work in progress.   The proof of Conjecture~\ref{conjecture:origin} is expected to be particularly challenging because Riemann-Hilbert Problem~\ref{rhp:renormalized} cannot even be formulated for $x=0$.


\section{Lax Pair and Isomonodromy Theory for the Painlev\'e-III Equation}
\label{sec:isomonodromy-review}
The representation of the Painlev\'e-III equation \eqref{eq:PIII} as the compatibility condition for a Lax pair of first-order linear systems was discovered by Jimbo and Miwa \cite{JimboM81b}.  Consider the linear differential equations 
\begin{equation}
\frac{\partial\mathbf{\Psi}}{\partial\lambda}(\lambda;x)=
\mathbf{A}(\lambda;x)\mathbf{\Psi}(\lambda;x),\quad
\mathbf{A}(\lambda;x):=\frac{\ii x}{2}\sigma_3+\frac{1}{\lambda}\begin{bmatrix}-\frac{1}{2}\Theta_{\infty} & y\\ v & \frac{1}{2}\Theta_{\infty}\end{bmatrix}+\frac{1}{\lambda^2}\begin{bmatrix}\tfrac{1}{2}\ii x-\ii st & \ii s\\ -\ii t(st-x) & -\tfrac{1}{2}\ii x+\ii st\end{bmatrix},
\label{eq:lambda-eqn}
\end{equation}
and
\begin{equation}
\frac{\partial\mathbf{\Psi}}{\partial x}(\lambda;x)=\mathbf{B}(\lambda;x)\mathbf{\Psi}(\lambda;x),\quad\mathbf{B}(\lambda;x):=\frac{\ii\lambda}{2}\sigma_3+\frac{1}{x}\begin{bmatrix}0 & y\\ v& 0\end{bmatrix}-\frac{1}{\lambda x}\begin{bmatrix}\tfrac{1}{2}\ii x-\ii st & \ii s\\ -\ii t(st-x) & -\tfrac{1}{2}\ii x+\ii st\end{bmatrix}.
\label{eq:x-eqn}
\end{equation}
Here, $\Theta_\infty$ is a constant parameter and $y=y(x)$, $v=v(x)$, $s=s(x)$, and $t=t(x)$ are coefficient functions\footnote{Our parametrization of the Lax system \eqref{eq:lambda-eqn}--\eqref{eq:x-eqn} differs from that of Jimbo and Miwa \cite{JimboM81b}, who instead of $s(x)$ and $t(x)$ worked with the combinations (in the notation of \cite{FokasIKN06}) $U(x):=s(x)t(x)$ and $w(x):=t(x)^{-1}$.  The parametrization \eqref{eq:lambda-eqn}--\eqref{eq:x-eqn} has the advantage that the singularities of the potentials $y$, $v$, and $s$ are exactly the singularities of the simultaneous solution  $\mathbf{\Psi}$ with respect to the parameter $x$.} (potentials).  
The matrix coefficient of $\lambda^{-2}$ in \eqref{eq:lambda-eqn} and of $-(\lambda x)^{-1}$ in \eqref{eq:x-eqn} looks complicated, but it simply represents the most general matrix having $\pm \tfrac{1}{2}\ii x$ as its eigenvalues (all such matrices depend on two parameters whose roles are played by $s(x)$ and $t(x)$).  The compatibility condition $\mathbf{A}_x-\mathbf{B}_\lambda + [\mathbf{A},\mathbf{B}]=\mathbf{0}$ for the simultaneous equations \eqref{eq:lambda-eqn}--\eqref{eq:x-eqn} is the first-order system of nonlinear differential equations
\begin{equation}
\begin{gathered}
	x\frac{\dd y}{\dd x}=-2xs+\Theta_{\infty}y,\ \ \ \ \ \ \ x\frac{\dd v}{\dd x}=-2xt(st-x)-\Theta_{\infty}v,
\\
	x\frac{\dd s}{\dd x}=(1-\Theta_\infty)s-2xy+4yst,\ \ \ \ \ \ \ x\frac{\dd t}{\dd x}=\Theta_\infty t-2yt^2+2v.
\end{gathered}
\label{eq:PIII-system}
\end{equation}
This system admits an integral of motion:
\begin{equation}
I:=\frac{2\Theta_\infty}{x}st-\Theta_\infty-\frac{2}{x}yt(st-x)+\frac{2}{x}vs
\label{eq:Integral}
\end{equation}
is a conserved quantity, i.e, \eqref{eq:PIII-system} implies that $\dd I/\dd x=0$ holds identically.
%
%
Using \eqref{eq:PIII-system} one can show that the combination 
\begin{equation}
u(x):=-\frac{y(x)}{s(x)}
\label{eq:u-recover}
\end{equation}
satisfies the differential equation
\begin{equation}
	x\frac{\dd u}{\dd x}=2x-(1-2\Theta_{\infty})u+4stu^2-2xu^2.
\label{eq:first-order-PIII}
\end{equation}
Taking another $x$-derivative and letting $\Theta_0$ denote the constant value of the integral $I$ one then obtains
the Painlev\'e-III equation in the form \eqref{eq:PIII}. (For some details of these calculations, see the last lines of the proof of Lemma~\ref{l:inverse} in Section~\ref{sec:inverse-monodromy} below.) The isomonodromy method algorithm for solving the initial-value problem for \eqref{eq:PIII} with initial conditions $u(x_0)=u_0$ and $u'(x_0)=u_0'$ is then the following \cite{FokasIKN06}.  Given 
constants $(\Theta_0,\Theta_\infty,x_0,u_0,u'_0)\in\mathbb{C}^5$ with $x_0u_0\neq 0$, 
\begin{enumerate}
\item 
Choose an arbitrary nonzero initial value of $y$:  $y(x_0)=y_0\neq 0$.  Then from \eqref{eq:u-recover} at $x=x_0$ one obtains the initial value of $s$:  $s_0:=s(x_0)=-y_0/u_0$, which is well-defined and nonzero.  Next, since $s_0u_0^2=-u_0y_0\neq 0$, $t_0:=t(x_0)$ is well-defined from \eqref{eq:first-order-PIII} at $x=x_0$:
\begin{equation}
t_0=\frac{1}{4u_0y_0}\left(2x_0-(1-2\Theta_\infty)u_0-2x_0u_0^2-x_0u_0'\right).
\end{equation}
Finally, from \eqref{eq:Integral} using $I=\Theta_0$ and substituting for $s_0$ and $t_0$ we get the initial value of $v$:  $v_0:=v(x_0)$ where
\begin{equation}
v_0=\frac{1}{16y_0u_0^2}\left(4x_0^2+(1-4\Theta_\infty^2)u_0^2-4x_0u_0-8\Theta_0x_0u_0^3-4x_0^2u_0^4-4x_0^2u_0'+2x_0u_0u_0'+x_0^2u_0^{\prime 2}\right).
\end{equation}
Note that $s_0$ is proportional, while $t_0$ and $v_0$ are inversely proportional, to the arbitrary\footnote{Given any constant $\alpha\neq 0$, the system of equations \eqref{eq:PIII-system} is obviously invariant under the substitution $(y(x),v(x),s(x),t(x))\mapsto (\alpha y(x),\alpha^{-1}v(x),\alpha s(x),\alpha^{-1}t(x))$, which also leaves $u(x)$ defined by \eqref{eq:u-recover} invariant.} nonzero constant $y_0$.
\item Taking $y=y_0$, $v=v_0$, $s=s_0$, $t=t_0$, and $x=x_0\neq 0$, 
seek four specific fundamental solution matrices of \eqref{eq:lambda-eqn} called \emph{canonical solutions}, namely two satisfying the normalization condition
\begin{equation}
\mathbf{\Psi} \lambda^{\Theta_\infty\sigma_3/2}\ee^{-\ii x\lambda\sigma_3/2}\to\mathbb{I},\quad\lambda\to\infty
\label{eq:infinity-norm}
\end{equation}
in two different abutting sectors with opening angle $\pi$ and bisected by directions in which the factors $\ee^{\pm \ii x\lambda}$ are oscillatory; and two satisfying the normalization condition
\begin{equation}
\begin{bmatrix}a(x) & b(x)s(x)\\a(x)t(x) & b(x)(s(x)t(x)-x)\end{bmatrix}^{-1}\mathbf{\Psi}\lambda^{-\Theta_0\sigma_3/2}\ee^{\ii x\lambda^{-1}\sigma_3/2}\to\mathbb{I},\quad\lambda\to 0,
\label{eq:zero-norm-general}
\end{equation}
in two different abutting sectors with opening angle $\pi$ and bisected by directions in which the factors $\ee^{\pm \ii x\lambda^{-1}}$ are oscillatory.  In \eqref{eq:zero-norm-general}, $a(x)$ and $b(x)$ are arbitrary except that the determinant of the matrix factor on the left should be equal to $1$ and therefore $a(x)b(x)=-x^{-1}$.
The two fundamental matrices near $\lambda=0$ are obviously related by right-multiplication by one $\lambda$-independent \emph{Stokes matrix} for each of the two sector boundary arcs; similarly for the fundamental solution matrices near $\lambda=\infty$.  A fifth \emph{connection matrix} relates the solution in one sector near $\lambda=0$ to that in one sector near $\lambda=\infty$. The four Stokes matrices and the connection matrix constitute the solution of the \emph{direct monodromy problem}.  
\item The equation \eqref{eq:x-eqn} implies that the Stokes matrices and the connection matrix are independent of $x$ when $y$, $v$, $s$, and $t$ evolve according to \eqref{eq:PIII-system}; this is the isomonodromy property of the representation \eqref{eq:lambda-eqn}--\eqref{eq:x-eqn}.  Hence, letting $x\in\mathbb{C}$ be arbitrary, solve the \emph{inverse monodromy (Riemann-Hilbert) problem} of determining the four fundamental solution matrices from the jump conditions relating them via right-multiplication by the Stokes matrices and the connection matrix and from the asymptotic normalization conditions \eqref{eq:infinity-norm}--\eqref{eq:zero-norm}. From the solution of this problem the coefficients $(y,v,s,t)$ of equation \eqref{eq:lambda-eqn} can then be extracted and from them $u$ is obtained for $x\neq x_0$ from \eqref{eq:u-recover}.
\end{enumerate}


\section{Monodromy Data for $u(x)=u_0(x;m)=1$}
\label{sec:direct-monodromy}
In the special case that $\Theta_0=\Theta_\infty-1$, i.e., $n=0$ for arbitrary $m\in\mathbb{C}$, the Painlev\'e-III equation \eqref{eq:PIII} has the rational (constant) solutions $u(x)=\pm 1$.
Our aim in this section is to calculate the necessary monodromy data so that the solution $u(x)=1$ can be obtained from an appropriate Riemann-Hilbert problem.  Although this appears to involve the study of the direct problem \eqref{eq:lambda-eqn} alone, our approach will be to leverage the compatibility with the isomonodromic deformation \eqref{eq:x-eqn} to solve the latter equation instead and then build in additional dependence on $\lambda$ via integration constants to satisfy \eqref{eq:lambda-eqn} as well.  With these results in hand, in Section~\ref{sec:Schlesinger} we will apply Schlesinger transformations to increment/decrement by $2$ the value of the difference $\Theta_\infty-\Theta_0=1-2n$ and thus obtain a Riemann-Hilbert representation for the B\"acklund chain of rational solutions with seed solution $u(x)=1$.

\subsection{The Lax pair for $\Theta_0=\Theta_\infty-1$ and $u(x)=1$}
\label{sec:LaxPair-n0}
Since we will be exploiting the differential equation \eqref{eq:x-eqn} to construct the monodromy data, we need to know how the coefficients $(y,v,s,t)$ depend on $x$.  
From \eqref{eq:u-recover} with $u(x)\equiv 1$ we find that $s(x)\equiv -y(x)$, so the differential equation for $y(x)$ in \eqref{eq:PIII-system} closes as a linear equation with solution
\begin{equation}
y(x)=-\frac{1}{4}K\ee^{2x}x^{\Theta_\infty} \quad\text{and hence also}\quad
s(x)=\frac{1}{4}K\ee^{2x}x^{\Theta_\infty},
\label{eq:y-special}
\end{equation}
where $K\neq 0$ is an arbitrary constant of integration.  Using this result and $u(x)\equiv 1$ in \eqref{eq:first-order-PIII} we obtain $t(x)$:
\begin{equation*}
t(x)=(1-2\Theta_\infty)K^{-1}\ee^{-2x}x^{-\Theta_\infty}.
\end{equation*}
Finally, using these along with $I=\Theta_0=\Theta_\infty-1$ in \eqref{eq:Integral}, we solve for $v(x)$:
\begin{equation*}
v(x)=-\frac{1}{4}(1-2\Theta_\infty)(4x+1+2\Theta_\infty)K^{-1}\ee^{-2x}x^{-\Theta_\infty}.
\end{equation*}
In order that the coefficients in the Lax pair are well-defined, we assume for the purposes of this calculation that $x\in\mathbb{C}\setminus\mathbb{R}_-$ and agree to label the argument of $x$ as being in the interval $(-\pi,\pi)$, i.e., we use the principal branch $\arg(x)=\mathrm{Arg}(x)$.  The arbitrary constant $K$ plays a similar role as the arbitrary nonzero initial value $y_0=y(x_0)$ in the solution of the initial-value problem for \eqref{eq:PIII} by the isomonodromy method. Next, introducing into \eqref{eq:x-eqn} the well-defined substitution 
\begin{equation*}
\mathbf{\Psi}=\ee^{x\sigma_3}x^{\Theta_\infty\sigma_3/2}x^{-1/2}\mathbf{W},
\end{equation*}
one finds that the first-row matrix entries $W_{1j}$
are solutions $W$ of the confluent hypergeometric equation (cf., \cite[Eq.~13.14.1]{DLMF})
\begin{equation}
\frac{\dd^2W}{\dd\zeta^2}+\left[-\frac{1}{4}+\frac{\kappa}{\zeta}+\frac{1-4\mu^2}{4\zeta^2}\right]W=0,\quad \mu=\frac{1}{4},\quad \kappa=\frac{1}{2}(\Theta_\infty-1),
\label{eq:confluent-hypergeometric}
\end{equation}
where $\zeta:=\ii x(\lambda+2\ii-\lambda^{-1})$.  The elements $W_{2j}$ of the second row are obtained from those in the first row by the formula
\begin{equation*}
W_{2j}=-\frac{4\zeta(W_{1j}'(\zeta)-\tfrac{1}{2}W_{1j}(\zeta))+(4\kappa-\ii (1-2\Theta_\infty)\lambda^{-1})W_{1j}(\zeta)}{K(1+\ii\lambda^{-1})}.
\end{equation*}
If we fix a fundamental pair of solutions of \eqref{eq:confluent-hypergeometric} that depend on $\lambda$ only through the variable $\zeta$ as the first row of the matrix $\mathbf{W}$, then the general solution of \eqref{eq:x-eqn} can be written in the form
\begin{equation}
\mathbf{\Psi}=\ee^{x\sigma_3}x^{\Theta_\infty\sigma_3/2}x^{-1/2}\mathbf{W}\mathbf{C}(\lambda),
\label{eq:Psi-general}
\end{equation}
where $\mathbf{C}(\lambda)$ cannot depend on $x$ but might depend on $\lambda$. Having found the general solution of the ``$x$-equation'' \eqref{eq:x-eqn} in the Lax pair for the Painlev\'e-III equation, we can now determine $\mathbf{C}(\lambda)$ such that the expression \eqref{eq:Psi-general} is simultaneously a solution of both (compatible, because $y(x)$, $v(x)$, $w(x)$, and $U(x)$ satisfy \eqref{eq:PIII-system}) equations \eqref{eq:lambda-eqn}--\eqref{eq:x-eqn}.  Upon substitution of \eqref{eq:Psi-general} into \eqref{eq:lambda-eqn} one easily finds that 
\begin{equation*}
\mathbf{C}(\lambda)=(\lambda+\ii)^{-1/2}\mathbf{C}, 
\end{equation*}
where $\mathbf{C}$ is a matrix independent of both $x$ and $\lambda$.  

\subsection{Normalized simultaneous solutions for $\mathrm{Im}(x)\neq 0$}
\label{sec:normalized-solutions}
For the moment, we assume that $\mathrm{Im}(x)\neq 0$ and define $x^p$ (e.g., in \eqref{eq:Psi-general}) by taking $\arg(x)=\mathrm{Arg}(x)\in (-\pi,\pi)$.  Later in Section~\ref{sec:real-axis-limit} we will consider the exceptional cases $\arg(\pm x)=0$. Our goal now is to determine the values of the matrix $\mathbf{C}$ in order to define the four canonical fundamental solution matrices satisfying the normalization conditions \eqref{eq:infinity-norm}--\eqref{eq:zero-norm-general}.  Note that \eqref{eq:zero-norm-general} here takes the form
\begin{equation}
\begin{bmatrix} a(x) & b(x)\tfrac{1}{4}K\ee^{2x}x^{\Theta_\infty}\\ a(x)K^{-1}(1-2\Theta_\infty)\ee^{-2x}x^{-\Theta_\infty} & \displaystyle b(x)\tfrac{1}{4}(1-2\Theta_\infty-4x)\end{bmatrix}^{-1}\mathbf{\Psi}\lambda^{-\Theta_0\sigma_3/2}\ee^{\ii x\lambda^{-1}\sigma_3/2}\to\mathbb{I},\quad\lambda\to 0
\label{eq:zero-norm}
\end{equation}
where
\begin{equation}
a(x)b(x)=-\frac{1}{x}.
\label{eq:unimodularity}
\end{equation}
To specify these four solutions carefully, we should make sure that the power functions $\lambda^p$ for various $p$ appearing in the normalization conditions, as well as the scalar factor $(\lambda+\ii)^{-1/2}$ 
and the solutions $W$ of the confluent hypergeometric equation \eqref{eq:confluent-hypergeometric} that are chosen for the first row of the matrix $\mathbf{W}$ are all unambiguous.  We do this as follows.  Firstly, we note that according to the Wronskian identity \cite[Eq.~13.14.30]{DLMF}, we may choose as a fundamental pair of solutions of \eqref{eq:confluent-hypergeometric} the two Whittaker functions $W_{11}:=W_{-\kappa,\mu}(-\zeta)$ and $W_{12}:=W_{\kappa,\mu}(\zeta)$.  Now, $W_{\pm\kappa,\mu}(z)$ are multi-valued functions, and to be completely unambiguous we select in both cases the principal branches, whose argument $z$ lies in the domain $\arg(z)\in (-\pi,\pi)$.  These solutions are related by the identity (cf., \cite[Eq.~13.14.13]{DLMF})
\begin{equation}
\lim_{\epsilon\downarrow 0}W_{\pm\kappa,\mu}(-z+\ii\epsilon)=\ee^{\pm 2\pi \ii\kappa}\lim_{\epsilon\downarrow 0}W_{\pm\kappa,\mu}(-z-\ii\epsilon)+\frac{2\pi \ii \ee^{\pm \ii\pi\kappa}}{\Gamma(\tfrac{1}{2}+\mu\mp\kappa)\Gamma(\tfrac{1}{2}-\mu\mp\kappa)}W_{\mp\kappa,\mu}(z),\quad z>0,
\label{eq:W-jump}
\end{equation}
and its (negative) derivative
\begin{equation}
\lim_{\epsilon\downarrow 0}W'_{\pm\kappa,\mu}(-z+\ii\epsilon)=\ee^{\pm 2\pi \ii\kappa}\lim_{\epsilon\downarrow 0}W'_{\pm\kappa,\mu}(-z-\ii\epsilon)-\frac{2\pi \ii \ee^{\pm \ii\pi\kappa}}{\Gamma(\tfrac{1}{2}+\mu\mp\kappa)\Gamma(\tfrac{1}{2}-\mu\mp\kappa)}W'_{\mp\kappa,\mu}(z),\quad z>0,
\label{eq:W-prime-jump}
\end{equation}
which express jump conditions for $W_{\pm\kappa,\mu}(z)$ and its derivative across the branch cut on the negative real $z$-axis.  We also have the asymptotic behavior (cf., \cite[Eq.~13.14.21]{DLMF})
\begin{equation*}
W_{\pm\kappa,\mu}(z)=\ee^{-z/2}z^{\pm\kappa}(1+O(z^{-1})),\quad z\to\infty,\quad \arg(z)\in (-\pi,\pi),
\end{equation*}
as well as 
\begin{equation*}
\zeta (W_{11}'(\zeta)-\tfrac{1}{2}W_{11}(\zeta))=-\kappa \ee^{\zeta/2}(-\zeta)^{-\kappa}(1+O(\zeta^{-1})),\quad \zeta\to\infty,\quad\arg(-\zeta)\in (-\pi,\pi),
\end{equation*}
and
\begin{equation*}
\zeta (W_{12}'(\zeta)-\tfrac{1}{2}W_{12}(\zeta))=-\ee^{-\zeta/2}\zeta^{\kappa+1}(1+O(\zeta^{-1})),\quad\zeta\to\infty,\quad\arg(\zeta)\in (-\pi,\pi),
\end{equation*}
and in these last three relations the indicated power functions all have their principal values.  Now, with the principal branches selected, given $\mathrm{Arg}(x)\in (-\pi,\pi)$, the matrix $\mathbf{W}$ becomes a well-defined analytic function of $\lambda$, henceforth denoted $\mathbf{W}=\mathbf{W}(x,\lambda)$, defined in the complement of the preimage under $\zeta$ of the real axis.  This $x$-dependent preimage is therefore the jump contour $L$ for $\mathbf{W}$, and it takes different forms for $-\pi<\mathrm{Arg}(x)<0$ and $0<\mathrm{Arg}(x)<\pi$; see Figure~\ref{fig:jump-contour}.  
\begin{figure}[h]
\begin{center}
\includegraphics{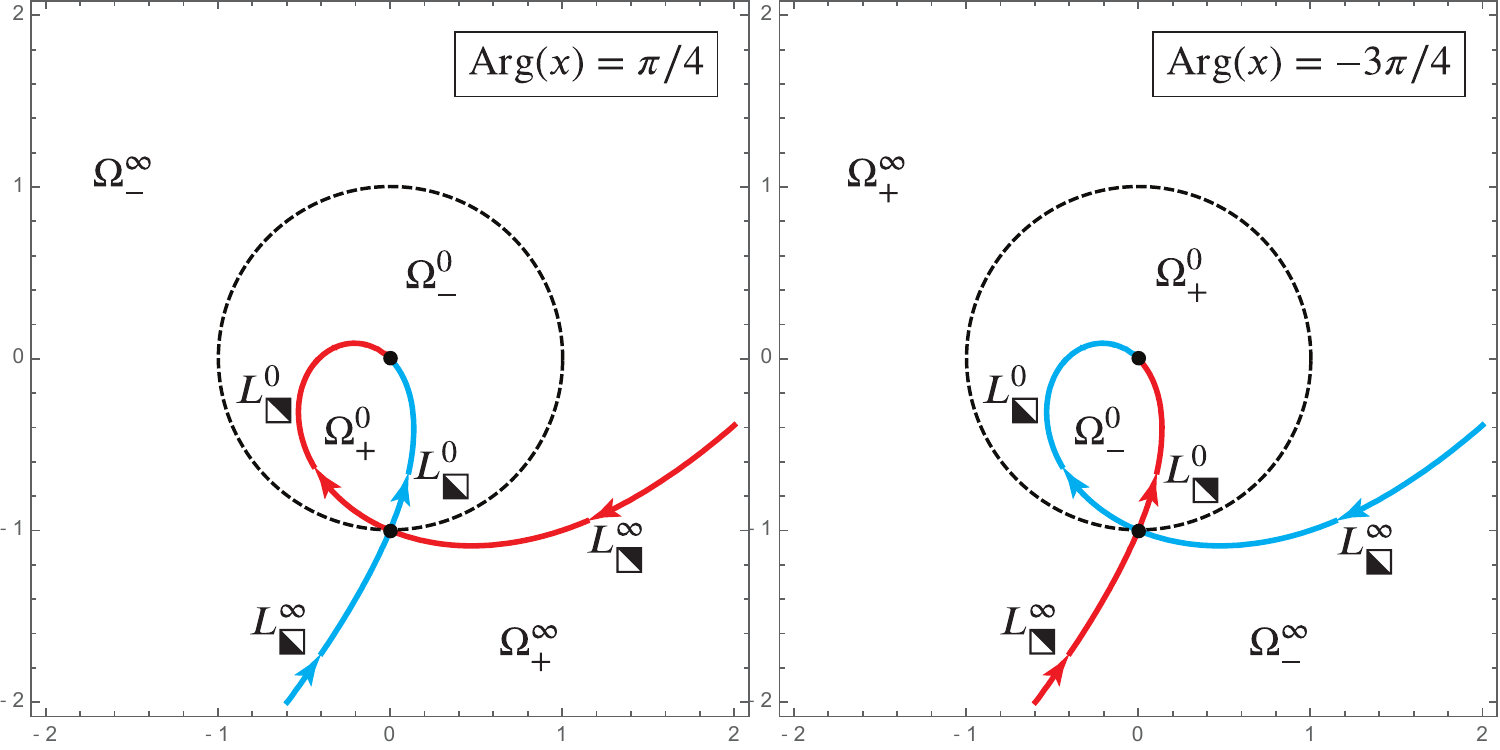}
\end{center}
\caption{The jump contour $L$ for the Whittaker matrix $\mathbf{W}(x,\lambda)$ takes a different form depending on whether $0<\mathrm{Arg}(x)<\pi$ (left) or $-\pi<\mathrm{Arg}(x)<0$ (right).  The arcs $\LInftyRed$ and $\LZeroRed$ (red) are where $\zeta<0$, and the arcs $\LInftyBlue$ and $\LZeroBlue$ (cyan) are where $\zeta>0$.  All four contour arcs meet at the only zero of $\zeta$, namely $\lambda=-\ii$.  Together with the unit circle (dotted), the contour arcs divide the complex $\lambda$-plane into four disjoint domains as indicated, $\Omega_\pm^0$ adjacent to $\lambda=0$ and where $\pm\mathrm{Im}(\zeta)>0$ holds, and unbounded domains $\Omega_\pm^\infty$ where $\pm\mathrm{Im}(\zeta)>0$ holds.
The subscript notation $\squarellblack$/$\squareurblack$ on the contour arcs is a mnemonic for the lower/upper triangular structure of jump matrices defined below (cf., \eqref{eq:Red-Jumps}--\eqref{eq:Blue-Jumps}) that will be carried by the corresponding contour arcs.}
\label{fig:jump-contour}
\end{figure}
Given a value of $x$ with $\mathrm{Im}(x)\neq 0$ and a corresponding jump contour $L$ as illustrated in this figure, we will now define the multivalued functions $\lambda^p$ and $(\lambda+\ii)^{-1/2}$ precisely as follows.  For $\lambda^p$, we take as a branch cut $\LInftyBlue\cup \LZeroBlue$.  Furthermore, noting that as $x$ varies in the upper half-plane $\LInftyBlue$ sweeps through the left half $\lambda$-plane, we define $\arg(\lambda)=0$ for sufficiently large positive $\lambda$ when $\mathrm{Im}(x)>0$.  Similarly, as $x$ varies in the lower half-plane $\LInftyBlue$ sweeps through the right half $\lambda$-plane and we therefore define $\arg(\lambda)=\pi$ for $\lambda<0$ of sufficiently large magnitude when $\mathrm{Im}(x)<0$.  This choice of branch along with the cut $\LInftyBlue\cup\LZeroBlue$ unambiguously determines $\arg(\lambda)$ and hence $\lambda^p$ for any $p\in\mathbb{C}$ given $x$ with $\mathrm{Im}(x)\neq 0$.  We use the notation $\OurPower{\lambda}{p}$ to indicate this branch.  Note that if $\OurArg{\lambda}$ denotes the value of the argument corresponding to this choice of branch we have
\begin{equation}
-\frac{\pi}{2}-\mathrm{Arg}(x)<\OurArg{\lambda}<\frac{3\pi}{2}-\mathrm{Arg}(x),\quad |\lambda|\to\infty,
\label{eq:arg-lambda-infinity}
\end{equation}
while
\begin{equation}
\mathrm{Arg}(x)-\frac{\pi}{2}<\OurArg{\lambda}<\mathrm{Arg}(x)+\frac{3\pi}{2},\quad |\lambda|\to 0.
\label{eq:arg-lambda-zero}
\end{equation}
Then, to define $(\lambda+\ii)^{-1/2}$, we select $\LInftyBlue$ as the branch cut and for $\mathrm{Im}(x)>0$ we take $(\lambda+\ii)^{-1/2}$ to be positive for sufficiently positive values of $\lambda+\ii$, while for $\mathrm{Im}(x)<0$ we take $(\lambda+\ii)^{-1/2}$ to be negative imaginary for sufficiently negative values of $\lambda+\ii$.  We denote the resulting well-defined function as $\OurPower{(\lambda+\ii)}{-1/2}$.  With this choice, we have in particular that 
\begin{equation}
\OurPower{(\lambda+\ii)}{-1/2}=\ee^{-\ii\pi/4}+O(\lambda),\quad\lambda\to 0,
\label{eq:factor-small}
\end{equation}
and 
\begin{equation}
\OurPower{(\lambda+\ii)}{-1/2}=\OurPower{\lambda}{-1/2}(1+O(\lambda^{-1})),\quad\lambda\to\infty.
\label{eq:factor-large}
\end{equation}
With these definitions in hand, we now construct the four normalized solutions for $u(x)=1$ as analytic functions of $\lambda$ in the four disjoint domains $\Omega^\infty_\pm$ and $\Omega^0_\pm$.  We will denote the resulting piecewise-analytic simultaneous matrix solution of \eqref{eq:lambda-eqn}--\eqref{eq:x-eqn} by $\mathbf{\Psi}(\lambda;x)$.
\subsubsection{Defining $\mathbf{\Psi}(\lambda;x)$ for $\lambda\in\Omega_+^\infty$}
We define $\mathbf{\Psi}(\lambda;x)$ for $\lambda\in\Omega_+^\infty$ by the formula
\begin{equation}
\mathbf{\Psi}(\lambda;x)=\ee^{x\sigma_3}x^{\Theta_\infty\sigma_3/2}x^{-1/2}\OurPower{(\lambda+\ii)}{-1/2}\mathbf{W}(x,\lambda)\mathbf{C}_+^\infty,\quad\lambda\in\Omega_+^\infty,
\label{eq:PsiInftyPlus}
\end{equation}
and we determine the constant matrix $\mathbf{C}_+^\infty$ so that $\mathbf{\Psi}=\mathbf{\Psi}(\lambda;x)$ satisfies \eqref{eq:infinity-norm} (with $\lambda^{\Theta_\infty\sigma_3/2}$ defined carefully as $\OurPower{\lambda}{\Theta_\infty\sigma_3/2}$) in the limit $\lambda\to\infty$ in $\Omega_+^\infty$.  Note that the precisely-defined factor $\OurPower{(\lambda+\ii)}{-1/2}$ satisfies \eqref{eq:factor-large}, and that
when $\lambda\to\infty$ the Whittaker matrix $\mathbf{W}(x,\lambda)$ takes the following asymptotic form:
\begin{equation}
\mathbf{W}(x,\lambda)=\left(\begin{bmatrix}1 & 1\\
0 & 4K^{-1}\zeta 
\end{bmatrix}+O(\lambda^{-1})\right)\begin{bmatrix}\ee^{\zeta/2}(-\zeta)^{-\kappa} & 0\\0 & \ee^{-\zeta/2}\zeta^\kappa\end{bmatrix},\quad\lambda\to\infty.
\label{eq:W-large}
\end{equation}
This can be further simplified by recalling that $\zeta=\ii x(\lambda+2i-\lambda^{-1})$ is large when $\lambda$ is large, and making use of the fact that the expressions $(\pm\zeta)^{\pm\kappa}$ refer to the principal branch.  
Indeed, by definition $\mathrm{Im}(\zeta)>0$ and $\mathrm{Im}(-\zeta)<0$ hold for $\lambda$ in the domain $\Omega_+^\infty$.  Therefore to define $(-\zeta)^{-\kappa}$ by the principal branch we need to have $-\pi<\arg(-\zeta)<0$ or, for large $\lambda$, $-\pi<\arg(-\ii x\lambda(1+O(\lambda^{-1}))<0$.  Writing $\arg(-\ii x\lambda(1+O(\lambda^{-1})))=-\tfrac{1}{2}\pi + \mathrm{Arg}(x)+\OurArg{\lambda} + \mathrm{Arg}(1+O(\lambda^{-1})) + 2\pi \ell$, $\ell\in\mathbb{Z}$, where $\OurArg{\lambda}$ satisfies (according to Figure~\ref{fig:jump-contour} and \eqref{eq:arg-lambda-infinity} for large $\lambda\in\Omega_1^\infty$) $\OurArg{\lambda}+\mathrm{Arg}(x)\in (-\tfrac{1}{2}\pi,\tfrac{1}{2}\pi)$, 
we see that $\ell=0$, and therefore 
$(-\zeta)^{-\kappa}=\ee^{\ii \pi\kappa/2}x^{-\kappa}\OurPower{\lambda}{-\kappa}(1+O(\lambda^{-1}))$ as $\lambda\to\infty$ in $\Omega_+^\infty$, where $x^{-\kappa}$ refers to the principal branch.
Similarly, to define $\zeta^\kappa$ by the principal branch we need to have $0<\arg(\zeta)<\pi$ or for large $\lambda$, $0<\arg(\ii x\lambda(1+O(\lambda^{-1})))<\pi$.  Writing $\arg(\ii x\lambda(1+O(\lambda^{-1})))=\tfrac{1}{2}\pi+\mathrm{Arg}(x)+\OurArg{\lambda}+\mathrm{Arg}(1+O(\lambda^{-1}))+2\pi \ell$ and again using $\OurArg{\lambda}+\mathrm{Arg}(x)\in (-\tfrac{1}{2}\pi,\tfrac{1}{2}\pi)$ gives $\ell=0$ so that $\zeta^\kappa=\ee^{\ii\pi\kappa/2}x^\kappa\OurPower{\lambda}{\kappa}(1+O(\lambda^{-1}))$ as $\lambda\to\infty$ in $\Omega_+^\infty$, where again $x^\kappa$ is the principal branch.  Putting these results together gives
\begin{multline}
\mathbf{\Psi}(\lambda;x)\ee^{-\ii x\lambda\sigma_3/2}\OurPower{\lambda}{\Theta_\infty\sigma_3/2}=
\left(\begin{bmatrix}\ee^{\ii \pi\kappa/2} & 0\\0 & 4K^{-1}\ee^{\ii\pi(\kappa+1)/2}
\end{bmatrix}+O(\lambda^{-1})\right)\\
{}\cdot\OurPower{\lambda}{-\Theta_\infty\sigma_3/2}\ee^{\ii x\lambda\sigma_3/2}\mathbf{C}_+^\infty \ee^{-\ii x\lambda\sigma_3/2}\OurPower{\lambda}{\Theta_\infty\sigma_3/2},\quad\lambda\to\infty,\quad\lambda\in\Omega_+^\infty.
\end{multline}
Since $\Omega_+^\infty$ contains directions in which both exponential factors $\ee^{\pm\ii x\lambda}$ are exponentially large as $\lambda\to\infty$, this can only have a finite limit if $\mathbf{C}_+^\infty$ is a diagonal matrix, in which case the correct normalization requires that
\begin{equation}
\mathbf{C}_+^\infty:=\begin{bmatrix}\ee^{-\ii\pi\kappa/2} & 0\\0 & -\tfrac{\ii}{4}K\ee^{-\ii\pi \kappa/2}\end{bmatrix}.  
\end{equation}
Using this formula for $\mathbf{C}_+^\infty$ in \eqref{eq:PsiInftyPlus} completes the precise definition of $\mathbf{\Psi}(\lambda;x)$ for $\lambda\in\Omega_+^\infty$.
\subsubsection{Defining $\mathbf{\Psi}(\lambda;x)$ for $\lambda\in\Omega_-^\infty$}
In a similar way, we define $\mathbf{\Psi}(\lambda;x)$ for $\lambda\in\Omega_-^\infty$ by the formula
\begin{equation}
\mathbf{\Psi}(\lambda;x)=\ee^{x\sigma_3}x^{\Theta_\infty\sigma_3/2}x^{-1/2}\OurPower{(\lambda+\ii)}{-1/2}\mathbf{W}(x,\lambda)\mathbf{C}_-^\infty,\quad\lambda\in\Omega_-^\infty
\label{eq:Psi-2-infinity}
\end{equation}
and we determine $\mathbf{C}_-^\infty$ so that $\mathbf{\Psi}=\mathbf{\Psi}(\lambda;x)$ satisfies \eqref{eq:infinity-norm} with $\lambda^{\Theta_\infty\sigma_3/2}$ interpreted as $\OurPower{\lambda}{\Theta_\infty\sigma_3/2}$ in the limit $\lambda\to\infty$ with $\lambda\in\Omega_-^\infty$.  Again we may use both \eqref{eq:factor-large} and \eqref{eq:W-large}, and it remains to interpret the principal branch power functions $(\pm\zeta)^{\pm\kappa}$ appearing in \eqref{eq:W-large}.  Now by definition, $\mathrm{Im}(\zeta)<0$ and $\mathrm{Im}(-\zeta)>0$ hold for $\lambda\in\Omega_-^\infty$, so for the principal branch powers we have $-\pi<\arg(\zeta)<0$ and $0<\arg(-\zeta)<\pi$.  Writing $\arg(\zeta)=\arg(\ii x\lambda(1+O(\lambda^{-1})))=\tfrac{1}{2}\pi + \mathrm{Arg}(x)+\OurArg{\lambda}+\mathrm{Arg}(1+O(\lambda^{-1})) + 2\pi \ell$, $\ell\in\mathbb{Z}$, and taking into account that $\OurArg{\lambda}+\mathrm{Arg}(x)\in (\tfrac{1}{2}\pi,\tfrac{3}{2}\pi)$ according to Figure~\ref{fig:jump-contour} and \eqref{eq:arg-lambda-infinity} we find that $\ell=-1$ and so $\zeta^\kappa=\ee^{-3\pi\ii\kappa/2}x^\kappa\OurPower{\lambda}{\kappa}(1+O(\lambda^{-1}))$ as $\lambda\to\infty$ from $\Omega_-^\infty$ where $x^\kappa$ is the principal branch.  Similarly writing $\arg(-\zeta)=\arg(-\ii x\lambda(1+O(\lambda^{-1})))=-\tfrac{1}{2}\pi+\mathrm{Arg}(x)+\OurArg{\lambda}+\mathrm{Arg}(1+O(\lambda^{-1}))+2\pi \ell$ we get that $\ell=0$ and so $(-\zeta)^{-\kappa}=\ee^{\ii\pi\kappa/2}x^{-\kappa}\OurPower{\lambda}{-\kappa}(1+O(\lambda^{-1}))$ as $\lambda\to\infty$ from $\Omega_-^\infty$ where $x^{-\kappa}$ is the principal branch.  Using this information and imposing the normalization condition \eqref{eq:infinity-norm} on the formula \eqref{eq:Psi-2-infinity} we learn that the matrix $\mathbf{C}_-^\infty$ must again be diagonal for the required limit to exist, and then
\begin{equation*}
\mathbf{C}_-^\infty = \begin{bmatrix}\ee^{-\ii\pi\kappa/2} & 0\\0 & -\tfrac{\ii}{4}K\ee^{3\pi\ii\kappa/2}\end{bmatrix}.
\end{equation*}
Combining this with \eqref{eq:Psi-2-infinity} completes the definition of $\mathbf{\Psi}(\lambda;x)$ for $\lambda\in\Omega_-^\infty$.
\subsubsection{Defining $\mathbf{\Psi}(\lambda;x)$ for $\lambda\in\Omega_-^0$}
We write $\mathbf{\Psi}(\lambda;x)$ for $\lambda\in\Omega_-^0$ in the form
\begin{equation}
\mathbf{\Psi}(\lambda;x)=\ee^{x\sigma_3}x^{\Theta_\infty\sigma_3/2}x^{-1/2}\OurPower{(\lambda+\ii)}{-1/2}\mathbf{W}(x,\lambda)\mathbf{C}_-^0,\quad\lambda\in\Omega_-^0,
\label{eq:Psi-Zero-Minus}
\end{equation}
and try to determine the constant matrix $\mathbf{C}_-^0$ such that \eqref{eq:zero-norm} holds (with $\lambda^{-\Theta_0\sigma_3/2}$ carefully interpreted as $\OurPower{\lambda}{-\Theta_0\sigma_3/2}$) for some appropriate $a$ and $b$ in the limit $\lambda\to 0$ from $\Omega_-^0$.  Note that the precisely-defined factor $\OurPower{(\lambda+\ii)}{-1/2}$ is analytic near $\lambda=0$ and satisfies \eqref{eq:factor-small}, while in the limit
$\lambda\to 0$, the Whittaker matrix $\mathbf{W}(x,\lambda)$ takes the following asymptotic form:
\begin{equation}
\mathbf{W}(x,\lambda)=\left(\begin{bmatrix} 1 & 1\\K^{-1}(1-2\Theta_\infty) & K^{-1}(1-2\Theta_\infty-4x)\end{bmatrix}+O(\lambda)\right)\begin{bmatrix}\ee^{\zeta/2}(-\zeta)^{-\kappa} & 0\\
0 & \ee^{-\zeta/2}\zeta^\kappa\end{bmatrix},\quad\lambda\to 0.
\label{eq:W-small}
\end{equation}
We carefully interpret the principal branch powers appearing in \eqref{eq:W-small} by noting that $\lambda\in\Omega_-^0$ means by definition that $\mathrm{Im}(\zeta)<0$ so we need to have $-\pi<\arg(\zeta)<0$ and $0<\arg(-\zeta)<\pi$.
Writing $\arg(\zeta)=\arg(-\ii x\lambda^{-1}(1+O(\lambda)))=-\tfrac{1}{2}\pi+\mathrm{Arg}(x)-\OurArg{\lambda}+\mathrm{Arg}(1+O(\lambda))+2\pi \ell$, $\ell\in\mathbb{Z}$, and observing from Figure~\ref{fig:jump-contour} and \eqref{eq:arg-lambda-zero}
that $\lambda$ small and in $\Omega_-^0$ means $\OurArg{\lambda}-\mathrm{Arg}(x)\in (-\tfrac{1}{2}\pi,\tfrac{1}{2}\pi)$, we see that $\ell=0$ and so $\zeta^\kappa=\ee^{-\ii\pi\kappa/2}x^\kappa\OurPower{\lambda}{-\kappa}(1+O(\lambda))$ as $\lambda\to 0$ from $\Omega_-^0$ where $x^\kappa$ is the principal branch.
Similarly, writing $\arg(-\zeta)=\arg(\ii x\lambda^{-1}(1+O(\lambda))=\tfrac{1}{2}\pi+\mathrm{Arg}(x)-\OurArg{\lambda}+\mathrm{Arg}(1+O(\lambda))+2\pi \ell$ and again using $\OurArg{\lambda}-\mathrm{Arg}(x)\in (-\tfrac{1}{2}\pi,\tfrac{1}{2}\pi)$ we find that $\ell=0$ and so $(-\zeta)^{-\kappa}=\ee^{-\ii\pi\kappa/2}x^{-\kappa}\OurPower{\lambda}{\kappa}(1+O(\lambda))$ as $\lambda\to 0$ from $\Omega_-^0$ where $x^{-\kappa}$ denotes the principal branch.  Using this information in \eqref{eq:zero-norm} we see that again $\mathbf{C}_-^0$ must be a diagonal matrix, say
\begin{equation}
\mathbf{C}_-^0=\begin{bmatrix}c & 0\\0 & d\end{bmatrix}
\label{eq:C-Zero-Minus}
\end{equation}
with $c$ and $d$ independent of both $x$ and $\lambda$, and then $\mathbf{\Psi}=\mathbf{\Psi}(\lambda;x)$ indeed satisfies \eqref{eq:zero-norm} provided that 
\begin{equation}
\begin{split}
a(x)&= \ee^{-\ii\pi\kappa/2}\ee^{-\ii\pi/4}c\\
b(x)&=4K^{-1}\ee^{-\ii\pi\kappa/2}\ee^{-\ii\pi/4}x^{-1}d.
\end{split}
\label{eq:ab-cd}
\end{equation}
Note that $a(x)$ is independent of $x$.  The unimodularity condition \eqref{eq:unimodularity} is then equivalent to the following condition on the constants $c$ and $d$:
\begin{equation}
\det(\mathbf{C}_-^0)=cd=-\frac{1}{4}\ii K\ee^{\ii\pi\kappa}.
\label{eq:cd-identity}
\end{equation}
Therefore, to completely define $\mathbf{\Psi}(\lambda;x)$ we should simply choose convenient values for $c$ and $d$ consistent with \eqref{eq:cd-identity} and then combine \eqref{eq:C-Zero-Minus} with \eqref{eq:Psi-Zero-Minus}.
\subsubsection{Defining $\mathbf{\Psi}(\lambda;x)$ for $\lambda\in\Omega_+^0$}
We write $\mathbf{\Psi}(\lambda;x)$ for $\lambda\in\Omega_+^0$ in the form
\begin{equation}
\mathbf{\Psi}(\lambda;x)=\ee^{x\sigma_3}x^{\Theta_\infty\sigma_3/2}x^{-1/2}\OurPower{(\lambda+\ii)}{-1/2}\mathbf{W}(x,\lambda)\mathbf{C}_+^0,\quad\lambda\in\Omega_+^0,
\label{eq:Psi-Zero-Plus}
\end{equation}
for a constant matrix $\mathbf{C}_+^0$ to be determined from the normalization condition \eqref{eq:zero-norm} in which $\lambda^{-\Theta_0\sigma_3/2}$ is interpreted as $\OurPower{\lambda}{-\Theta_0\sigma_3/2}$.  We may again use \eqref{eq:factor-small} and \eqref{eq:W-small} and it remains to interpret the principal branch power functions $\zeta^\kappa$ and $(-\zeta)^{-\kappa}$ for $\lambda\in\Omega_+^0$.  By definition, $\lambda\in\Omega_+^0$ means $\mathrm{Im}(\zeta)>0$, so $0<\arg(\zeta)<\pi$ and $-\pi<\arg(-\zeta)<0$.  Writing $\arg(\zeta)=\arg(-\ii x\lambda^{-1}(1+O(\lambda)))=-\tfrac{1}{2}\pi+\mathrm{Arg}(x)-\OurArg{\lambda}+\mathrm{Arg}(1+O(\lambda)) + 2\pi \ell$, $\ell\in\mathbb{Z}$, and noting from Figure~\ref{fig:jump-contour} and \eqref{eq:arg-lambda-zero} that
$\lambda$ small in $\Omega_+^0$ means that $\OurArg{\lambda}-\mathrm{Arg}(x)\in (\tfrac{1}{2}\pi,\tfrac{3}{2}\pi)$, we obtain $\ell=1$ and therefore 
$\zeta^\kappa=\ee^{3\pi\ii\kappa/2}x^\kappa\OurPower{\lambda}{-\kappa}(1+O(\lambda))$)
as $\lambda\to 0$ from $\Omega_+^0$ where $x^\kappa$ is the principal branch.  Likewise writing $\arg(-\zeta)=\arg(\ii x\lambda^{-1}(1+O(\lambda)))=\tfrac{1}{2}\pi+\mathrm{Arg}(x)-\OurArg{\lambda}+\mathrm{Arg}(1+O(\lambda))+2\pi \ell$ we see that $\ell=0$ and therefore $(-\zeta)^{-\kappa}=\ee^{-\ii\pi\kappa/2}x^{-\kappa}\OurPower{\lambda}{\kappa}(1+O(\lambda))$ as $\lambda\to 0$ from $\Omega_+^0$ where $x^{-\kappa}$ is the principal branch.  Using this information in \eqref{eq:zero-norm} we see that the matrix $\mathbf{C}_+^0$ must be diagonal:
\begin{equation}
\mathbf{C}_+^0=\begin{bmatrix}g & 0\\0 & h\end{bmatrix}
\label{eq:C-Zero-Plus}
\end{equation}
where the constants $g$ and $h$ are related to $a(x)$ and $b(x)$ by
\begin{equation}
\begin{split}
a(x)&=\ee^{-\ii\pi\kappa/2}\ee^{-\ii\pi/4}g\\
b(x)&=4K^{-1}\ee^{3\pi\ii\kappa/2}\ee^{-\ii\pi/4}x^{-1}h.
\end{split}
\label{eq:ab-gh}
\end{equation}
Once again, $a(x)$ is independent of $x$, and the unimodularity condition \eqref{eq:unimodularity} is then equivalent to
\begin{equation}
\det(\mathbf{C}_+^0)=gh=-\frac{1}{4}\ii K\ee^{-\ii\pi\kappa}.
\label{eq:gh-identity}
\end{equation}
Choosing any constants $g$ and $h$ consistent with \eqref{eq:gh-identity} therefore determines $\mathbf{\Psi}(\lambda;x)$ for $\lambda\in\Omega_+^0$ by combining \eqref{eq:C-Zero-Plus} with \eqref{eq:Psi-Zero-Plus}. 

\subsection{Jump matrices for $\mathrm{Im}(x)\neq 0$}
Before computing the jump matrices, we will remove the ambiguity of the constants $c,d,g,h$ still present in the definition of $\mathbf{\Psi}(\lambda;x)$ for $\lambda\in\Omega_\pm^0$ in the following way:
\begin{itemize}
\item If $\mathrm{Im}(x)>0$, we choose $c$ and $d$ so that $\mathbf{C}_-^0 = \mathbf{C}_-^\infty$.  This is allowed because the diagonal elements of $\mathbf{C}_-^\infty$ obviously also satisfy \eqref{eq:cd-identity} because $2\kappa+1=\Theta_\infty$.  Similarly, if $\mathrm{Im}(x)<0$, we choose $g$ and $h$ such that $\mathbf{C}_+^0=\mathbf{C}_+^\infty$, which is consistent because the diagonal elements of $\mathbf{C}_+^\infty$ satisfy \eqref{eq:gh-identity}.
\item We then insist that the normalization factors $a(x)$ and $b(x)$ appearing in \eqref{eq:zero-norm} are exactly the same regardless of whether $\lambda\to 0$ from $\Omega_-^0$ or from $\Omega_+^0$.
\end{itemize}
The first choice implies that at every point $\lambda\neq -\ii$ of the unit circle forming the common boundary of $\Omega_-^\infty$ and $\Omega_-^0$ (for $\mathrm{Im}(x)>0$) or the common boundary of $\Omega_+^\infty$ and $\Omega_+^0$ (for $\mathrm{Im}(x)<0$), the boundary values taken by $\mathbf{\Psi}(\lambda;x)$ agree, i.e., \emph{the jump matrix for $\mathbf{\Psi}(\lambda;x)$ across the unit circle $S^1\setminus\{-\ii\}$ is exactly the identity matrix}.  The second choice together with the first implies, in light of \eqref{eq:ab-cd} and \eqref{eq:ab-gh}, that the matrices $\mathbf{C}_\pm^0$ are necessarily given by
\begin{equation*}
\mathbf{C}_-^0 = \begin{bmatrix}\ee^{-\ii\pi\kappa/2} & 0\\0 & -\tfrac{1}{4}\ii K\ee^{3\pi\ii\kappa/2}
\end{bmatrix}\quad\text{and}\quad
\mathbf{C}_+^0 = \begin{bmatrix}\ee^{-\ii\pi \kappa/2}&0\\0 & \displaystyle
-\tfrac{1}{4}\ii K\ee^{-\ii\pi\kappa/2}\end{bmatrix}.
\end{equation*}
Note that these formul\ae\ do not depend on the sign of $\mathrm{Im}(x)$.
Thus, the matrix function $\mathbf{\Psi}(\lambda;x)$ has been determined modulo only the value of the constant $K\neq 0$, as an analytic function of $\lambda\in\mathbb{C}\setminus L$ where $L=\LInftyRed\cup\LZeroRed\cup\LInftyBlue\cup\LZeroBlue$ is the jump contour for the Whittaker matrix $\mathbf{W}$ illustrated with red and cyan curves in Figure~\ref{fig:jump-contour}.\bigskip

The jump conditions satisfied by $\mathbf{\Psi}(\lambda;x)$ across the four arcs of $L$ oriented as shown in Figure~\ref{fig:jump-contour} 
are computed by comparing the formul\ae\ for $\mathbf{\Psi}(\lambda;x)$ on either side using the identities \eqref{eq:W-jump}--\eqref{eq:W-prime-jump} together with the fact that $\zeta<0$ along $\LZeroRed$ and $\LInftyRed$ while $\zeta>0$ along $\LZeroBlue$ and $\LInftyBlue$.  One also has to take into account that the factor $\OurPower{(\lambda+\ii)}{-1/2}$ changes sign across $\LInftyBlue$ by definition, but otherwise is analytic.  The jump conditions are as follows:
\begin{itemize}
\item The arc $\LInftyRed$ separates the domain $\Omega_+^\infty$ on its left from $\Omega_-^\infty$ on its right.  Using 
$\zeta<0$ for $\lambda\in\LInftyRed$ we deduce that
\begin{equation}
\mathbf{\Psi}_+(\lambda;x)=\mathbf{\Psi}_-(\lambda;x)\VInftyRed,\quad \lambda\in\LInftyRed
\label{eq:jump-infty-red}
\end{equation}
where
\begin{equation}
\VInftyRed:=\begin{bmatrix}1 & \displaystyle\tfrac{1}{4}K\ee^{\ii\pi\kappa}\cdot\frac{2\pi }{\Gamma(\tfrac{1}{2}+\mu-\kappa)\Gamma(\tfrac{1}{2}-\mu-\kappa)}\\
0 & 1\end{bmatrix}.
\label{eq:V-infty-red}
\end{equation}
\item The arc $\LZeroRed$ separates the domain $\Omega_-^0$ on its left from $\Omega_+^0$ on its right.  Using
$\zeta<0$ we get
\begin{equation}
\mathbf{\Psi}_+(\lambda;x)=\mathbf{\Psi}_-(\lambda;x)\VZeroRed,\quad\lambda\in\LZeroRed
\label{eq:jump-0-red}
\end{equation}
where
\begin{equation}
\VZeroRed:=\begin{bmatrix}1 & \displaystyle
-\tfrac{1}{4}K\ee^{\ii\pi\kappa}\cdot\frac{2\pi}{\Gamma(\tfrac{1}{2}+\mu-\kappa)\Gamma(\tfrac{1}{2}-\mu-\kappa)}\\0 & 1\end{bmatrix}.
\label{eq:V-0-red}
\end{equation}
\item The arc $\LZeroBlue$ separates the domain $\Omega_+^0$ on its left from $\Omega_-^0$ on its right.
Using 
$\zeta>0$ we arrive at
\begin{equation}
\mathbf{\Psi}_+(\lambda;x)=\mathbf{\Psi}_-(\lambda;x)\VZeroBlue,\quad\lambda\in\LZeroBlue
\label{eq:jump-0-blue}
\end{equation}
where
\begin{equation}
\VZeroBlue:=\begin{bmatrix}\ee^{2\pi \ii\kappa} & 0\\\displaystyle (\tfrac{1}{4}K\ee^{\ii\pi\kappa})^{-1}\cdot\frac{2\pi}{\Gamma(\tfrac{1}{2}+\mu+\kappa)\Gamma(\tfrac{1}{2}-\mu+\kappa)} & \ee^{-2\pi\ii\kappa}\end{bmatrix}.
\label{eq:V-0-blue}
\end{equation}
\item Finally, the arc $\LInftyBlue$ separates the domain $\Omega_-^\infty$ on its left from $\Omega_+^\infty$ on its right.  Using $\zeta>0$ and taking into account that $\OurPower{(\lambda+\ii)}{-1/2}$ changes sign across $\LInftyBlue$ we obtain
\begin{equation}
\mathbf{\Psi}_+(\lambda;x)=\mathbf{\Psi}_-(\lambda;x)\VInftyBlue,\quad\lambda\in\LInftyBlue,
\label{eq:jump-infty-blue}
\end{equation}
where
\begin{equation}
\VInftyBlue:=\begin{bmatrix}-\ee^{-2\pi\ii\kappa} & 0\\
\displaystyle (\tfrac{1}{4}K\ee^{\ii\pi\kappa})^{-1}\cdot\frac{2\pi}{\Gamma(\tfrac{1}{2}+\mu+\kappa)\Gamma(\tfrac{1}{2}-\mu+\kappa)}& -\ee^{2\pi \ii\kappa}\end{bmatrix}.
\label{eq:V-infty-blue}
\end{equation}
\end{itemize}
These formul\ae\ may be simplified further by recalling the definitions $\mu=\tfrac{1}{4}$ and $\kappa=\tfrac{1}{2}(\Theta_\infty-1)$ (so $\Theta_\infty=m+1$ for $n=0$ implies $\kappa=\tfrac{1}{2}m$), using the duplication formula \cite[Eq.~5.5.5]{DLMF} 
$\Gamma(2z)=\pi^{-1/2}2^{2z-1}\Gamma(z)\Gamma(z+\tfrac{1}{2})$, and choosing
\begin{equation*}
K=2^{m+2}\ee^{-\ii\pi m/2}.
\end{equation*}
Thus we find
\begin{equation}
\VInftyRed=\VInftyRed(m):=\begin{bmatrix}1 & \displaystyle\frac{\sqrt{2\pi}}{\Gamma(\tfrac{1}{2}-m)}\\0 & 1\end{bmatrix},\quad
\VZeroRed=\VZeroRed(m):=\begin{bmatrix}1 & \displaystyle-\frac{\sqrt{2\pi}}{\Gamma(\tfrac{1}{2}-m)}\\0 & 1\end{bmatrix},
\label{eq:Red-Jumps}
\end{equation}
\begin{equation}
\VZeroBlue=\VZeroBlue(m):=\begin{bmatrix}\ee^{\ii\pi m} & 0\\
\displaystyle \frac{\sqrt{2\pi}}{\Gamma(\tfrac{1}{2}+m)} & \ee^{-\ii\pi m}\end{bmatrix},\quad\VInftyBlue=\VInftyBlue(m):=\begin{bmatrix}-\ee^{-\ii\pi m} & 0\\
\displaystyle \frac{\sqrt{2\pi}}{\Gamma(\tfrac{1}{2}+m)} & -\ee^{\ii\pi m}\end{bmatrix}.
\label{eq:Blue-Jumps}
\end{equation}

In the general theory \cite{FokasIKN06} of the direct monodromy problem for \eqref{eq:PIII}, the Stokes constants are subject to an identity known as the \emph{cyclic relation}.  In this setting, the cyclic relation is simply equivalent to the statement that for consistency, the ordered product of the jump matrices around the self-intersection point $\lambda=-\ii$ must be the identity:
\begin{equation}
\VInftyBlue(m)^{-1}\VInftyRed(m)^{-1}
\VZeroBlue(m)\VZeroRed(m)=\mathbb{I}.
\label{eq:cyclic}
\end{equation}
While it is straightforward to check directly that \eqref{eq:cyclic} holds, this identity is in fact a simple consequence of the way the jump matrices were computed, namely by 
comparing four functions, each of which admits analytic continuation to a full neighborhood of the self-intersection point $\lambda=-\ii$ and that differ only by right-multiplication by constant matrices.  In other words, \eqref{eq:cyclic} holds as a (\v{C}ech-)cohomological identity.

\subsection{The limiting cases of $x>0$ and $x<0$}
\label{sec:real-axis-limit}
The jump contour $L$ for the Whittaker matrix $\mathbf{W}(x,\lambda)$ undergoes a bifurcation when $x$ crosses either the positive or negative real axes.  The bifurcation that occurs as $\mathrm{Arg}(x)$ passes through zero is illustrated in Figure~\ref{fig:NearPositive-x}.
\begin{figure}[h]
\begin{center}
\includegraphics{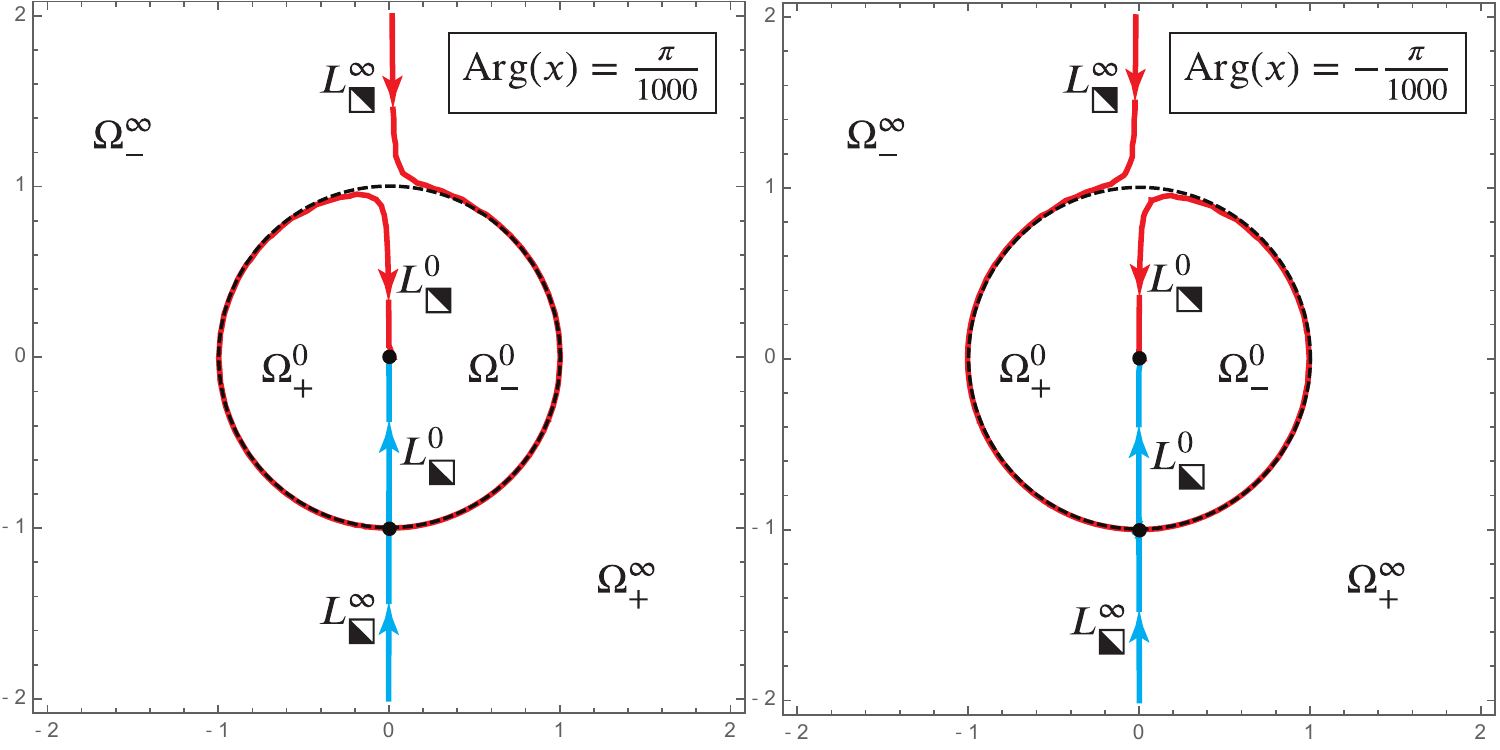}
\end{center}
\caption{As in Figure~\ref{fig:jump-contour} except for values of $x$ close to the positive real axis.}
\label{fig:NearPositive-x}
\end{figure}
Clearly, the arcs $\LZeroBlue$ and $\LInftyBlue$ depend continuously on $\mathrm{Arg}(x)$ near $\mathrm{Arg}(x)=0$, but the parts of $\LZeroRed$ and $\LInftyRed$ close to the unit circle become interchanged as $\mathrm{Arg}(x)$ passes through zero.  However, noting that the matrices $\VInftyRed(m)$ and $\VZeroRed(m)$ as defined in \eqref{eq:Red-Jumps} are inverse to each other, we easily conclude that the jump conditions satisfied by the matrix $\mathbf{\Psi}(\lambda;x)$ actually depend continuously on $\mathrm{Arg}(x)$ near $\mathrm{Arg}(x)=0$.  This makes it possible to define the jump conditions by continuity for $\mathrm{Arg}(x)=0$.  Note also that not only are the branch cuts of the functions $\OurPower{\lambda}{p}$ and $\OurPower{(\lambda+\ii)}{-1/2}$ continuous with respect to $\mathrm{Arg}(x)$ near $\mathrm{Arg}(x)=0$, but so also are the functions themselves.  

On the other hand, as $x$ approaches the negative real axis from above and below, the bifurcation as illustrated in Figure~\ref{fig:NearNegative-x} 
\begin{figure}[h]
\begin{center}
\includegraphics{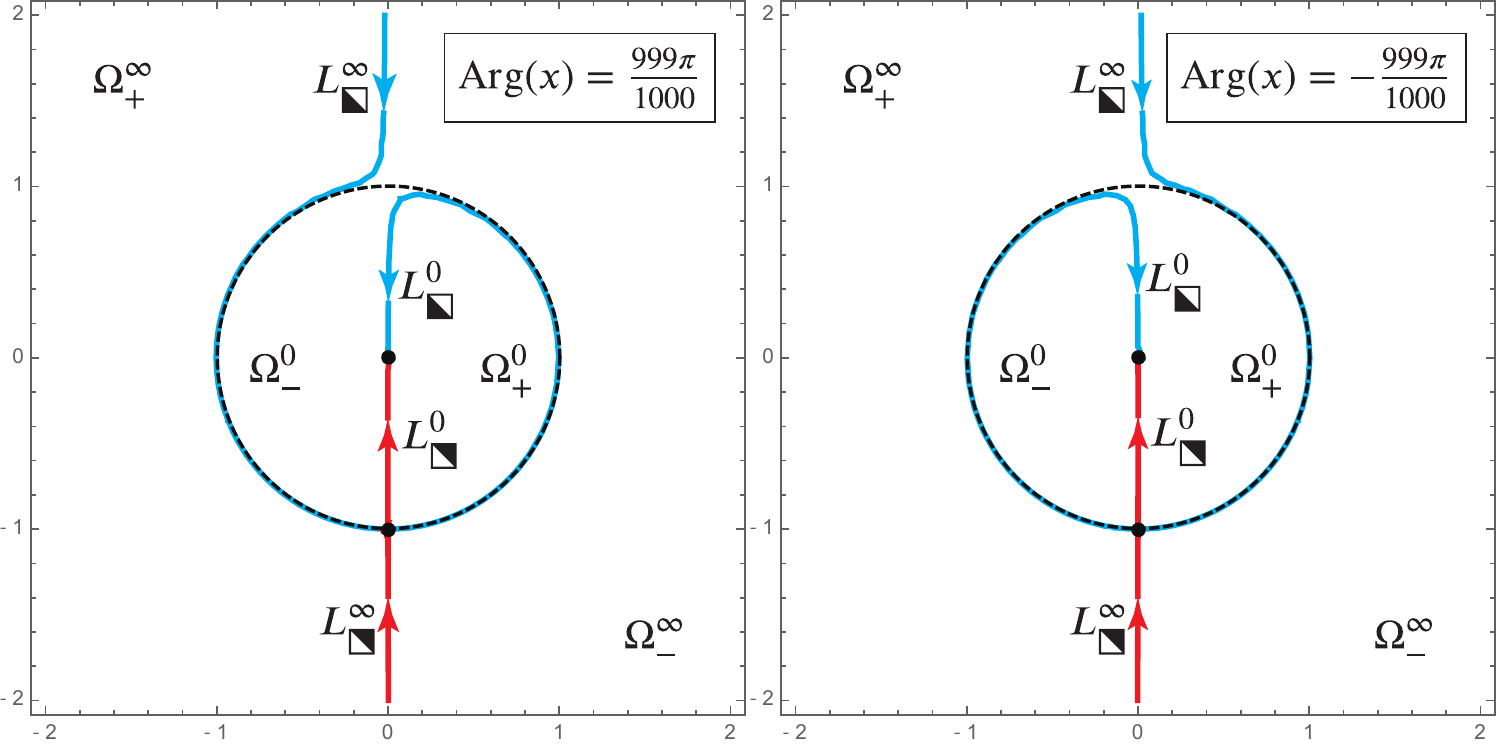}
\end{center}
\caption{As in Figure~\ref{fig:jump-contour} except for values of $x$ close to the negative real axis.}
\label{fig:NearNegative-x}
\end{figure}
is apparently more serious.  Indeed, the arcs of $\LInftyBlue$ and $\LZeroBlue$ near the unit circle are now interchanged while $\LInftyRed$ and $\LZeroRed$ depend continuously on $\mathrm{Arg}(-x)$.  Since, according to \eqref{eq:Blue-Jumps}, $\VZeroBlue(m)\VInftyBlue(m)=-\mathbb{I}$, it is not hard to see that in the limit $\mathrm{Arg}(-x)\to 0$ the limiting jump conditions from $\mathrm{Im}(x)>0$ and $\mathrm{Im}(x)<0$ differ precisely on the unit circle, by a sign.  In terms of the matrix $\mathbf{\Psi}(\lambda;x)$ itself, 
\begin{equation*}
\lim_{\epsilon\downarrow 0}\mathbf{\Psi}(\lambda;x+\ii\epsilon)=\mathrm{sgn}(\ln|\lambda|)\lim_{\epsilon\downarrow 0}\mathbf{\Psi}(\lambda;x-\ii\epsilon),\quad x<0.
\end{equation*}
Naturally, both limiting values correspond to simultaneous solutions of the Painlev\'e-III Lax pair \eqref{eq:lambda-eqn}--\eqref{eq:x-eqn} for exactly the same solution $u(x)=1$; the apparent monodromy in the function $\mathbf{\Psi}(\lambda;x)$ about $x=0$ can be absorbed into a sign change in the arbitrary constants $a$ and $b$ appearing in \eqref{eq:zero-norm}.  For practical calculations one has to be careful about the values of the power functions $\OurPower{\lambda}{p}$ for $|\lambda|<1$ in taking the limit of $\mathbf{\Psi}(\lambda;x)$ as $x$ approaches a negative real value from the upper/lower half-planes.  Indeed, keeping track of the dependence of $\OurArg{\lambda}$ on $x$ with the augmented notation $\OurArg{\lambda;x}$, we have the identity
\begin{equation*}
\lim_{\epsilon\downarrow 0}\OurArg{\lambda;x+\ii\epsilon}=\lim_{\epsilon\downarrow 0}\OurArg{\lambda;x-\ii\epsilon} - 2\pi\mathrm{sgn}(\ln|\lambda|),\quad x<0.
\end{equation*}

\section{Schlesinger-B\"acklund Transformations}
\label{sec:Schlesinger}
\subsection{Schlesinger transformations to increment/decrement $n$}
Now suppose that $\VInftyBlue$, $\VInftyRed$, $\VZeroBlue$, and $\VZeroRed$ are any unimodular $2\times 2$ matrices satisfying the cyclic relation \eqref{eq:cyclic}, and that $\mathbf{\Psi}(\lambda;x)$ is an analytic function of $\lambda$ in the domain $\mathbb{C}\setminus L$, $L:=\LInftyBlue\cup\LInftyRed\cup\LZeroBlue\cup\LZeroRed$, satisfying jump conditions of the form \eqref{eq:jump-infty-red}, \eqref{eq:jump-0-red}, \eqref{eq:jump-0-blue}, \eqref{eq:jump-infty-blue}, as well as asymptotic conditions of the form
\begin{equation}
\mathbf{\Psi}(\lambda;x)\OurPower{\lambda}{\Theta_\infty\sigma_3/2}\ee^{-\ii x\lambda\sigma_3/2}=\mathbb{I} + \mathbf{\Psi}^\infty_1(x)\lambda^{-1} + \cdots,\quad\lambda\to\infty
\label{eq:psi-general-infty}
\end{equation}
and
\begin{equation}
\mathbf{\Psi}(\lambda;x)\OurPower{\lambda}{-\Theta_0\sigma_3/2}\ee^{\ii x\lambda^{-1}\sigma_3/2}=\mathbf{\Psi}^0_0(x) + \mathbf{\Psi}^0_1(x)\lambda + \cdots,\quad\lambda\to 0.
\label{eq:psi-general-0}
\end{equation}
Here, $\mathbf{\Psi}^\infty_k(x)$, $k\ge 1$ and $\mathbf{\Psi}^0_k(x)$, $k\ge 0$, are certain matrix coefficients.  Since it necessarily holds that $\det(\mathbf{\Psi}(\lambda;x))=1$, it follows that $\det(\mathbf{\Psi}_0^0(x))=1$ and $\mathrm{tr}(\mathbf{\Psi}_1^\infty(x))=0$.  We define the Pauli-type matrices
\begin{equation*}
\widehat{\sigma}:=\begin{bmatrix}1 & 0\\0 & 0\end{bmatrix}\quad\text{and}\quad
\widecheck{\sigma}:=\begin{bmatrix}0 & 0 \\0 & 1\end{bmatrix},
\end{equation*}
and supposing further that the matrix element $\Psi^0_{0,11}(x)$ is not identically zero, we consider the Schlesinger transformation (also known as a Darboux transformation) given by 
\begin{equation}
\widehat{\mathbf{\Psi}}(\lambda;x):=(\widehat{\sigma}\OurPower{\lambda}{1/2}+\widehat{\mathbf{B}}(x)\OurPower{\lambda}{-1/2})\mathbf{\Psi}(\lambda;x),
\label{eq:Schlesinger}
\end{equation}
where
\begin{equation}
\widehat{\mathbf{B}}(x):=\begin{bmatrix}\Psi^0_{0,21}(x)\Psi^\infty_{1,12}(x)/\Psi_{0,11}^0(x) & -\Psi^\infty_{1,12}(x)\\
-\Psi^0_{0,21}(x)/\Psi^0_{0,11}(x) & 1\end{bmatrix}.
\label{eq:B-up}
\end{equation}
Note that $\det(\widehat{\mathbf{\Psi}}(\lambda;x))=\det(\mathbf{\Psi}(\lambda;x))$ by direct calculation.
Since $\OurPower{\lambda}{\pm 1/2}$ are analytic except on $\LZeroBlue\cup\LInftyBlue$ across which these factors change sign, $\widehat{\mathbf{\Psi}}(\lambda;x)$ is also analytic for $\lambda\in\mathbb{C}\setminus L$,
and it is a direct matter to check the following jump conditions:
\begin{equation}
\widehat{\mathbf{\Psi}}_+(\lambda;x)=\widehat{\mathbf{\Psi}}_-(\lambda;x)\begin{cases}
\VZeroRed,&\quad \lambda\in\LZeroRed,\\
\VInftyRed,&\quad\lambda\in\LInftyRed,\\
-\VZeroBlue,&\quad\lambda\in\LZeroBlue,\\
-\VInftyBlue,&\quad\lambda\in\LInftyBlue.
\end{cases}
\label{eq:transformed-jumps}
\end{equation}
Next, combining \eqref{eq:psi-general-infty} and \eqref{eq:Schlesinger}, observe that in the limit $\lambda\to\infty$ we have
\begin{equation*}
\begin{split}
\widehat{\mathbf{\Psi}}(\lambda;x)\OurPower{\lambda}{(\Theta_\infty-1)\sigma_3/2}\ee^{-\ii x\lambda\sigma_3/2}&=
(\widehat{\sigma}\OurPower{\lambda}{1/2}+\widehat{\mathbf{B}}(x)\OurPower{\lambda}{-1/2})(\mathbb{I}+\mathbf{\Psi}_1^\infty(x)\lambda^{-1}+\cdots)\OurPower{\lambda}{-\sigma_3/2}\\
&=\lambda(\widehat{\sigma}+\widehat{\mathbf{B}}(x)\lambda^{-1})(\mathbb{I}+\mathbf{\Psi}_1^\infty(x)\lambda^{-1}+\cdots)(\widecheck{\sigma}+\widehat{\sigma}\lambda^{-1})\\
&=\widehat{\sigma}\widecheck{\sigma}\lambda  + [\widehat{\sigma}^2+\widehat{\sigma}\mathbf{\Psi}_1^\infty(x)\widecheck{\sigma}+\widehat{\mathbf{B}}(x)\widecheck{\sigma}]+\widehat{\mathbf{\Psi}}_1^\infty(x)\lambda^{-1}+\cdots\\
&=\mathbb{I}+\widehat{\mathbf{\Psi}}_1^\infty(x)\lambda^{-1} + \cdots,
\end{split}
\end{equation*}
where
\begin{equation}
\widehat{\mathbf{\Psi}}_1^\infty:=\widehat{\sigma}\mathbf{\Psi}_1^\infty(x)\widehat{\sigma} +\widehat{\sigma}\mathbf{\Psi}_2^\infty(x)\widecheck{\sigma}+\widehat{\mathbf{B}}(x)\widehat{\sigma}+\widehat{\mathbf{B}}(x)\mathbf{\Psi}_1^\infty(x)\widecheck{\sigma}.
\label{eq:Modified-infty-leading-matrix}
\end{equation}
Similarly, combining \eqref{eq:psi-general-0} with \eqref{eq:Schlesinger} shows that in the limit $\lambda\to 0$ we have
\begin{equation*}
\begin{split}
\widehat{\mathbf{\Psi}}(\lambda;x)\OurPower{\lambda}{-(\Theta_0+1)\sigma_3/2}\ee^{\ii x\lambda^{-1}\sigma_3/2}&=
(\widehat{\sigma}\OurPower{\lambda}{1/2}+\widehat{\mathbf{B}}(x)\OurPower{\lambda}{-1/2})(\mathbf{\Psi}_0^0(x)+\mathbf{\Psi}_1^0(x)\lambda + \cdots)\OurPower{\lambda}{-\sigma_3/2}\\
&=\lambda^{-1}(\widehat{\mathbf{B}}(x)+\widehat{\sigma}\lambda)(\mathbf{\Psi}_0^0(x)+\mathbf{\Psi}_1^0(x)\lambda+\cdots)(\widehat{\sigma}+\widecheck{\sigma}\lambda)\\
&=\widehat{\mathbf{B}}(x)\mathbf{\Psi}_0^0(x)\widehat{\sigma}\lambda^{-1} + \widehat{\mathbf{\Psi}}_0^0(x) + \widehat{\mathbf{\Psi}}_1^0(x)\lambda + \cdots\\
&=\widehat{\mathbf{\Psi}}_0^0(x) + \widehat{\mathbf{\Psi}}_1^0(x)\lambda + \cdots,
\end{split}
\end{equation*}
where
\begin{equation}
\widehat{\mathbf{\Psi}}_0^0(x):= \widehat{\mathbf{B}}(x)\mathbf{\Psi}_0^0(x)\widecheck{\sigma} + \widehat{\mathbf{B}}(x)\mathbf{\Psi}_1^0(x)\widehat{\sigma} + \widehat{\sigma}\mathbf{\Psi}_0^0(x)\widehat{\sigma}.
\label{eq:Modified-0-leading-matrix}
\end{equation}
Thus, the Schlesinger transformation \eqref{eq:Schlesinger} results in a simple modification of the jump conditions and preserves the form of the asymptotic conditions \eqref{eq:psi-general-infty}--\eqref{eq:psi-general-0}, but with the replacements $\Theta_\infty\mapsto\widehat{\Theta}_\infty:=\Theta_\infty-1$ and
$\Theta_0\mapsto\widehat{\Theta}_0:=\Theta_0+1$.  Comparing with \eqref{eq:Thetas-m-n}, we see that these replacements have the effect of incrementing the value of $n$ by $1$ and holding $m$ fixed.
Similarly, assuming that $\Psi^0_{0,22}(x)$ is not identically zero and setting
\begin{equation}
\widecheck{\mathbf{\Psi}}(\lambda;x):=(\widecheck{\sigma}\OurPower{\lambda}{1/2}+\widecheck{\mathbf{B}}(x)\OurPower{\lambda}{-1/2})\mathbf{\Psi}(\lambda;x),
\label{eq:Schlesinger-decrease-n}
\end{equation}
where 
\begin{equation}
\widecheck{\mathbf{B}}(x):=\begin{bmatrix}
1 & -\Psi^0_{0,12}(x)/\Psi^0_{0,22}(x)\\
-\Psi^\infty_{1,21}(x) & \Psi^0_{0,12}(x)\Psi^\infty_{1,21}(x)/\Psi^0_{0,22}(x)\end{bmatrix}
\label{eq:B-down}
\end{equation}
respectively, one finds that again $\det(\widecheck{\mathbf{\Psi}}(\lambda;x))=\det(\mathbf{\Psi}(\lambda;x))$ and \eqref{eq:transformed-jumps} holds with $\widecheck{\mathbf{\Psi}}$ replacing $\widehat{\mathbf{\Psi}}$, but now as $\lambda\to\infty$,
\begin{equation*}
\widecheck{\mathbf{\Psi}}(\lambda;x)\OurPower{\lambda}{(\Theta_\infty+1)\sigma_3/2}\ee^{-\ii x\lambda\sigma_3/2}=
\mathbb{I} +\widecheck{\mathbf{\Psi}}_1^\infty(x)\lambda^{-1}+\cdots,
\end{equation*}
where 
\begin{equation*}
\widecheck{\mathbf{\Psi}}_1^\infty(x):=\widecheck{\sigma}\mathbf{\Psi}_1^\infty(x)\widecheck{\sigma} +\widecheck{\sigma}\mathbf{\Psi}_2^\infty(x)\widehat{\sigma} +\widecheck{\mathbf{B}}(x)\widecheck{\sigma} +\widecheck{\mathbf{B}}(x)\mathbf{\Psi}_1^\infty(x)\widehat{\sigma},
\end{equation*}
and similarly, as $\lambda\to 0$,
\begin{equation*}
\widecheck{\mathbf{\Psi}}(\lambda;x)\OurPower{\lambda}{-(\Theta_0-1)\sigma_3/2}\ee^{\ii x\lambda^{-1}\sigma_3/2}=
\widecheck{\mathbf{\Psi}}_0^0(x)+\widecheck{\mathbf{\Psi}}_1^0(x)\lambda+\cdots
\end{equation*}
where 
\begin{equation*}
\widecheck{\mathbf{\Psi}}_0^0(x):=\widecheck{\mathbf{B}}(x)\mathbf{\Psi}_0^0(x)\widehat{\sigma} +\widecheck{\mathbf{B}}(x)\mathbf{\Psi}_1^0(x)\widecheck{\sigma} +\widecheck{\sigma}\mathbf{\Psi}_0^0(x)\widecheck{\sigma}.
\end{equation*}
Therefore, the Schlesinger transformation \eqref{eq:Schlesinger-decrease-n} also results in a simple modification of the jump conditions and preserves the form of the asymptotic conditions \eqref{eq:psi-general-infty}--\eqref{eq:psi-general-0}, but now with the replacements $\Theta_\infty\mapsto\widecheck{\Theta}_\infty:=\Theta_\infty+1$ and
$\Theta_0\mapsto\widecheck{\Theta}_0:=\Theta_0-1$, replacements having the effect of decrementing the value of $n$ by $1$ and holding $m$ fixed. We now show that the transformations \eqref{eq:Schlesinger} and \eqref{eq:Schlesinger-decrease-n} are in fact inverse to each other:
\begin{lem}
$\widecheck{\widehat{\mathbf{\Psi}}}(\lambda;x)=\widehat{\widecheck{\mathbf{\Psi}}}(\lambda;x)=\mathbf{\Psi}(\lambda;x).$
\label{lemma:inverse}
\end{lem}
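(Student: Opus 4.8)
The plan is to observe that each Schlesinger transformation acts on $\mathbf{\Psi}$ by left multiplication by a \emph{dressing factor} of the shape $\mathbf{R}(\lambda)=\mathbf{P}\OurPower{\lambda}{1/2}+\mathbf{B}\OurPower{\lambda}{-1/2}$, where $\mathbf{P}$ is one of the constant projections $\widehat{\sigma},\widecheck{\sigma}$ and $\mathbf{B}$ is a constant matrix, and that the composition of two such factors is again \emph{rational} in $\lambda$. Using only the branch identities $\OurPower{\lambda}{1/2}\OurPower{\lambda}{-1/2}=1$ and $(\OurPower{\lambda}{\pm1/2})^2=\lambda^{\pm1}$, one computes
\[
(\widecheck{\sigma}\OurPower{\lambda}{1/2}+\widehat{\widecheck{\mathbf{B}}}(x)\OurPower{\lambda}{-1/2})(\widehat{\sigma}\OurPower{\lambda}{1/2}+\widehat{\mathbf{B}}(x)\OurPower{\lambda}{-1/2})
=\widecheck{\sigma}\widehat{\sigma}\,\lambda+\big(\widecheck{\sigma}\widehat{\mathbf{B}}(x)+\widehat{\widecheck{\mathbf{B}}}(x)\widehat{\sigma}\big)+\widehat{\widecheck{\mathbf{B}}}(x)\widehat{\mathbf{B}}(x)\,\lambda^{-1},
\]
where $\widehat{\widecheck{\mathbf{B}}}(x)$ denotes the matrix \eqref{eq:B-down} assembled from the expansion data of $\widehat{\mathbf{\Psi}}$ rather than of $\mathbf{\Psi}$. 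Since $\widecheck{\sigma}\widehat{\sigma}=\mathbf{0}$ the $\lambda$-term drops out, so $\widecheck{\widehat{\mathbf{\Psi}}}(\lambda;x)=\mathbf{R}(\lambda)\mathbf{\Psi}(\lambda;x)$ with $\mathbf{R}(\lambda)=\mathbf{C}_0+\mathbf{C}_{-1}\lambda^{-1}$ a rational matrix function. Before this one should note that $\widehat{\mathbf{\Psi}}$ satisfies the hypothesis required to apply the second transformation \eqref{eq:Schlesinger-decrease-n}: reading off the $(2,2)$-entry of \eqref{eq:Modified-0-leading-matrix} and using $\det\mathbf{\Psi}^0_0=1$ gives $\widehat{\Psi}^0_{0,22}=1/\Psi^0_{0,11}$, which is not identically zero under the standing assumption that $\Psi^0_{0,11}\not\equiv0$ (and $\Psi^0_{0,22}\not\equiv0$, needed for the other order).

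It then remains to identify $\mathbf{C}_0$ and $\mathbf{C}_{-1}$, and here I use only the asymptotic bookkeeping already carried out just above the statement. The two Schlesinger steps shift $(\Theta_0,\Theta_\infty)$ by $(+1,-1)$ and then $(-1,+1)$, so $\widecheck{\widehat{\mathbf{\Psi}}}$ obeys the normalizations \eqref{eq:psi-general-infty}--\eqref{eq:psi-general-0} with the \emph{original} parameters; moreover the two sign flips of the blue jumps in \eqref{eq:transformed-jumps} cancel, so $\widecheck{\widehat{\mathbf{\Psi}}}$ has precisely the four jump matrices of $\mathbf{\Psi}$. Multiplying $\widecheck{\widehat{\mathbf{\Psi}}}=\mathbf{R}\,\mathbf{\Psi}$ on the right by $\OurPower{\lambda}{\Theta_\infty\sigma_3/2}\ee^{-\ii x\lambda\sigma_3/2}$ and letting $\lambda\to\infty$ yields $\mathbb{I}=\mathbf{R}(\infty)\,\mathbb{I}$, hence $\mathbf{C}_0=\mathbb{I}$. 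Multiplying instead by $\OurPower{\lambda}{-\Theta_0\sigma_3/2}\ee^{\ii x\lambda^{-1}\sigma_3/2}$ and letting $\lambda\to0$ yields $\widecheck{\widehat{\mathbf{\Psi}}}^0_0(x)=\mathbf{C}_{-1}\mathbf{\Psi}^0_0(x)\,\lambda^{-1}+O(1)$; since the left side has a finite limit and $\mathbf{\Psi}^0_0(x)$ is invertible, $\mathbf{C}_{-1}=\mathbf{0}$. Therefore $\mathbf{R}(\lambda)\equiv\mathbb{I}$ and $\widecheck{\widehat{\mathbf{\Psi}}}(\lambda;x)=\mathbf{\Psi}(\lambda;x)$. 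The identity $\widehat{\widecheck{\mathbf{\Psi}}}(\lambda;x)=\mathbf{\Psi}(\lambda;x)$ follows verbatim after interchanging $\widehat{\,\cdot\,}$ and $\widecheck{\,\cdot\,}$, using $\widehat{\sigma}\widecheck{\sigma}=\mathbf{0}$ and the analogous relation $\widecheck{\Psi}^0_{0,11}=1/\Psi^0_{0,22}$ to justify applicability.

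I do not expect a serious obstacle; the single point requiring care is the very first step — checking that the product of the two dressing factors is genuinely single-valued, i.e.\ that the $\OurPower{\lambda}{\pm1/2}$ factors recombine into integer powers. This is exactly what makes both composition orders $\widecheck{\widehat{\,\cdot\,}}$ and $\widehat{\widecheck{\,\cdot\,}}$ collapse to the identity while a composition with two copies of the \emph{same} projection would not. A fully equivalent but more computational route, which I would fall back on if the asymptotic argument felt too terse, is to bypass the normalizations entirely and verify the three matrix identities $\widecheck{\sigma}\widehat{\sigma}=\mathbf{0}$, $\widecheck{\sigma}\widehat{\mathbf{B}}+\widehat{\widecheck{\mathbf{B}}}\widehat{\sigma}=\mathbb{I}$, and $\widehat{\widecheck{\mathbf{B}}}\widehat{\mathbf{B}}=\mathbf{0}$ directly, after expressing $\widehat{\Psi}^0_{0,12}$, $\widehat{\Psi}^0_{0,22}$, and $\widehat{\Psi}^\infty_{1,21}$ through \eqref{eq:Modified-infty-leading-matrix}--\eqref{eq:Modified-0-leading-matrix} and \eqref{eq:B-up}; the only labor there is the linear-algebra bookkeeping of those transformed entries.
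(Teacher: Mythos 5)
Your argument is correct, and your applicability check ($\widehat{\Psi}^0_{0,22}=1/\Psi^0_{0,11}$ via $\det\mathbf{\Psi}^0_0=1$) is exactly the paper's. Where you diverge is in how the composed dressing factor $\mathbf{R}(\lambda)=\mathbf{C}_0+\mathbf{C}_{-1}\lambda^{-1}$ is identified with $\mathbb{I}$. The paper's primary proof is the ``more computational route'' you sketch as a fallback: it shows $\mathbf{C}_{-1}=\widehat{\widecheck{\mathbf{B}}}(x)\widehat{\mathbf{B}}(x)=\mathbf{0}$ by noting that the rows of $\widehat{\widecheck{\mathbf{B}}}(x)$ are proportional to $[1,\,-\widehat{\Psi}^0_{0,12}(x)/\widehat{\Psi}^0_{0,22}(x)]$ while the columns of $\widehat{\mathbf{B}}(x)$ are proportional to $[-\Psi^\infty_{1,12}(x),\,1]^\top$ with vanishing inner product, and then shows the constant term is lower-triangular unipotent with off-diagonal entry $-\Psi^0_{0,21}(x)/\Psi^0_{0,11}(x)-\widehat{\Psi}^\infty_{1,21}(x)=0$ via \eqref{eq:Modified-infty-leading-matrix} and \eqref{eq:B-up}. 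Your main route instead pins down $\mathbf{C}_0=\mathbb{I}$ and $\mathbf{C}_{-1}=\mathbf{0}$ from the normalizations \eqref{eq:psi-general-infty}--\eqref{eq:psi-general-0} (the parameter shifts cancel, and $\mathbf{\Psi}^0_0(x)$ is invertible). This is a sharpened version of the one-sentence alternative the paper offers after its computation (equality of the two matrices by Liouville's theorem), with the genuine improvement that the Laurent-polynomial form of $\mathbf{R}$ comes for free from $\widecheck{\sigma}\widehat{\sigma}=\mathbf{0}$ and the single-valuedness of $\OurPower{\lambda}{\pm 1/2}\OurPower{\lambda}{\pm 1/2}$, so you never need to check that the sign flips in the jump matrices \eqref{eq:transformed-jumps} cancel. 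What the paper's explicit computation buys in exchange is the concrete identities among the transformed expansion coefficients; what yours buys is brevity, using only the one entry $\widehat{\Psi}^0_{0,22}$ from the bookkeeping of \eqref{eq:Modified-0-leading-matrix}.
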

\begin{proof}
Fix $x\in\mathbb{C}$ such that $\mathbf{\Psi}(\lambda;x)$ exists satisfying the appropriate analyticity, jump, and normalization conditions; hence in particular the diagonal elements of $\mathbf{\Psi}^0_0(x)$ are finite.
If $\Psi^0_{0,11}(x)\neq 0$ so that $\widehat{\mathbf{\Psi}}(\lambda;x)$ exists, then according to \eqref{eq:Modified-0-leading-matrix} with \eqref{eq:B-up}, the fact that $\det(\mathbf{\Psi}_0^0(x))=1$ implies
that $\widehat{\Psi}_{0,22}^0(x)=1/\Psi_{0,11}^0(x)\neq 0$. Therefore, \eqref{eq:Schlesinger-decrease-n} can be applied to $\widehat{\mathbf{\Psi}}(\lambda;x)$ with the elements of $\widecheck{\mathbf{B}}(x)$ obtained from $\widehat{\mathbf{\Psi}}_1^\infty(x)$ and $\widehat{\mathbf{\Psi}}_0^0(x)$ rather than $\mathbf{\Psi}_1^\infty(x)$ and $\mathbf{\Psi}_0^0(x)$.  Both rows of the latter matrix are proportional to $[1, -\widehat{\Psi}^0_{0,12}(x)/\widehat{\Psi}^0_{0,22}(x)]$, while both columns of $\widehat{\mathbf{B}}(x)$ are proportional to $[-\Psi^\infty_{1,12}(x),1]^\top$, with the inner product being
\begin{equation*}
-\Psi^\infty_{1,12}(x)-\frac{\widehat{\Psi}^0_{0,12}(x)}{\widehat{\Psi}^0_{0,22}(x)}=
-\Psi^\infty_{1,12}(x)-\widehat{\Psi}^0_{0,12}(x)\Psi^0_{0,11}(x)=0,
\end{equation*}
again using \eqref{eq:Modified-0-leading-matrix} with \eqref{eq:B-up}.
Therefore, since $\widecheck{\sigma}\widehat{\sigma}=\mathbf{0}$,
\begin{equation*}
\widecheck{\widehat{\mathbf{\Psi}}}(\lambda;x)=\begin{bmatrix}
1 & 0\\
-\Psi_{0,21}^0(x)/\Psi_{0,11}^0(x)-\widehat{\Psi}^\infty_{1,21}(x) & 1\end{bmatrix}\mathbf{\Psi}(\lambda;x)=\mathbf{\Psi}(\lambda;x),
\end{equation*}
with the help of \eqref{eq:Modified-infty-leading-matrix} and \eqref{eq:B-up}.  Another proof of this result is simply to note that the matrices $\widecheck{\widehat{\mathbf{\Psi}}}(\lambda;x)$ and $\mathbf{\Psi}(\lambda;x)$ satisfy exactly the same analyticity, jump, and normalization conditions, and therefore since $\det(\mathbf{\Psi}(\lambda;x))=1$, Liouville's theorem shows that $\widecheck{\widehat{\mathbf{\Psi}}}(\lambda;x)\mathbf{\Psi}(\lambda;x)^{-1}=\mathbb{I}$.  The proof that \eqref{eq:Schlesinger} can be applied to $\widecheck{\mathbf{\Psi}}(\lambda;x)$ provided that $\Psi^0_{0,22}(x)\neq 0$ so that the latter exists, with the result that $\widehat{\widecheck{\mathbf{\Psi}}}(\lambda;x)=\mathbf{\Psi}(\lambda;x)$, is completely analogous.
\end{proof}

\subsection{The defining inverse monodromy problem for the rational solution $u_n(x;m)$}
\label{sec:inverse-monodromy}
%
Let $\mathbf{\Psi}^{(0)}(\lambda;x,m):=\mathbf{\Psi}(\lambda;x)$ be the matrix function defined in Sections~\ref{sec:LaxPair-n0}--\ref{sec:normalized-solutions}, which satisfies \eqref{eq:psi-general-infty}--\eqref{eq:psi-general-0} with $\Theta_0=m$ and $\Theta_\infty=m+1$, and for which $\Psi_{0,11}^0(x)=a(x)=\ee^{-\ii\pi\kappa/2}\ee^{-\ii\pi/4}c\neq 0$ and $\Psi_{0,22}^0(x)\not\equiv 0$ for $b(x)\not\equiv 0$ (note that both inequalities follow from \eqref{eq:ab-cd}--\eqref{eq:cd-identity}). We now apply the Schlesinger transformations \eqref{eq:Schlesinger} and \eqref{eq:Schlesinger-decrease-n} repeatedly, assuming that after each iteration, the condition $\Psi_{0,11}^0(x)\Psi_{0,22}^0(x)\not\equiv 0$ persists\footnote{See statement 2 of Lemma~\ref{l:inverse}.} to obtain for each integer $n\in\mathbb{Z}$ a matrix function $\mathbf{\Psi}^{(n)}(\lambda;x,m)$ that satisfies \eqref{eq:psi-general-infty}--\eqref{eq:psi-general-0} as well as the jump conditions
\begin{equation}
\mathbf{\Psi}^{(n)}_+(\lambda;x,m)=\mathbf{\Psi}^{(n)}_-(\lambda;x,m)\begin{cases}
\VZeroRed(m),&\quad \lambda\in\LZeroRed,\\
\VInftyRed(m),&\quad\lambda\in\LInftyRed,\\
(-1)^n\VZeroBlue(m),&\quad\lambda\in\LZeroBlue,\\
(-1)^n\VInftyBlue(m),&\quad\lambda\in\LInftyBlue,
\end{cases}
\label{eq:jumps-n}
\end{equation}
where now the matrices $\VZeroRed(m)$ and $\VInftyRed(m)$ are defined in \eqref{eq:Red-Jumps} and $\VZeroBlue(m)$ and $\VInftyBlue(m)$ are defined in \eqref{eq:Blue-Jumps}.
Since $\det(\mathbf{\Psi}^{(0)}(\lambda;x,m))=1$ it follows that $\det(\mathbf{\Psi}^{(n)}(\lambda;x,m))=1$ for all $n\in\mathbb{Z}$.  The \emph{inverse monodromy problem} consists of fixing $n\in\mathbb{Z}$, $m\in\mathbb{C}$, and $x\in\mathbb{C}\setminus\{0\}$ and attempting to determine $\mathbf{\Psi}^{(n)}(\lambda;x,m)$ from the following conditions only:
\begin{itemize}
\item Analyticity:  $\mathbf{\Psi}^{(n)}(\lambda;x,m)$ is analytic for $\lambda\in\mathbb{C}\setminus L$ and analyticity extends to the the contour $L$ from each component of its complement.
\item Jump conditions:  The boundary values taken by $\mathbf{\Psi}^{(n)}(\lambda;x,m)$ on the four oriented arcs of $L$ are to be related by the jump conditions \eqref{eq:jumps-n}.
\item Behavior for small and large $\lambda$:  $\mathbf{\Psi}^{(n)}(\lambda;x,m)$ satisfies the two conditions \eqref{eq:psi-general-infty}--\eqref{eq:psi-general-0} in which $\Theta_0$ and $\Theta_\infty$ are defined in terms of $m$ and $n$ by \eqref{eq:Thetas-m-n}.
\end{itemize}

By its construction in Sections~\ref{sec:LaxPair-n0}--\ref{sec:normalized-solutions}, $\mathbf{\Psi}^{(0)}(\lambda;x,m)$ is the simultaneous solution of a Lax pair of linear problems.  We now show that this is also true for $\mathbf{\Psi}^{(n)}(\lambda;x,m)$, $\forall n\in\mathbb{Z}$, establishing simultaneously some related important properties.  
\begin{lem}\label{l:inverse} 
Let $n\in\mathbb{Z}$ and $m\in\mathbb{C}$ be fixed and suppose the above inverse monodromy problem for $\mathbf{\Psi}(\lambda;x)=\mathbf{\Psi}^{(n)}(\lambda;x,m)$ 
is solvable for $x$ in some domain $D\subset\mathbb{C}\setminus\{0\}$.  
\begin{itemize}
	\item[1.] For $\lambda\in\mathbb{C}\setminus L$, the function $\mathbf{\Psi}(\lambda;x)=\mathbf{\Psi}^{(n)}(\lambda;x,m)$ is a simultaneous solution matrix of the Lax system \eqref{eq:lambda-eqn}--\eqref{eq:x-eqn} in which
the $x$-dependent coefficients $y$, $v$, $s$, and $t$ are given in terms of the leading matrix coefficients in the expansions \eqref{eq:psi-general-infty}--\eqref{eq:psi-general-0} by
\begin{equation}\label{e:5}
y(x)=-\ii x\Psi_{1,12}^{\infty}(x),\quad v(x)=\ii x\Psi_{1,21}^{\infty}(x),\quad s(x)=-x\Psi_{0,11}^0(x)\Psi_{0,12}^{0}(x),\quad t(x)=\frac{\Psi_{0,21}^{0}(x)}{\Psi_{0,11}^{0}(x)}.
\end{equation}	
	\item[2.] None of the three matrix elements $\Psi_{0,11}^{0}(x)$, $\Psi_{0,12}^0(x)$, nor $\Psi_{0,22}^0(x)$ of the leading coefficient in the expansion \eqref{eq:psi-general-0} of $\mathbf{\Psi}(\lambda;x)=\mathbf{\Psi}^{(n)}(\lambda;x,m)$ vanishes identically on the domain $D$. \item[3.] The combination $u(x):=-y(x)/s(x)$ (cf., \eqref{eq:u-recover})
is a solution of the Painlev\'e-III equation \eqref{eq:PIII} meromorphic on $D$ with parameters $\Theta_0$ and $\Theta_\infty$ given by \eqref{eq:Thetas-m-n}.  
\end{itemize}
\end{lem}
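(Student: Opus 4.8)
The plan is to recognize Lemma~\ref{l:inverse} as the standard ``inverse monodromy problem $\Rightarrow$ Lax pair'' step of the isomonodromy method and to establish items~1, 2, 3 in an intertwined way, inserting item~2 at the moment it is first needed to make the coefficient formulas \eqref{e:5} meaningful. For item~1, fix $x\in D$ and form the logarithmic derivatives $\mathbf{A}(\lambda;x):=\partial_\lambda\mathbf{\Psi}\cdot\mathbf{\Psi}^{-1}$ and $\mathbf{B}(\lambda;x):=\partial_x\mathbf{\Psi}\cdot\mathbf{\Psi}^{-1}$. Because every jump matrix in \eqref{eq:jumps-n} is independent of both $\lambda$ and $x$ (and the cyclic relation, which is preserved by the sign changes in \eqref{eq:jumps-n}, lets $\mathbf{\Psi}$ be continued analytically through a neighborhood of the self-intersection point $\lNaught$), $\mathbf{A}$ and $\mathbf{B}$ have no jump across $L$. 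Here I would invoke the customary device that, the jumps being constant, $\mathbf{\Psi}$ extends across each arc of $L$ by right multiplication, so that near any fixed $(\lambda_0,x_0)$ the contour may be deformed away and $\partial_x\mathbf{\Psi}$ makes sense in spite of the $x$-dependence of $L$ and of the branch cuts of $\OurPower{\lambda}{p}$ and $\OurPower{(\lambda+\ii)}{-1/2}$.

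Substituting the expansions \eqref{eq:psi-general-infty}--\eqref{eq:psi-general-0} into $\mathbf{A}$ and $\mathbf{B}$ shows that $\mathbf{A}-\tfrac12\ii x\sigma_3$ is $O(\lambda^{-1})$ at $\lambda=\infty$ with a double pole at $\lambda=0$, while $\mathbf{B}-\tfrac12\ii\lambda\sigma_3$ is $O(1)$ at $\lambda=\infty$ with a simple pole at $\lambda=0$; since $\det\mathbf{\Psi}\equiv 1$ both are traceless. Liouville's theorem then gives
\[
\mathbf{A}(\lambda;x)=\tfrac12\ii x\sigma_3+\frac{\mathbf{A}_{-1}(x)}{\lambda}+\frac{\mathbf{A}_{-2}(x)}{\lambda^2},\qquad \mathbf{B}(\lambda;x)=\tfrac12\ii\lambda\sigma_3+\mathbf{B}_0(x)+\frac{\mathbf{B}_{-1}(x)}{\lambda}.
\]
Matching the $\lambda^{-1}$ coefficient at $\lambda=\infty$ forces the diagonal part of $\mathbf{A}_{-1}$ and the whole of $\mathbf{B}_0$ into the forms dictated by \eqref{eq:lambda-eqn}--\eqref{eq:x-eqn}, and identifies the off-diagonal entries with $y,v$ exactly as in \eqref{e:5}; matching at $\lambda=0$ yields $\mathbf{A}_{-2}=\mathbf{\Psi}_0^0\cdot\tfrac12\ii x\sigma_3\cdot(\mathbf{\Psi}_0^0)^{-1}$ (traceless, of determinant $\tfrac14 x^2$) together with $\mathbf{B}_{-1}=-x^{-1}\mathbf{A}_{-2}$, and writing this matrix in the parametrized form of \eqref{eq:lambda-eqn} produces $s,t$ as in \eqref{e:5}. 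The apparently over-determined identity between the two computations of the residue $\mathbf{A}_{-1}$ is automatic; unwinding it is precisely the statement that the integral of motion \eqref{eq:Integral} of the extracted coefficients is constant and equal to $\Theta_0$. With $\mathbf{A},\mathbf{B}$ so identified, $\mathbf{\Psi}$ solves \eqref{eq:lambda-eqn}--\eqref{eq:x-eqn}, whose compatibility is \eqref{eq:PIII-system}; this proves item~1 (modulo the non-vanishing of $\Psi_{0,11}^0$ needed to define $t$).

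Item~2 is where the work is, and I expect it to be the main obstacle. If $\Psi_{0,11}^0\equiv 0$ or $\Psi_{0,22}^0\equiv 0$, then $\det\mathbf{\Psi}_0^0=1$ forces $\Psi_{0,12}^0\Psi_{0,21}^0=-1$, so the $(1,1)$ entry of $\mathbf{A}_{-2}=\mathbf{\Psi}_0^0\cdot\tfrac12\ii x\sigma_3\cdot(\mathbf{\Psi}_0^0)^{-1}$ equals $-\tfrac12\ii x$ while $\mathbf{A}_{-2}$ is triangular; such a coefficient matrix is incompatible with the prescribed local form $\mathbf{\Psi}\OurPower{\lambda}{-\Theta_0\sigma_3/2}\ee^{\ii x\lambda^{-1}\sigma_3/2}\to\mathbf{\Psi}_0^0$ at $\lambda=0$, since the decoupled scalar equation then satisfied by the first row of $\mathbf{\Psi}$ carries the leading exponential factor $\ee^{+\ii x/(2\lambda)}$ rather than $\ee^{-\ii x/(2\lambda)}$, and propagating the resulting identical vanishing of a matrix entry around $L$ through the nontrivial jumps \eqref{eq:jumps-n} contradicts $\det\mathbf{\Psi}\equiv 1$. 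If instead $\Psi_{0,12}^0\equiv 0$, then $s\equiv 0$ by \eqref{e:5}, whence the third equation of \eqref{eq:PIII-system} forces $y\equiv 0$ and \eqref{eq:Integral} collapses to $I\equiv-\Theta_\infty$, contradicting $I\equiv\Theta_0$ (equivalently $\Theta_0+\Theta_\infty=2m+1$ would have to vanish). The delicate point, which I would handle by the same branch-propagation bookkeeping, is the borderline parameter values at which a $\Gamma$-factor in \eqref{eq:Red-Jumps}--\eqref{eq:Blue-Jumps} degenerates, killing some jump entry. The conclusion is that none of $\Psi_{0,11}^0,\Psi_{0,12}^0,\Psi_{0,22}^0$ vanishes identically on $D$.

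Granting items~1 and~2, item~3 is the computation already sketched in Section~\ref{sec:isomonodromy-review}. Since $s\not\equiv 0$ and $y,v,s,t$ are rational expressions of the entries of the matrices $\mathbf{\Psi}_1^\infty(x)$ and $\mathbf{\Psi}_0^0(x)$ (which depend analytically on $x\in D$), the function $u:=-y/s$ is meromorphic on $D$. Differentiating $u=-y/s$ and using \eqref{eq:PIII-system} gives the first-order relation \eqref{eq:first-order-PIII}; differentiating once more, eliminating the combination $st$ with the help of the conserved quantity $I\equiv\Theta_0$, and simplifying produces \eqref{eq:PIII} with parameters $\Theta_0,\Theta_\infty$ related to $m$ and $n$ through \eqref{eq:Thetas-m-n}, as recorded in the normalization exponents in \eqref{eq:psi-general-infty}--\eqref{eq:psi-general-0}. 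This last step is a routine but lengthy algebraic manipulation that I would carry out explicitly at the end of Section~\ref{sec:inverse-monodromy}.
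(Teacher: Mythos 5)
Your overall strategy coincides with the paper's: apply Liouville's theorem to the logarithmic derivatives $\mathbf{A}=\partial_\lambda\mathbf{\Psi}\cdot\mathbf{\Psi}^{-1}$ and $\mathbf{B}=\partial_x\mathbf{\Psi}\cdot\mathbf{\Psi}^{-1}$ to obtain Laurent-polynomial Lax matrices and read off \eqref{e:5}; prove statement 2 by contradiction through triangularization and explicit integration of a decoupled row; and obtain statement 3 by the elimination computation. One sub-step is genuinely different and is worth recording: you identify $I=\Theta_0$ by equating the two expressions for the $\lambda^{-1}$ coefficient $\mathbf{A}_{-1}$ coming from the expansions at $\lambda=\infty$ and at $\lambda=0$. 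This works: expressed through \eqref{e:5}, the diagonal part of $(\mathbf{\Psi}_0^0)^{-1}\mathbf{A}_{-1}\mathbf{\Psi}_0^0$ equals $\tfrac12 I\sigma_3$ with $I$ as in \eqref{eq:Integral}, while the expansion \eqref{eq:psi-general-0} gives $\tfrac12\Theta_0\sigma_3$ for the same quantity (the remaining terms conjugate to a commutator with $\sigma_3$ and have zero diagonal). The paper instead gets $I=\Theta_0$ by comparing \eqref{eq:psi-general-0} with the formal fundamental solution at the rank-one irregular singular point $\lambda=0$, citing Wasow; your route is more self-contained. Note also that the paper secures the analyticity of $x\mapsto\mathbf{\Psi}^{(n)}(\lambda;x,m)$ on $D$ (needed before $\mathbf{B}$ can even be formed) via analytic Fredholm theory, not merely by the contour-deformation device you invoke.

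Two points in your statement 2 are elided and must be supplied. First, in the cases $\Psi^0_{0,11}\equiv 0$ and $\Psi^0_{0,22}\equiv 0$, triangularity of $\mathbf{A}_{-2}$ alone does not decouple a row of the Lax system: you also need the corresponding off-diagonal entry of $\mathbf{A}_{-1}$ (namely $y$, respectively $v$) to vanish identically. This follows from the zero-curvature condition $\mathbf{A}_x-\mathbf{B}_\lambda+[\mathbf{A},\mathbf{B}]=\mathbf{0}$ once one knows $s\equiv 0$ (resp.\ $t(st-x)\equiv 0$), and only then does the first (resp.\ second) row satisfy a scalar equation with explicit solution proportional to $\ee^{\pm\ii x(\lambda+\lambda^{-1})/2}\OurPower{\lambda}{\mp\Theta_\infty/2}$, whose incompatibility with the normalization at $\lambda=\infty$ together with $\det\mathbf{\Psi}_0^0\equiv 1$ yields the contradiction. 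Second, in the case $\Psi^0_{0,12}\equiv 0$ your contradiction $I=-\Theta_\infty\neq\Theta_0$ fails precisely when $2m+1=0$; you flag this as a delicate point but do not resolve it. At $m=-\tfrac12$ the lower-triangular jumps degenerate, but the upper-triangular jump on $\LInftyRed$ survives with off-diagonal entry $\sqrt{2\pi}/\Gamma(1)\neq 0$, so the jump relation forces $\Psi^{(n)}_{12+}-\Psi^{(n)}_{12-}=\sqrt{2\pi}\,\Psi^{(n)}_{11-}\neq 0$ on $\LInftyRed$, contradicting $\Psi^{(n)}_{12}\equiv 0$; this is the paper's resolution and is the piece missing from your sketch.
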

\begin{proof} 
It is a standard result based on Liouville's theorem and the fact that the jump matrices are all unimodular that there can be at most one solution of the inverse monodromy conditions and that this solution satisfies $\det(\mathbf{\Psi}^{(n)}(\lambda;x,m))=1$.  Applying analytic Fredholm theory to a suitable singular integral equation equivalent to the inverse monodromy problem and parametrized analytically by $x\in\mathbb{C}\setminus\{0\}$, existence of a solution for $x\in D$ implies that for each $m\in\mathbb{C}$ and for each fixed $\lambda$ disjoint from the jump contour $L$ for all $x\in D$, $x\mapsto\mathbf{\Psi}^{(n)}(\lambda;x,m)$ is analytic on $D$.  In particular, in a neighborhood of such fixed $\lambda$ and any $x\in D$, $\mathbf{\Psi}^{(n)}(\lambda;x,m)$ is jointly differentiable with respect to both $\lambda$ and $x$.  
Because the jump matrices in \eqref{eq:jumps-n} are independent of both $\lambda$ (on each arc) and $x$,
it follows that the matrices
\begin{equation*}
\mathbf{A}^{(n)}(\lambda;x,m):=\frac{\partial\mathbf{\Psi}^{(n)}}{\partial\lambda}(\lambda;x,m)\mathbf{\Psi}^{(n)}(\lambda;x,m)^{-1}\quad\text{and}\quad
\mathbf{B}^{(n)}(\lambda;x,m):=\frac{\partial\mathbf{\Psi}^{(n)}}{\partial x}(\lambda;x,m)\mathbf{\Psi}^{(n)}(\lambda;x,m)^{-1}
\end{equation*}
are both analytic functions of $(\lambda,x)$ in the domain $(\mathbb{C}\setminus\{0\})\times D$.  Note that to define $\mathbf{B}^{(n)}(\lambda;x,m)$, we may take the jump contour $L$ to be locally independent of $x$ because the boundary values taken from each sector on $L$ are analytic functions of $\lambda$.
From \eqref{eq:psi-general-infty} we see that in the limit $\lambda\to\infty$,
\begin{equation}
\begin{split}
\mathbf{A}^{(n)}(\lambda;x,m)&=\frac{\ii x}{2}\sigma_3 + \left(\frac{\ii x}{2}\big[\mathbf{\Psi}_1^{\infty}(x),\sigma_3\big]-\frac{\Theta_{\infty}}{2}\sigma_3\right)\lambda^{-1}\\
&\quad\quad{}+\left(-\mathbf{\Psi}_1^{\infty}(x)-\frac{\Theta_{\infty}}{2}\big[\mathbf{\Psi}_1^{\infty}(x),\sigma_3\big]+\frac{\ii x}{2}\Big\{\big[\mathbf{\Psi}_2^{\infty}(x),\sigma_3\big]-\big[\mathbf{\Psi}_1^{\infty}(x),\sigma_3\big]\mathbf{\Psi}_1^{\infty}(x)\Big\}\right)\lambda^{-2}+O(\lambda^{-3}),\\
\mathbf{B}^{(n)}(\lambda;x,m)&=\frac{\ii}{2}\sigma_3\lambda +\frac{\ii}{2}[\mathbf{\Psi}_1^{\infty}(x),\sigma_3] + \left(\mathbf{\Psi}_{1}^{\infty\prime}(x)+\frac{\ii}{2}\big[\mathbf{\Psi}_2^{\infty}(x),\sigma_3\big]-\frac{\ii}{2}\big[\mathbf{\Psi}_1^{\infty}(x),\sigma_3\big]\mathbf{\Psi}_1^{\infty}(x)\right)\lambda^{-1}+O(\lambda^{-2}).
\end{split}
\label{eq:AnBn-large}
\end{equation}
Similarly, in the limit $\lambda\to 0$, from \eqref{eq:psi-general-0} we get
\begin{equation}
\begin{split}
\mathbf{A}^{(n)}(\lambda;x,m)&=\frac{\ii x}{2}\mathbf{\Psi}_0^{0}(x)\sigma_3\mathbf{\Psi}_0^{0}(x)^{-1}\lambda^{-2}\\
&\quad{}+\left(\frac{\Theta_0}{2}\mathbf{\Psi}_0^{0}(x)\sigma_3\mathbf{\Psi}_0^{0}(x)^{-1}+
\frac{\ii x}{2}\mathbf{\Psi}_1^{0}(x)\sigma_3\mathbf{\Psi}_0^{0}(x)^{-1}-\frac{\ii x}{2}\mathbf{\Psi}_0^{0}(x)\sigma_3
\mathbf{\Psi}_0^{0}(x)^{-1}\mathbf{\Psi}_1^{0}(x)\mathbf{\Psi}_0^{0}(x)^{-1}\right)\lambda^{-1}\\
&\quad{}+O(1)\\
\mathbf{B}^{(n)}(\lambda;x,m)&=-\frac{\ii}{2}\mathbf{\Psi}_0^{0}(x)\sigma_3\mathbf{\Psi}_0^{0}(x)^{-1}\lambda^{-1}+\mathbf{\Psi}_{0}^{0\prime}(x)\mathbf{\Psi}_0^0(x)^{-1}+\frac{\ii}{2}\big[\mathbf{\Psi}_0^0(x)\sigma_3\mathbf{\Psi}_0^0(x)^{-1},\mathbf{\Psi}_1^0(x)\mathbf{\Psi}_0^0(x)^{-1}\big]+O(\lambda).
\end{split}
\label{eq:AnBn-small}
\end{equation}
Therefore, Liouville's theorem shows that $\mathbf{A}^{(n)}(\lambda;x,m)$ and $\mathbf{B}^{(n)}(\lambda;x,m)$ are Laurent polynomials:
\begin{equation}
\mathbf{A}^{(n)}(\lambda;x,m)=\frac{\ii x}{2}\sigma_3 +
\left(\frac{\ii x}{2}\big[\mathbf{\Psi}_1^{\infty}(x),\sigma_3\big]-\frac{\Theta_{\infty}}{2}\sigma_3\right)\lambda^{-1}
+ \frac{\ii x}{2}\mathbf{\Psi}_0^{0}(x)\sigma_3\mathbf{\Psi}_0^{0}(x)^{-1}\lambda^{-2}
\label{eq:An}
\end{equation}
and
\begin{equation}
\mathbf{B}^{(n)}(\lambda;x,m)=
\frac{\ii}{2}\sigma_3\lambda + \frac{\ii}{2}[\mathbf{\Psi}_1^{\infty}(x),\sigma_3] -\frac{\ii}{2}\mathbf{\Psi}_0^{0}(x)\sigma_3
\mathbf{\Psi}_0^{0}(x)^{-1}\lambda^{-1}.
\label{eq:Bn}
\end{equation}
Furthermore, the coefficients of different powers of $\lambda$ in \eqref{eq:An}--\eqref{eq:Bn} are  analytic matrix-valued functions of $x$ on $D$.
Since $\mathbf{\Psi}^{(n)}_\lambda(\lambda;x,m)=\mathbf{A}^{(n)}(\lambda;x,m)\mathbf{\Psi}^{(n)}(\lambda;x,m)$ and $\mathbf{\Psi}^{(n)}_x(\lambda;x,m)=\mathbf{B}^{(n)}(\lambda;x,m)\mathbf{\Psi}^{(n)}(\lambda;x,m)$, matching \eqref{eq:An}--\eqref{eq:Bn} with \eqref{eq:lambda-eqn}--\eqref{eq:x-eqn} using also $\det(\mathbf{\Psi}^0_0(x))=1$ yields the expressions \eqref{e:5} and proves statement 1.\bigskip

Suppose $\Psi^0_{0,11}(x)\equiv 0$ holds as an identity on $D$.  From $\det(\mathbf{\Psi}^0_0(x))\equiv 1$ we then get $\Psi^0_{0,12}(x)\Psi^0_{0,21}(x)\equiv -1$.  Therefore $s(x)\equiv 0$ and $\tfrac{1}{2}\ii x-\ii s(x)t(x)\equiv -\tfrac{1}{2}\ii x$, so the matrices $\mathbf{A}^{(n)}(\lambda;x,m)$ and $\mathbf{B}^{(n)}(\lambda;x,m)$ can be written in the alternate form
\begin{equation}
\begin{split}
	\mathbf{A}=\mathbf{A}^{(n)}(\lambda;x,m)&=\frac{\ii x}{2}\sigma_3+\frac{1}{\lambda}\begin{bmatrix}-\frac{1}{2}\Theta_{\infty} & y\\ v & \frac{1}{2}\Theta_{\infty}\end{bmatrix}+\frac{1}{\lambda^2}\begin{bmatrix} -\tfrac{1}{2}\ii x & 0\\ -\ii V & \tfrac{1}{2}\ii x\end{bmatrix}\\
	\mathbf{B}=\mathbf{B}^{(n)}(\lambda;x,m)&=\frac{\ii \lambda}{2}\sigma_3+\frac{1}{x}\begin{bmatrix} 0 & y\\ v & 0\end{bmatrix}-\frac{1}{\lambda x}\begin{bmatrix} -\tfrac{1}{2}\ii x & 0\\ -\ii V & \tfrac{1}{2}\ii x\end{bmatrix}
\end{split}
\label{eq:Lax-System-Case-2}
\end{equation}
with $y(x)$ and $v(x)$ defined as in \eqref{e:5}, while
\begin{equation*}
V(x):=-x\Psi_{0,21}^0(x)\Psi_{0,22}^0(x).
\end{equation*}
Existence of the simultaneous fundamental solution matrix $\mathbf{\Psi}^{(n)}(\lambda;x,m)$ of the Lax system implies that these coefficient matrices satisfy the zero-curvature compatibility condition $\mathbf{A}_x-\mathbf{B}_\lambda+[\mathbf{A},\mathbf{B}]=\mathbf{0}$, which in turn implies that $y(x)\equiv 0$ also, making $\mathbf{A}$ and $\mathbf{B}$ lower-triangular with explicit diagonal entries.  Therefore, the elements of the first row are determined from the Lax system up to overall constants $c_1$ and $c_2$ by
\begin{equation*}
\begin{bmatrix}\Psi_{11}^{(n)}(\lambda;x,m) & \Psi_{12}^{(n)}(\lambda;x,m)\end{bmatrix}=
\begin{bmatrix}c_1\ee^{\ii x(\lambda+\lambda^{-1})/2}\OurPower{\lambda}{-\Theta_\infty/2}& c_2\ee^{\ii x(\lambda+\lambda^{-1})/2}\OurPower{\lambda}{-\Theta_\infty/2}\end{bmatrix}.
\end{equation*}
Applying the condition \eqref{eq:psi-general-infty} then forces the choice $c_2=0$, so $\Psi^{(n)}_{12}(\lambda;x,m)\equiv 0$ and therefore also $\Psi^0_{0,12}(x)\equiv 0$ on $D$.  But since $\det(\mathbf{\Psi}_0^0(x))\equiv 1$, this contradicts the assumption that $\Psi^0_{0,11}(x)\equiv 0$.\bigskip  

Suppose next that $\Psi^0_{0,22}(x)\equiv 0$.  Then using $\det(\mathbf{\Psi}^0_{0}(x))\equiv 0$ shows that the combination $-\ii t(x)(s(x)t(x)-x)$ vanishes identically, and then the compatibility condition for the matrices $\mathbf{A}^{(n)}(\lambda;x,m)$ and $\mathbf{B}^{(n)}(\lambda;x,m)$ implies that also $v(x)\equiv 0$.  Therefore, the coefficient matrices are upper-triangular in this case, and since also $\tfrac{1}{2}\ii x-\ii s(x)t(x)\equiv -\tfrac{1}{2}\ii x$, the second row of $\mathbf{\Psi}^{(n)}(\lambda;x,m)$ takes the form
\begin{equation}
\begin{bmatrix}\Psi^{(n)}_{21}(\lambda;x,m) & \Psi^{(n)}_{22}(\lambda;x,m)\end{bmatrix}=
\begin{bmatrix}  c_1\ee^{-\ii x(\lambda+\lambda^{-1})/2}\OurPower{\lambda}{\Theta_\infty/2} & c_2\ee^{-\ii x(\lambda+\lambda^{-1})/2}\OurPower{\lambda}{\Theta_\infty/2}\end{bmatrix}
\end{equation}
where $c_1$ and $c_2$ are constants.  Applying as before the condition \eqref{eq:psi-general-infty} now forces $c_1=0$, so $\Psi^0_{0,21}(x)$ and $\Psi^0_{0,22}(x)$ both vanish identically in contradiction to $\det(\mathbf{\Psi}^0_{0}(x))\equiv 1$.\bigskip  

Finally, suppose that $\Psi^{0}_{0,12}(x)\equiv 0$ on $D$.  Then also $s(x)\equiv 0$ and $s(x)t(x)\equiv 0$, and the compatibility condition for the Lax system implies that also $y(x)\equiv 0$, making the coefficient matrices lower-triangular.  Solving for the first row of $\mathbf{\Psi}^{(n)}(\lambda;x,m)$ now yields
\begin{equation}
\begin{bmatrix}\Psi_{11}^{(n)}(\lambda;x,m) & \Psi_{12}^{(n)}(\lambda;x,m)\end{bmatrix}=
\begin{bmatrix}c_1\ee^{\ii x (\lambda-\lambda^{-1})/2}\OurPower{\lambda}{-\Theta_\infty/2} &
c_2\ee^{\ii x(\lambda-\lambda^{-1})/2}\OurPower{\lambda}{-\Theta_\infty/2}\end{bmatrix}
\end{equation}
for constants $c_1$ and $c_2$, and applying the normalization condition \eqref{eq:psi-general-infty} forces $c_1=1$ and $c_2=0$.  For this result to be compatible with \eqref{eq:psi-general-0} it is then necessary that $\Theta_0+\Theta_\infty=0$, i.e., that $m=-\tfrac{1}{2}$.  But, if $m=-\tfrac{1}{2}$, the jump condition across the arc $\LInftyRed$ implies that (using $\Theta_\infty=\tfrac{1}{2}-n$ for $m=-\tfrac{1}{2}$)
\begin{equation}
\Psi^{(n)}_{12+}(\lambda;x,-\tfrac{1}{2})-\Psi^{(n)}_{12-}(\lambda;x,-\tfrac{1}{2})=\sqrt{2\pi}\Psi_{11-}^{(n)}(\lambda;x,-\tfrac{1}{2})=\sqrt{2\pi}\ee^{\ii x(\lambda-\lambda^{-1})/2}\OurPower{\lambda}{n/2-1/4},\quad\lambda\in \LInftyRed.
\end{equation}
The right-hand side is nonzero on the indicated contour, which is obviously inconsistent with $\Psi^{(n)}_{12}(\lambda;x,-\tfrac{1}{2})\equiv 0$ implied by $c_2=0$.
All together, since assuming $\Psi^0_{0,11}(x)\equiv 0$, $\Psi^0_{0,22}(x)\equiv 0$, or $\Psi^0_{0,12}(x)\equiv 0$ leads in each case to a contradiction, we have established statement 2.\bigskip

The potentials $y(x)$, $v(x)$, and $s(x)$ are analytic on $D$ by analytic Fredholm theory, and by statement 2 it also holds that $t(x)$ is meromorphic on $D$.  In general, the compatibility condition $\mathbf{A}_x-\mathbf{B}_\lambda + [\mathbf{A},\mathbf{B}]=\mathbf{0}$ on the matrices \eqref{eq:An}--\eqref{eq:Bn} implies that these four functions satisfy the coupled nonlinear differential equations \eqref{eq:PIII-system}.  
The system \eqref{eq:PIII-system} has a conserved quantity $I$ defined by \eqref{eq:Integral}; 
to determine its constant value, it suffices evaluate it at any $x\in D$ that makes each term in $I$ finite (it is only necessary to avoid the isolated zeros of $\Psi^0_{0,11}(x)$). Note that the direct monodromy problem \eqref{eq:lambda-eqn} has an irregular singular point of Poincar\'e rank $1$ at $\lambda=0$ and hence by general theory two fundamental solutions exist in a vicinity of $\lambda=0$ which are uniquely specified by their asymptotics as $\lambda\to 0$ in the associated Stokes sectors.  An explicit computation of the formal expansions directly from the differential equation \eqref{eq:lambda-eqn} (cf., \cite{W}) yields, upon comparison with the expansion \eqref{eq:psi-general-0} the identity $I=\Theta_0$. 
Now, the expression $u(x)=-y(x)/s(x)$ defines a meromorphic function on $D$ because the zeros of $s(x)$ are isolated by statement 2.  
Differentiating this expression using \eqref{eq:PIII-system} and eliminating $y(x)=-s(x)u(x)$, one finds that $u(x)$ and the product $s(x)t(x)$ are related by the first order differential equation \eqref{eq:first-order-PIII}.  Solving this identity for $s(x)t(x)$ in terms of $u(x)$ and $u'(x)$ and differentiating the result yields a second-order differential expression involving $u(x)$ alone.  On the other hand, the product $s(x)t(x)$ can be differentiated directly using \eqref{eq:PIII-system} after which $y(x)$ can be eliminated using $y(x)=-s(x)u(x)$, $v(x)$ can be eliminated using the integral of motion $I=\Theta_0$, and finally the product $s(x)t(x)$ can be eliminated once again using \eqref{eq:first-order-PIII}.  Equating these two equivalent expressions for the derivative of $s(x)t(x)$ 
yields precisely the Painlev\'e-III equation \eqref{eq:PIII} for $u(x)$.
This proves statement 3.
\end{proof}

Next, we have the following result.
\begin{lem}\label{l:exist}
Given $n\in\mathbb{Z}$ and $m\in\mathbb{C}$, there is a finite set $P_n(m)$ such that the inverse monodromy problem is uniquely solvable for $x\in \mathbb{C}\setminus (\mathbb{R}_-\cup P_n(m))$.  The corresponding solution $u(x)$ of the Painlev\'e-III equation \eqref{eq:PIII} is a rational function.
\end{lem}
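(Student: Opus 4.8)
\emph{Overview and uniqueness.} The plan is to establish unique solvability, then existence off a discrete set, and finally to identify the resulting $u$ with the rational function $u_n(x;m)$ of Proposition~\ref{prop:polynomial-recurrence}; finiteness of the exceptional set $P_n(m)$ will follow from that identification. Uniqueness is the standard vanishing argument: if $\mathbf{\Psi}$ and $\widetilde{\mathbf{\Psi}}$ both solve the inverse monodromy problem, then $\mathbf{\Psi}\widetilde{\mathbf{\Psi}}^{-1}$ has no jump across $L$ (all jump matrices in \eqref{eq:jumps-n} are unimodular and $\lambda$-independent on each arc), extends analytically across the self-intersection point of $L$ and across the endpoints $\lambda=0,\infty$ because the normalizations \eqref{eq:psi-general-infty}--\eqref{eq:psi-general-0} are sharp enough to preclude poles there, and tends to $\mathbb{I}$ as $\lambda\to\infty$; Liouville's theorem then gives $\mathbf{\Psi}\equiv\widetilde{\mathbf{\Psi}}$ and in particular $\det\mathbf{\Psi}^{(n)}\equiv 1$.

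\emph{Existence on a dense open set.} The matrix $\mathbf{\Psi}^{(0)}(\lambda;x,m)$ was constructed explicitly out of Whittaker functions in Sections~\ref{sec:LaxPair-n0}--\ref{sec:normalized-solutions}, so the inverse monodromy problem is solvable for \emph{every} $x\in\mathbb{C}\setminus\mathbb{R}_-$ when $n=0$. For general $n$ we obtain $\mathbf{\Psi}^{(n)}$ from $\mathbf{\Psi}^{(0)}$ by $|n|$ successive applications of the Schlesinger transformations \eqref{eq:Schlesinger} (to increment $n$) or \eqref{eq:Schlesinger-decrease-n} (to decrement it); the raising step is legitimate precisely where $\Psi^0_{0,11}(x)$ of the current matrix is nonzero, the lowering step precisely where $\Psi^0_{0,22}(x)$ is nonzero. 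By Lemma~\ref{l:inverse}, statement~2, applied at each level of the chain, neither entry vanishes identically, so each vanishes only on an isolated set. Recasting the inverse monodromy problem as a singular integral equation with $x$-analytic operator on $\mathbb{C}\setminus\mathbb{R}_-$ and invoking analytic Fredholm theory together with the uniqueness above — exactly as in the proof of Lemma~\ref{l:inverse} — shows that $\mathbf{\Psi}^{(n)}(\lambda;x,m)$ exists off a closed discrete subset of $\mathbb{C}\setminus\mathbb{R}_-$ and, where it exists, depends meromorphically on $x$; hence so do the potentials $y,v,s,t$ of \eqref{e:5} and $u=-y/s$, which by Lemma~\ref{l:inverse}, statement~3, solves \eqref{eq:PIII} with $\Theta_0,\Theta_\infty$ as in \eqref{eq:Thetas-m-n}.

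\emph{Identification with $u_n(x;m)$.} The crucial point is that the Schlesinger transformation acts rationally at the level of $u$: combining \eqref{e:5} with \eqref{eq:Modified-infty-leading-matrix}, \eqref{eq:Modified-0-leading-matrix}, and \eqref{eq:B-up} expresses the data attached to $\widehat{\mathbf{\Psi}}$ rationally in terms of that attached to $\mathbf{\Psi}$, and since the latter is determined by $(u,u',x)$ through the Lax system \eqref{eq:lambda-eqn}--\eqref{eq:x-eqn} (up to the overall scaling gauge), one obtains $\widehat{u}=F(u,u',x)$ for a fixed rational function $F$; a short computation in fact identifies $F$ with Gromak's B\"acklund transformation \eqref{eq:Backlund-n}, but only rationality of $F$ is needed. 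Since the seed $u^{(0)}\equiv 1$ is rational in $x$, induction along the chain shows every $u^{(n)}$ is rational in $x$. Tracking the leading coefficients in \eqref{eq:psi-general-infty} through the same chain, starting from the explicit $n=0$ data for which $u\equiv 1$, shows $u^{(n)}(x)\to 1$ as $x\to\infty$ (alternatively, $u^{(n)}$ being rational tends to one of $\pm 1$, $\pm\ii$ at infinity, so the bookkeeping need only rule out the other three). Thus $u^{(n)}$ is a rational solution of \eqref{eq:PIII} with parameters \eqref{eq:Thetas-m-n} tending to $1$ at infinity, so the uniqueness clause of Proposition~\ref{prop:polynomial-recurrence} (and \eqref{negn} when $n<0$) forces $u^{(n)}=u_n(x;m)$.

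\emph{Finiteness and the main obstacle.} Once $u=u_n(x;m)$ is known to be rational, the potentials $y,v,s,t$ and all expansion coefficients entering the Schlesinger data at each level of the chain are rational functions of $x$ with only finitely many poles; in particular the degeneracy loci $\{\Psi^0_{0,11}=0\}$ and $\{\Psi^0_{0,22}=0\}$ encountered along the chain are finite sets. Taking $P_n(m)$ to be the union of these loci with the (finite) set of poles of $u_n(x;m)$, the inverse monodromy problem is uniquely solvable for all $x\in\mathbb{C}\setminus(\mathbb{R}_-\cup P_n(m))$, and the corresponding $u$ is the rational function $u_n(x;m)$. I expect the main obstacle to be exactly this upgrade from ``discrete'' to ``finite'': it rests on the identification $u^{(n)}=u_n(x;m)$, and hence on carefully checking both that the Schlesinger transformation descends to a rational map on $u$ preserving the normalization $u\to 1$ at infinity, and that the analytic-Fredholm machinery genuinely yields a meromorphic (not merely locally meromorphic) $x$-dependence. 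A self-contained alternative to the first half of this is to argue directly from the Riemann--Hilbert data that $u$ continues meromorphically across $\mathbb{R}_-$ and to the fixed singular point $x=0$ with at worst polar behavior and is bounded as $x\to\infty$, so that $u$ is rational on the Riemann sphere; this bypasses Proposition~\ref{prop:polynomial-recurrence} but requires a comparable amount of branch-tracking near $x=0$.
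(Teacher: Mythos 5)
Your uniqueness argument and your existence-off-a-discrete-set argument (explicit $n=0$ solution, Schlesinger chain, Lemma~\ref{l:inverse} statement~2, analytic Fredholm theory) match the paper's, and you correctly identify the upgrade from ``discrete'' to ``finite'' as the crux. But your resolution of that crux contains a false assertion. You claim that once $u=u_n(x;m)$ is known to be rational, ``the potentials $y,v,s,t$ and all expansion coefficients entering the Schlesinger data at each level of the chain are rational functions of $x$.'' They are not: already for $n=0$ one has $y(x)=-\tfrac14 K\ee^{2x}x^{\Theta_\infty}$, and in general the off-diagonal expansion coefficients carry transcendental factors $\ee^{\pm 2x}x^{\pm m}$. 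Rationality of $u=-y/s$ is perfectly compatible with $y$ and $s$ sharing such a common transcendental factor --- which is exactly what happens --- so rationality of $u$ does \emph{not} imply rationality of the quantities $\Psi^0_{0,11}(x)$ and $\Psi^0_{0,22}(x)$ whose zero sets are the degeneracy loci. (From $\det\mathbf{\Psi}^0_0\equiv 1$ and the rationality of $s(x)t(x)$ one only gets that the \emph{product} $\Psi^0_{0,11}\Psi^0_{0,22}$ is rational.) Without rationality of these diagonal entries you only know their zero sets are discrete in the domain of solvability, and discreteness does not forbid accumulation at $\mathbb{R}_-$, at $x=0$, or at $\infty$; so finiteness of $P_n(m)$ does not follow.

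The paper closes this gap by strengthening the inductive hypothesis: at each level $n=k$ it assumes not only that $P_k(m)$ is finite but also that the six gauge-normalized coefficients $\Psi^0_{0,11}$, $\Psi^0_{0,22}$, $\ee^{-2x}x^{-m}\Psi^0_{0,12}$, $\ee^{2x}x^{m}\Psi^0_{0,21}$, $\ee^{-2x}x^{-m}\Psi^\infty_{1,12}$, $\ee^{2x}x^{m}\Psi^\infty_{1,21}$ are rational; this is verified explicitly at $n=0$ and propagated using the fact that $\ee^{-x\sigma_3}x^{-m\sigma_3/2}\widehat{\mathbf{B}}(x)x^{m\sigma_3/2}\ee^{x\sigma_3}$ is rational. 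Finiteness of the new degeneracy locus and rationality of $\widehat{u}$ then drop out simultaneously, with no appeal to Proposition~\ref{prop:polynomial-recurrence}; the identification with Gromak's B\"acklund chain is deferred to Proposition~\ref{p:u-sub-n}. Your detour through the uniqueness clause of Proposition~\ref{prop:polynomial-recurrence} is a legitimate alternative for proving rationality of $u$, but it cannot substitute for the coefficient-level rationality that the finiteness of $P_n(m)$ actually requires; you would need to add that bookkeeping to your induction.
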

\begin{proof}
Since existence of a solution implies uniqueness by a Liouville argument, it is sufficient to establish existence for suitable $x$. To this end we 
first consider $n=0$. The explicit solution $\mathbf{\Psi}^{(0)}(\lambda;x,m)$ of the direct monodromy problem constructed in Section~\ref{sec:direct-monodromy} obviously satisfies the conditions of the inverse monodromy problem as well, and it is well-defined for $x\in\mathbb{C}\setminus\mathbb{R}_-$.  A calculation shows that the leading term $\mathbf{\Psi}^0_0(x)$ takes the form
\begin{equation}
\mathbf{\Psi}^0_0(x)=\begin{bmatrix}\ee^{-\ii\pi/4}\ee^{-\ii\pi m/2} & 2^m\ee^{-3\pi\ii/4}\ee^{2x}x^m\\
\tfrac{1}{4}\ee^{3\pi\ii/4}2^{-m}(2m+1)x^{-1}\ee^{-2x}x^{-m} & \tfrac{1}{4}\ee^{\ii\pi/4}\ee^{\ii\pi m/2}(2m+1+4x)x^{-1}
\end{bmatrix},\quad n=0.
\end{equation}
Obviously, $\Psi^0_{0,11}(x)$, $\Psi^0_{0,22}(x)$, $\ee^{-2x}x^{-m}\Psi^0_{0,12}(x)$, and $\ee^{2x}x^m\Psi^0_{0,21}(x)$ are all rational functions (with poles at $x=0$ only).  Similar calculations give
\begin{equation}
\Psi^\infty_{1,12}(x)=-\ii 2^m\ee^{-\ii\pi m/2}\ee^{2x}x^m\quad\text{and}\quad
\Psi^\infty_{1,21}(x)=-\ii 2^{-(m+4)}\ee^{\ii\pi m/2}(2m+1)(4x-2m-1)\ee^{-2x}x^{-m},\quad n=0.
\end{equation}
Therefore also $\ee^{-2x}x^{-m}\Psi^\infty_{1,12}(x)$ and $\ee^{2x}x^m\Psi^\infty_{1,21}(x)$ are rational functions.  Clearly, $P_0(m)=\emptyset$ (the pole at $x=0$ is already excluded as $0\in\mathbb{R}_-$), and the corresponding solution $u(x)=-\ii\Psi^\infty_{1,12}(x)/(\Psi^0_{0,11}(x)\Psi^0_{0,12}(x))\equiv 1$ is clearly rational. Next, let $k\ge 0$ be an integer, and suppose that $P_k(m)$ is finite, that the inverse monodromy problem for $n=k$ is (uniquely) solvable for $m\in\mathbb{C}$ and $x\in\mathbb{C}\setminus(\mathbb{R}_-\cup P_k(m))$, and that for $n=k$ the expansion coefficients $\Psi^0_{0,11}(x)$, $\Psi^0_{0,22}(x)$, $\ee^{-2x}x^{-m}\Psi^0_{0,12}(x)$, $\ee^{2x}x^m\Psi^0_{0,21}(x)$, $\ee^{-2x}x^{-m}\Psi^\infty_{1,12}(x)$, and $\ee^{2x}x^m\Psi^\infty_{1,21}(x)$ are all rational functions.  Taking $D=\mathbb{C}\setminus(\mathbb{R}_-\cup P_k(m))$ and applying Lemma~\ref{l:inverse} we see that $\Psi^0_{0,11}(x)\not\equiv 0$ holds on $D$, so the Schlesinger transformation \eqref{eq:Schlesinger} exists on $D$ except at the finitely-many zeros of 
the rational function $\Psi^0_{0,11}(x)$ in $D$.  Letting $P_{k+1}(m)$ denote the union of the set of these zeros with $P_k(m)$, the matrix $\mathbf{\Psi}^{(k+1)}(\lambda;x,m):=\widehat{\mathbf{\Psi}}^{(k)}(\lambda;x,m)$ clearly satisfies all of the properties of the inverse monodromy problem for $n=k$, $m\in\mathbb{C}$, and $x\in\mathbb{C}\setminus(\mathbb{R}_-\cup P_k(m))$.  Since, according to \eqref{eq:B-up} and the inductive hypotheses in force, the matrix $\ee^{-x\sigma_3}x^{-m\sigma_3/2}\widehat{\mathbf{B}}(x)x^{m\sigma_3/2}\ee^{x\sigma_3}$ is a rational function of $x$, it then follows that the transformed expansion coefficients are such that $\widehat{\Psi}^0_{0,11}(x)$, $\widehat{\Psi}^0_{0,22}(x)$, $\ee^{-2x}x^{-m}\widehat{\Psi}^0_{0,12}(x)$, $\ee^{2x}x^m\widehat{\Psi}^0_{0,21}(x)$, $\ee^{-2x}x^{-m}\widehat{\Psi}^\infty_{1,12}(x)$, and $\ee^{2x}x^m\widehat{\Psi}^\infty_{1,21}(x)$ are all rational functions, as is $\widehat{u}(x)=-\ii\widehat{\Psi}^\infty_{1,12}(x)/(\widehat{\Psi}^0_{0,11}(x)\widehat{\Psi}^0_{0,12}(x))$, which by Lemma~\ref{l:inverse} satisfies the Painlev\'e-III equation with parameters $n=k+1$ and $m$.  The desired conclusion therefore holds for all integers $n\ge 0$ by induction on $n$.\smallskip

For $n\le 0$, we apply instead the transformation \eqref{eq:Schlesinger-decrease-n}--\eqref{eq:B-down} to decrease $n$, making use of the fact that $\Psi^0_{0,22}(x)\not\equiv 0$.  A parallel induction argument shows that the desired conclusion holds for all negative integers $n$ as well.
\end{proof}
We remark that the points at which the inverse monodromy problem fails to have a solution need not coincide with the poles or zeros of the rational function $u(x)$.

\subsection{Induced B\"acklund transformations}
\label{sec:Backlund}

%
The Schlesinger transformation \eqref{eq:Schlesinger} implies a corresponding B\"acklund transformation for the potentials $v(x)$, $y(x)$, $s(x)$ and $t(x)$:
\begin{equation}
\begin{split}
\widehat{v}(x)&:=-\ii xt(x)\\
\widehat{y}(x)&:=\frac{\ii}{x}\left( xs(x)-(\Theta_\infty-1)y(x)+y(x)^2t(x)\right)\\
\widehat{s}(x)&:= \frac{\ii y(x)}{x^2}\left(x^2+y(x)^2t(x)^2-\Theta_\infty y(x)t(x)-v(x)y(x)\right)\\
\widehat{t}(x)&:=\ii x\frac{y(x)t(x)^2-\Theta_\infty t(x)-v(x)}{x^2+y(x)^2t(x)^2-\Theta_\infty y(x)t(x)-v(x)y(x)}.
\end{split}
\label{eq:Backlund-n-plus-1}
\end{equation}
It is straightforward to confirm directly that whenever $(v,y,s,t)$ solves \eqref{eq:PIII-system}, then so does $(\widehat{v},\widehat{y},\widehat{s},\widehat{t})$ when $\Theta_\infty$ is replaced in \eqref{eq:PIII-system} by $\widehat{\Theta}_\infty:=\Theta_\infty -1$.  Defining $\widehat{u}(x):=-\widehat{y}(x)/\widehat{s}(x)$ and using \eqref{eq:Backlund-n-plus-1} along with $u(x)=-y(x)/s(x)$, the identity $I=\Theta_0$, and \eqref{eq:first-order-PIII}, one arrives at Gromak's transformation \eqref{eq:Backlund-n}.  This proves the following.
\begin{prop}
The rational function $u(x)$ obtained from the inverse monodromy problem with parameters $m\in\mathbb{C}$ and $n\in\mathbb{Z}_{\ge 0}$ coincides with the function $u(x)=u_n(x;m)$ obtained via $n$ iterations of the B\"acklund transformation \eqref{eq:Backlund-n} starting from the seed $u_0(x;m)\equiv 1$.
\label{p:u-sub-n}
\end{prop}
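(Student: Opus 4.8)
The plan is to prove the statement in the equivalent form that, for every $n\in\mathbb{Z}_{\ge 0}$, the rational function $u(x)=-y(x)/s(x)$ attached by statement~3 of Lemma~\ref{l:inverse} (via \eqref{e:5}) to the solution $\mathbf{\Psi}^{(n)}(\lambda;x,m)$ of the inverse monodromy problem coincides with the $n$-th iterate of Gromak's B\"acklund transformation \eqref{eq:Backlund-n} applied to the seed $u\equiv 1$. Both sides are rational functions of $x$ (by Lemma~\ref{l:exist} and Proposition~\ref{prop:polynomial-recurrence}), so it suffices to establish the identity on $\mathbb{C}\setminus(\mathbb{R}_-\cup P_n(m))$; it then extends everywhere by analytic continuation. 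I proceed by induction on $n$. For the base case $n=0$, the explicit matrix $\mathbf{\Psi}^{(0)}(\lambda;x,m)$ built in Section~\ref{sec:direct-monodromy} satisfies the inverse monodromy conditions, and the formulas for $\mathbf{\Psi}^0_0(x)$ and $\Psi^\infty_{1,12}(x)$ recorded in the proof of Lemma~\ref{l:exist} give $u(x)=-\ii\Psi^\infty_{1,12}(x)/(\Psi^0_{0,11}(x)\Psi^0_{0,12}(x))\equiv 1=u_0(x;m)$.

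For the inductive step, assume the potential associated with the $(k,m)$ inverse monodromy problem is $u_k(x;m)$, the $k$-th iterate of \eqref{eq:Backlund-n}. By statement~2 of Lemma~\ref{l:inverse} together with Lemma~\ref{l:exist}, $\Psi^0_{0,11}(x)\not\equiv 0$, so the Schlesinger transformation \eqref{eq:Schlesinger}--\eqref{eq:B-up} is defined off a finite set, and by the analysis of Section~\ref{sec:Schlesinger} the matrix $\widehat{\mathbf{\Psi}}^{(k)}(\lambda;x,m)$ satisfies precisely the conditions of the inverse monodromy problem with parameters $(k+1,m)$; by uniqueness of its solution (Lemma~\ref{l:inverse}) it equals $\mathbf{\Psi}^{(k+1)}(\lambda;x,m)$. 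Reading the new potentials off \eqref{e:5} applied to $\widehat{\mathbf{\Psi}}^{(k)}$, and using the leading-coefficient formulas \eqref{eq:Modified-infty-leading-matrix} and \eqref{eq:Modified-0-leading-matrix} for $\widehat{\mathbf{\Psi}}_1^\infty$ and $\widehat{\mathbf{\Psi}}_0^0$ together with \eqref{eq:B-up} and $\det(\mathbf{\Psi}^0_0(x))=1$, one obtains the B\"acklund formulas \eqref{eq:Backlund-n-plus-1} expressing $(\widehat v,\widehat y,\widehat s,\widehat t)$ in terms of $(v,y,s,t)$; since $(v,y,s,t)$ solve \eqref{eq:PIII-system} by Lemma~\ref{l:inverse}, $(\widehat v,\widehat y,\widehat s,\widehat t)$ solve \eqref{eq:PIII-system} with $\Theta_\infty\mapsto\Theta_\infty-1$, consistently with the parameter shift effected by \eqref{eq:Schlesinger}. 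Finally, forming $\widehat u:=-\widehat y/\widehat s$ from \eqref{eq:Backlund-n-plus-1}, substituting $u=-y/s$, and eliminating the product $s\,t$ by \eqref{eq:first-order-PIII} and the potential $v$ by the integral of motion $I=\Theta_0$ from \eqref{eq:Integral}, a direct simplification collapses $\widehat u$ to Gromak's transformation \eqref{eq:Backlund-n} applied to $u=u_k(x;m)$. By the definition of the iteration in Proposition~\ref{prop:polynomial-recurrence}, this is $u_{k+1}(x;m)$, which closes the induction.

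The main obstacle is the chain of algebraic reductions in the last step: deriving \eqref{eq:Backlund-n-plus-1} from the Darboux formula \eqref{eq:Schlesinger} requires matching $\widehat{\mathbf{A}}=\widehat{\mathbf{\Psi}}_\lambda\widehat{\mathbf{\Psi}}^{-1}$ against the structured coefficient matrix in \eqref{eq:lambda-eqn} with shifted $\Theta_\infty$, and then one must check that the ratio $-\widehat y/\widehat s$ built from the resulting potentials simplifies, after use of $I=\Theta_0$ and \eqref{eq:first-order-PIII}, exactly to the rational expression \eqref{eq:Backlund-n} in $u$ and $u'$ alone. These computations are lengthy but entirely mechanical; everything else — the base case, the existence/uniqueness bookkeeping, and the identification of one Schlesinger step with one Gromak step — follows directly from Lemmas~\ref{l:inverse} and~\ref{l:exist} and the discussion in Section~\ref{sec:Schlesinger}.
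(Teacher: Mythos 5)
Your proposal is correct and follows essentially the same route as the paper: the paper likewise derives the induced B\"acklund transformation \eqref{eq:Backlund-n-plus-1} for the potentials from the Schlesinger transformation \eqref{eq:Schlesinger}, verifies it shifts $\Theta_\infty\mapsto\Theta_\infty-1$ in \eqref{eq:PIII-system}, and then reduces $\widehat u=-\widehat y/\widehat s$ to Gromak's formula \eqref{eq:Backlund-n} using $u=-y/s$, the integral $I=\Theta_0$, and \eqref{eq:first-order-PIII}, with the seed $u_0\equiv 1$ supplied by the explicit $n=0$ computation. Your explicit induction and bookkeeping via Lemmas~\ref{l:inverse} and~\ref{l:exist} merely make precise what the paper leaves implicit.
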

This result establishes the link between the algebraic representation \eqref{eq:s-recurrence}--\eqref{eq:un-fraction} of $u_n(x;m)$ and the analytic representation afforded by the inverse monodromy problem.  It is easy to check that the B\"acklund transformation \eqref{eq:Backlund-n} preserves the property $u(x)\to 1$ as $x\to\infty$, and therefore $u_n(x;m)$ and its odd reflection $R^2u_n(x;m)=-u_n(-x;m)$ are distinct rational solutions of the Painlev\'e-III equation \eqref{eq:PIII} for the same values of $n\in\mathbb{Z}$ and $m\in\mathbb{C}$.  Suppose that $m\not\in\mathbb{Z}$, but $u(x)$ is a rational solution of \eqref{eq:PIII} for parameters $(m,n)$.  We may invert the B\"acklund transformation (the corresponding explicit formula for the inverse can be obtained from the $n$-reducing Schlesinger transformation \eqref{eq:Schlesinger-decrease-n} in the same way that Gromak's transformation can be deduced from \eqref{eq:Schlesinger}) and apply the inverse $n$ times to $u(x)$, thereby arriving at a rational solution of \eqref{eq:PIII} with parameters $(m,0)$.  However, it has been shown that when $n=0$ and $m\not\in\mathbb{Z}$, the only rational solutions of \eqref{eq:PIII} are the constants $\pm 1$.  By Lemma~\ref{lemma:inverse}, the inverse transformation is injective and therefore it follows that either $u(x)=u_n(x;m)$ or $u(x)=R^2u_n(x;m)$, i.e., for $m\not\in\mathbb{Z}$ and $n\in\mathbb{Z}$, there are exactly two rational solutions.  From this it follows that for general $m$ it is sufficient to study the family of functions $\{u_n(x;m)\}_{n\in\mathbb{Z}}$ to analyze all rational solutions of \eqref{eq:PIII}.  This can be done using the inverse monodromy problem, suitably reformulated in the form of Riemann-Hilbert Problem~\ref{rhp:renormalized}, which we now are in a position to establish.

\subsection{Renormalization}
\label{sec:renormalized}
To study the asymptotic behavior of the rational solutions for $n$ a large integer and $m\in\mathbb{C}$ fixed, it is useful to study in place of $\mathbf{\Psi}^{(n)}(\lambda;x,m)$ a matrix that is normalized to the identity matrix as $\lambda\to\infty$.  Therefore, we consider the matrix $\mathbf{Y}^{(n)}(\lambda;x,m)$
defined by a small modification of the left-hand side of \eqref{eq:psi-general-infty}:
\begin{equation*}
\mathbf{Y}^{(n)}(\lambda;x,m):=\mathbf{\Psi}^{(n)}(\lambda;x,m)\OurPower{\lambda}{\Theta_\infty\sigma_3/2}\ee^{-\ii x(\lambda-\lambda^{-1})\sigma_3/2}
\end{equation*}
where $\Theta_\infty$ is given by \eqref{eq:Thetas-m-n}.  It is easy to check that if it exists for a given $x\in\mathbb{C}$, this matrix satisfies the conditions of Riemann-Hilbert Problem~\ref{rhp:renormalized}.
Recalling the expansions \eqref{eq:Y-expand-infty}--\eqref{eq:Y-expand-zero}, 
the coefficients $\mathbf{Y}_1^\infty(x)$ and $\mathbf{Y}_0^0(x)$ are related to the expansions of $\mathbf{\Psi}^{(n)}(\lambda;x,m)$ by
\begin{equation}
\mathbf{\Psi}_1^\infty(x)=\mathbf{Y}_1^\infty(x)-\frac{\ii x}{2}\sigma_3\quad\text{and}\quad
\mathbf{\Psi}_0^0(x)=\mathbf{Y}_0^0(x),
\label{eq:Y-Psi-coefficients}
\end{equation}
and therefore combining \eqref{eq:u-recover}, \eqref{e:5}, and \eqref{eq:Y-Psi-coefficients}, the rational solution $u_n(x;m)$ of the Painlev\'e-III equation \eqref{eq:PIII} is given by \eqref{eq:u-n-from-Y-formula}.\smallskip

It is a consequence of the cyclic relation \eqref{eq:cyclic} that at this point we may take the contour $L$ to be arbitrary subject to the restrictions indicated in  Subsection \ref{master}.  Such a modified form of $L$ can always be connected with the original $L$ by a homotopy that moves the intersection point but maintains the increment of arguments as specified by \eqref{eq:increment-argument-red}--\eqref{eq:increment-argument-blue}, and throughout which the power functions $\OurPower{\lambda}{p}$ appearing in the jump conditions \eqref{eq:Yjump-1}--\eqref{eq:Yjump-4} are deformed in a natural way by analytic continuation.  This completes the proof of Theorem~\ref{thm:RH-representation}.


%

\section{Algebraic Solution of Riemann-Hilbert Problem~\ref{rhp:renormalized} for $m\in\mathbb{Z}+\tfrac{1}{2}$}
\label{sec:m-half-integer}
Note that the jump matrices on $\LInftyRed\cup\LZeroRed$ reduce to the identity if $m=\tfrac{1}{2},\tfrac{3}{2},\tfrac{5}{2},\dots$.   Likewise, the jump matrices on $\LInftyBlue\cup\LZeroBlue$ reduce to the identity if $m=-\tfrac{1}{2},-\tfrac{3}{2},-\tfrac{5}{2},\dots$.  This observation results in an algebraic solution technique for half-integer values of $m$ that we will now describe.\bigskip

Suppose first that $m=\tfrac{1}{2}+k$, $k\in\mathbb{Z}_{\geq 0}$.  Then according to Riemann-Hilbert Problem~\ref{rhp:renormalized}, $\mathbf{Y}^{(n)}(\lambda;x,m)$ is analytic for $\mathbb{C}\setminus L$ where now we may take $L=\LZeroBlue\cup\LInftyBlue$ because the jump matrices on $\LZeroRed\cup\LInftyRed$ reduce to the identity so analyticity follows by Morera's theorem.  Moreover, the jump condition on $L$ takes the form
\begin{equation}
\mathbf{Y}^{(n)}_+(\lambda;x,\tfrac{1}{2}+k)=\mathbf{Y}^{(n)}_-(\lambda;x,\tfrac{1}{2}+k)\begin{bmatrix}1 & 0\\
\displaystyle\frac{\sqrt{2\pi}}{k!}(\OurPower{\lambda}{k/2+3/4})_+(\OurPower{\lambda}{k/2+3/4})_-\lambda^{-n}\ee^{-\ii x(\lambda-\lambda^{-1})} & 1\end{bmatrix},\quad\lambda\in L,
\quad k\in\mathbb{Z}_{\geq 0}.
\label{eq:Y-jump-positive-half-integer}
\end{equation}
A similar Morera argument therefore implies that the second column of $\mathbf{Y}^{(n)}(\lambda;x,\tfrac{1}{2}+k)$ has no jump across $L$ and hence is analytic for $\lambda\in\mathbb{C}\setminus\{0\}$.  Applying the normalization condition at $\lambda=\infty$ yields $Y^{(n)}_{12}(\lambda;x,\tfrac{1}{2}+k)=O(\lambda^{-1})$ and $Y^{(n)}_{22}(\lambda;x,\tfrac{1}{2}+k)=1+O(\lambda^{-1})$ as $\lambda\to \infty$, while $Y^{(n)}_{j2}(\lambda;x,\tfrac{1}{2}+k)=O(\lambda^{k+1})$ as $\lambda\to 0$ for $j=1,2$.  It follows by Liouville's theorem that
\begin{equation*}
Y^{(n)}_{12}(\lambda;x,\tfrac{1}{2}+k)=\sum_{j=1}^{k+1}a^{(n,k)}_j(x)\lambda^{-j}\quad\text{and}\quad
Y^{(n)}_{22}(\lambda;x,\tfrac{1}{2}+k)=1+\sum_{j=1}^{k+1}b^{(n,k)}_j(x)\lambda^{-j}
\end{equation*}
where $a^{(n,k)}_j(x)$ and $b^{(n,k)}_j(x)$ are coefficients to be determined.  The first column of the jump condition \eqref{eq:Y-jump-positive-half-integer} can then be used together with the Plemelj formula and the normalization conditions $Y^{(n)}_{11}(\lambda;x,\tfrac{1}{2}+k)=1+O(\lambda^{-1})$ and $Y^{(n)}_{21}(\lambda;x,\tfrac{1}{2}+k)=O(\lambda^{-1})$ as $\lambda\to\infty$ to express $Y^{(n)}_{j1}(\lambda;x,\tfrac{1}{2}+k)$ explicitly in terms of $Y^{(n)}_{j2}(\lambda;x,\tfrac{1}{2}+k)$:
\begin{equation*}
Y^{(n)}_{11}(\lambda;x,\tfrac{1}{2}+k)=1+\frac{1}{\ii k!\sqrt{2\pi}}\int_L \frac{Y^{(n)}_{12}(\mu;x,\tfrac{1}{2}+k)(\OurPower{\mu}{k/2+3/4})_+(\OurPower{\mu}{k/2+3/4})_-\mu^{-n}\ee^{-\ii x(\mu-\mu^{-1})}}{\mu-\lambda}\,\dd\mu
\end{equation*}
and
\begin{equation*}
Y^{(n)}_{21}(\lambda;x,\tfrac{1}{2}+k)=\frac{1}{\ii k!\sqrt{2\pi}}\int_L \frac{Y^{(n)}_{22}(\mu;x,\tfrac{1}{2}+k)(\OurPower{\mu}{k/2+3/4})_+(\OurPower{\mu}{k/2+3/4})_-\mu^{-n}\ee^{-\ii x(\mu-\mu^{-1})}}{\mu-\lambda}\,\dd\mu.
\end{equation*}
It only remains to enforce the condition that $Y^{(n)}_{j1}(\lambda;x,\tfrac{1}{2}+k)=O(\lambda^{k+1})$ as $\lambda\to 0$ for $j=1,2$.  Expanding $(\mu-\lambda)^{-1}$ for small $\lambda$ in a geometric series and elimination of the second column elements in favor of $a_j^{(n,k)}(x)$ and $b_j^{(n,k)}(x)$, $j=1,\dots,k+1$, yields separate $(k+1)\times (k+1)$ linear systems of Hankel type separately for the $a_j^{(n,k)}(x)$ and the $b_j^{(n,k)}(x)$:  defining coefficients $I^+_{n,k,j}(x)$ by
\begin{equation}
I^+_{n,k,j}(x):=\int_L(\OurPower{\lambda}{k/2+3/4})_+(\OurPower{\lambda}{k/2+3/4})_-\lambda^{-n-j}\ee^{-\ii x(\lambda-\lambda^{-1}))}\,\dd\lambda
\label{eq:Iplus}
\end{equation}
the systems are
\begin{equation*}
\mathbf{H}^+_{n,k}(x)\mathbf{a}^{(n,k)}(x) = -\ii\sqrt{2\pi}k!\mathbf{e}^{(1)}\quad\text{and}\quad
\mathbf{H}^+_{n,k}(x)\mathbf{b}^{(n,k)}(x) = -\mathbf{v}^+_{n,k}(x)
\end{equation*}
where $\mathbf{e}^{(1)}:=(1,0,0,\dots,0)^\top$ denotes the first coordinate unit vector, the unknowns are arranged in vectors as
\begin{equation*}
\quad \mathbf{a}^{(n,k)}(x):=(a^{(n,k)}_1(x),\dots,a^{(n,k)}_{k+1}(x))^\top,\quad \mathbf{b}^{(n,k)}(x):=(b^{(n,k)}_1(x),\dots,b^{(n,k)}_{k+1}(x))^\top,
\end{equation*}
and the Hankel matrix and right-hand side vector for the $\mathbf{b}^{(n,k)}(x)$ system are
\begin{equation*}
\mathbf{H}^+_{n,k}(x):=\{I^+_{n,k,p+q}(x)\}_{p,q=1}^{k+1},\quad\mathbf{v}^+_{n,k}(x):=(I^+_{n,k,1}(x),\dots,I^+_{n,k,k+1}(x))^\top.
\end{equation*}
Therefore, when $m=\tfrac{1}{2}+k$, $k\in\mathbb{Z}_{\geq 0}$, Riemann-Hilbert Problem~\ref{rhp:renormalized} has a solution obtained by linear algebra in dimension $k+1$ provided that $x$ is such that the complex Hankel determinant
\begin{equation*}
D^+_{n,k}(x):=\det(\mathbf{H}_{n,k}^+(x))
\end{equation*}
is nonzero.  From the formula \eqref{eq:u-n-from-Y-formula} we then get the corresponding rational solution $u_n(x;\tfrac{1}{2}+k)$ of the Painlev\'e-III equation \eqref{eq:PIII} for $k=0,1,2,3,\dots$ in the form
\begin{equation}
u_n(x;\tfrac{1}{2}+k)=\frac{\sqrt{2\pi}k!a_1^{(n,k)}(x)}{\displaystyle a_{k+1}^{(n,k)}(x)\sum_{j=1}^{k+1}a_j^{(n,k)}(x)I^+_{n,k,j+k+2}(x)},\quad k\in\mathbb{Z}_{\geq 0}.
\end{equation}
For instance, if $k=0$, then we obtain
\begin{equation*}
a_1^{(n,0)}(x)=-\frac{\ii\sqrt{2\pi}}{D^+_{n,0}(x)}\quad\text{and}\quad
b_1^{(n,0)}(x)=-\frac{1}{D^+_{n,0}(x)}\int_L(\OurPower{\lambda}{3/4})_+(\OurPower{\lambda}{3/4})_-\lambda^{-n-1}\ee^{-\ii x(\lambda-\lambda^{-1})}\,\dd\lambda
\end{equation*}
where
\begin{equation*}
D^+_{n,0}(x):=\int_L(\OurPower{\lambda}{3/4})_+(\OurPower{\lambda}{3/4})_-\lambda^{-n-2}\ee^{-\ii x(\lambda-\lambda^{-1})}\,\dd\lambda.
\end{equation*}
Therefore, assuming that $D^+_{n,0}(x)\neq 0$, the solution of Riemann-Hilbert Problem~\ref{rhp:renormalized}
has been obtained in closed form for arbitrary integer $n$ and for $m=\tfrac{1}{2}$.  The corresponding rational solution of the Painlev\'e-III equation \eqref{eq:PIII} is
\begin{equation}
u_n(x;\tfrac{1}{2})=\ii\frac{\displaystyle\int_{\LInftyBlue\cup\LZeroBlue}(\OurPower{\lambda}{3/4})_+(\OurPower{\lambda}{3/4})_-\lambda^{-(n+2)}\ee^{-\ii x(\lambda-\lambda^{-1})}\,\dd\lambda}{\displaystyle\int_{\LInftyBlue\cup\LZeroBlue}(\OurPower{\lambda}{3/4})_+(\OurPower{\lambda}{3/4})_-\lambda^{-(n+3)}\ee^{-\ii x(\lambda-\lambda^{-1})}\,\dd\lambda}.
\label{eq:u-m-half}
\end{equation}
Assuming that the integrals in the fraction \eqref{eq:u-m-half} have no common zeros, we see that the zeros of $u_n(x;\tfrac{1}{2})$ are the points where Riemann-Hilbert Problem~\ref{rhp:renormalized} has no solution for $m=\tfrac{1}{2}$, while the poles of $u_n(x;\tfrac{1}{2})$ are regular points for $\mathbf{Y}^{(n)}(\lambda;x,\tfrac{1}{2})$.\bigskip

Next assume that $m=-(\tfrac{1}{2}+k)$, $k\in\mathbb{Z}_{\geq 0}$. Then according to Riemann-Hilbert Problem~\ref{rhp:renormalized}, the matrix $\mathbf{Y}^{(n)}(\lambda;x,-\tfrac{1}{2}-k)$ is analytic for $\lambda\in\mathbb{C}\setminus L$, where we may now take $L$ to be the contour $L=\LInftyRed\cup\LZeroRed$, across which we may write the jump condition in the form
\begin{equation*}
\mathbf{Y}^{(n)}_+(\lambda;x,-\tfrac{1}{2}-k)=\mathbf{Y}^{(n)}_-(\lambda;x,-\tfrac{1}{2}-k)
\begin{bmatrix}1 & \displaystyle\frac{\sqrt{2\pi}}{k!}(\OurPower{\lambda}{k-1/2})_\infty\lambda^n\ee^{\ii x(\lambda-\lambda^{-1})}\\0 & 1\end{bmatrix},\quad\lambda\in L,\quad k\in\mathbb{Z}_{\geq 0},
\end{equation*}
where $(\OurPower{\lambda}{k-1/2})_\infty$ denotes the function
\begin{equation*}
(\OurPower{\lambda}{k-1/2})_\infty:=\begin{cases}\OurPower{\lambda}{k-1/2},&\quad \lambda\in\LInftyRed,\\
-\OurPower{\lambda}{k-1/2},&\quad\lambda\in\LZeroRed.\end{cases}
\end{equation*}
Note that $(\OurPower{\lambda}{k-1/2})_\infty$ is continuous at the junction point between $\LZeroRed$ and $\LInftyRed$ because $\OurPower{\lambda}{k-1/2}$ changes sign across its jump contour of $\LZeroBlue\cup\LInftyBlue$.  Obviously, it is now the first column of $\mathbf{Y}^{(n)}(\lambda;x,-\tfrac{1}{2}-k)$ that is analytic for $\lambda\in\mathbb{C}\setminus\{0\}$, and from the normalization conditions $Y^{(n)}_{11}(\lambda;x,-\tfrac{1}{2}-k)=1+O(\lambda^{-1})$ and $Y^{(n)}_{21}(\lambda;x,-\tfrac{1}{2}-k)=O(\lambda^{-1})$ as $\lambda\to\infty$ while $Y^{(n)}_{j1}(\lambda;x,-\tfrac{1}{2}-k)=O(\lambda^{-k})$ as $\lambda\to 0$, we see that the entries of the first column necessarily take the form
\begin{equation*}
Y^{(n)}_{11}(\lambda;x,-\tfrac{1}{2}-k)=1+\sum_{j=1}^kc^{(n,k)}_j(x)\lambda^{-j}\quad\text{and}\quad
Y^{(n)}_{21}(\lambda;x,-\tfrac{1}{2}-k)=\sum_{j=1}^kd^{(n,k)}_j(x)\lambda^{-j}
\end{equation*}
where $c^{(n,k)}_j(x)$ and $d^{(n,k)}_j(x)$ are coefficients to be determined.  The jump condition together with the normalization condition that $Y^{(n)}_{12}(\lambda;x,-\tfrac{1}{2}-k)=O(\lambda^{-1})$ and $Y^{(n)}_{22}(\lambda;x,-\tfrac{1}{2}-k)=1+O(\lambda^{-1})$ as $\lambda\to\infty$ then determines the second column from the first:
\begin{equation*}
Y_{12}^{(n)}(\lambda;x,-\tfrac{1}{2}-k)=\frac{1}{\ii k!\sqrt{2\pi}}\int_L\frac{Y_{11}^{(n)}(\mu;x,-\tfrac{1}{2}-k)(\OurPower{\mu}{k-1/2})_\infty\mu^n\ee^{\ii x(\mu-\mu^{-1})}}{\mu-\lambda}\,\dd\mu
\end{equation*}
and
\begin{equation*}
Y_{22}^{(n)}(\lambda;x,-\tfrac{1}{2}-k)=1+\frac{1}{\ii k!\sqrt{2\pi}}\int_L\frac{Y_{21}^{(n)}(\mu;x,-\tfrac{1}{2}-k)(\OurPower{\mu}{k-1/2})_\infty\mu^n\ee^{\ii x(\mu-\mu^{-1})}}{\mu-\lambda}\,\dd\mu.
\end{equation*}
Then demanding that $Y_{j2}^{(n)}(\lambda;x,-\tfrac{1}{2}-k)=O(\lambda^k)$ as $\lambda\to 0$ yields two Hankel systems on the coefficients $c_j^{(n,k)}(x)$ and $d_j^{(n,k)}(x)$.  Setting
\begin{equation*}
I^-_{n,k,j}(x):=\int_L(\OurPower{\lambda}{k-1/2})_\infty\lambda^{n-j}\ee^{\ii x(\lambda-\lambda^{-1})}\,\dd\lambda,
\end{equation*}
these systems take the form
\begin{equation*}
\mathbf{H}^-_{n,k}(x)\mathbf{c}^{(n,k)}(x)=-\mathbf{v}^-_{n,k}(x)\quad\text{and}\quad
\mathbf{H}^-_{n,k}(x)\mathbf{d}^{(n,k)}(x)=-\ii k!\sqrt{2\pi}\mathbf{e}^{(1)}
\end{equation*}
where
\begin{equation*}
\mathbf{c}^{(n,k)}(x):=(c_1^{(n,k)}(x),\dots,c_k^{(n,k)}(x))^\top,\quad
\mathbf{d}^{(n,k)}(x):=(d_1^{(n,k)}(x),\dots,d_k^{(n,k)}(x))^\top,
\end{equation*}
and the Hankel matrix and right-hand side vector for the $\mathbf{c}^{(n,k)}(x)$ system are
\begin{equation*}
\mathbf{H}^-_{n,k}(x):=\{I^-_{n,k,p+q}(x)\}_{p,q=1}^k,\quad \mathbf{v}^-_{n,k}(x):=(I^-_{n,k,1}(x),\dots,I^-_{n,k,k}(x))^\top.
\end{equation*}
Therefore, if $k\in\mathbb{Z}_{\geq 1}$ and $m=-\tfrac{1}{2}-k$, then Riemann-Hilbert Problem~\ref{rhp:renormalized} has a solution obtained by $k\times k$ linear algebra, provided that the Hankel determinant
\begin{equation*}
D^-_{n,k}(x):=\det(\mathbf{H}^-_{n,k}(x))
\end{equation*}
is nonzero given $x$.  From \eqref{eq:u-n-from-Y-formula} we get the corresponding rational solution of the Painlev\'e-III equation \eqref{eq:PIII} in the form
\begin{equation}
u_n(x;-\tfrac{1}{2}-k)=\frac{\displaystyle \ii I^-_{n,k,0}(x)+\ii\sum_{j=1}^kc_j^{(n,k)}(x)I^-_{n,k,j}(x)}{\displaystyle c_k^{(n,k)}(x)I^-_{n,k,k+1}(x)+c_k^{(n,k)}(x)\sum_{j=1}^kc_j^{(n,k)}(x)I^-_{n,k,j+k+1}(x)},\quad k\in\mathbb{Z}_{\geq 1}.
\end{equation}
Note that if $k=0$, the linear algebra system is trivial and hence Riemann-Hilbert Problem~\ref{rhp:renormalized} \emph{always} has a solution when $m=-\tfrac{1}{2}$:
\begin{equation*}
\mathbf{Y}^{(n)}(\lambda;x,-\tfrac{1}{2})=\begin{bmatrix}1 & \displaystyle\frac{1}{\ii \sqrt{2\pi}}
\int_L\frac{(\OurPower{\mu}{-1/2})_\infty\mu^n\ee^{\ii x(\mu-\mu^{-1})}}{\mu-\lambda}\,\dd\mu\\0 & 1\end{bmatrix}.
\end{equation*}
The corresponding rational solution of the Painlev\'e-III equation \eqref{eq:PIII} is
\begin{equation*}
u_n(x;-\tfrac{1}{2})=\ii\frac{\displaystyle\int_{\LInftyRed\cup\LZeroRed}(\OurPower{\lambda}{-1/2})_\infty\lambda^n\ee^{\ii x(\lambda-\lambda^{-1})}\,\dd\lambda}{\displaystyle\int_{\LInftyRed\cup\LZeroRed}(\OurPower{\lambda}{-1/2})_\infty\lambda^{n-1}\ee^{\ii x(\lambda-\lambda^{-1})}\,\dd\lambda}.
\end{equation*}

\begin{rem}
We remark that in both cases the solution becomes more complicated as $|m|$ increases.  This is similar to the situation with the explicit solution of the Fokas-Its-Kitaev Riemann-Hilbert problem for orthogonal polynomials \cite{FokasIK91}. Significantly however, the large parameter $n$ appears explicitly in the (algebraic) solution of the Hankel system corresponding to any fixed half-integral value of $m$.  It is this latter feature that enables a direct large-$n$ asymptotic analysis by classical steepest descent methods \cite{BothnerM18}.
\end{rem}

Another observation is that the formula \eqref{eq:u-m-half} can be written in terms of Bessel functions.
Indeed, we may write this formula in simplified form as
\begin{equation*}
u_n(x;\tfrac{1}{2})=\ii\frac{\displaystyle\int_0^\infty\lambda^{-n-1/2}\ee^{-\ii x(\lambda-\lambda^{-1})}\,\dd\lambda}{\displaystyle\int_0^\infty\lambda^{-n-3/2}\ee^{-\ii x(\lambda-\lambda^{-1})}\,\dd\lambda}
\end{equation*}
where in both integrals the path of integration is the same, chosen (depending on $x$) so that the integrals are convergent at $\lambda=0,\infty$, and also the branch of $\lambda^{-n-1/2}$ is arbitrary as long as it is analytic along the contour of integration and taken to be the same in both integrals. By the substitution $\lambda=e^t$ and comparison with \cite[Equation 10.9.18]{DLMF} we then find that if $\mathrm{Im}(x)>0$, then
\begin{equation*}
u_n(x;\tfrac{1}{2})=\ii\frac{H^{(2)}_{n-1/2}(-\tfrac{\ii}{2}x)}{H^{(2)}_{n+1/2}(-\tfrac{\ii}{2} x)}
\end{equation*}
where $H^{(2)}_\nu(z)$ denotes a Hankel function.  This formula admits meromorphic continuation to the whole complex $x$-plane.  The same formula can then be expressed in terms of spherical Bessel functions of the second kind \cite[10.47(ii)]{DLMF} as
\begin{equation*}
u_n(x;\tfrac{1}{2})=\ii\frac{\mathsf{h}^{(2)}_{n-1}(-\tfrac{\ii}{2} x)}{\mathsf{h}^{(2)}_n(-\tfrac{\ii}{2} x)}.
\end{equation*}
The functions $\ee^{\ii z}\mathsf{h}_n^{(2)}(z)$ are explicit polynomials in $z^{-1}$ \cite[Equation 10.49.7]{DLMF} and this in turn leads to the explicit formula
\begin{equation*}
u_n(x;\tfrac{1}{2})=\frac{\displaystyle\sum_{j=1}^n\frac{(2n-j-1)!}{(n-j)!(j-1)!}x^j}{\displaystyle\sum_{j=0}^n\frac{(2n-j)!}{(n-j)!j!}x^j}.
\end{equation*}
The identification of $u(x;\tfrac{1}{2})$ with ratios of Bessel polynomials was also noted in \cite{ClarksonLL16}.  More generally, from \cite[Equation 10.9.18]{DLMF} it is clear that the integrals $I^\pm_{n,k,j}(x)$ are proportional to Hankel functions, and hence the expression for $u_n(x;\pm (\tfrac{1}{2}+k))$ can always be written in terms of ratios of Hankel-type determinants whose entries are Bessel functions.  More important from the point of view of asymptotic analysis in the large-$n$ limit however is the fact that the coefficients are integrals that may be analyzed by classical steepest descent methods; see \cite{BothnerM18}.

\end{document}